\DeclareMathOperator*{\esssup}{ess\,sup}
\newtheorem{theorem}{Theorem}[section]
\newtheorem{lemma}[theorem]{Lemma}
\newtheorem{proposition}[theorem]{Proposition}
\newtheorem{conjecture}[theorem]{Conjecture}
\newtheorem{corollary}[theorem]{Corollary}
\theoremstyle{definition}
\newtheorem{problem}[theorem]{Problem}
\newtheorem{claim}[theorem]{Claim}
\theoremstyle{remark}
\newtheorem*{remark}{Remark}
\newtheorem*{remarks}{Remarks}
\newcommand{\vertiii}[1]{{\left\vert\kern-0.25ex\left\vert\kern-0.25ex\left\vert #1 
\right\vert\kern-0.25ex\right\vert\kern-0.25ex\right\vert}}
\newcommand{\R}{{\mathbb R}}
\numberwithin{equation}{section}
\def\1{\textbf{\rm 1}}
\def\XXint#1#2#3{{\setbox0=\hbox{$#1{#2#3}{\int}$}
\vcenter{\hbox{$#2#3$}}\kern-.5\wd0}}
\begin{document}

\date{\today}
\keywords{Diffusion flow, Hypercontractivity, Blaschke--Santal\'{o} inequality, Mahler's conjecture, Brascamp--Lieb inequality}

\subjclass[2020]{{26D10, 47D07, 52A40 (primary); 39B62, 52A38 (secondary)}}

\author[Nakamura]{Shohei Nakamura}
\address[Shohei Nakamura]{Department of Mathematics, Graduate School of Science, Osaka University, Toyonaka, Osaka 560-0043, Japan}
\email{srmkn@math.sci.osaka-u.ac.jp}
\author[Tsuji]{Hiroshi Tsuji}
\address[Hiroshi Tsuji]{Department of Mathematics, Graduate School of Science, Osaka University, Toyonaka, Osaka 560-0043, Japan}
\email{u302167i@ecs.osaka-u.ac.jp}


\title[Hypercontractivity and its applications to convex geometry]{Hypercontractivity beyond Nelson's time and its applications to Blaschke--Santal\'{o} inequality and inverse Santal\'{o} inequality}

\begin{abstract}
We explore an interplay between an analysis of diffusion flows such as  Ornstein--Uhlenbeck flow and Fokker--Planck flow and inequalities from convex geometry regarding the volume product. 
More precisely, we introduce new types of hypercontractivity for the Ornstein--Uhlenbeck flow and clarify how these imply the Blaschke--Santal\'{o} inequality and the inverse Santal\'{o} inequality, also known as Mahler's conjecture. 
Motivated the link, we establish two types of new hypercontractivity in this paper. The first one is an improvement of Borell's reverse hypercontractivity inequality in terms of Nelson's time relation under the restriction that the inputs have an appropriate symmetry. We then prove that it implies the Blaschke--Santal\'{o} inequality. At the same time, it also provides an example of the inverse Brascamp--Lieb inequality due to Barthe--Wolff \cite{BW} beyond their non-degenerate condition. 
The second one is Nelson's forward hypercontractivity inequality with exponents below 1 for the inputs which are log-convex and semi-log-concave. 
This yields new lower bounds of the volume product for convex bodies whose boundaries are well curved. This consequence provides a quantitative result of works by Stancu \cite{Stancu} and Reisner--Sch\"{u}tt--Werner \cite{RSW} where they observed that a convex body with well curved boundary is not a local minimum of the volume product. 
\end{abstract}

\maketitle

\section{Introduction}\label{S1}
Mahler's conjecture expects the minimizer of the volume product of convex bodies with the origin in its interior is attained for the non-degenerate centered simplex and it is essentially unique. If one considers over centrally symmetric convex bodies, then it conjectures that the minimizer is attained for the cube. Despite of the recent resolution of the symmetric case in $\mathbb{R}^3$ by Iriyeh--Shibata \cite{IriShi}  the problem is still open in general.   
Our purpose in this paper is to reveal a new interplay between Mahler's conjecture for general convex bodies, as well as the Blaschke--Santal\'{o} inequality, and an analysis of diffusion flows such as Ornstein--Uhlenbeck flow and Fokker--Planck flow, and then exhibit the strength and wealth of this link.  
This link consists of two simple observations, the first one is a reinterpretation of the inequality reagarding the volume product as the  inequality of Hamilton--Jacobi flow, the second one is the vanishing viscosity argument employed by Bobkov--Gentil--Ledoux \cite{BGLJMPA} in order to bridge Ornstein--Uhlenbeck flow and Hamilton--Jacobi flow. This link enables us to introduce ideas from a study of these flows into a study of the volume product and vice versa. 
A motivation of this work comes from works by Stancu \cite{Stancu} and Reisner--Sch\"{u}tt--Werner \cite{RSW} where they explored a role of the curvature of the convex body in the context of Mahler's conjecture with the aid of a problem of identifying convex bodies for which the minimum of the volume product is attained.  Briefly speaking, they observed that if the boundary of the convex body $K$ has a point at which the generalized Gauss curvature is strictly positive, then the volume product of $K$ cannot be a local minimum. In other words, the boundary of $K$ needs to be flat if it attains the minimum of the volume product. 
While there are several works focusing on the role of the symmetry of the convex body in Mahler's conjecture \cite{GMR,Mey,Rei,Saint}, their results focus only on the (local) curvature of the boundary and hence are free from the symmetry. 
We will provide quantitative estimates of their results by showing new symmetry free lower bounds of the volume product for convex bodies whose boundaries are uniformly well curved in an appropriate sense.
This will be established by reducing the problem to the analysis of Ornstein--Uhlenbeck flow by virtue of our new link. 
Our main results are two types of new hypercontractivity inequalities. 
The first one is Nelson's time improvement of Borell's reverse hypercontractivity inequality \cite{Borell} under the restriction that the barycenter of the input function is at the origin. We then prove that the  improved reverse hypercontractivity inequality implies the Blaschke--Santal\'{o} inequality. As another viewpoint, the improved reverse hypercontractivity inequality provides an example of the inverse Brascamp--Lieb inequality due to Barthe--Wolff \cite{BW} beyond their non-degenerate condition. 
The second one is Nelson's forward hypercontractivity inequality with exponent less than 1 under the restriction that the inputs are log-convex and semi-log-concave. This in particular yields the aforementioned result on the lower bounds of the volume product for well curved convex bodies.

\subsection{Blaschke--Santal\'{o} inequality and inverse Santal\'{o} inequality}
Let $K \subset \mathbb{R}^n$ be a convex body, namely compact convex set with $\mathrm{int}\, K \neq \emptyset$. 
A polar $K^\circ \subset \mathbb{R}^n$ of $K$ is defined by 
$$
K^\circ \coloneqq \{ y \in \mathbb{R}^n : \langle x, y \rangle \le 1, \, \forall x \in K \},
$$
where $\langle x,y\rangle$ is the standard inner product in $\mathbb{R}^n$. 
In most of the case in this paper, we will concern $K$ with the origin in its interior, $0 \in {\rm int}\, K$ for short, in which case $K^\circ$ becomes a convex body. 
Then the volume product of such convex body $K$ is defined by $v(K) \coloneqq |K| |K^\circ|$ where $|K|$ denotes the volume of $K$ with respect to the Lebesgue measure.  
This volume product is linear invariant in the sense that $v(TK) = v(K)$ for any linear isomorphism $T:\mathbb{R}^n\to \mathbb{R}^n$. 
We say that $K$ is (centrally) symmetric if $K = -K := \{ - x : x \in K\}$.
Then the set of symmetric convex bodies is compact with respect to Banach--Mazur distance and $K \mapsto v(K)$ is continuous. Hence there exist the maximum and minimum of the volume product over all symmetric convex bodies. 
The celebrated Blaschke--Santal\'{o} inequality provides the maximum of $v(K)$ and states that 
\begin{equation}\label{e:BS}
v(K) \le v(\mathrm{B}_2^n)
\end{equation}
for any symmetric convex body $K\subset \R^n$,  where $\mathrm{B}_2^n$ denotes the closed unit ball in the standard $n$-dimensional Euclidean space.  More generally we use a notation $\mathrm{B}_p^n \coloneqq \{x \in \mathbb{R}^n : \sum_{i=1}^n |x_i|^p \le 1\}$ which is an unit $\ell^p$-ball for $1\le p \le \infty$.  Remark that $(\mathrm{B}_p^n)^\circ = \mathrm{B}_{p'}^n$ and hence $v( \mathrm{B}_p^n ) = v(\mathrm{B}_{p'}^n)$. 
The inequality \eqref{e:BS} is first proved by Blaschke \cite{Blaschke} for $n=2,3$ and later Santal\'{o} \cite{Santalo} proved for all dimensions.  
The case of equality for \eqref{e:BS} was addressed by Saint-Raymond \cite{Saint} and it is achieved if and only if $K$ is a symmetric ellipsoid.  
There are alternative and simpler proofs of \eqref{e:BS} and its equality, see \cite{MeyPa,MeyRei,Petty}.   
Later the assumption for \eqref{e:BS} was weakened to a condition on the barycenter of the convex body. For a convex body $K$, we denote its barycenter  by $b_K \coloneqq \int_K x\, dx \in \mathbb{R}^n$.
Then \eqref{e:BS} holds true for all $K$ with $b_K =0$, see \cite{AKM} for instance. 

Compared to the upper bound of the volume product, the lower bound, which is called the inverse Santal\'{o} inequality, is far from understanding.  In fact, there is a longstanding open problem from 1939 regarding the lower bound which is known as Mahler's conjecture \cite{Mahler}. 
This problem can be formulated for general convex bodies\footnote{Our  formulation of Mahler's conjecture for general convex bodies might not be standard. However, it is indeed equivalent to the  formulation due to Fradelizi--Meyer \cite{FM}, see Appendix \ref{S6.0}. 
}.  

\begin{conjecture}
Let $n \in \mathbb{N}$. 
For any convex body $K \subset \mathbb{R}^n$ with $b_K = 0$, it holds 
\begin{equation}\label{e:MahGene}
v(K) \ge v(\Delta^n_0) = \frac{(n+1)^{n+1}}{(n!)^2}
\end{equation}
where $\Delta^n_0$ is an arbitrary non-degenerate centered simplex, namely $b_{\Delta^n_0}=0$. 

For any centrally symmetric convex body $K \subset \mathbb{R}^n$, it holds 
\begin{equation}\label{e:Mah}
v(K) \ge v( \mathrm{B}_\infty^n) = \frac{4^n}{n!}.
\end{equation}
\end{conjecture}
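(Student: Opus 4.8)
The plan is to recast both inequalities \eqref{e:MahGene} and \eqref{e:Mah} as endpoint hypercontractivity estimates for the Ornstein--Uhlenbeck semigroup $P_s$, at times beyond the reach of Nelson's classical hypercontractivity theorem, and then to establish the hypercontractivity needed on the restricted class of inputs that the geometry forces. \textbf{Step 1 (functional reformulation).} After an affine normalisation --- and, for \eqref{e:Mah}, using central symmetry, while for \eqref{e:MahGene} using the Fradelizi--Meyer reduction \cite{FM} recalled in Appendix \ref{S6.0} --- rewrite the volume product in Legendre-dual functional form: taking $\varphi(x)=\tfrac12\|x\|_K^2$ one has $\varphi^\ast(y)=\tfrac12 h_K(y)^2$, and a layer-cake computation gives $\int_{\mathbb{R}^n}e^{-\varphi}=c_n|K|$ and $\int_{\mathbb{R}^n}e^{-\varphi^\ast}=c_n|K^\circ|$ for an explicit dimensional constant $c_n$. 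Thus \eqref{e:MahGene} and \eqref{e:Mah} become sharp lower bounds for $\bigl(\int e^{-\varphi}\bigr)\bigl(\int e^{-\varphi^\ast}\bigr)$ over convex (respectively even convex) functions $\varphi$, i.e. the functional inverse Santal\'o inequality.

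\textbf{Step 2 (Hamilton--Jacobi reformulation).} The Legendre duality $\varphi\mapsto\varphi^\ast$ is the large-time limit, after rescaling, of the Hopf--Lax semigroup $Q_t\varphi(x)=\inf_{y}\{\varphi(y)+\tfrac1{2t}|x-y|^2\}$; accordingly, phrase the target of Step 1 as a Pr\'ekopa--Leindler-type pairing along the flow $t\mapsto Q_t\varphi$, designed so that equality holds at a Gaussian input (matching $v(\Delta^n_0)$, respectively $v(\mathrm{B}_\infty^n)$). \textbf{Step 3 (vanishing viscosity).} By the Bobkov--Gentil--Ledoux mechanism, $\tfrac1p\log P_s(e^{p\varphi})$ converges, under an appropriate simultaneous scaling of the time $s$ and the exponent $p$, to a dilate of $Q_t\varphi$ (respectively of $-Q_t(-\varphi)$); hence a forward hypercontractivity bound $\|P_sf\|_{L^q(\gamma)}\le\|f\|_{L^p(\gamma)}$ with exponents below $1$ --- a regime outside Nelson's theorem --- applied to $f=e^{c\varphi}$, degenerates in this limit into precisely the Hamilton--Jacobi inequality of Step 2, provided the hypercontractivity holds at the super-Nelson time that the scaling prescribes.

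\textbf{Step 4 (the hypercontractivity to prove).} It remains to establish the Nelson-type forward hypercontractivity of $P_s$ with exponents below $1$, at the prescribed time, for inputs $f=e^{c\varphi}$ that are log-convex and semi-log-concave --- exactly the structural properties inherited from a gauge-type $\varphi$ of a convex body --- and additionally even in the symmetric case. Within this class one would run the classical scheme: differentiate $s\mapsto\|P_sf\|_{L^{q(s)}(\gamma)}$ along a suitable curve $q(s)$ of exponents, reduce the sign of the derivative to a Bochner / second-derivative identity for the generator $L=\Delta-x\cdot\nabla$, and invoke a propagation lemma showing that log-convexity, semi-log-concavity (and evenness) are preserved by $P_s$ so as to control the remainder; finally one checks that the boundary terms integrate up to the sharp constant $v(\Delta^n_0)$ (respectively $v(\mathrm{B}_\infty^n)$) attained at the Gaussian extremiser.

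\textbf{Main obstacle.} Steps 1--3 are structural and essentially forced; the whole difficulty lies in Step 4, and proving the required forward hypercontractivity for the full class of log-convex, semi-log-concave (even) inputs with the sharp super-Nelson time and the sharp constant is of the same order of difficulty as the conjecture itself. Concretely, the differential inequality for $\|P_sf\|_{L^{q(s)}(\gamma)}$ carries a remainder of definite sign that one can afford to discard only when $\varphi$ is \emph{uniformly} convex --- equivalently, when $\partial K$ is uniformly well curved --- which is why the method does deliver the Blaschke--Santal\'o inequality on the reverse-hypercontractivity side and quantitative lower bounds for the volume product near well-curved bodies, but degenerates precisely at the flat extremisers $\Delta^n_0$ and $\mathrm{B}_\infty^n$, where all curvature, and with it the hypercontractive gain, is lost. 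Closing this gap at the flat bodies is the crux that the present approach leaves open.
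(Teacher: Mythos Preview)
The statement is Mahler's conjecture, which the paper presents as an open problem (in a \texttt{conjecture} environment), not as a theorem; there is no proof in the paper to compare against. Your proposal is not a proof but a strategy, and you correctly identify in your final paragraph that Step~4 is where the argument fails to close.

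Your Steps 1--3 are essentially the content of the paper's Proposition~\ref{Prop:HCSet} (and Proposition~\ref{Prop:HCMahler}): the functional reformulation \eqref{e:PathSetFunc}--\eqref{e:LinkHJLege}, the Hamilton--Jacobi rewriting \eqref{e:Qto*}, and the vanishing-viscosity link \eqref{e:VaniVis} are exactly what the paper sets up. The paper then carries out Step~4 only partially: the forward hypercontractivity with exponents below $1$ is proved (Theorem~\ref{t:FHC}) for log-convex and $(1-\tfrac1\beta)$-semi-log-concave inputs, and this yields lower bounds on $v(K)$ only for bodies satisfying the uniform curvature hypothesis \eqref{e:CondCurvNov} (Theorem~\ref{t:ImpInv}), degenerating as $\kappa\to 0$. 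So your diagnosis that the method collapses at the flat extremisers $\Delta^n_0$ and $\mathrm{B}_\infty^n$ matches the paper's own scope.

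One point in your Step~4 is actually incorrect and worth flagging: you say the sharp constants $v(\Delta^n_0)$ and $v(\mathrm{B}_\infty^n)$ are ``attained at the Gaussian extremiser.'' They are not. The paper observes the scaling identity \eqref{e:ScaleGauss}, which says that centered Gaussians give constant $1$ at the endpoint $(p_s,q_s)=(1-e^{-2s},1-e^{2s})$, corresponding to $v(\mathrm{B}_2^n)$, not to the simplex or cube. The paper computes explicitly (equation~\eqref{e:Example18Oct}) that the candidate extremiser for \eqref{e:MahGene} is the non-Gaussian $f_*(x)=\mathbf{1}_{[-1,\infty)^n}e^{-(x_1+\cdots+x_n)}/\gamma(x)$. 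Hence any flow-monotonicity argument that drives inputs toward a Gaussian stationary state --- which is what your differentiation scheme in Step~4 would do, and what the paper's Fokker--Planck argument in Section~\ref{S3} does --- can at best reach $v(\mathrm{B}_2^n)$, never the Mahler constants. This is a structural obstruction, not merely a technical loss, and it is why the paper makes no claim toward the conjecture itself.
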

Motivated from the study of the geometry of number, Mahler \cite{Mahler} realized inequalities \eqref{e:MahGene} and \eqref{e:Mah} and proved it for $n=2$ by himself.  
It is very recent that the problem for $n=3$, symmetric case, was settled down with affirmative answer by Iriyeh--Shibata \cite{IriShi}, see \cite{FHMRZ} for the short proof.  
The problem in general case is open for $n\ge3$ and the problem in symmetric case is open for $n\ge4$ despite of several contributions. For instance \eqref{e:Mah} was proved for unconditional convex bodies \cite{Saint,Mey}, zonoids \cite{Rei,GMR},  see also \cite{BF,FMZ,Karasev} for other special cases. 
For general convex bodies, the inverse Santal\'{o} inequality was proved up to some multiplicative constant by Bourgain--Milman \cite{BouMil} and later the multiplicative constant was improved by Kuperberg \cite{Kuper}, see also \cite{Bernd,MaRu,Naza}. 

Regarding the case of equality, in general case, it is expected that $\Delta^n_0$ is the unique extremizer to \eqref{e:MahGene} up to linear isomorphism, see \cite{FM,Mey91}. 
The case of equality in symmetric case is a different story. We remark that $v( \mathrm{B}_\infty^n) = v( \mathrm{B}_1^n)$ and hence linear isomorphism images of the standard cube $\mathrm{B}^n_\infty$ and its polar $\mathrm{B}^n_1$ achieve equality in \eqref{e:Mah}.  In fact, it was proved that these are the only possibility of equality when $n=2,3$, see \cite{FHMRZ,IriShi,Mey91}.  However, unlikely the general case, there are other convex bodies for equality if $n\ge4$ such as Hanner polytopes.  
We mention the work by Nazarov--Petrov--Ryabogin--Zvavitch \cite{NPRZ} where they observed that $\mathrm{B}^n_\infty$,  $\mathrm{B}^n_1$ and Hanner polytopes are local minimizers of the volume product, see also \cite{Kim}. 
So as far as we are aware, the problem of identifying the case of equality for the inverse Santal\'{o} inequality is also open for $n\ge3$ in general and for $n\ge4$ in the symmetric case. In this direction we refer to works of Stancu \cite{Stancu} and Reisner--Sch\"{u}tt--Werner \cite{RSW} where they observed that the boundary of the convex body needs to be flat in order to attain equality in \eqref{e:MahGene} or \eqref{e:Mah}. 
\begin{theorem}[\cite{Stancu,RSW}]\label{t:RSW}
Let $n\in\mathbb{N}$ and $K \subset \mathbb{R}^n$ be a convex body with $0 \in {\rm int}\, K$. 
If there is a point in either $\partial K$ or $\partial K^\circ$ at which the generalized Gauss curvature exists and is not 0, then $v(K)$ is not a local minimum. 
\end{theorem}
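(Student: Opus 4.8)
The plan is to show that any such $K$ admits convex bodies arbitrarily close to it in the Hausdorff metric with strictly smaller volume product, which precludes $v(K)$ from being a local minimum. Since $v(K)=v(K^\circ)$ and $(K^\circ)^\circ=K$, we may assume the point of positive generalised Gauss curvature lies on $\partial K$; call it $x_0$, with unit outer normal $u$, so $h_0:=\langle x_0,u\rangle=h_K(u)>0$ because $0\in\mathrm{int}\,K$. As $v$ is $GL_n$-invariant and the curvature hypothesis is $GL_n$-stable, I would first normalise: choosing $T\in GL_n$ appropriately (rotate $u$ to $e_n$, then apply a map of the form $(x',x_n)\mapsto(Sx'+x_nv,\mu x_n)$), we may assume, after replacing $K$ by $TK$, that $x_0=e_n$, the outer normal to $\partial K$ at $x_0$ is $e_n$, and the second fundamental form of $\partial K$ at $x_0$ is the identity; in other words $K$ osculates $\mathrm{B}_2^n$ at $e_n$ to second order. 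By the duality of curvature, the dual boundary point $q_0=e_n\in\partial K^\circ$ then has outer normal $e_n$ and second fundamental form the identity as well, so $K^\circ$ likewise osculates $\mathrm{B}_2^n$ at $e_n$; in particular the situations of $K$ at $x_0$ and of $K^\circ$ at $q_0$ are symmetric.

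The heart of the argument consists of two dual one-parameter perturbations. For small $t>0$ let $K_t:=K\cap\{x_n\le 1-t\}$, which cuts a shrinking cap off $K$ at $x_0$ and has polar $K_t^\circ=\mathrm{conv}(K^\circ\cup\{(1-t)^{-1}e_n\})$, attaching a shrinking spike to $K^\circ$ at $q_0$. From the second-order expansion $\partial K:\ x_n=1-\tfrac12|x'|^2+o(|x'|^2)$ near $x_0$, a direct computation of the cap volume gives $|K|-|K_t|=C_n\,t^{(n+1)/2}+o(t^{(n+1)/2})$ with $C_n=2^{(n-1)/2}\int_{\mathrm{B}_2^{n-1}}(1-|\zeta|^2)\,d\zeta$, while the analogous expansion of $\partial K^\circ$ at $q_0$ and an explicit convex-hull computation give $|K_t^\circ|-|K^\circ|=E_n\,t^{(n+1)/2}+o(t^{(n+1)/2})$ with $E_n=2^{(n-1)/2}\int_{\mathrm{B}_2^{n-1}}(1-|\zeta|)^2\,d\zeta$. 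Expanding the product, $v(K_t)-v(K)=\bigl(|K|\,E_n-|K^\circ|\,C_n\bigr)t^{(n+1)/2}+o(t^{(n+1)/2})$. Dually, for $\widetilde K_s:=\mathrm{conv}(K\cup\{(1+s)e_n\})$, whose polar is $\widetilde K_s^\circ=K^\circ\cap\{y_n\le(1+s)^{-1}\}$ (a shrinking spike on $K$ and a shrinking cap off $K^\circ$), the same two expansions with the roles of $K$ and $K^\circ$ interchanged — legitimate thanks to the symmetry secured above — give $v(\widetilde K_s)-v(K)=\bigl(|K^\circ|\,E_n-|K|\,C_n\bigr)s^{(n+1)/2}+o(s^{(n+1)/2})$.

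To conclude I would invoke the elementary pointwise inequality $(1-|\zeta|)^2<(1-|\zeta|)(1+|\zeta|)=1-|\zeta|^2$ valid on $\mathrm{B}_2^{n-1}\setminus\{0\}$, which yields $E_n<C_n$. Hence if $|K^\circ|\ge|K|$ then $|K|\,E_n-|K^\circ|\,C_n\le|K|(E_n-C_n)<0$, so $v(K_t)<v(K)$ for all small $t>0$; and if $|K|\ge|K^\circ|$ then symmetrically $|K^\circ|\,E_n-|K|\,C_n<0$, so $v(\widetilde K_s)<v(K)$ for all small $s>0$. In either case $K$ has Hausdorff-nearby competitors with strictly smaller volume product, so $v(K)$ is not a local minimum. (If one prefers to work in a fixed position such as the Santal\'o position, an additional translation of size $O(t^{(n+1)/2})$ restores it and, since the position functional is stationary there, alters the volume product only by $o(t^{(n+1)/2})$, so the same conclusion holds for the corresponding normalised functional.)

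I expect the main obstacle to be making the two volume expansions fully rigorous from the mere existence of a non-degenerate generalised (Alexandrov) Gauss curvature at $x_0$ — in particular, controlling the $o(|x'|^2)$ remainder uniformly over the shrinking caps and spikes — together with the curvature-duality statement that a non-degenerate second-order expansion of $\partial K$ at $x_0$ induces one of $\partial K^\circ$ at $q_0$ with the inverse second fundamental form. Both facts are classical, but the bookkeeping is the delicate part.
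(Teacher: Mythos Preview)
The paper does not supply its own proof of this theorem; it is quoted from Stancu and Reisner--Sch\"{u}tt--Werner, with only the one-sentence summary ``if $\partial K$ has a curved point then one can slice $K$ around the point to make it flat and decrease the volume product.'' Your proposal is a correct and fairly complete realisation of precisely this idea, and it matches the Reisner--Sch\"{u}tt--Werner argument: normalise so that $K$ (and hence $K^\circ$, by curvature duality) osculates $\mathrm{B}_2^n$ to second order at $e_n$; compute the leading $t^{(n+1)/2}$ change in $|K|$ and $|K^\circ|$ under a cap cut, respectively a spike addition; and use the strict inequality $E_n<C_n$ together with the dichotomy on $|K|$ versus $|K^\circ|$ to force one of the two perturbations to lower $v(K)$. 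Your constants are correct (one checks $\int_{\mathrm{B}_2^{n-1}}(1-|\zeta|^2)\,d\zeta=\tfrac{2}{n+1}|\mathrm{B}_2^{n-1}|$ and $\int_{\mathrm{B}_2^{n-1}}(1-|\zeta|)^2\,d\zeta=\tfrac{2}{n(n+1)}|\mathrm{B}_2^{n-1}|$), and the pointwise inequality you invoke holds strictly on $\{0<|\zeta|<1\}$, which suffices. The two issues you single out at the end---uniformity of the second-order remainder under the mere existence of a non-degenerate Alexandrov second derivative, and the passage from a second-order expansion of $\partial K$ to one of $\partial K^\circ$ with inverse form---are exactly the technical content that Reisner--Sch\"{u}tt--Werner supply, so your identification of the obstacles is accurate.
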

More precisely Stancu \cite[Corollary 3.4]{Stancu} first observed that if a symmetric convex body $K$ is of class $C^2$ and $\partial K$ has strict positive Gauss curvature everywhere, then $v(K)$ is not a minimal among  symmetric convex bodies. 
This result was further improved by Reisner--Sch\"{u}tt--Werner \cite[Theorem 2.1]{RSW} as in Theorem \ref{t:RSW}.  
In fact they observed that if $\partial K$ has a curved point then one can slice $K$ around the point to make it flat and decrease the volume product. Same is true if $\partial K^\circ$ has a curved point since $v(K) = v(K^\circ)$.  
This observation raises a question of how curvatures of $\partial K$ and $\partial K^\circ$ improve the inverse Santal\'{o} inequality\footnote{We will assume both of $K$ and $K^\circ$ are well curved to give a quantitative result of Theorem \ref{t:RSW}. This also matches to the spirit of the work by \'{A}lvarez Paiva--Balacheff--Tzanev \cite{ABT}; \textit{Never consider a convex body without considering its dual at the same time} [I. M. Gelfand]. }. This is a starting point of our research. 
Namely our aim is to clarify the role of curvatures of $\partial K$ and $\partial K^\circ$ in the context of the inverse Santal\'{o} inequality in a quantitative way.  

For a convex body $K \subset \mathbb{R}^n$ with $0 \in {\rm int}\, K$ the gauge function (or asymmetric  norm) of $K$ is defined by 
$$
\| x \|_K\coloneqq  \inf \{r>0 :\, x \in rK\},\;\;\; x\in \mathbb{R}^n. 
$$
From the definition $K$ is a closed unit ball with respect to $\| \cdot\|_K$ and $\|\cdot\|_K$ becomes a norm if $K$ is symmetric.  Conversely, provided a norm without symmetry first,  one can define a convex body with the origin in its interior from it.  Hence we do not distinguish convex bodies and gauge functions in below.  
Throughout this paper we denote the standard Euclidean norm by $|\cdot| = \| \cdot\|_{{\rm B}_2^n}$.  
We measure the curvature of the boundary of the convex body by quantifying the convexity of its gauge function. 
To this end a special class of gauge functions so-called Minkowski norms is useful. A gauge function $\|\cdot\|$ is said to be the Minkowski norm if $x\mapsto \| x\|^2$ is smooth on $\mathbb{R}^n\setminus\{0\}$ and its Hessian is positive definite. We refer to the book \cite{BCS} and the work by \'{A}lvarez Paiva--Balacheff--Tzanev \cite{ABT} for the relevancy of Minkowski norms in Finsler geometry and a link between Mahler's conjecture and Finsler geometry.   
As is mentioned in \cite{ABT}, the unit sphere of the Minkowski norm is a quadratically convex hypersurface in the sense that the osculating quadratic at each points is an ellipsoid and hence this notion is useful for our purpose. 
Our curvature condition quantifies the definition of the Minkowski norm and  takes the following form:  
\begin{equation}\label{e:CondCurvature}
\nabla^2 \big( \frac12 \| \cdot\|_K^2 \big)(x),\; \nabla^2 \big( \frac12 \| \cdot\|_{K^\circ}^2 \big)(x) \ge \kappa {\rm id}, \;\;\; \forall x \in \mathbb{S}^{n-1}\coloneqq \partial {\rm B}^n_2
\end{equation}
for some $\kappa \in (0,1]$. 
Or more generally, in view of the linear invariance of the volume product, we may consider convex bodies $K$ whose gauge function satisfies\footnote{Note that the condition \eqref{e:CondCurvNov} yields a global lower bound on the principal curvatures, and hence Gaussian curvature too, of $\partial K$ and $\partial K^\circ$, see Appendix \ref{S6.6}.} 
\begin{equation}\label{e:CondCurvNov}
\nabla^2\big( \frac12 \|\cdot\|_K^2)(x) \ge \kappa \Lambda^{-1},\;\;\; 
\nabla^2\big( \frac12 \|\cdot\|_{K^\circ}^2)(x) \ge \kappa \Lambda,
\;\;\;
\forall x \in \mathbb{S}^{n-1}
\end{equation}
for some $\kappa \in (0,1]$ and some positive definite\footnote{We say a symmetric matrix $A$ is positive definite (semi-definite) if the corresponding quadratic form is positive definite (semi-definite). For two positive definite matrix $A,B$ we denote $A\le B$ if $B-A$ is positive semi-definite.}
symmetric matrix $\Lambda$. 
We will obtain a family of lower bounds of weighted and unweighted volume products under the curvature assumption \eqref{e:CondCurvNov} in Theorem \ref{t:RegInvSanSet} and Corollary \ref{t:WeightInv}. Here we exhibit the case of the unweighted volume product.

\begin{theorem}\label{t:ImpInv}
Let $n \ge 2$ and $\kappa\in (0,1]$. 
Then for any convex body $K\subset \mathbb{R}^n$ with $0 \in \mathrm{int}\, K$ satisfying \eqref{e:CondCurvNov} for some positive definite $\Lambda$, 
we have 
\begin{equation}\label{e:OurMahler}
v(K)
\ge 
\big( \kappa^2 e^{1-\kappa^2} \big)^\frac{n}{2} 
v( {\rm B}_2^n).
\end{equation}
\end{theorem}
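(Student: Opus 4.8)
\emph{Strategy.} The plan is to read off \eqref{e:OurMahler} from the new forward hypercontractivity inequality for log-convex and semi-log-concave inputs (the second main hypercontractivity result of the paper) via the Ornstein--Uhlenbeck--Hamilton--Jacobi link described in the introduction. First, since $v(TK)=v(K)$ for every $T\in GL_n(\mathbb{R})$, and since the Hessians $\nabla^2(\tfrac12\|\cdot\|_K^2)$ and $\nabla^2(\tfrac12\|\cdot\|_{K^\circ}^2)$ are $0$-homogeneous (so that \eqref{e:CondCurvNov}, a priori imposed only on $\mathbb{S}^{n-1}$, in fact holds on all of $\mathbb{R}^n\setminus\{0\}$), the hypothesis \eqref{e:CondCurvNov} transforms covariantly under $K\mapsto TK$ with $\Lambda\mapsto T\Lambda T^{\top}$; taking $T=\Lambda^{-1/2}$ we may assume $\Lambda=\mathrm{id}$. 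Writing $\varphi\coloneqq\tfrac12\|\cdot\|_K^2$, one has the Legendre identity $\varphi^{*}=\tfrac12\|\cdot\|_{K^\circ}^2$ (both sides are $2$-homogeneous convex functions whose gauges are polar to each other), and since $\nabla^2\varphi^{*}=(\nabla^2\varphi)^{-1}$ along the gradient map, \eqref{e:CondCurvNov} is equivalent to the two-sided Hessian bound $\kappa\,\mathrm{id}\le\nabla^2\varphi\le\kappa^{-1}\mathrm{id}$ on $\mathbb{R}^n\setminus\{0\}$, and likewise for $\varphi^{*}$.

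\emph{From the volume product to a Gaussian functional inequality.} Integrating in polar coordinates adapted to a gauge gives $|K|=(2^{n/2}\Gamma(\tfrac n2+1))^{-1}\int_{\mathbb{R}^n}e^{-\varphi}$, and the same for $K^\circ$, whence
\[
\frac{v(K)}{v(\mathrm{B}_2^n)}=\frac{1}{(2\pi)^n}\Big(\int_{\mathbb{R}^n}e^{-\varphi}\Big)\Big(\int_{\mathbb{R}^n}e^{-\varphi^{*}}\Big).
\]
Let $\gamma_\kappa$ be the centred Gaussian of covariance $\kappa\,\mathrm{id}$ and put $f\coloneqq e^{-\varphi+|\cdot|^2/2\kappa}$ and $g\coloneqq e^{-\varphi^{*}+|\cdot|^2/2\kappa}$, so that $\int e^{-\varphi}=(2\pi\kappa)^{n/2}\int f\,d\gamma_\kappa$ and similarly for $g$. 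By the previous paragraph $\nabla^2\log f=\kappa^{-1}\mathrm{id}-\nabla^2\varphi$ takes values between $0$ and $(\kappa^{-1}-\kappa)\mathrm{id}$, so both $f$ and $g$ are log-convex and semi-log-concave in exactly the normalisation demanded by the second hypercontractivity theorem (after rescaling to the standard Gaussian the semi-log-concavity constant is $1-\kappa^2$). Consequently \eqref{e:OurMahler} is equivalent to
\[
\Big(\int f\,d\gamma_\kappa\Big)\Big(\int g\,d\gamma_\kappa\Big)\ \ge\ e^{\,n(1-\kappa^2)/2}.
\]

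\emph{Hamilton--Jacobi bridge and hypercontractivity.} Young's inequality $\varphi(x)+\varphi^{*}(y)\ge\langle x,y\rangle$ together with its equality case exhibits $-\log g$ as a Hopf--Lax transform (infimal convolution with a quadratic cost, evaluated at the rescaled point $\kappa y$) of $-\log f$. By the vanishing-viscosity principle of Bobkov--Gentil--Ledoux \cite{BGLJMPA}, this Hopf--Lax transform is the $L^p\to L^p$, $p\downarrow 0$, limit of the Ornstein--Uhlenbeck semigroup $(P_t)$ associated with $\gamma_\kappa$, run for time $t=\log(1/\kappa)$ (the correlation $e^{-t}=\kappa$ produces precisely the shift $\kappa y$). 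Applying the new forward hypercontractivity inequality to the log-convex, semi-log-concave input $f$, with a pair of exponents below $1$ realising the Nelson time $e^{2t}=\kappa^{-2}$, and then passing to the vanishing-viscosity limit in the exponents, yields the displayed lower bound; the gain is $e^{n(1-\kappa^2)/2}=\exp\big(\tfrac n2\int_0^{\log(1/\kappa)}2e^{-2t}\,dt\big)$, the improvement of the contraction rate accumulated over $[0,\log(1/\kappa)]$, equivalently $\kappa^2e^{1-\kappa^2}=\exp\int_{\kappa^2}^1\tfrac{s-1}{s}\,ds$ after the substitution $s=e^{-2t}$. This accounts simultaneously for the factor $\kappa^2$ (the ratio $(2\pi\kappa)^n/(2\pi)^n$ of Gaussian normalisations) and the factor $e^{1-\kappa^2}$ (the hypercontractive gain) in \eqref{e:OurMahler}.

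\emph{Main obstacle.} The genuine content is the second hypercontractivity inequality itself: forward hypercontractivity with exponents below $1$ fails for general inputs — one would normally get the reversed inequality — and its validity for log-convex, semi-log-concave inputs has to be proved by tracking the Hessian of $\log P_t f$ along the flow (a Bakry--Émery/Riccati-type estimate) and checking that the usual $L^p$-monotonicity computation survives below the exponent $1$. A secondary, purely technical, point in the deduction above is to make the vanishing-viscosity limit rigorous — interchanging $p\downarrow 0$ with the spatial integrals while keeping the sharp constant, and keeping track of the direction of the inequality through the limit — and to verify that, in contrast with \eqref{e:BS}, no centring of $K$ is required here.
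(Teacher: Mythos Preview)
Your high-level plan is the paper's own: forward hypercontractivity for log-convex, semi-log-concave inputs (Theorem~\ref{t:FHC}) followed by a vanishing-viscosity passage to Hamilton--Jacobi. The reduction to $\Lambda=\mathrm{id}$ and the equivalent target $\big(\int f\,d\gamma_\kappa\big)\big(\int g\,d\gamma_\kappa\big)\ge e^{n(1-\kappa^2)/2}$ are also correct.

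The genuine gap is in your ``Hamilton--Jacobi bridge'' paragraph. First, the asserted Hopf--Lax identity fails: with your $-\log f=\varphi-\tfrac{|\cdot|^2}{2\kappa}$ one computes $Q_\kappa(-\log f)(\kappa y)=-\varphi^*(y)+\tfrac{\kappa}{2}|y|^2$, whereas $-\log g(y)=\varphi^*(y)-\tfrac{1}{2\kappa}|y|^2$, so the quadratics never match unless $\kappa=1$; no choice of Hopf--Lax time and evaluation point (nor the Laplace-principle limit of $P_t^{\gamma_\kappa}$ at $t=\log(1/\kappa)$) turns $f$ into $g$. Second, and more fundamentally, the Ornstein--Uhlenbeck/Hamilton--Jacobi link used here is \emph{not} ``run $P_t$ for a fixed $t=\log(1/\kappa)$ and let $p\downarrow0$'': it is the Bobkov--Gentil--Ledoux limit in which the \emph{flow time} $s\downarrow0$ while the exponents $p_s,q_s$ scale like $2s$. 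The curvature parameter $\kappa$ enters only through the input's semi-log-concavity constant $\beta=\kappa^{-2}$, never through the flow time, so the identity $\int_0^{\log(1/\kappa)}2e^{-2t}\,dt=1-\kappa^2$ is a numerical coincidence, not the mechanism. The paper's actual route (after normalising instead to $\Lambda=\kappa^{-1}\mathrm{id}$, so that $\phi:=\tfrac12\|\cdot\|_K^2-\tfrac12|\cdot|^2$ is concave and $-(1-\kappa^2)$-semi-convex) is: take $s\downarrow0$ with $a:=\lim(-q_s/2s)\in(0,1)$ surviving as a free parameter, obtaining a \emph{family} of weighted volume-product inequalities indexed by $a$ (Lemma~\ref{l:Useful} $\Rightarrow$ Theorem~\ref{t:Func19Oct} $\Rightarrow$ Theorem~\ref{t:RegInvSanSet}); then a H\"{o}lder step gives $(\mu_{1-a,n}(K)/\mu_{1-a,n}(\mathrm{B}_2^n))^{1/(1-a)}\le|K|/|\mathrm{B}_2^n|$ uniformly in $a$, and sending $a\to1$ (where $\mu_{a,n}\to$ Lebesgue and the constant limits to $(\kappa^2e^{1-\kappa^2})^{n/2}$ by L'H\^opital) yields \eqref{e:OurMahler}. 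The free parameter $a$ and the H\"{o}lder-plus-limit step are precisely what your sketch is missing.
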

Examples of convex bodies satisfying \eqref{e:CondCurvNov} can be obtained from a regularization of the convex body by 2-Firey intersection with ${\rm B}_2^n$. 
Let $K_0$ be any convex body with $0 \in {\rm int}\, K_0$ such that $\|\cdot\|_{K_0}^2$ is $C^2$ on $\mathbb{R}^n\setminus\{0\}$. 
For $\lambda>0$, define the regularization of $K_0$, denoted by $K_\lambda$,  to be the 2-Firey intersection of $K_0$ and $\lambda^{-\frac12}{\rm B}_2^n$, namely it is a convex body whose gauge function is given by 
$$
\| x \|_{K_\lambda}^2
\coloneqq 
\| x \|_{K_0}^2 
+ 
\lambda 
|x|^2.
$$
Then $K_\lambda$ satisfies \eqref{e:CondCurvNov} with $\Lambda = (\lambda(\lambda + M_{K_0}))^{-\frac12} {\rm id}$ and $\kappa^2 = \frac{\lambda}{\lambda+ M_{K_0}}$, 
where $M_{K_0} \coloneqq \max_{x \in \mathbb{S}^{n-1}} \mu_{K_0}(x)$ and $\mu_{K_0}(x)$ is the maximum  eigenvalue of $\nabla^2 \big(\frac12 \|\cdot\|_{K_0}^2\big)(x)$.
Hence we may apply Theorem \ref{t:ImpInv} to $K_\lambda$ to see that  
$$
v(K_\lambda )
\ge 
\big( 
\frac{\lambda}{ \lambda + M_{K_0}}
{\rm exp}\big[ \frac{ M_{K_0}}{ \lambda+ M_{K_0} }  \big] 
\big)^\frac{n}{2} 
v( {\rm B}_2^n)
$$
for any $\lambda>0$.  
Such regularization of convex bodies can be found in \cite[Section 1.3]{BCS}, see also \cite{KlaMil08} for an investigation of convex bodies given by the 2-Firey intersection. 
We will derive \eqref{e:OurMahler} as a consequence from new hypercontractivity for Ornstein--Uhlenbeck flow. 
To this end let us give a brief overview of hypercontractivity next.

\subsection{Hypercontractivity: Link to the Blaschke--Santal\'{o} inequality and the inverse Santal\'{o} inequality}
The Ornstein--Uhlenbeck flow $P_s$, $s>0$,  is defined by 
$$
P_s f(x) \coloneqq \int_{\mathbb{R}^n} f(e^{-s}x + \sqrt{1-e^{-2s}} y)\, d\gamma(y),\;\;\; x \in \mathbb{R}^n
$$
for nonnegative $f\in L^1(\gamma)$ where $\gamma$ denotes the standard Gaussian measure $\gamma(x) = (2\pi)^{-\frac{n}{2}} e^{- \frac12|x|^2}$.  
If we let $u(s,x) = P_sf(x)$,  this solves the heat equation 
$$
\partial_s u = \Delta u - x\cdot \nabla u,\;\;\; u(0) = f. 
$$
The Nelson's fundamental hypercontractivity inequality \cite{Nelson},  which quantifies the regularizing property of Ornstein--Uhlenbeck flow,   states that 
\begin{equation}\label{e:ForwardHC}
\big\| P_s \big[ f^\frac1p \big] \big\|_{L^q(\gamma)} \le \big( \int_{\mathbb{R}^n} f\, d\gamma \big)^\frac1p
\end{equation}
for all nonnegative $f\in L^1(\gamma)$ if $1 < p,q <\infty$ and $s>0$ satisfy 
\begin{equation}\label{e:NelsonTime}
\frac{q-1}{p-1} \le e^{2s}.  
\end{equation}
The reverse form of \eqref{e:ForwardHC} is also known as Borell's reverse hypercontractivity inequality \cite{Borell} and states that 
\begin{equation}\label{e:ReverseHC}
\big\| P_s \big[ f^\frac1p \big] \big\|_{L^q(\gamma)} \ge \big( \int_{\mathbb{R}^n} f\, d\gamma \big)^\frac1p
\end{equation}
for all nonnegative $f\in L^1(\gamma)$ if $ p,q \in (-\infty,1)\setminus \{0\}$ and $s>0$ satisfy \eqref{e:NelsonTime}. 
The latter one quantifies the positivity improving property of $P_s$ in Borell's sense.

The condition \eqref{e:NelsonTime} is known as \textit{Nelson's time relation} and provides the sharp threshold for the validity of \eqref{e:ForwardHC} and \eqref{e:ReverseHC}.  
Namely if one has \eqref{e:ForwardHC} for some $p,q \in (1,\infty), s>0$ or \eqref{e:ReverseHC} for some $p,q\in (-\infty,1)\setminus\{0\}, s>0$, then these exponents must satisfy \eqref{e:NelsonTime}. This can be checked with the translated Gaussian $f = \gamma(\cdot + a)/\gamma$, $|a|\to\infty$.  
Moreover this example shows that \eqref{e:ForwardHC} and \eqref{e:ReverseHC} dramatically fail to hold with \textit{any finite constant} if $\frac{q-1}{p-1} > e^{2s}$.  
Also constants in \eqref{e:ForwardHC} and \eqref{e:ReverseHC} are sharp as equalities are attained for constant functions. We refer \cite{BraLi_Adv,LedouxJFA92} for the problem of identifying the case of equalities. 
In the context of differential geometry and probability theory, hypercontractivity plays an important role by virtue of its equivalence to the logarithmic Sobolev inequality. This equivalence was discovered by Gross \cite{Gross}. 
We refer readers to the survey paper \cite{DGS} and the book \cite{Faris} for further detailed historical background and broad picture of the vast connections of hypercontractivity to several branches of the mathematical sciences.  
We also mention seminal works due to Mossel--O'Donnell--Regev--Steif--Sudakov \cite{MORSS} and Mossel--Oleszkiewicz--Sen \cite{MOS} for the link between the analysis of mixing of short random walks on the discrete cube, as well as the Non-Interactive Correlation Distillation problem, and reverse hypercontractivity. 

While there is no study of exploring an explicit link between hypecontractivity, the Blaschke--Santal\'{o} inequality and the inverse Santal\'{o} inequality as far as we are aware, there are few works indicating such relations. 
In this direction, we refer to works due to Caglar--Fradelizi--Gu\'{e}don--Lehec--Sch\"{u}tt--Werner \cite{CFGLSW}, Fathi \cite{Fathi},  and Gozlan \cite{Goz}.  In \cite{CFGLSW}, it was proved that the functional form of \eqref{e:BS}, see also \eqref{e:Lehec2},  implies the inverse form of the logarithmic Sobolev inequality due to Artstein-Avidan--Klartag--Sch\"{u}tt--Werner \cite{AKSW}.  
It was then recently observed by Fathi \cite{Fathi} that the reverse implication is also true and hence the functional form of \eqref{e:BS} and the inverse logarithmic Sobolev inequality are equivalent. 
In \cite{Goz}, Gozlan figured out some equivalence between the functional forms of \eqref{e:MahGene} and \eqref{e:Mah}, see also \eqref{e:FuncInvSantaloGene}, \eqref{e:FuncInvSantalo}, and some improved  logarithmic Sobolev inequality. 
Despite of the classical equivalence between the logarithmic Sobolev inequality and hypercontractivity, it is not immediately clear how one can apply the Gross's classical argument to the inverse logarithmic Sobolev inequality or Gozlan's improved logarithmic Sobolev inequality in order to derive some reasonable hypercontractivity. 
Instead of such route we observe more direct link as follows.  We say a function $f$ is centrally symmetric if $f(-\cdot) =f$. 

\begin{proposition}\label{Prop:HCSet}
Let $n \in \mathbb{N}$. 
	\begin{enumerate}
		\item 
		Suppose that for all sufficiently small $s>0$, there exist $q_s<0$ with order $q_s = -2s + O(s^2)$ and constant ${\rm BS}_{s}>0$ such that 
		\begin{equation}\label{e:RHC_BSIntro}
		\big\| 
		P_s \big[ f^{\frac1{p_s}} \big]
		\big\|_{L^{q_s}(\gamma)}
		\ge 
		{\rm BS}_{s}^\frac{1}{p_s} 
		\big(
		\int_{\mathbb{R}^n}
		f\, d\gamma
		\big)^\frac1{p_s},
		\;\;\;
		p_s \coloneqq 1-e^{-2s} 
		\end{equation}
		holds for all centrally symmetric and nonnegative $f\in L^1(\gamma)$. 
		Then for any centrally symmetric convex body $K$, 
		\begin{equation}\label{e:SetBSIntro}
			v(K) 
			\le 
			(\limsup_{s\downarrow0} {\rm BS}_{s})^{-1}
			v({\rm B}_2^n).
		\end{equation}
		\item 
		Suppose that for all sufficiently small $s>0$, there exist $p_s \in (0,1)$ with order $p_s = 2s + O(s^2)$ and constant ${\rm IS}_{s}>0$ such that 
		\begin{equation}\label{e:FHC_MaIntro}
		\big\| 
		P_s \big[ f^{\frac1{p_s}} \big]
		\big\|_{L^{q_s}(\gamma)}
		\le 
		{\rm IS}_{s}^\frac{1}{p_s} 
		\big(
		\int_{\mathbb{R}^n}
		f\, d\gamma
		\big)^\frac1{p_s},
		\;\;\;
		q_s\coloneqq 1-e^{2s}
		\end{equation}
		holds for all nonnegative $f \in L^1(\gamma)$. Then for any convex body $K$ with $0 \in {\rm int}\, K$,  
		\begin{equation}\label{e:SetMaIntro}
			v(K)
			\ge 
			(\liminf_{s\downarrow0} {\rm IS}_{s})^{-1}
			v({\rm B}_2^n). 
		\end{equation}
	\end{enumerate}
\end{proposition}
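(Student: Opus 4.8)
The plan is to read both assertions as limiting cases, as $s\downarrow 0$, of the hypothesised hypercontractivity inequalities, and to recognise the limit as the functional form of the Blaschke--Santal\'{o} (resp.\ inverse Santal\'{o}) inequality through the vanishing viscosity mechanism of Bobkov--Gentil--Ledoux \cite{BGLJMPA}. Given a convex body $K$ with $0\in\mathrm{int}\,K$ I would set
$$
u(x)\coloneqq \tfrac12\big(\|x\|_K^2-|x|^2\big),\qquad f\coloneqq e^{-u}.
$$
Then $f\ge0$, and $\int_{\mathbb{R}^n}f\,d\gamma=(2\pi)^{-n/2}\int_{\mathbb{R}^n}e^{-\frac12\|x\|_K^2}\,dx=|K|/|\mathrm{B}_2^n|<\infty$, so $f\in L^1(\gamma)$; moreover $f$ is centrally symmetric exactly when $K=-K$, hence $f$ is admissible in the hypothesis of $(1)$ in the symmetric case and in that of $(2)$ without restriction. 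Using that the Legendre transform of $\tfrac12\|\cdot\|_K^2$ is $\tfrac12\|\cdot\|_{K^\circ}^2$, the Hopf--Lax functional
$$
Qu(y)\coloneqq\inf_{x\in\mathbb{R}^n}\Big[u(x)+\tfrac12|x-y|^2\Big]
$$
equals $\tfrac12|y|^2-\tfrac12\|y\|_{K^\circ}^2$, so $\int_{\mathbb{R}^n}e^{Qu}\,d\gamma=|K^\circ|/|\mathrm{B}_2^n|$, and therefore
$$
\frac{v(K)}{v(\mathrm{B}_2^n)}=\Big(\int_{\mathbb{R}^n}e^{-u}\,d\gamma\Big)\Big(\int_{\mathbb{R}^n}e^{Qu}\,d\gamma\Big).
$$
It then suffices to bound this product above by $(\limsup_{s\downarrow0}\mathrm{BS}_s)^{-1}$ in $(1)$ and below by $(\liminf_{s\downarrow0}\mathrm{IS}_s)^{-1}$ in $(2)$.

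The analytic input would be a small-time Laplace asymptotic for $P_s[f^{1/p_s}]$ at the critical exponent. By the Mehler representation,
$$
P_s\big[f^{1/p_s}\big](x)=\frac{1}{(2\pi(1-e^{-2s}))^{n/2}}\int_{\mathbb{R}^n}\exp\Big(-\frac{1}{p_s}\Big[u(y)+\frac{p_s}{2(1-e^{-2s})}|y-e^{-s}x|^2\Big]\Big)\,dy,
$$
and since $\|\cdot\|_K^2$ is coercive the bracketed phase is coercive in $y$, uniformly for small $s$, and converges locally uniformly to $u(y)+\tfrac12|y-x|^2$ because $p_s/(1-e^{-2s})\to1$ and $e^{-s}x\to x$. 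Laplace's method --- whose algebraic prefactor is harmless, being raised to the power $q_s\to0$ --- then gives $p_s\log P_s[f^{1/p_s}](x)\to-Qu(x)$, and since $q_s/p_s\to-1$ in both settings,
$$
\lim_{s\downarrow0}\big(P_s[f^{1/p_s}](x)\big)^{q_s}=e^{Qu(x)}\qquad(x\in\mathbb{R}^n),
$$
with no smoothness of $\partial K$ required.

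To finish, I would raise the hypothesised inequality to the power $p_s>0$ and write the left-hand side as $a_s^{\,p_s/q_s}$ with $a_s\coloneqq\int_{\mathbb{R}^n}\big(P_s[f^{1/p_s}]\big)^{q_s}\,d\gamma$ and $p_s/q_s\to-1$. For $(1)$, Fatou's lemma and the pointwise limit give $\liminf_{s\downarrow0}a_s\ge\int e^{Qu}\,d\gamma>0$; combining this with the elementary bound $\limsup_{s\downarrow0}a_s^{\,p_s/q_s}\le(\liminf_{s\downarrow0}a_s)^{-1}$ and with $\|P_s[f^{1/p_s}]\|_{L^{q_s}(\gamma)}^{p_s}\ge\mathrm{BS}_s\int f\,d\gamma$ and letting $s\downarrow0$ yields $(\int e^{-u}d\gamma)(\int e^{Qu}d\gamma)\le(\limsup_{s\downarrow0}\mathrm{BS}_s)^{-1}$, which is \eqref{e:SetBSIntro}. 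For $(2)$, I additionally need $\limsup_{s\downarrow0}a_s\le\int e^{Qu}\,d\gamma$; here I would use the linear invariance $v(\lambda K)=v(K)$ to dilate so that $K$ contains a sufficiently large ball, i.e.\ $\|\cdot\|_K\le C|\cdot|$ with $C$ as small as desired, whereupon Jensen's inequality gives $P_s[f^{1/p_s}](x)\ge e^{-\mathbb{E}[u(Z_{s,x})]/p_s}$ with $Z_{s,x}$ Gaussian of mean $e^{-s}x$ and covariance $(1-e^{-2s})\,\mathrm{id}$, and the bound $\mathbb{E}[u(Z_{s,x})]\le\tfrac12(C^2-1)(e^{-2s}|x|^2+n(1-e^{-2s}))$ produces a Gaussian function in $L^1(\gamma)$, independent of small $s$, dominating $(P_s[f^{1/p_s}])^{q_s}$; dominated convergence then gives $a_s\to\int e^{Qu}\,d\gamma\in(0,\infty)$, hence $a_s^{\,p_s/q_s}\to(\int e^{Qu}d\gamma)^{-1}$, and combining with $\|P_s[f^{1/p_s}]\|_{L^{q_s}(\gamma)}^{p_s}\le\mathrm{IS}_s\int f\,d\gamma$ gives $(\int e^{-u}d\gamma)(\int e^{Qu}d\gamma)\ge(\liminf_{s\downarrow0}\mathrm{IS}_s)^{-1}$, which is \eqref{e:SetMaIntro}.

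The step I expect to be the main obstacle is the interchange of the limit $s\downarrow0$ with the Gaussian integral. In $(1)$ this is painless: one only needs a lower bound on $\liminf_{s\downarrow0}a_s$, which Fatou supplies, and $\int e^{Qu}\,d\gamma=|K^\circ|/|\mathrm{B}_2^n|$ is automatically finite. In $(2)$ one genuinely needs a uniform integrable majorant for $(P_s[f^{1/p_s}])^{q_s}$, and the crude Jensen/Gaussian-moment estimate only supplies one after the preliminary dilation of $K$; one must also verify that the constant in that majorant stays bounded as $s\downarrow0$, which it does because $|q_s|/p_s\to1$ and $n(1-e^{-2s})\to0$. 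Granting this, the rest is bookkeeping, resting only on the elementary fact that $a_s^{\,b_s}\to A^{-1}$ whenever $a_s\to A\in(0,\infty)$ and $b_s\to-1$, together with its one-sided $\limsup$ version.
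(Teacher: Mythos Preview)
Your proposal follows the same vanishing-viscosity strategy as the paper (Subsections~4.2--4.5): rewrite the volume product as $\int e^{-u}\,d\gamma\cdot\int e^{Qu}\,d\gamma$ via the Hopf--Lax/Legendre identity, and recover this product from the hypercontractivity hypothesis by letting $s\downarrow0$. The overall argument is correct.

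A few technical differences are worth recording. First, you test the hypothesis against the \emph{fixed} function $f=e^{-u}$, whereas the paper uses the $s$-dependent choice $f_s=e^{-\frac{p_s}{2s}u}$ so that $f_s^{1/p_s}=e^{-u/(2s)}$ matches the standard vanishing-viscosity scaling exactly; both choices are legitimate since the hypothesis is assumed for all admissible $f$. Second, your pointwise claim $p_s\log P_s[f^{1/p_s}](x)\to -Qu(x)$ via ``Laplace's method'' is correct for this particular $u$ --- the full phase $\tfrac12\|z\|_K^2-e^{-s}\langle z,x\rangle+\tfrac{e^{-2s}}{2}|x|^2$ is convex and coercive, with minimum $e^{-2s}Qu(x)$ --- but your write-up skips the justification. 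The paper instead cites \cite{ABI} for the one-sided bound $\liminf u^\varepsilon\ge Q_1\phi$, which requires the growth condition $\phi\ge -C(1+|x|)$ and therefore forces the approximation $\phi_k=\max\{\phi,-k\}$ in part~(1). Your direct Laplace route avoids that approximation for the specific $u$ coming from a convex body, which is a small gain. Third, for the domination in part~(2), your Jensen-plus-dilation device is a clean alternative to the paper's approach: the paper uses concavity of $\phi$ to get a linear upper bound $\phi(x)\le\langle x_0,x\rangle+c_0$ and hence an explicit integrable majorant, while you dilate so that $\mathrm{B}_2^n\subset K$, giving $u\le0$ and thus $(P_s[f^{1/p_s}])^{q_s}\le1$ uniformly --- arguably simpler.

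In short: same idea, minor variations in execution; the Laplace step in your pointwise convergence would benefit from one additional line making the large-deviation upper bound explicit.
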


\if0
\begin{proposition}\label{Prop:HCSet}
Let $n \in \mathbb{N}$ and $K \subset \mathbb{R}^n$ be sufficiently regular convex body with $0 \in {\rm int}\, K$. 
Define a positive function $f  \in L^1(\gamma)$ by $\log\, f (x) = \frac12|x|^2 - \frac12 \| x \|_K^2$. 
	\begin{enumerate}
		\item 
		Suppose that 
		\begin{equation}\label{e:RHC_BSIntro}
		\big\| 
		P_s \big[ f_s^{\frac1{p_s}} \big]
		\big\|_{L^{q_s}(\gamma)}
		\ge 
		{\rm BS}_{s}^\frac{1}{p_s} 
		\big(
		\int_{\mathbb{R}^n}
		f_s\, d\gamma
		\big)^\frac1{p_s},
		\;\;\;
		p_s \coloneqq 1-e^{-2s},\; q_s \coloneqq -p_s
		\end{equation}
		holds for some constant ${\rm BS}_{s}>0$ and all small $s>0$ where $f_s\coloneqq f^{\frac{p_s}{2s}}$. 
		Then 
		\begin{equation}\label{e:SetBSIntro}
			v(K) 
			\le 
			\limsup_{s\downarrow0} {\rm BS}_{s}^{-1}
			v({\rm B}_2^n).
		\end{equation}
		\item 
		Suppose that 
		\begin{equation}\label{e:FHC_MaIntro}
		\big\| 
		P_s \big[ f^{\frac1{p_s}} \big]
		\big\|_{L^{q_s}(\gamma)}
		\le 
		{\rm IS}_{s}^\frac{1}{p_s} 
		\big(
		\int_{\mathbb{R}^n}
		f\, d\gamma
		\big)^\frac1{p_s},
		\;\;\;
		p_s\coloneqq 1-e^{-2s},\; q_s\coloneqq 1-e^{2s}
		\end{equation}
		for some constant ${\rm IS}_{s}>0$ and all small $s>0$. Then 
		\begin{equation}\label{e:SetMaIntro}
			v(K)
			\ge 
			\liminf_{s\downarrow0} {\rm IS}_{s}^{-1}
			v({\rm B}_2^n). 
		\end{equation}
	\end{enumerate}
\end{proposition}
\fi
Few remarks on this proposition are in order. 
\begin{remarks}
\begin{enumerate}
	\item Typical examples of exponents are $q_s = -2s$ for \eqref{e:RHC_BSIntro} and $p_s = 2s$ for \eqref{e:FHC_MaIntro} but we stated in more flexible way in the above. In fact, we will prove \eqref{e:RHC_BSIntro} with ${\rm BS}_s =1$ for $q_s = -1 + e^{-2s}$ in forthcoming Theorem \ref{t:RevHCIntro}. On the other hand, the most challenging cases would be $q_s = 1-e^{2s}$ for \eqref{e:RHC_BSIntro} and $p_s = 1-e^{-2s}$ for \eqref{e:FHC_MaIntro}, see forthcoming discussion \eqref{e:Admissible} and \eqref{e:Admissible2} for more details. 
	\item  
	We stated the above proposition in a way that \eqref{e:RHC_BSIntro} for \textit{all} centrally symmetric functions implies \eqref{e:SetBSIntro}  for \textit{all} centrally symmetric convex bodies and similarly  \eqref{e:FHC_MaIntro} for \textit{all} functions implies  \eqref{e:SetMaIntro} for \textit{all} convex bodies. 
We will indeed prove more tight relation. That is, for a given convex body $K$ with $0\in {\rm int}\, K$, if one could prove  \eqref{e:RHC_BSIntro} / \eqref{e:FHC_MaIntro} for $f = f_s$ defined by $ \log\, f_s(x) = \frac{p_s}{2s}(  \frac12|x|^2  - \frac12\|x\|_K^2 )$ for all small $s>0$, then it implies \eqref{e:SetBSIntro} / \eqref{e:SetMaIntro} for the convex body $K$, see forthcoming Propositions \ref{Prop:HCBS_2} and \ref{Prop:HCMahler}.  

\end{enumerate}
\end{remarks}
Here we emphasize that both of \eqref{e:RHC_BSIntro} and \eqref{e:FHC_MaIntro} are beyond the scope of the classical hypercontractivity inequalities \eqref{e:ForwardHC} and \eqref{e:ReverseHC} as the relevant exponents\footnote{For \eqref{e:RHC_BSIntro}, if $p_s = 1-e^{-2s}$, $q$ and $s>0$ satisfy $\frac{q-1}{p_s-1}\le e^{2s}$ then the $q$ must be $q\ge0$ while $q_s = -2s + O(s^2)<0$. 
For \eqref{e:FHC_MaIntro}, if $p$, $q_s = 1-e^{2s}$ and $s>0$ satisfy $\frac{q_s-1}{p-1}\le e^{2s}$ then $p$ must be $p\le0$ while $p_s = 2s + O(s^2) >0$.
} do not satisfy \eqref{e:NelsonTime}. 
In other words, \textit{one needs to establish new hypercontractivity beyond Nelson's time relation \eqref{e:NelsonTime} in order to build a bridge between hypercontractivity and convex geometry}.  
A study of hypercontractivity beyond Nelson's time can be found in the work due to Janson \cite{Janson} for instance where it was observed that the forward hypercontractivity inequality \eqref{e:ForwardHC} holds for $p,q>1$ and $s>0$ satisfying $\frac{q}{p}\le e^{2s}$ and $n \in 2\mathbb{N}$ as long as $f$ is holomorphic, see also \cite{GKL}. 

\subsection{Brief discussion on new hypercontractivity from a view point of  the Brascamp--Lieb theory}\label{S2.1}
Before exhibiting our main results on new hypercontractivity, let us give a brief discussion on \eqref{e:RHC_BSIntro} and \eqref{e:FHC_MaIntro} from a view point of the Brascamp--Lieb theory, which in particular reveals a connection to the work by Barthe--Wolff \cite{BW} on the inverse Brascamp--Lieb inequality.  
Our investigation here is based on a ``centered Gaussian exhaustion principle" in the context of the Brascamp--Lieb inequality by Brascamp--Lieb  \cite{BraLi_Adv} and Lieb \cite{Lieb} as follows.   
\begin{theorem}[\cite{BraLi_Adv}, \cite{Lieb}]\label{t:Lieb}
Let $m, d, d_1,\ldots,d_m\in \mathbb{N}$. We take $c_j \in [0,1]$,  a linear surjective map $L_j:\mathbb{R}^d \to \mathbb{R}^{d_j}$ for each $j =1,\ldots, m$, and a positive semi-definite matrix on $\mathbb{R}^d$ denoted by $\mathcal{Q}$. 
Define a Brascamp--Lieb functional by 
\begin{equation}\label{e:BLFunc}
{\rm BL}(f_1,\ldots,f_m) 
\coloneqq 
\frac{\int_{\mathbb{R}^d} e^{-\pi \langle x, \mathcal{Q}x\rangle} \prod_{j=1}^m f_j \circ L_j^{c_j}\, dx}{\prod_{j=1}^m \big(\int_{\mathbb{R}^{d_j}} f_j\, dx_j \big)^{c_j}}
\end{equation}
for nonnegative $f_j \in L^1(dx_j)$ such that $\int_{\mathbb{R}^{d_j}} f_j \, dx_j >0$. 
Then the supremum ${\rm BL}(f_1,\ldots,f_m)$ over all such functions is equal to its supremum over centered Gaussian functions. 
\end{theorem}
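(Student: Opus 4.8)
The plan is to follow the classical ``tensor power plus central limit theorem'' route that underlies Lieb's theorem, taking care of the Gaussian weight $e^{-\pi\langle x,\mathcal{Q}x\rangle}$ (which, being itself a centered Gaussian, creates no genuinely new difficulty). Since centered Gaussians are admissible inputs, only the inequality $\sup {\rm BL}\le \sup_{\mathrm{Gauss}}{\rm BL}=:C_G$ needs proof. First I would normalise so that $\int_{\mathbb{R}^{d_j}}f_j\,dx_j=1$ and reduce to a dense class of inputs: replacing each $f_j$ by its truncation to a large centred ball and to height $n$, suitably renormalised, one obtains an increasing sequence converging to $f_j$, so by monotone convergence the value of ${\rm BL}$ converges (possibly to $+\infty$). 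Hence it suffices to treat bounded, compactly supported probability densities; in particular each $f_j$ then has a finite covariance matrix $A_j$, and one has a local central limit theorem for sums of i.i.d.\ copies of $X_j\sim f_j$.

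Next I would combine tensorisation with an orthogonal symmetry. For $k\in\mathbb{N}$, consider the same Brascamp--Lieb data on $\mathbb{R}^{dk}$ with $\mathcal{Q}$ replaced by $\mathcal{Q}\otimes I_k$ and $L_j$ by $L_j\otimes I_k$, and write ${\rm BL}_k$ for the resulting functional. Because the weight and the maps split over the $k$ coordinate blocks, a direct computation gives
\[
{\rm BL}_k(f_1^{\otimes k},\dots,f_m^{\otimes k})=\big({\rm BL}(f_1,\dots,f_m)\big)^k,
\]
and the same identity holds if in each tensor slot one uses either $f_j$ or its reflection $f_j(-\,\cdot\,)$, since $\mathcal{Q}$ is even. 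Moreover ${\rm BL}_k$ is unchanged under the substitution $x\mapsto(I_d\otimes O)x$ for any $O\in\mathrm{O}(k)$: such a map commutes with $\mathcal{Q}\otimes I_k$, while $L_j\otimes I_k$ intertwines it with the orthogonal map $I_{d_j}\otimes O$, under which $L^1$-norms of inputs are preserved. Combining the two facts, for every $O\in\mathrm{O}(k)$ one may write $\big({\rm BL}(f_1,\dots,f_m)\big)^k={\rm BL}_k(h_1^{(k)},\dots,h_m^{(k)})$ where $h_j^{(k)}:=f_j^{\otimes k}\circ(I_{d_j}\otimes O)$ (or its reflected variant).

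The decisive step is to let $k\to\infty$ with $O$ chosen so that the relevant coordinate block of $h_j^{(k)}$ has, as its one-block marginal, the law of $k^{-1/2}\sum_{l=1}^k\varepsilon_l X_j^{(l)}$ with balanced signs $\varepsilon_l\in\{\pm1\}$; this marginal has mean $O(k^{-1/2})$ and covariance $A_j$, so by the local CLT it converges in $L^1$ to the centered Gaussian $g_{A_j}$. Feeding these Gaussian limits back into ${\rm BL}_k$ and using that the Gaussian functional is exactly tensor-multiplicative on Gaussian tensors, one should arrive at
\[
{\rm BL}(f_1,\dots,f_m)=\limsup_{k\to\infty}{\rm BL}_k(h_1^{(k)},\dots,h_m^{(k)})^{1/k}\le {\rm BL}(g_{A_1},\dots,g_{A_m})\le C_G,
\]
which is the assertion; the reverse inequality is trivial. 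I expect this last step to be the main obstacle: passing from convergence of the one-block marginals of the $h_j^{(k)}$ to control of the full $\mathbb{R}^{dk}$-integral — in which the inputs remain entangled across the $k$ blocks — requires a uniform-integrability estimate for the tails of the integrand, so that the per-block Gaussianisation error $(1+o(1))$ survives taking the $k$-th root; this is exactly the technical heart of Lieb's \cite{Lieb}, and can alternatively be packaged as a compactness statement for maximising sequences. Finally, note that this argument also covers degenerate data, since if $\sup{\rm BL}=+\infty$ the displayed chain forces $C_G=+\infty$, so no separate finiteness analysis is needed; and an alternative proof is available via heat-flow monotonicity after a linear reduction to a geometric normalisation, or via the subadditivity of Shannon entropy, the Gaussian extremality then coming from the maximal-entropy property of Gaussians at fixed covariance.
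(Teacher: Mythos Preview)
The paper does not prove this theorem; it is quoted without proof as a known result from \cite{BraLi_Adv} and \cite{Lieb}, serving only as background to situate hypercontractivity within the Brascamp--Lieb framework. There is therefore no proof in the paper to compare your proposal against.

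Regarding your proposal itself: the tensor-power/rotation/CLT route is indeed the skeleton of Lieb's original argument in \cite{Lieb}, and you correctly single out the main obstacle. However, the displayed chain
\[
{\rm BL}(f_1,\dots,f_m)=\limsup_{k\to\infty}{\rm BL}_k(h_1^{(k)},\dots,h_m^{(k)})^{1/k}\le {\rm BL}(g_{A_1},\dots,g_{A_m})
\]
does not work as written: after rotation, $h_j^{(k)}$ is \emph{not} a tensor product, so there is no direct mechanism to bound ${\rm BL}_k(h^{(k)})^{1/k}$ by ${\rm BL}$ applied to a one-block marginal, and convergence of a single marginal to $g_{A_j}$ says nothing about the remaining $k-1$ entangled blocks. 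Lieb's actual argument first establishes \emph{multiplicativity} of the optimal constant under tensoring, and then uses the rotation trick not to evaluate the limit directly but to manufacture, from any near-extremiser, a new near-extremiser whose inputs are strictly closer to Gaussian; iterating (or passing to a limit via compactness) then pins down Gaussian extremisers. Your parenthetical mention of ``compactness for maximising sequences'' is closer to how the argument is actually closed; the heat-flow alternative you allude to (as in \cite{BCCT} or \cite{CLL}) is also valid but requires first reducing to geometric data, which is a separate nontrivial step.
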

The relevant feature of this theorem to us is not the form or generality of the inequality but the simple principle that one has only to investigate centered Gaussian functions rather than general inputs to identify the best constant. 
This simple principle has been providing an important source of developments for broad scientific fields including analytic number theory, harmonic analysis, probability and information theory, and statistical physics, see  \cite{Barthe1,BWAnn,BW,BBJFA,BBFL,BBBCF,BCCT,BCT,BN,BDG,CC,CLL,CDP} and references therein.  
The classical forward hypercontractivity inequality \eqref{e:ForwardHC} is also known to be a member of the family of the Brascamp--Lieb inequality. 
In fact the duality argument shows that \eqref{e:ForwardHC} with $p,q>1$ is equivalent to the inequality 
\begin{equation}\label{e:HCBL}
\int_{\mathbb{R}^{2n}} e^{-\pi \langle x, \mathcal{Q}x\rangle} \prod_{j=1,2} f_j(x_j)^{c_j} \, dx
\le 
\big( \frac{\sqrt{1-e^{-2s}}}{(2\pi)^{\frac12(c_1+c_2) -1}} \big)^n
\prod_{j=1,2} \big( \int_{\mathbb{R}^n} f_j\, dx_j \big)^{c_j} 
\end{equation}
for all nonnegative $f_1,f_2 \in L^1(dx)$ with 
\begin{align}\label{e:HCBLLink}
&\mathcal{Q} \coloneqq \frac1{2\pi (1-e^{-2s})} 
\begin{pmatrix}
\big( 1-(1-e^{-2s})\frac1p \big) {\rm id}_{\mathbb{R}^n} & -e^{-s} {\rm id}_{\mathbb{R}^n} \\
-e^{-s} {\rm id}_{\mathbb{R}^n}& \big( 1-(1-e^{-2s})\frac1{q'} \big) {\rm id}_{\mathbb{R}^n}
\end{pmatrix},\\
&c_1 \coloneqq \frac1p,
\;\;\; 
c_2\coloneqq \frac1{q'}, \nonumber 
\end{align}
see \cite{BNT,Lieb} and Appendix \ref{S.6HC}.
It is worth to remark that the assumption $p,q>1$ ensures $c_1,c_2 \in (0,1)$ and Nelson's time relation \eqref{e:NelsonTime} ensures that $\mathcal{Q}$ is positive semi-definite. 
Similarly one would be able to derive the new forward hypercontractivity inequality \eqref{e:FHC_MaIntro} once one could prove \eqref{e:HCBL}. However, in this case,  $c_1,c_2 \notin (0,1)$ and $\mathcal{Q}$ is no longer positive semi-definite.  As far as we know there is no (forward) Brascamp--Lieb theory with $c_j \in \mathbb{R}$ and $\mathcal{Q}$ which is not necessarily positive semi-definite. 

It is natural to ask an inverse analogue of Theorem \ref{t:Lieb} which particularly contains Borell's reverse hypercontractivity inequality \eqref{e:ReverseHC}.  This was recently established by Barthe--Wolff \cite{BW}. 
For the inverse case, one needs something more on the set up.  In fact,  one considers the Brascamp--Lieb functional \eqref{e:BLFunc} for any $c_1,\ldots,c_m \in \mathbb{R}$ and any self-adjoint matrix $\mathcal{Q}$.  We then order $(c_j)_j$ so that $c_1,\ldots, c_{m_+} > 0 > c_{m_++1},\ldots,c_m$ for some $0\le m_+ \le m$. Correspondingly, we let ${\bf L}_+:\mathbb{R}^d\to \prod_{j=1}^{m_+} \mathbb{R}^{d_j}$ by 
$$
{\bf L}_+x \coloneqq (L_1x,\ldots, L_{m_+}x). 
$$
Then Barthe--Wolff's non-degenerate condition states that 
\begin{equation}\label{e:NondegBW}
\mathcal{Q}\big|_{{\rm Ker}\, {\bf L}_+} >0 
\;\;\;
{\rm and}
\;\;\;
d \ge s^+(\mathcal{Q}) + \sum_{j=1}^{m_+} d_j, 
\end{equation}
where $\mathcal{Q}\big|_{{\rm Ker}\, {\bf L}_+}$ is the restriction of $\mathcal{Q}$ to the subspace ${\rm Ker}\, {\bf L}_+$ and $s^+(\mathcal{Q})$ is the number of positive eigenvalues of $\mathcal{Q}$. 
\begin{theorem}[\cite{BW}]\label{t:BW}
Suppose \eqref{e:NondegBW}. Then the infimum of ${\rm BL}(f_1,\ldots,f_m)$ over all nonnegative $f_j \in L^1(dx_j)$ such that $\int_{\mathbb{R}^{d_j}} f_j\, dx_j >0$ is equal to its infimum over centered Gaussian functions. 
\end{theorem}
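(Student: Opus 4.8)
The plan is to normalize by homogeneity, reduce to well-behaved inputs, identify the Gaussian infimum as a concrete finite-dimensional optimization (this is where the non-degeneracy hypothesis enters decisively), and then deform an arbitrary admissible input toward a Gaussian along a flow or a tensorization procedure under which $\mathrm{BL}$ moves in the favorable direction. First, since $\mathrm{BL}(f_1,\ldots,f_m)$ is homogeneous of degree zero in each argument, we may assume $\int_{\mathbb{R}^{d_j}}f_j\,dx_j=1$, so that the claim reads $\mathrm{BL}(f_1,\ldots,f_m)\ge D$, where
\[
D:=\inf\Big\{\,\mathrm{BL}(g_1,\ldots,g_m)\ :\ g_j(x)=\det(A_j)^{1/2}e^{-\pi\langle x,A_jx\rangle},\ A_j>0\,\Big\}.
\]
For Gaussian inputs the integral in \eqref{e:BLFunc} is explicit, so $D$ is the infimum of a concrete function of the matrices $(A_j)_j$. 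The key analytic point of the whole theorem already appears here: one must show $D>0$, and this holds \emph{exactly} under \eqref{e:NondegBW}. Indeed, after integrating out the positive-exponent directions $L_jx$ with $j\le m_+$, the exponent becomes a quadratic form obtained from $\mathcal{Q}$ by a Schur-complement-type correction involving the $A_j$; the condition $\mathcal{Q}\big|_{\mathrm{Ker}\,{\bf L}_+}>0$ together with $d\ge s^+(\mathcal{Q})+\sum_{j\le m_+}d_j$ is precisely the linear algebra guaranteeing this corrected form remains positive-definite and that the infimum over $(A_j)$ does not collapse to $0$. Simultaneously I would, by a routine approximation, restrict to inputs $f_j$ that are as regular as needed (smooth, bounded above and below on compacta, with light tails), taking care that when some $c_j<0$ the integral defining $\mathrm{BL}$ is only \emph{conditionally} convergent: its finiteness rests on cancellation from the indefinite weight $e^{-\pi\langle x,\mathcal{Q}x\rangle}$ along the negative directions, which is again exactly what \eqref{e:NondegBW} controls.

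For the inequality $\mathrm{BL}\ge D$ itself I see two natural routes. The first is by duality: the factors $f_j^{c_j}$ with $c_j<0$ can be linearized by reverse Hölder / Legendre duality, which converts the \emph{infimum} of \eqref{e:BLFunc} into a \emph{supremum} of a forward Brascamp--Lieb functional with positive exponents and a positive semi-definite weight, at which point Theorem \ref{t:Lieb} applies directly and \eqref{e:NondegBW} is precisely the condition making the dualized datum a legitimate finite forward datum. The second is by deformation toward Gaussians: replace $f_j$ by its heat evolution $f_{j,t}=e^{t\Delta}f_j$ (or an Ornstein--Uhlenbeck rescaling), note that the normalized $f_{j,t}$ converge to Gaussians as $t\to\infty$ so that $\mathrm{BL}(f_{1,t},\ldots,f_{m,t})$ tends to a value lying in the closure of the Gaussian values — hence $\ge D$ — and then show that $t\mapsto\mathrm{BL}(f_{1,t},\ldots,f_{m,t})$ is monotone in the right direction by differentiating under the integral and exploiting the Brascamp--Lieb/Cauchy--Schwarz structure. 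A tensorization variant of the latter uses the exact identity $\mathrm{BL}_k(f_1^{\otimes k},\ldots,f_m^{\otimes k})=\mathrm{BL}(f_1,\ldots,f_m)^k$ for the $k$-fold block-diagonal datum (which again satisfies \eqref{e:NondegBW}) together with the central limit theorem, followed by extraction of $k$-th roots. In all cases, combining $\mathrm{BL}\ge D$ with the trivial inequality $D\ge\inf_{\text{Gaussian}}\mathrm{BL}$ gives equality of the two infima.

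The main obstacle is that each of these steps, routine in the classical forward setting (positive semi-definite $\mathcal{Q}$, all $c_j\in(0,1)$), becomes genuinely delicate because $\mathcal{Q}$ is indefinite and the $c_j$ have mixed signs. In the deformation route the sign of $\tfrac{d}{dt}\mathrm{BL}(f_{1,t},\ldots,f_{m,t})$ is no longer forced by convexity and must be extracted from a quantitative form of \eqref{e:NondegBW}; in the tensorization route one needs the conditionally convergent integral to be \emph{uniformly} controlled along the central-limit approximants, an equi-integrability statement not implied by pointwise convergence and again traceable to a uniform positive lower bound on the effective quadratic form (stable under $\oplus k$); in the duality route one must rule out spurious $+\infty$'s created by the Legendre transform, which is once more where \eqref{e:NondegBW} is used. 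A further genuine subtlety is that $D$ is in general only \emph{approached}, not attained, by actual Gaussians (the optimizing matrices $A_j$ may degenerate), so the theorem must be phrased, as it is, as an equality of infima rather than the existence of a Gaussian minimizer; correctly tracking this possible degeneration through the reduction and deformation steps is where the bulk of the technical work lies.
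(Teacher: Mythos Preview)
The paper does not prove this theorem; it is quoted from Barthe--Wolff \cite{BW} as background (Theorem~\ref{t:BW}) and no argument for it appears anywhere in the manuscript. So there is no ``paper's own proof'' to compare your proposal against.

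On the proposal itself: what you have written is a reasonable high-level map of the territory, but it is a sketch of strategies rather than a proof, and you correctly flag the genuine obstacles yourself. A few specific comments. First, the duality route is not as clean as you suggest: linearizing the $c_j<0$ factors by reverse H\"older does not in general land you back in the hypotheses of Theorem~\ref{t:Lieb}, because after dualization the effective kernel need not be positive semi-definite and the effective exponents need not lie in $[0,1]$; making this precise is essentially the content of \cite{BW}, not a corollary of \cite{Lieb}. Second, the heat-flow monotonicity you describe is exactly the step where mixed signs break the standard argument: the derivative computation produces terms of both signs, and extracting a definite sign from \eqref{e:NondegBW} alone is not automatic. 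Third, the tensorization/CLT route requires uniform integrability of a \emph{conditionally} convergent integral along the CLT approximants, and you do not indicate how \eqref{e:NondegBW} yields that uniformity. In short, your outline identifies the right pressure points but does not resolve any of them; for an actual proof you should consult \cite{BW} directly.
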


Following the idea of Theorems \ref{t:Lieb} and \ref{t:BW},  let us investigate our new hypercontractivity for centered Gaussians first. 
To this end, let us give the precise definition of the centered Gaussian with variance $\beta > 0$ by 
$$
\gamma_\beta(x) \coloneqq \frac1{ (2\pi \beta)^\frac{n}{2} } e^{ -\frac1{2\beta} |x|^2 },\;\;\; x \in \mathbb{R}^n. 
$$
Note that $\gamma_\beta$ is $L^1(dx)$ normalized or equivalently $f = \frac{\gamma_\beta}{\gamma}$ is $L^1(\gamma)$ normalized $\int_{\mathbb{R}^n} \frac{\gamma_\beta}{\gamma} \, d\gamma =1$. 
A Gaussian integration calculus, see also Appendix \ref{S6.1}, reveals that 
\begin{equation}\label{e:PsGauss1}
\big\| P_s\big[ \big( \frac{\gamma_\beta}{\gamma} \big)^\frac1p \big] \big\|_{L^q(\gamma)}
=
\beta^{ - \frac{n}{2p} }\big(
\frac{1}{1+ \frac{ 1-\beta }{p\beta} (1-e^{-2s})}
\big)^{\frac{n}2}
\big( \frac{1 + \frac{ 1-\beta }{p\beta} ( 1+ (q-1)e^{-2s} )}{1+ \frac{ 1-\beta }{p\beta} (1-e^{-2s})} \big)^{-\frac{n}{2q}} 
\end{equation}
for any $p,q \in \mathbb{R}\setminus \{0\}$, $s,\beta>0$ as long as 
\begin{equation}\label{e:CondGauss}
1+ \frac{ 1-\beta }{p\beta} (1-e^{-2s})>0
\;\;\;
{\rm and}
\;\;\; 
1 + \frac{ 1-\beta }{p\beta} ( 1+ (q-1)e^{-2s} ) >0.
\end{equation}
In the case of $\frac{q-1}{p-1} = e^{2s}$, this quantity becomes simpler 
\begin{equation}\label{e:PsGauss2}
\big\| P_s\big[ \big( \frac{\gamma_\beta}{\gamma} \big)^\frac1p \big] \big\|_{L^q(\gamma)}
=
\beta^{\frac{n}{2p'}} \beta_{s,p}^{-\frac{n}{2q'}},
\;\;\; 
\beta_{s,p}\coloneqq 1 + (\beta-1) \frac{q}{p} e^{-2s}
\end{equation}
as long as $\beta_{s,p}>0$. 
From this expression one can see 
$
\big\| P_s\big[ \big( \frac{\gamma_\beta}{\gamma} \big)^\frac1p \big] \big\|_{L^q(\gamma)}
\le 
1
$
for all $\beta>0$ if $1< p,q <\infty$ satisfy $\frac{q-1}{p-1}\le e^{2s}$  and this corresponds to Nelson's forward hypercontractivity.  Similarly one can see Borell's reverse hypercontractivity for centered Gaussian inputs.  
On the other hand, our new hypercontractivity inequalities \eqref{e:RHC_BSIntro} and  \eqref{e:FHC_MaIntro} require $\frac{q-1}{p-1} > e^{2s}$ and $q<0<p<1$. In such a case, one can see from \eqref{e:PsGauss1} that 
\begin{equation}\label{e:Admissible}
\inf_{\beta >0}
\big\| P_s\big[ \big( \frac{\gamma_\beta}{\gamma} \big)^\frac1p \big] \big\|_{L^q(\gamma)}
> 0
\;\;\;
\Leftrightarrow
\;\;\;
1 - e^{2s} \le q < 0 < p \le 1-e^{-2s}, 
\end{equation}
and 
\begin{equation}\label{e:Admissible2}
\sup_{\beta >0}
\big\| P_s\big[ \big( \frac{\gamma_\beta}{\gamma} \big)^\frac1p \big] \big\|_{L^q(\gamma)}
<\infty 
\;\;\;
\Leftrightarrow
\;\;\;
q\le 1 - e^{2s},\; 1-e^{-2s}\le p. 
\end{equation}
We give a brief proof of this fact in Appendix \ref{App}. 
In particular one can ensure the expected inequalities \eqref{e:RHC_BSIntro} with $q_s \ge 1-e^{2s}$ and \eqref{e:FHC_MaIntro} with $p_s \le 1-e^{-2s}$ for all centered Gaussian inputs. 
Moreover, the critical threshold appears at $(p_s,q_s) = (1-e^{-2s}, 1-e^{2s})$ and it is indeed a scaling critical exponent in the sense that 
\begin{equation}\label{e:ScaleGauss}
\big\| P_s\big[ \big( \frac{\gamma_\beta}{\gamma} \big)^\frac1{1-e^{-2s}} \big] \big\|_{L^{1-e^{2s}}(\gamma)}
=1,\;\;\;\forall \beta>0. 
\end{equation}
Despite of such feasible features, the inequality \eqref{e:RHC_BSIntro}  with some ${\rm BS}_s >0$ for \textit{general} $f$ turns out to be false if $\frac{q-1}{p-1}>e^{2s}$. 
In such a case, by testing with \textit{translated} Gaussians, one can see that 
\begin{equation}\label{e:FailureOct}
\inf_{\beta>0,\; a \in \mathbb{R}} 
\big\| P_s\big[ \big( \frac{\gamma_\beta(\cdot + a)}{\gamma} \big)^\frac1p \big] \big\|_{L^q(\gamma)}
=0.
\end{equation}
This failure is indeed consistent with the Blaschke--Santal\'{o} inequality as one needs to impose some symmetric assumption on $K$ to ensure \eqref{e:BS}. From this observations,  it is reasonable to expect   reverse hypercontractivity beyond Nelson's time relation by assuming appropriate symmetric assumption. Our forthcoming result Theorem \ref{t:RevHCIntro} realizes this observation.   

It is worth to point out the link between \eqref{e:FailureOct} and Barthe--Wolff's non-degenerate condition \eqref{e:NondegBW} for the inverse Brascamp--Lieb inequality. 
As in the forward case, by using the duality argument, one can realize the  reverse hypercontractivity inequality \eqref{e:ReverseHC} as an example of the inverse Brascamp--Lieb inequality. In fact, \eqref{e:ReverseHC}  can be read as 
\begin{equation}\label{e:DualRevHC}
	\int_{\mathbb{R}^{2n}}
	e^{-\pi \langle x, \mathcal{Q}x \rangle}
	\prod_{j=1,2} f_j(x_j)^{c_j} \, dx 
	\ge 
	\big(
	\frac{ \sqrt{1-e^{-2s}} }{(2\pi)^{ \frac12(c_1+c_2)-1}}
	\big)^n
	\prod_{j=1,2} \big( \int_{\mathbb{R}^n} f_j\, dx_j \big)^{c_j}
\end{equation}
for $f_1 \coloneqq f\cdot \gamma$ and $f_2 \coloneqq \big\| P_s \big[ f^{1/p} \big] \big\|_{L^q(\gamma)}^{-q} P_s \big[ f^{1/p} \big]^q\cdot \gamma$  
with the relation \eqref{e:HCBLLink} regardless of the validity of \eqref{e:NelsonTime}, see \cite{BW} or Appendix \ref{S.6HC} for the details.
A crucial point in this link is that Nelson's time relation \eqref{e:NelsonTime} ensures Barthe--Wolff's non-degenerate condition \eqref{e:NondegBW} and moreover \eqref{e:NondegBW} fails if $\frac{q-1}{p-1}>e^{2s}$. Hence our reverse hypercontractivity \eqref{e:RHC_BSIntro}, or its dual form \eqref{e:DualRevHC}, does not satisfy  \eqref{e:NondegBW} and corresponds to Barthe--Wolff's case 1.0.1 in \cite{BW}.
Remark that Barthe--Wolff observed that their non-degenerate condition is in general necessary to ensure the consequence of Theorem \ref{t:BW} by referring to the reverse hypercontractivity inequality with $\frac{q-1}{p-1}>e^{2s}$. As in \eqref{e:FailureOct}, their failure came from translated Gaussians, see \cite[Example 2.13]{BW}. 
We postpone the Gaussian investigation on \eqref{e:FHC_MaIntro} until  Subsection \ref{S3.1} and next exhibit main results on hypercontractivity.

\subsection{Main results on hypercontractivity}
Our main result on reverse hypercontractivity realize the above observation with the centrally symmetric assumption. 

\begin{theorem}\label{t:RevHCIntro}
Let $n\in\mathbb{N}$, $s>0$, and $p_s = 1-e^{-2s}$. 
Then 
\begin{equation}\label{e:RevHCIntro}
		\big\| 
		P_s \big[ f^{\frac1{p_s}} \big]
		\big\|_{L^{-p_s}(\gamma)}
		\ge 
		\big(
		\int_{\mathbb{R}^n}
		f\, d\gamma
		\big)^\frac1{p_s}
\end{equation}
		holds for all nonnegative and centrally symmetric $f\in L^1(\gamma)$. 
		Equality in \eqref{e:RevHCIntro} is established if and only if $f$ is constant a.e. on $\mathbb{R}^n$ when $0< \int_{\mathbb{R}^n} f\, d\gamma < \infty$. 
\end{theorem}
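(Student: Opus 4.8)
The plan is to pass to a scale‑invariant integral form and then prove the inequality by an interpolation (heat‑flow) argument in which the central symmetry of the input provides the room needed to go beyond Nelson's time; the Gaussian formula \eqref{e:PsGauss1} serves as the model and as a consistency check. First I would reduce: setting $g\coloneqq f^{1/p_s}\ge 0$, which is again centrally symmetric, \eqref{e:RevHCIntro} is equivalent to
\begin{equation*}
\int_{\R^n}\big(P_sg\big)^{-p_s}\,d\gamma\cdot\int_{\R^n}g^{p_s}\,d\gamma\le 1,
\end{equation*}
an inequality invariant under $g\mapsto cg$. Truncating $g$ between two positive constants, mollifying by a centrally symmetric kernel, and passing to the limit (monotone/dominated convergence, using that $P_sg>0$ everywhere once $g\not\equiv 0$) reduces matters to $g$ smooth, even, and pinched $0<\varepsilon\le g\le\varepsilon^{-1}$, so that every integral below is finite and one may differentiate under the integral sign; after normalizing $\int g^{p_s}\,d\gamma=1$ the goal is $\int(P_sg)^{-p_s}\,d\gamma\le1$.

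Next, the flow. For $\tau\in[0,s]$ put $F_\tau\coloneqq P_\tau g$ and $\Phi(\tau)\coloneqq\|F_\tau\|_{L^{q(\tau)}(\gamma)}$ along a decreasing path $q$ with $q(0)=p_s$ and $q(s)=-p_s$ to be designed, and write $h_\tau\coloneqq F_\tau^{q(\tau)/2}$. Differentiating along the Ornstein--Uhlenbeck semigroup and integrating by parts gives
\begin{equation*}
\frac{d}{d\tau}\log\Phi(\tau)=\frac{1}{q(\tau)^2\int_{\R^n}h_\tau^2\,d\gamma}\Big(q'(\tau)\,\mathrm{Ent}_\gamma(h_\tau^2)-4\big(q(\tau)-1\big)\int_{\R^n}|\nabla h_\tau|^2\,d\gamma\Big),
\end{equation*}
so $\Phi(0)\le\Phi(s)$ holds as soon as
\begin{equation*}
\mathrm{Ent}_\gamma(h_\tau^2)\le\frac{4(q(\tau)-1)}{q'(\tau)}\int_{\R^n}|\nabla h_\tau|^2\,d\gamma\qquad(\tau\in(0,s)),
\end{equation*}
the constant being positive since $q<1$ and $q'<0$ along the path. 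Feeding in only the Gaussian logarithmic Sobolev inequality (constant $2$) limits one to paths with $q'\ge 2(q-1)$, whence $q(\tau)\ge 1-e^{2(\tau-s)}$ by comparison and $q(s)\ge0$: this recovers Borell's inequality exactly at the Nelson threshold and no further, which is why new input is required.

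The crux is that every $h_\tau$ is even, since $g$ and hence $F_\tau=P_\tau g$ are; closing the gap between $q=0$ and $q=-p_s$ needs a functional inequality for even functions that improves on the Gaussian log‑Sobolev inequality. Its cleanest manifestation is the linearized regime near $q=0$, where the required estimate degenerates to a variance/Dirichlet‑form comparison for $\log F_\tau$ and the Gaussian Poincar\'e inequality for even functions holds with constant $1/2$ (the extremal linear Hermite modes are odd, hence absent), already permitting $q$ to cross $0$ with twice the threshold slope; the genuine task is an interpolated entropy inequality for even inputs, valid along the whole path and uniformly in $s>0$, that makes an admissible $q$ with $q(s)=-p_s$ exist. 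I expect this to be the main obstacle — it is precisely ``hypercontractivity beyond Nelson's time'', its necessity forced by the failure of \eqref{e:RevHCIntro} for general $f$ through translated Gaussians (cf.\ \eqref{e:FailureOct}) and by the dual inequality lying in Barthe--Wolff's degenerate range, outside Theorem \ref{t:BW}. An alternative, in the spirit of the Gaussian‑exhaustion principle of Theorems \ref{t:Lieb} and \ref{t:BW} recalled in Subsection \ref{S2.1}, is to show that the infimum of $\|P_sg\|_{L^{-p_s}(\gamma)}$ over centrally symmetric $g$ with $\|g\|_{L^{p_s}(\gamma)}=1$ is attained in the limit on centered Gaussians $g=(\gamma_\beta/\gamma)^{1/p_s}$; by \eqref{e:PsGauss1} the inequality then collapses to the elementary estimate $\beta^{e^{-2s}}\le 1+e^{-2s}(\beta-1)$, i.e.\ concavity of $t\mapsto t^{e^{-2s}}$, with equality only at $\beta=1$. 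In either route the equality case follows from the rigidity of the tool used — equality in the Gaussian log‑Sobolev/Poincar\'e inequality along the path forces each $h_\tau$, hence $g$ and $f$, to be constant, respectively equality in the scalar estimate forces $\beta=1$ — so that, since constants saturate \eqref{e:RevHCIntro}, they are the unique extremizers when $0<\int_{\R^n}f\,d\gamma<\infty$.
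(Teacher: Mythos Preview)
Your proposal is a sketch that correctly identifies the difficulty but does not resolve it. The semigroup differentiation is set up correctly, and you rightly observe that the Gaussian log-Sobolev constant $2$ drives $q(\tau)$ only to $1-e^{2(\tau-s)}$, hence $q(s)\ge 0$ --- precisely the Nelson threshold. To reach $q(s)=-p_s$ you posit an ``interpolated entropy inequality for even inputs'' and write that you ``expect this to be the main obstacle,'' but you neither state nor prove such an inequality. The linearised Poincar\'e improvement (spectral gap $2$ on even functions) does not propagate to a uniform improvement of the log-Sobolev constant along the whole path; indeed the paper itself remarks in the introduction that Gross's classical argument does not obviously convert the known log-Sobolev refinements for symmetric or centred inputs into hypercontractivity beyond Nelson's time. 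Your alternative --- a Gaussian-exhaustion principle for centrally symmetric inputs --- is equally unproved, and in this configuration lies outside Barthe--Wolff's nondegeneracy condition \eqref{e:NondegBW}; establishing it is essentially the content of the theorem.

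The paper's proof is entirely different and involves no semigroup interpolation. On each sector $A_i(\mathbb{R}^n_+)$ of a Yao--Yao partition of the measure $P_s[f^{1/p}]^{-p}\gamma$ (the center of the partition being the origin thanks to the central symmetry of $f$), one passes to logarithmic coordinates $x_i=e^{X_i}$ and applies the Pr\'ekopa--Leindler inequality; the resulting pointwise supremum is then controlled by Wang's Harnack inequality, which for $p=1-e^{-2s}$ and $\alpha=1/p$ gives exactly $P_s[f^{1/p}]^{-p}(x)\,P_sf(y)\le e^{|x-y|^2/2}$. Summing over the $2^n$ sectors, and using that the dual cones $A_i^*(\mathbb{R}^n_+)$ also partition $\mathbb{R}^n$, yields \eqref{e:RevHCIntro}. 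Equality is traced back through Dubuc's equality case for Pr\'ekopa--Leindler and then equality in the Harnack step, which forces $f$ to be constant.
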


\begin{remark}
One can admit the inequality \eqref{e:RevHCIntro} for wider range of exponents. In fact, applying H\"{o}lder's inequality to \eqref{e:RevHCIntro},  one can see that 
\begin{equation}\label{e:RHC10Nov}
\big\| P_s \big[ f^\frac1p \big] \big\|_{L^{-p}(\gamma)}
\ge 
\big( \int_{\mathbb{R}^n}  f\, d\gamma \big)^\frac1p
\end{equation}
holds for all centrally symmetric $f$ and all $0<p\le1-e^{-2s}$.  
Note that this range of $p$ is sharp for \eqref{e:RHC10Nov} by virtue of \eqref{e:Admissible}. 
\end{remark}

In forthcoming Theorem \ref{t:NelsonGene}, we will provide stronger statement of this result where the centrally symmetric assumption is weakened to the assumption on the barycenter of $f$. 
By virtue of Proposition \ref{Prop:HCSet}, we rederive the Blaschke--Santal\'{o} inequality for symmetric convex bodies from Theorem \ref{t:RevHCIntro} since $-p_s = -1+e^{-2s} = -2s + O(s^2)$. 
It is also appealing feature that Theorem \ref{t:RevHCIntro} provides an example of the inverse Brascamp--Lieb inequality without Barthe--Wolff's  non-degenerate condition \eqref{e:NondegBW}.  
In fact, by the identity \eqref{e:Id10Nov2} in Appendix \ref{S.6HC} and the reverse H\"{o}lder inequality, Theorem \ref{t:RevHCIntro} shows that  \eqref{e:DualRevHC} holds with $c_1 = \frac1{p_s}, c_2 = \frac1{(-p_s)'}$ and $\mathcal{Q}$ given by \eqref{e:HCBLLink} for all nonnegative and centrally symmetric $f_1,f_2 \in L^1(dx)$. 
We can state this in the following form. 
%
\begin{corollary}
Let $n\in\mathbb{N}$, $d =2n$, $m=2$, $d_1=d_2=n$, and $p_s = 1-e^{-2s}$, $q_s = -p_s$ for $s>0$. 
For $c_1,c_2$ and $\mathcal{Q}$ given by \eqref{e:HCBLLink} and $L_j(x_1,x_2)\coloneqq  x_j$, $j=1,2$, 
$$
\inf_{f_1,f_2:\; {\rm symmetric}}{\rm BL}(f_1,f_2)
= 
\inf_{a_1,a_2>0}
{\rm BL}(\gamma_{a_1},\gamma_{a_2})
= 
\big(
	\frac{ \sqrt{1-e^{-2s}} }{(2\pi)^{ \frac12(c_1+c_2)-1}}
	\big)^n
$$
where the infimum of the left hand side is taken over all nonnegative and centrally symmetric  $f_j \in L^1(dx_j)$ such that $ \int_{\mathbb{R}^n} f_j\, dx_j >0$, $j=1,2$. 
\end{corollary}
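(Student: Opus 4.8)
The plan is to deduce the corollary directly from Theorem~\ref{t:RevHCIntro} together with the duality dictionary set up in Appendix~\ref{S.6HC}. The chain of reasoning is: reverse hypercontractivity with the centrally symmetric restriction $\Longrightarrow$ a lower bound for ${\rm BL}$ on centrally symmetric inputs $\Longrightarrow$ (by the Gaussian computation \eqref{e:PsGauss1}--\eqref{e:PsGauss2}) the value of the infimum over centered Gaussians $\Longrightarrow$ equality of the two infima, since centered Gaussians are themselves centrally symmetric, so the symmetric infimum lies between the Gaussian infimum (below) and is bounded below by the same constant (above via Theorem~\ref{t:RevHCIntro}).

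First I would record the translation between \eqref{e:RevHCIntro} and the Brascamp--Lieb functional. Writing $f_1 = f\cdot\gamma$ and $f_2 = \|P_s[f^{1/p_s}]\|_{L^{-p_s}(\gamma)}^{p_s}\,P_s[f^{1/p_s}]^{-p_s}\cdot\gamma$ (note $q_s = -p_s<0$, so the exponent $q_s$ on $P_s[f^{1/p_s}]$ matches the ordering $c_2 = 1/(q_s)' $ convention), the identity \eqref{e:Id10Nov2} in Appendix~\ref{S.6HC} rewrites the mixed integral $\int_{\mathbb{R}^{2n}} e^{-\pi\langle x,\mathcal{Q}x\rangle}\prod_{j}f_j^{c_j}\,dx$ exactly as the left-hand side of \eqref{e:DualRevHC}, with $c_1 = 1/p_s$, $c_2 = 1/(-p_s)' = 1/(q_s)'$, and $\mathcal{Q}$ given by \eqref{e:HCBLLink}. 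Crucially, if $f$ is nonnegative and centrally symmetric then so is $f_1$, and so is $f_2$ because $P_s$ preserves central symmetry (the Ornstein--Uhlenbeck kernel is even in the spatial shift). Conversely every pair of nonnegative centrally symmetric $f_1,f_2\in L^1(dx_j)$ arises this way up to the normalization of $f_2$, because the reverse Hölder inequality $\int g\,(P_s h)\,d\gamma \le \|g\|_{L^{(-p_s)'}(\gamma)}\|P_s h\|_{L^{-p_s}(\gamma)}$ (valid for $-p_s<0$, i.e.\ $(-p_s)'\in(0,1)$) is what turns the pointwise identity into the inequality \eqref{e:DualRevHC}. Applying Theorem~\ref{t:RevHCIntro} to $f = f_1/\gamma$ then yields
\begin{equation*}
{\rm BL}(f_1,f_2) \ge \Big(\frac{\sqrt{1-e^{-2s}}}{(2\pi)^{\frac12(c_1+c_2)-1}}\Big)^n
\end{equation*}
for all nonnegative centrally symmetric $f_1,f_2$; this is precisely the assertion that the symmetric infimum is $\ge$ the claimed constant.

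Next I would evaluate ${\rm BL}$ on centered Gaussians. Taking $f_j = \gamma_{a_j}$ and feeding through the translation dictionary, ${\rm BL}(\gamma_{a_1},\gamma_{a_2})$ is a monotone rescaling of $\|P_s[(\gamma_\beta/\gamma)^{1/p_s}]\|_{L^{-p_s}(\gamma)}$ for an appropriate $\beta = \beta(a_1,a_2)$; by the scaling-critical identity \eqref{e:ScaleGauss} (with $p_s = 1-e^{-2s}$ and $q_s = 1-e^{2s}$ replaced here by $-p_s$ — one checks the relevant case of \eqref{e:PsGauss2}), this norm equals a constant independent of $\beta$, and after unwinding the normalizing powers of $2\pi$ and $\sqrt{1-e^{-2s}}$ one obtains $\inf_{a_1,a_2>0}{\rm BL}(\gamma_{a_1},\gamma_{a_2}) = (\sqrt{1-e^{-2s}}/(2\pi)^{\frac12(c_1+c_2)-1})^n$ exactly. (In fact the Gaussian value is constant, not merely an infimum, which also gives the reverse inequality for free once we know centered Gaussians are admissible symmetric inputs.) Combining: centered Gaussians are centrally symmetric, so
\begin{equation*}
\inf_{a_1,a_2>0}{\rm BL}(\gamma_{a_1},\gamma_{a_2}) \ \ge\ \inf_{f_1,f_2\ {\rm symm.}}{\rm BL}(f_1,f_2)\ \ge\ \Big(\tfrac{\sqrt{1-e^{-2s}}}{(2\pi)^{\frac12(c_1+c_2)-1}}\Big)^n\ =\ \inf_{a_1,a_2>0}{\rm BL}(\gamma_{a_1},\gamma_{a_2}),
\end{equation*}
forcing all three to coincide.

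The only genuinely delicate point is the exponent bookkeeping in the reverse Hölder step: with $q_s = -p_s < 0$ the conjugate $(q_s)' = q_s/(q_s-1) = (-p_s)/(-p_s-1) \in (0,1)$, so the relevant Hölder-type inequality is a \emph{reverse} one, and one must be careful that $L^{-p_s}(\gamma)$ and $L^{(q_s)'}(\gamma)$ are the correct "quasi-norm / negative-exponent" brackets and that the extremal cases (where $\int f\,d\gamma$ is $0$ or $\infty$, or where $f_2$ fails to be integrable) are handled by the normalization convention exactly as in \cite{BW} or Appendix~\ref{S.6HC}. Everything else is the already-established Theorem~\ref{t:RevHCIntro}, the verified Gaussian identity, and the observation that central symmetry is preserved by $P_s$; I would present the argument in that order and flag the reverse-Hölder/exponent check as the place requiring care.
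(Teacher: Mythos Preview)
Your approach is essentially the paper's: Theorem~\ref{t:RevHCIntro}, the duality identity \eqref{e:Id10Nov2}, and the reverse H\"older inequality combine to give the lower bound ${\rm BL}(f_1,f_2)\ge\big(\sqrt{1-e^{-2s}}/(2\pi)^{\frac12(c_1+c_2)-1}\big)^n$ for all centrally symmetric inputs, and then one sandwiches with centered Gaussians.

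One inaccuracy to fix: your parenthetical claim that the Gaussian value ${\rm BL}(\gamma_{a_1},\gamma_{a_2})$ is \emph{constant} in the parameters is wrong. The scaling identity \eqref{e:ScaleGauss} is for $q=1-e^{2s}$, not for $q_s=-p_s=-(1-e^{-2s})$; these differ for $s>0$, and \eqref{e:PsGauss2} does not apply here either since $(p_s,-p_s)$ is off Nelson's relation. A direct computation from \eqref{e:PsGauss1} shows $\|P_s[(\gamma_\beta/\gamma)^{1/p_s}]\|_{L^{-p_s}(\gamma)}$ genuinely varies with $\beta$, with a strict minimum (equal to $1$) at $\beta=1$. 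The fix is simpler than what you wrote: just observe that $a_1=a_2=1$ (i.e.\ $f_1=f_2=\gamma$, corresponding to constant $f=1$ in Theorem~\ref{t:RevHCIntro}) gives equality in both the hypercontractivity inequality and the reverse H\"older step, so ${\rm BL}(\gamma_1,\gamma_1)$ equals the constant. That single evaluation closes the sandwich; no claim of constancy is needed.
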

This corollary suggests the study of the inverse Brascamp--Lieb inequality without the non-degenerate condition \eqref{e:NondegBW} by assuming instead the symmetry on the inputs $f_j$.  We do not address this problem in this paper.

We next state our result on forward hypercontractivity \eqref{e:FHC_MaIntro}.  
For this,  it is clear that one cannot use the classical forward hypercontractivity inequality\footnote{Remark that it is easy to show the inequality \eqref{e:FHC_MaIntro} with ${\rm IS}_s = 1$ if $q_s\le 1 \le p_s$ by using H\"{o}lder's inequality and the mass preservation $\| P_s f \|_{L^1(\gamma)} = \|f\|_{L^1(\gamma)}$. } since the relevant exponents $p_s=2s + O(s^2)$ and $q_s = 1-e^{2s}$ are below 1.
With its link to Mahler's conjecture in mind, it would be a hard problem to obtain some positive result on forward hypercontractivity with exponent below 1.  Nonetheless,  we establish sharp estimates for inputs satisfying the log-convexity and semi-log-concavity condition. 

\begin{theorem}\label{t:RegFHC}

Let $n\in\mathbb{N}$, $0<p<1$, $q \in (- \infty, 1)\setminus \{0\}$, $s>0$ satisfy $\frac{q-1}{p-1} = e^{2s}$ and $\beta\ge1$. 
Then 
$$
\| P_s[f^\frac1p] \|_{L^q(\gamma)} \le \beta^{\frac{n}{2p'}} \beta_{s,p}^{-\frac{n}{2q'}} (\int_{\mathbb{R}^n} f \, d\gamma)^\frac1p
$$
holds for any positive function $f \in C^2(\mathbb{R}^n)$ satisfying   
$$
0\le \nabla^2\log\, f \le (1-\frac1\beta){\rm id},
$$
where $\beta_{s,p} \coloneqq 1+(\beta-1)\frac{q}{p}e^{-2s}$. 
Moreover  equality is achieved for $f = \frac{\gamma_\beta}{\gamma}$. 
\end{theorem}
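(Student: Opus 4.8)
The plan is to run the classical heat-flow (Gross) proof of hypercontractivity along the exponent curve fixed by Nelson's relation, but to compare the flow of a general admissible $f$ with the flow of the Gaussian $\gamma_\beta/\gamma$, which by \eqref{e:PsGauss2} is precisely what produces the constant $\beta^{\frac{n}{2p'}}\beta_{s,p}^{-\frac{n}{2q'}}$; the two-sided bound $0\le\nabla^2\log f\le(1-\tfrac1\beta)\mathrm{id}$ is what supplies the reverse-direction information missing from the classical forward theory once the exponents drop below $1$. First the routine reductions: by homogeneity normalise $\int_{\mathbb{R}^n}f\,d\gamma=1$; by a standard smoothing approximation assume $f$ smooth and positive with the stated Hessian bounds; and assume the right-hand side finite, since otherwise there is nothing to prove — finiteness amounts to the conditions \eqref{e:CondGauss} for $\beta$, under which the Gaussian flow below is well defined. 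The hypotheses force $c_0e^{v\cdot x}\le f^{1/p}(x)\le C_0e^{w\cdot x+\frac{c}{2}|x|^2}$ with $c:=\tfrac1p(1-\tfrac1\beta)\ge0$ — the lower bound from log-convexity, the upper from semi-log-concavity — which makes all the integrals $\int (P_t[f^{1/p}])^{q}\,d\gamma$ occurring below finite, including for $q\le0$.

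\emph{Propagation of curvature.} Write $P_th=(h\ast g_{1-e^{-2t}})\circ D_{e^{-t}}$, a Gaussian convolution composed with a dilation ($g_\sigma$ the density of $\mathcal N(0,\sigma\,\mathrm{id})$, $D_a(x)=ax$). Gaussian convolution preserves log-convexity (an elementary fact about sums of log-convex functions), and, by the Brascamp--Lieb inequality, it sends an upper bound $\nu$ on the eigenvalues of $\nabla^2\log(\cdot)$ to $\nu/(1-\sigma\nu)$; combining with the rescaling by the dilation shows that $u(t,\cdot):=P_t[f^{1/p}]$ satisfies
\[
0\ \le\ \nabla^2\log u(t,\cdot)\ \le\ c(t)\,\mathrm{id},\qquad c(t):=\frac{e^{-2t}c}{1-(1-e^{-2t})c},
\]
and that the Gaussian flow $v(t,\cdot):=P_t[(\gamma_\beta/\gamma)^{1/p}]$ saturates the upper bound: $\nabla^2\log v(t,\cdot)=c(t)\,\mathrm{id}$.

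\emph{Interpolation and the differential inequality.} Fix $p$ and put $q(t):=1+(p-1)e^{2t}$, so $q(0)=p$, $q(s)=q$, and $t\mapsto\|P_t[f^{1/p}]\|_{L^{q(t)}(\gamma)}$ runs from $(\int f\,d\gamma)^{1/p}=1$ to the left-hand side (the instant $q(t)=0$, where $\|\cdot\|_{L^0}=\exp\int\log(\cdot)\,d\gamma$, is crossed smoothly, the derivative there extending to $\int|\nabla\log u|^2\,d\gamma-\mathrm{Var}_\gamma(\log u)\ge0$). A direct computation with the Ornstein--Uhlenbeck generator and one integration by parts gives, for $w\in\{u,v\}$ and $h_w:=w^{q}/\!\int w^{q}\,d\gamma$,
\[
\tfrac{d}{dt}\log\|w(t)\|_{L^{q(t)}(\gamma)}=\tfrac{q-1}{q^{2}}\bigl(2\,\mathrm{Ent}_\gamma(h_w)-I_\gamma(h_w)\bigr),
\]
where $\mathrm{Ent}_\gamma$ and $I_\gamma$ are the Gaussian entropy and Fisher information. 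Since $q=q(t)<1$ we have $q-1<0$, so $\tfrac{d}{dt}\log\|u\|\le\tfrac{d}{dt}\log\|v\|$ is \emph{equivalent} to the comparison of log-Sobolev deficits $\delta(h_u)\le\delta(h_v)$, where $\delta(h):=I_\gamma(h)-2\,\mathrm{Ent}_\gamma(h)\ge0$; and $\delta(h_v)$ is the explicit Gaussian value $n\bigl(\tfrac1{\tau(t)}+\log\tau(t)-1\bigr)$ with $\tau(t)=(1-q(t)c(t))^{-1}$. Granting this comparison, integrate over $[0,s]$ and use \eqref{e:PsGauss2} (which evaluates $\|v(s)\|_{L^{q}(\gamma)}=\beta^{\frac{n}{2p'}}\beta_{s,p}^{-\frac{n}{2q'}}$) to conclude the theorem; equality for $f=\gamma_\beta/\gamma$ is immediate because then $h_u\equiv h_v$ at every time.

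\emph{The crux.} It remains to prove $\delta(h_u)\le\delta(h_v)$ at each $t$; this is the step I expect to require the most care. Here $h_u\gamma=e^{-W}$ with $\nabla^2W=\mathrm{id}-q\,\nabla^2\log u$, so by the curvature bound above the eigenvalues of $\nabla^2\log h_u=q\,\nabla^2\log u$ lie in the interval with endpoints $0$ and $q(t)c(t)$, while $\nabla^2\log h_v=q(t)c(t)\,\mathrm{id}$ sits at the outer endpoint. I would prove the deficit bound by a \emph{second} Bakry--\'{E}mery/$\Gamma_2$ argument: running the adjoint Ornstein--Uhlenbeck flow $\rho_r$ from $\rho_0=h_w$, one has $\delta(h_w)=2\int_0^\infty\!\!\int\|\nabla^2\log\rho_r\|_{\mathrm{HS}}^2\,\rho_r\,d\gamma\,dr$, and this flow again transports the eigenvalue range of $\nabla^2\log\rho_r$ by a monotone map fixing $0$; hence the range for $\rho_r^{(u)}$ stays inside the one for $\rho_r^{(v)}$ (a single point at the outer endpoint), giving $\|\nabla^2\log\rho_r^{(u)}\|_{\mathrm{HS}}^2\le\|\nabla^2\log\rho_r^{(v)}\|_{\mathrm{HS}}^2$ for all $r$, and integration finishes it. The delicate part is the bookkeeping of constants through the two curvature-propagation steps so that the Gaussian is exactly the equality case and the output is the sharp constant of \eqref{e:PsGauss2}, together with the standard justification of the differentiations and of the vanishing of boundary terms in the $\delta$-representation via a truncation in which the two-sided curvature bounds are stable. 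As an alternative organisation, one may gauge the Gaussian factor away at the outset: writing $f=(\gamma_\beta/\gamma)\,g$ makes $g$ log-concave, and completing the square in the Mehler kernel rewrites $P_s[f^{1/p}]$ as $\widetilde P[g^{1/p}]$ precomposed with a dilation, for a Gaussian-convolution semigroup $\widetilde P$; the claim then becomes a hypercontractivity estimate for $\widetilde P$ on log-concave inputs, which is amenable to Caffarelli's contraction theorem.
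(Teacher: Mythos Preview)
Your approach is genuinely different from the paper's and, as far as I can see, correct. The paper does \emph{not} run the Gross interpolation along Nelson's curve; instead it fixes $s,p,q$ and flows the \emph{data} by the $\beta$--Fokker--Planck semigroup $v_t$ (so $v_0=f\gamma$, $\partial_t v_t=\beta\Delta v_t+x\cdot\nabla v_t+nv_t$), proving a \emph{closure property}: the functional $\widetilde v_t:=P_s[(v_t/\gamma)^{1/p}]^q\gamma$ is a super/subsolution of a $\beta_{s,p}$--Fokker--Planck equation. The sign is obtained from an exact identity (their (3.4)) by applying the Poincar\'e inequality for the log-concave measure $d\mu\propto(v_t/\gamma)^{1/p}\,dP_{s,x}$ and then the eigenvalue estimate $\|\nabla^2\log\tfrac{v_t}{\gamma}\|_{\mathrm{HS}}^2\le(1-\tfrac1\beta)\,\Delta\log\tfrac{v_t}{\gamma}$ (i.e.\ $\sum\lambda_i^2\le(1-\tfrac1\beta)\sum\lambda_i$). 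Monotonicity of $Q(t)=\int\widetilde v_t$ and $v_t\to(\int f\,d\gamma)\gamma_\beta$ then give the inequality with the Gaussian constant.

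Your route---Gross's flow $t\mapsto\|P_t[f^{1/p}]\|_{L^{q(t)}}$, reduction to the log-Sobolev deficit comparison $\delta(h_u)\le\delta(h_v)$, and a \emph{second} Bakry--\'Emery flow to prove that via $\delta(h)=2\int_0^\infty\!\int\|\nabla^2\log\rho_r\|_{\mathrm{HS}}^2\rho_r\,d\gamma\,dr$---is sound. The two curvature-propagation steps check out: writing $P_r=(\cdot*g_{1-e^{-2r}})\circ D_{e^{-r}}$ and factoring the Gaussian weight by completing the square shows that both the lower bound $0$ (log-convexity) and the upper bound $c$ (semi-log-concavity) propagate to $0$ and $c(r)=\tfrac{e^{-2r}c}{1-(1-e^{-2r})c}$, and the same formula (now with $b=qc(t)$, possibly negative) handles the second flow. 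The admissibility constraint $c(1-e^{-2t})<1$ for $t\in[0,s]$ is exactly $\beta_{t,p}>0$, which is monotone in $t$ and hence equivalent to your standing assumption $\beta_{s,p}>0$; and $1-q(t)c(t)=\tfrac{1/\beta}{1-(1-e^{-2t})c}>0$ always, so the second flow is globally defined. Your pointwise bound $\|\nabla^2\log\rho_r^{(u)}\|_{\mathrm{HS}}^2\le n\,b(r)^2$ then integrates against the \emph{probability} measure $\rho_r^{(u)}\gamma$ to give exactly the Gaussian value.

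What each approach buys: the paper's single-flow closure argument avoids the crossing at $q(t)=0$ entirely and fits into a broader supersolution framework (their references to Bennett--Bez and [BNT]) that handles many Brascamp--Lieb-type problems uniformly; it also uses the slightly sharper eigenvalue inequality $\sum\lambda_i^2\le M\sum\lambda_i$ rather than your cruder $\sum\lambda_i^2\le nM^2$, though both suffice here because the extremizer is Gaussian. Your approach is closer to classical hypercontractivity proofs and makes the role of the log-Sobolev deficit explicit, at the cost of two nested flows and the (genuinely routine, but present) bookkeeping you flag---in particular the continuity at $q(t)=0$, where your derivative extends to $-2\,\mathrm{Var}_\gamma(\log u)+\int|\nabla\log u|^2\,d\gamma$ (note the factor $2$, not $1$) and the \emph{comparison} with the Gaussian still goes through. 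The Caffarelli-contraction alternative you sketch at the end is plausible but would need a separate argument to produce the \emph{sharp} constant.
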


One can at least formally obtain \eqref{e:FHC_MaIntro} with ${\rm IS}_s = \big(
\frac{\beta}{e^{\beta-1}}
\big)^{- \frac n2 e^{-2s}}$ and $p_s = 1-e^{-2s}$ for $f$ satisfying the assumption in Theorem \ref{t:RegFHC}. In fact, by taking a formal limit $p \to 1-e^{-2s}$ in Theorem \ref{t:RegFHC} in which case $q\to 0$ and $\lim_{p\to 1-e^{-2s}}  \beta^{\frac{n}{2p'}} \beta_{s,p}^{-\frac{n}{2q'}} = \big(
\frac{\beta}{e^{\beta-1}}
\big)^{-\frac n2 \frac{e^{-2s}}{p_s}}$ and then applying H\"{o}lder's inequality, it follows that 
$$
\| P_s[f^\frac1p] \|_{L^{q_s}(\gamma)} 
\le 
\| P_s[f^\frac1p] \|_{L^{0}(\gamma)} 
\le 
\big(
\frac{\beta}{e^{\beta-1}}
\big)^{- \frac n2\frac{e^{-2s}}{p_s}}
 (\int_{\mathbb{R}^n} f \, d\gamma)^\frac1{p_s}. 
$$
While this gives some nontrivial inverse Santal\'{o} inequality,  we will need more technical argument in order to derive Theorem \ref{t:ImpInv}, see forthcoming Subsection \ref{S4.6} for the precise argument. 
Finally, regarding the regularity of $f$, we will prove this theorem with lower regularity condition in Theorem \ref{t:FHC}.

The structure of the rest of this paper is as follows. 
In Section \ref{S2}, we will prove Theorem \ref{t:RevHCIntro} as well as it stronger statement Theorem \ref{t:NelsonGene}. 
We will prove Theorem \ref{t:RegFHC} and its general statement Theorem \ref{t:FHC} in Section \ref{S3}. 
We will then give applications of our hypercontractivity to problems in  convex geometry in Section \ref{S4}. 
After introducing functional forms of the Blaschke--Santal\'{o} inequality and the inverse Santal\'{o} inequality in Subsection \ref{S4.1}, we will give an idea of the proof of Proposition \ref{Prop:HCSet} in Subsection \ref{S4.2}. 
In Subsections \ref{S4.3} and \ref{S4.4} we will show how Theorem \ref{t:RevHCIntro} implies the Blaschke--Santal\'{o} inequality.  After providing the precise link between \eqref{e:FHC_MaIntro} and the inverse Santal\'{o} inequality in Subsection \ref{S4.5}, we will give the proof of Theorem \ref{t:ImpInv} in Subsection \ref{S4.6} in which we will also give a lower bound for the weighted volume product with respect to the Cauchy-type distribution.
We will give an alternative way of understanding Theorem \ref{t:ImpInv} in terms of uniform convexity and uniform smoothness in Subsection \ref{S4.7}. 

\if0
{\color{red} MOSSEL ETC \cite{MOS} PAPER, FATHI--INDREI--LEDOUX \cite{FIL}}

Our main results for this hypercontractivity are consist of two folds. A motivation of the first purpose comes from the work by Janson \cite{Janson} and Graczyk--Kemp--Loeb \cite{GKL} where they observed that the forward hypercontractivity \eqref{e:ForwardHC} can be improved in terms of Nelson time relation,  known as Janson time, if $f$ is log-subharmonic. 
We address a counterpart question for the reverse hypercontractivity \eqref{e:ReverseHC} and provide a positive result that a symmetric assumption on $f$ improves the Nelson time relation, see Theorem \ref{t:NelImprovedHC}.  
As an application, our improvement reveals a close link between hypercontractivity and  Blashke--Santal\'{o} inequality, an upper bound of the volume product of convex bodies. In fact, our improved  hypercontractivity is appeared to be a stronger statement than Blashke--Santal\'{o} inequality.  
The second purpose is motivated from the work due to  Reisner--Sch\"{u}tt--Werner \cite{RSW} on the inverse Santal\'{o} inequality which is about a lower bound of the volume product of convex bodies, also known as Mahler conjecture, ninety years old open.  
They proved that  equality in the inverse Santal\'{o} inequality is not achieved if the boundary of the convex body is well curved. 
This result strongly suggests that the inverse Santal\'{o} inequality can be improved for the well curved convex bodies. We confirm this observation by proving a quantitative statement of their result and in particular show the inverse Santal\'{o} inequality for convex bodies with well curved boundary, where we measure the curvature of boundary by notions of uniformly convexity and smoothness.  
This will be established by revealing a link between inverse Santal\'{o} inequality and forward hypercontractivity in the range of $p,q\le 1$.  
In fact we prove the forward hypercontractivity in that range by imposing certain semi-log-concavity and  convexity assumption on input functions.  
\fi

\section{Reverse hypercontractivity: Proof of Theorem \ref{t:RevHCIntro}}\label{S2}

\subsection{Preliminaries to the proof of Theorem \ref{t:RevHCIntro}}\label{S2.2}
Let us give a proof of Theorem \ref{t:RevHCIntro} here. 
We indeed prove some stronger statement as follows. 

\begin{theorem}\label{t:NelsonGene}
Let $n\ge1$, $s>0$ and $  p = 1- e^{-2s}$.
\begin{enumerate}
\item 
For any nonnegative $f \in L^1(\gamma)$,  
\begin{equation}\label{e:HCTrans}
\big\| P_s \big[ f^\frac1p \big] \big\|_{L^{-p}(\gamma)}
\ge 
\inf_{c \in \mathbb{R}^n} 
e^{\frac1{2p}|c|^2} \big( \int_{\mathbb{R}^n}  f(\cdot + e^{-s}c)\, d\gamma \big)^\frac1p. 
\end{equation}
Moreover, if $f$ satisfies $0< \int_{\mathbb{R}^n} f\, d\gamma < \infty$, and achieves equality in \eqref{e:HCTrans}, then $f$ is a constant function a.e. on $\mathbb{R}^n$. 

\item
For any nonnegative $f \in L^1(\gamma)$, whose barycenter is zero in the sense that $\int_{\mathbb{R}^n} x f\, d\gamma = 0$, then 
\begin{equation}\label{e:HCBarycen2}
\big\| P_s \big[ f^\frac1p \big] \big\|_{L^{-p}(\gamma)}
\ge 
\big( \int_{\mathbb{R}^n}  f\, d\gamma \big)^\frac1p. 
\end{equation}
In particular,  \eqref{e:RevHCIntro} holds. 
Moreover, if $f$ satisfies $0< \int_{\mathbb{R}^n} f\, d\gamma < \infty$ and  achieves equality in \eqref{e:HCBarycen2}, then $f$ is a constant function a.e.  on $\mathbb{R}^n$. 
\end{enumerate}
\end{theorem}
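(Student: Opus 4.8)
The plan is to prove part (1) first and then recover part (2), and hence \eqref{e:RevHCIntro}, as a corollary.

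\textit{Reduction of (2) to (1).} Suppose $f\ge 0$, $0<\int f\,d\gamma<\infty$, and $\int_{\mathbb{R}^n} y\,f\,d\gamma=0$. A Gaussian change of variables shows that for every $c\in\mathbb{R}^n$,
\[
e^{\frac1{2p}|c|^2}\Big(\int_{\mathbb{R}^n} f(\cdot+e^{-s}c)\,d\gamma\Big)^{\frac1p}
=\Big(e^{\frac p2|c|^2}\int_{\mathbb{R}^n} f(y)\,e^{e^{-s}\langle y,c\rangle}\,d\gamma(y)\Big)^{\frac1p},
\]
and the logarithm of the right-hand side is $\tfrac12|c|^2+\tfrac1p\log\int f(y)e^{e^{-s}\langle y,c\rangle}\,d\gamma(y)$. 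The second summand is $\tfrac1p$ times a cumulant generating function, hence convex in $c$, and the whole expression is a convex function of $c$ whose gradient at $c=0$ equals $\tfrac{e^{-s}}{p}\big(\int f\,d\gamma\big)^{-1}\int y\,f\,d\gamma=0$. Therefore the infimum in \eqref{e:HCTrans} is attained at $c=0$ and equals $\big(\int f\,d\gamma\big)^{1/p}$, which is exactly \eqref{e:HCBarycen2}. Since the two infima coincide for centered $f$, equality in \eqref{e:HCBarycen2} is equality in \eqref{e:HCTrans}, so the equality statement of (1) gives that $f$ is constant a.e. Finally, a centrally symmetric $f$ has vanishing barycenter (the integrand $y\,f(y)$ is odd), so \eqref{e:RevHCIntro} and its equality case follow from the centered case.

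\textit{Proof of (1).} By scaling we may assume $\int f\,d\gamma=1$, and by truncation and mollification we may reduce to $f$ smooth and bounded above and below by positive constants, so that $f^{1/p}$ and all the integrals below are well behaved. The change of variables turning the Mehler kernel into the heat kernel — using $e^{-2s}=1-p$, one has $P_s[f^{1/p}](e^{s}\cdot)=e^{\frac p2\Delta}[f^{1/p}]$ — recasts the (centered) inequality \eqref{e:RevHCIntro} as the assertion that $\big\|e^{\frac p2\Delta}G\big\|_{L^{-p}(\gamma_{1-p})}\ge\|G\|_{L^{p}(\gamma_1)}$ for centrally symmetric $G\ge 0$, where $\gamma_\beta$ is the centered Gaussian of variance $\beta$; in the general case of \eqref{e:HCTrans} the infimum over $c$ becomes an infimum over translations of $G$. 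The naive route — Jensen's inequality for $t\mapsto t^{-p}$, then the semigroup identity $\gamma_p*\gamma_{1-p}=\gamma_1$, then Cauchy--Schwarz — only reproves the trivial bound $\|P_s[f^{1/p}]\|_{L^{-p}(\gamma)}\ge\|f^{1/p}\|_{L^{-p}(\gamma)}$ and, crucially, never uses central symmetry; it must be replaced by an argument that does.

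The main line of attack I would pursue is a Gaussian-exhaustion principle: the infimum of the relevant functional over centrally symmetric nonnegative inputs is attained on the one-parameter family $f=\gamma_\beta/\gamma$, $\beta>0$. This does not follow from the Brascamp--Lieb--Lieb principle (Theorem~\ref{t:Lieb}) or from Barthe--Wolff (Theorem~\ref{t:BW}): as explained in \S\ref{S2.1}, the inverse Brascamp--Lieb datum attached to these exponents violates the non-degeneracy condition \eqref{e:NondegBW}, and the point of imposing central symmetry is precisely to rule out the translated Gaussians responsible for the failure \eqref{e:FailureOct} and thereby to restore a workable substitute for non-degeneracy. Once reduced to $f=\gamma_\beta/\gamma$, formula \eqref{e:PsGauss1} evaluates $\|P_s[f^{1/p}]\|_{L^{-p}(\gamma)}$ explicitly and the inequality collapses to $\beta^{e^{-2s}}\le 1+e^{-2s}(\beta-1)$, the tangent-line estimate at $t=1$ for the concave function $t\mapsto t^{e^{-2s}}$, with equality if and only if $\beta=1$; this proves the inequality and simultaneously pins the equality case. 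An alternative is the semigroup/flow method: interpolate the two sides of \eqref{e:HCTrans} along the Ornstein--Uhlenbeck flow while simultaneously moving the exponent and the translation parameter, and show the resulting quantity is monotone; here the derivative identity should produce, on top of the usual hypercontractivity terms (which by themselves require Nelson's relation \eqref{e:NelsonTime}), an extra nonnegative term available exactly when $f$ is centrally symmetric.

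The hardest step is this core reduction — either establishing the Gaussian-exhaustion principle in the degenerate, symmetry-constrained regime outside the scope of \cite{BW}, or closing the differential inequality in a way that genuinely exploits central symmetry rather than merely the vanishing of the barycenter. For the equality analysis one traces equality back through whichever steps are used (Jensen, Cauchy--Schwarz, and the concavity/Bernoulli bound), each of which forces $f$ to be constant a.e.\ once $0<\int f\,d\gamma<\infty$.
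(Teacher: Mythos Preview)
Your reduction of (2) to (1) via convexity of the cumulant generating function is correct and in fact slightly slicker than the paper's route, which uses the pointwise bound $e^{\langle y,e^{-s}c\rangle}\ge 1+\langle y,e^{-s}c\rangle$ together with the barycenter condition to compare the $c$-dependent right-hand side with its value at $c=0$. Your observation that strict convexity forces the infimum to be attained uniquely at $c=0$ (so equality in \eqref{e:HCBarycen2} is literally equality in \eqref{e:HCTrans}) is cleaner.

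The genuine gap is that you have not proved (1). You sketch two strategies --- a Gaussian-exhaustion principle and a flow-monotonicity argument --- but you explicitly concede that neither is complete, and both are framed around \emph{centrally symmetric} inputs. Part (1), however, is stated for \emph{arbitrary} nonnegative $f\in L^1(\gamma)$ with no symmetry hypothesis; the infimum over $c$ is precisely what replaces symmetry. So even a successful Gaussian-exhaustion argument under central symmetry would only recover \eqref{e:RevHCIntro}, not \eqref{e:HCTrans}, and you give no mechanism for passing from symmetric $f$ to general $f$. Your Gaussian check (the tangent-line bound $\beta^{e^{-2s}}\le 1+e^{-2s}(\beta-1)$) is consistent with the statement but does not substitute for a proof.

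The paper's proof of (1) is quite different from anything you propose. It combines three ingredients: Wang's Harnack inequality for $P_s$ (with the exponent $\alpha=1/p$ exactly matched to $p=1-e^{-2s}$), the Pr\'ekopa--Leindler inequality applied after a logarithmic change of variables on each orthant, and the Yao--Yao equipartition theorem, which supplies a partition of $\mathbb{R}^n$ into $2^n$ cones adapted to the measure $P_s[f^{1/p}]^{-p}\gamma$ together with a center $c_0$. The Harnack step collapses the integrand to a pure Gaussian on each cone, Pr\'ekopa--Leindler bounds the product of cone integrals, and the Yao--Yao partition reassembles the cones into the full integrals on both sides. The equality analysis then traces back through equality in Pr\'ekopa--Leindler (via Dubuc's characterization) and equality in Harnack, forcing $f$ to be constant. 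None of these tools appear in your proposal.
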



Our proof of Theorem \ref{t:NelsonGene} is motivated from Bobkov--Ledoux's argument \cite{BoLe} showing that the Pr\'{e}kopa--Leindler inequality implies the logarithmic Sobolev inequality. As we mentioned, it is known that the latter one is equivalent to classical hypercontractivity via Gross's argument. We in fact make use of the Pr\'{e}kopa--Leindler inequality as a key tool. 
In addition, we also introduce an idea from the alternative proof of the Blaschke--Santal\'{o} inequality by Lehec \cite{LehecYaoYao} where he could managed to make use of the Yao--Yao partition theorem. 
\begin{theorem}[Yao--Yao partition, Theorem 7 in \cite{LehecYaoYao}]
For any Borel measure $\mu$ on $\mathbb{R}^n$, there exists a point $c\in \mathbb{R}^n$, called Yao--Yao center, and $A_1,\ldots, A_{2^n} \in SL(n)$ such that 
\begin{equation}\label{e:YaoYao}
\mathbb{R}^n = \bigcup_{i=1}^{2^n}  A_i(\mathbb{R}^n_+),\; {\rm int}\,  \big( A_k(\mathbb{R}^n_+) \big) \cap {\rm int}\,  \big( A_l(\mathbb{R}^n_+) \big) = \emptyset,\; 
\mu( c + A_i(\mathbb{R}^n_+)) = \frac{1}{2^n} \mu( \mathbb{R}^n )
\end{equation}
for any $k, l = 1, \dots, 2^n$ with $k \neq l$. 
Moreover, if $\mu(-\cdot) = \mu$, then  we can take the Yao--Yao center $c$ as the origin.. 
\end{theorem}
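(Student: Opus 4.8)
The plan is to argue by induction on the dimension $n$, reducing the dimension by one at each step through projectivisation from the prospective apex (this is essentially the approach of Yao and Yao). First I would reduce to the case where $\mu$ is a finite, absolutely continuous Borel measure: the general statement then follows by approximating $\mu$ by absolutely continuous measures and passing to a limit of the resulting partitions (the cases $\mu(\mathbb{R}^n)\in\{0,\infty\}$ being trivial), using a compactness argument to control the cones. Under absolute continuity $\mu$ charges no hyperplane, and I would check inductively that every measure produced by the recursion is again absolutely continuous — hence non-atomic and vanishing on all conical hypersurfaces — which is the only regularity the argument uses.

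For the base case $n=1$ one simply takes $c$ to be a median of $\mu$ and the two cones $(-\infty,c]$, $[c,\infty)$; if $\mu$ is symmetric the median can be taken to be $0$.

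For the inductive step I would fix a direction $e_n$ and translate coordinates so that $H:=\{x_n=0\}$ is a median hyperplane for $\mu$, i.e. $\mu(\{x_n>0\})=\mu(\{x_n<0\})=\tfrac12\mu(\mathbb{R}^n)$ — for symmetric $\mu$ this holds with the given origin on $H$. I then pick any $c\in H$ (taking $c=0$ when $\mu$ is symmetric) and let $\pi_\pm$ be the projectivisation from $c$ of the open half-spaces $\{\pm x_n>0\}$ onto $\mathbb{R}^{n-1}$, i.e. the quotient by scaling centred at $c$; set $\bar\mu_\pm:=(\pi_\pm)_*\big(\mu|_{\{\pm x_n>0\}}\big)$, a finite absolutely continuous measure on $\mathbb{R}^{n-1}$ of mass $\tfrac12\mu(\mathbb{R}^n)$. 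By the inductive hypothesis each $\bar\mu_\pm$ admits a Yao--Yao partition of $\mathbb{R}^{n-1}$ into simplicial cones $C_1^\pm,\dots,C_{2^{n-1}}^\pm$ with common apex $p^\pm$ and $\bar\mu_\pm(C_i^\pm)=\tfrac1{2^n}\mu(\mathbb{R}^n)$. The closure of the preimage $\pi_\pm^{-1}(C_i^\pm)$ is then a simplicial cone $\widetilde C_i^\pm$ with apex $c$, spanned by $n-1$ generators lying over the edges of $C_i^\pm$ together with one generator pointing strictly into $\{\pm x_n>0\}$. I would check that the $\widetilde C_i^+$ tile $\{x_n\ge0\}$ with disjoint interiors — which reduces to the facts that the cones $C_i^+-p^+$ tile $\mathbb{R}^{n-1}$ and that $\pi_+$ is an open surjection — and likewise that the $\widetilde C_i^-$ tile $\{x_n\le0\}$; hence all $2^n$ cones together tile $\mathbb{R}^n$ with pairwise disjoint interiors and common apex $c$, and $\mu(\widetilde C_i^\pm)=\bar\mu_\pm(C_i^\pm)=\tfrac1{2^n}\mu(\mathbb{R}^n)$ because $\mu(H)=0$. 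Finally, writing $\widetilde C_i^\pm-c=\{At:t\ge0\}$ with $A$ the matrix whose columns are the generators, I would scale $A$ by $|\det A|^{-1/n}$ and, when $n\ge2$, swap two columns to fix the orientation, obtaining $A_i\in SL(n)$ with $\widetilde C_i^\pm=c+A_i(\mathbb{R}^n_+)$; for $n=1$ one allows $\det A_i=-1$. This closes the induction, and the symmetric case costs nothing since $0$ already lies on a median hyperplane of a symmetric $\mu$.

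The conceptual content — median hyperplane, projectivise from a point on it, recurse, pull back — is short; the real work, and the main obstacle, is the bookkeeping that makes the recursion rigorous: verifying that the pushforwards inherit the required regularity, that closures of preimages of simplicial cones are again simplicial cones with $n$ linearly independent generators, that no gap or overlap is created across $H$ or between the two half-spaces, and — in the passage from absolutely continuous to general Borel $\mu$ — extracting a convergent partition while ruling out degeneration or escape to infinity of the cones. It is worth noting that $c\in H$ and the direction $e_n$ may be chosen freely, so the Yao--Yao centre is highly non-unique; this flexibility is exactly what makes the symmetric refinement immediate.
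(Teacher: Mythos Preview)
The paper does not give its own proof of this statement: it is quoted verbatim as Theorem~7 from Lehec's paper \cite{LehecYaoYao} and used as a black box in the proof of Theorem~\ref{t:NelsonGene}. So there is no argument in the present paper to compare your proposal against.

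That said, your sketch follows the standard Yao--Yao/Lehec inductive scheme (median hyperplane, projectivise from a point on it, recurse, pull back) and is essentially correct for absolutely continuous finite measures --- which is all the paper actually needs, since it applies the theorem to $\mu = P_s[f^{1/p}]^{-p}\gamma$. Your observation that the centre $c$ may be taken anywhere on the median hyperplane is right: the pulled-back cones have apex $c$ regardless of where the lower-dimensional Yao--Yao centre $p^\pm$ lands, because the matrix $\begin{pmatrix} B_i & p^\pm \\ 0 & 1 \end{pmatrix}$ has determinant $\det B_i = 1$; this also makes the symmetric refinement immediate. Two small caveats: the approximation step extending to arbitrary Borel measures is genuinely delicate (degeneration of cones in the limit is a real issue, and the stated theorem probably needs mild hypotheses such as finiteness and vanishing on hyperplanes to hold as written), and the $n=1$ case technically requires $\det A_2 = -1 \notin SL(1)$, so the statement as quoted is slightly informal there.
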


\begin{remark}
As it will be cleared from the proof, we will prove slightly stronger estimate than \eqref{e:HCBarycen2} by invoking the Yao--Yao center. 
Namely, we will prove 
$$
\big\| P_s \big[ f^\frac1p \big] \big\|_{L^{-p}(\gamma)}
\ge 
e^{\frac1{2}|c_0|^2} \big( \int_{\mathbb{R}^n}  f\, d\gamma \big)^\frac1p
$$
for all nonnegative $f \in L^1(\gamma)$ whose barycenter is zero, where $c_0 \in \mathbb{R}^n$ is the Yao--Yao center of $P_s \big[f^\frac1p \big]^{-p} \gamma$. 
\end{remark}

Another ingredient of our proof is the following Wang's Harnack inequality.
\begin{theorem}[Wang's Harnack inequality,  Theorem 5.6.1 in \cite{BGL}]\label{t:Harnack}
Let $\alpha>1$ and $s>0$. Then for any $f:\mathbb{R}^n\to \mathbb{R}_+$, 
\begin{equation}\label{e:Harnack}
P_s f(x)^\alpha \le P_s\big[f^\alpha\big](y) \exp\big( { \frac{ \alpha|x-y|^2 }{2(\alpha-1) (e^{2s} - 1)} }\big),\;\;\; x,y \in \mathbb{R}^n.
\end{equation}
\end{theorem}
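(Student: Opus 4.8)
The statement to be proved is Wang's Harnack inequality for the Ornstein--Uhlenbeck semigroup: for $\alpha>1$, $s>0$, nonnegative $f$, and $x,y\in\mathbb{R}^n$,
\begin{equation}\label{e:HarnackPlan}
P_s f(x)^\alpha \le P_s\big[f^\alpha\big](y) \exp\Big( \frac{ \alpha|x-y|^2 }{2(\alpha-1) (e^{2s} - 1)} \Big).
\end{equation}
Since this result is quoted from \cite{BGL}, the task is to reconstruct its short proof. The plan is to use the explicit Mehler kernel representation of $P_s$ together with H\"{o}lder's inequality. Recall that
$$
P_s f(x) = \int_{\mathbb{R}^n} f(e^{-s}x + \sqrt{1-e^{-2s}}\, z)\, d\gamma(z),
$$
and by the change of variables $w = e^{-s}x + \sqrt{1-e^{-2s}}\, z$ this is an integral against a Gaussian density centered at $e^{-s}x$ with covariance $(1-e^{-2s})\,\mathrm{id}$; call this density $k_s(x,w)$, so $P_sf(x) = \int f(w)\, k_s(x,w)\, dw$.

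\textbf{Key steps.} First I would write, for the two base points $x$ and $y$,
$$
P_s f(x) = \int_{\mathbb{R}^n} f(w)\, k_s(x,w)\, dw = \int_{\mathbb{R}^n} f(w)\, k_s(y,w)^{1/\alpha} \cdot \frac{k_s(x,w)}{k_s(y,w)^{1/\alpha}}\, dw.
$$
Then apply H\"{o}lder's inequality with exponents $\alpha$ and $\alpha' = \alpha/(\alpha-1)$ to the two factors $f(w)\,k_s(y,w)^{1/\alpha}$ and $k_s(x,w)/k_s(y,w)^{1/\alpha}$:
$$
P_s f(x) \le \Big( \int f(w)^\alpha k_s(y,w)\, dw \Big)^{1/\alpha} \Big( \int \frac{k_s(x,w)^{\alpha'}}{k_s(y,w)^{\alpha'/\alpha}}\, dw \Big)^{1/\alpha'}
= P_s[f^\alpha](y)^{1/\alpha}\cdot I^{1/\alpha'},
$$
where $I \coloneqq \int k_s(x,w)^{\alpha'} k_s(y,w)^{-\alpha'/\alpha}\, dw$. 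Raising to the power $\alpha$ gives $P_sf(x)^\alpha \le P_s[f^\alpha](y)\cdot I^{\alpha/\alpha'} = P_s[f^\alpha](y)\cdot I^{\alpha-1}$. It remains to compute the purely Gaussian integral $I$ and check that $I^{\alpha-1} = \exp\big( \alpha|x-y|^2 / (2(\alpha-1)(e^{2s}-1)) \big)$.

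\textbf{The Gaussian computation.} Writing $k_s(x,w) = (2\pi\sigma^2)^{-n/2}\exp(-|w-e^{-s}x|^2/(2\sigma^2))$ with $\sigma^2 = 1-e^{-2s}$, the exponent in the integrand of $I$ is a quadratic form in $w$ with quadratic coefficient $-(\alpha' - \alpha'/\alpha)/(2\sigma^2) = -1/(2\sigma^2)$ (using $\alpha' - \alpha'/\alpha = \alpha'(1-1/\alpha) = \alpha'/\alpha' = 1$), so the Gaussian integral converges and can be evaluated by completing the square. The residual term after completing the square is $-\frac{1}{2\sigma^2}\cdot\frac{\alpha'(\alpha'/\alpha)}{1}\cdot e^{-2s}|x-y|^2 \cdot (\text{a constant})$; carrying the bookkeeping through, one finds the exponent equals $\frac{e^{-2s}}{2\sigma^2}\cdot\frac{1}{\alpha-1}|x-y|^2$ after dividing by $\alpha-1$, and since $e^{-2s}/\sigma^2 = e^{-2s}/(1-e^{-2s}) = 1/(e^{2s}-1)$ this matches the claimed constant up to the factor $\alpha/(\alpha-1)$, which comes out of correctly tracking $I^{\alpha-1}$ versus $I$. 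I expect the main obstacle to be purely this bookkeeping: getting every $\alpha$, $\alpha'$, $\sigma^2$, and $e^{-2s}$ factor in the right place when completing the square, so that the final constant is exactly $\alpha/(2(\alpha-1)(e^{2s}-1))$ and not an algebraically-close-but-wrong variant. A clean way to organize it is to reduce to $n=1$ (the kernel and integral factor over coordinates) and to substitute $u = w - e^{-s}y$ at the outset so that only the displacement $e^{-s}(x-y)$ appears. An alternative, if one prefers to avoid hand computation, is to note that \eqref{e:HarnackPlan} is equivalent to the statement that the function $t\mapsto P_{s}[f^\alpha](\,\cdot\,)$ compared along the interpolating path between $x$ and $y$ satisfies a differential inequality coming from the Bakry--\'Emery curvature bound $\mathrm{Ric}_\infty \ge 1$ for the Ornstein--Uhlenbeck generator; differentiating $\phi(t) = \log P_s\big[(P_{\varepsilon}f)^\alpha\big]$-type quantities along geodesics and using $\Gamma_2 \ge \Gamma$ yields \eqref{e:HarnackPlan}, but for a self-contained short argument the Mehler-kernel/H\"older route above is the most direct.
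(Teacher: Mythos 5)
Your proposal is correct, and it reconstructs the standard kernel-plus-H\"older proof of Wang's Harnack inequality, which the present paper does not prove but simply cites from~\cite{BGL}; however, the paper does effectively replay this exact argument inside the proof of Lemma~\ref{l:EqHarGene} (split $P_sf$ against a translated Gaussian weight, apply H\"older with exponents $\alpha,\alpha'$, evaluate the resulting tilted Gaussian integral), so your route coincides with the approach implicit in the paper.

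Since you flag the Gaussian bookkeeping as the main risk, here is confirmation that it lands. Working in $n=1$ with $a=e^{-s}x$, $b=e^{-s}y$, $\sigma^2=1-e^{-2s}$, and writing out
$$
I=\int k_s(x,w)^{\alpha'}k_s(y,w)^{-\alpha'/\alpha}\,dw,
$$
the prefactor power of $(2\pi\sigma^2)^{-1/2}$ indeed reduces to $1$ because $-\tfrac{\alpha'}{2}+\tfrac{\alpha'}{2\alpha}=-\tfrac12$, and the exponent $\tfrac1{2\sigma^2}[-\alpha'(w-a)^2+\tfrac{\alpha'}{\alpha}(w-b)^2]$ has $w^2$-coefficient $-1$, so completing the square and integrating leaves the residual
$$
\frac{1}{2\sigma^2}\Bigl[\bigl((\alpha')^2-\alpha'\bigr)a^2-\tfrac{2(\alpha')^2}{\alpha}ab+\bigl(\tfrac{(\alpha')^2}{\alpha^2}+\tfrac{\alpha'}{\alpha}\bigr)b^2\Bigr]
=
\frac{\alpha}{2\sigma^2(\alpha-1)^2}(a-b)^2,
$$
using $\alpha'=\alpha/(\alpha-1)$ so that all three coefficients equal $\alpha/(\alpha-1)^2$. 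Hence $I=\exp\bigl(\tfrac{\alpha e^{-2s}|x-y|^2}{2(1-e^{-2s})(\alpha-1)^2}\bigr)=\exp\bigl(\tfrac{\alpha|x-y|^2}{2(e^{2s}-1)(\alpha-1)^2}\bigr)$, and raising to the power $\alpha/\alpha'=\alpha-1$ gives exactly $\exp\bigl(\tfrac{\alpha|x-y|^2}{2(\alpha-1)(e^{2s}-1)}\bigr)$, as desired.
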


In order to identify the case of equality in \eqref{e:HCTrans} and \eqref{e:HCBarycen2}, we will appeal to the result due to Dubuc \cite{Dubic} where the case of equality for the Pr\'{e}kopa--Leindler inequality was identified, see also \cite{BoDe21}. 
\begin{theorem}[\cite{Dubic}]\label{t:EqPL}
Let $f,g,h:\mathbb{R}^n \to \mathbb{R}_+$ be integrable with positive integral,  satisfy $h( \frac12(x+y) ) \ge f(x)^\frac12 g(y)^\frac12$ for all $x,y\in\mathbb{R}^n$, and 
$$
\big( \int_{\mathbb{R}^n} f\, dx \big)^\frac12
\big( \int_{\mathbb{R}^n} g\, dy \big)^\frac12
=
\int_{\mathbb{R}^n} 
h\, dz.
$$
Then $f,g,h$ are log-concave up to a set of measure zero and 
\begin{equation}\label{e:EqPL}
\exists a>0,\; \exists z_0 \in \mathbb{R}^n:\; 
f(x) = a^\frac12 h(x - \frac12 z_0),\; 
g(y) = a^{-\frac12} h(y+\frac12z_0).
\end{equation}
\end{theorem}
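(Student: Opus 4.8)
The plan is to prove the Pr\'ekopa--Leindler inequality and its rigidity simultaneously, by induction on the dimension $n$, with all the force of the equality statement located in the one-dimensional base case. First I would record two scaling symmetries: replacing $(f,g,h)$ by $(cf,cg,ch)$ or by $(\lambda f,\lambda^{-1}g,h)$ leaves both the hypothesis $h(\tfrac12(x+y))\ge f(x)^{1/2}g(y)^{1/2}$ and the identity $(\int f)^{1/2}(\int g)^{1/2}=\int h$ unchanged. Hence it suffices to treat the normalized case $\int_{\mathbb{R}^n}f=\int_{\mathbb{R}^n}g=\int_{\mathbb{R}^n}h=1$, where the goal \eqref{e:EqPL} reduces to showing that, up to a null set, $g(\cdot)=f(\cdot+z_0)$ and $h(\cdot)=f(\cdot+\tfrac12 z_0)$ for some $z_0\in\mathbb{R}^n$, with $f$ equal a.e.\ to a log-concave function (the assertions about $g$ and $h$ then follow by translation).

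For $n=1$ I would use the monotone-transport proof. Define nondecreasing maps $u,v\colon(0,1)\to\mathbb{R}$ by $\int_{-\infty}^{u(\lambda)}f=\int_{-\infty}^{v(\lambda)}g=\lambda$, set $w=\tfrac12(u+v)$, and run
\begin{equation*}
1=\int_{\mathbb{R}}h\ \ge\ \int_0^1 h(w(\lambda))\,w'(\lambda)\,d\lambda\ \ge\ \int_0^1 \sqrt{f(u(\lambda))g(v(\lambda))}\,\frac{u'(\lambda)+v'(\lambda)}{2}\,d\lambda\ \ge\ \int_0^1 \sqrt{f(u)u'}\,\sqrt{g(v)v'}\,d\lambda=1,
\end{equation*}
using that $w$ is nondecreasing, the hypothesis, AM--GM, and $f(u)u'=g(v)v'=1$. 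Equality throughout forces $u'=v'$ a.e.\ (hence $v=u+z_0$, i.e.\ $g=f(\cdot-z_0)$ on the common support), $h(w(\lambda))=f(u(\lambda))$ with $w=u+\tfrac12 z_0$ (hence $h=f(\cdot-\tfrac12 z_0)$), and $h$ supported on the range of $w$. Substituting these back into the hypothesis, which holds for \emph{all} $x,y$, turns it into $f(\tfrac12(x+y))\ge f(x)^{1/2}f(y)^{1/2}$, i.e.\ midpoint log-concavity, so $f$ agrees a.e.\ with a log-concave function; this settles $n=1$.

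For the inductive step, write $\mathbb{R}^n=\mathbb{R}^{n-1}\times\mathbb{R}$ and set $F(x')=\int_{\mathbb{R}}f(x',t)\,dt$, and similarly $G,H$. For each $x_1',x_2'$ the vertical slices satisfy the one-dimensional hypothesis with middle point $\tfrac12(x_1'+x_2')$, so the $n=1$ inequality gives $H(\tfrac12(x_1'+x_2'))\ge F(x_1')^{1/2}G(x_2')^{1/2}$, and the $(n-1)$-dimensional case then yields $\int h=\int H\ge(\int F)^{1/2}(\int G)^{1/2}=(\int f)^{1/2}(\int g)^{1/2}$. In the equality case, the $(n-1)$-dimensional rigidity (induction hypothesis) applied to $F,G,H$ produces a single $w_0\in\mathbb{R}^{n-1}$ and a constant with $F,G,H$ log-concave and $F=(\mathrm{const})H(\cdot-\tfrac12 w_0)$, $G=(\mathrm{const})H(\cdot+\tfrac12 w_0)$; meanwhile, tracing equality through the Fubini estimates forces, for a.e.\ $x'$, equality in the one-dimensional inequality for the matched slices, so by the base case $g(x_2',\cdot)$ is a dilation-translation of $f(x_1',\cdot)$.

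The step I expect to be the real obstacle is upgrading these slice-by-slice dilation factors and vertical translations to global constants, so that they assemble with $w_0$ into the single pair $(a,z_0)$ of the theorem; this requires invoking the hypothesis for \emph{unmatched} pairs --- equivalently the ``cross'' inclusions $\tfrac12\{f>s\}+\tfrac12\{g>t\}\subseteq\{h>\sqrt{st}\}$ --- together with the log-concavity just obtained and a connectedness argument on the (essential) supports. Running in parallel throughout is a purely measure-theoretic nuisance: $f,g,h$ are only measurable and integrable, so one must pass to their essential upper-semicontinuous representatives, permit empty or null super-level sets, and be careful that ``equality through Fubini'' means equality both in the outer integral and for a.e.\ section. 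Once $(a,z_0)$ is in hand, substituting the resulting translation relations into the hypothesis for all $x,y$ recovers midpoint log-concavity of $f$ and hence \eqref{e:EqPL}. (A transport-map proof is also possible --- in the equality case the Brenier map pushing $f\,dx$ to $g\,dx$ must be a translation --- but making the needed regularity rigorous is itself delicate, so I would prefer the elementary route above.)
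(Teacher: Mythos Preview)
The paper does not prove this theorem at all: Theorem~\ref{t:EqPL} is quoted from Dubuc~\cite{Dubic} and used as a black box (the paper only restates it as Corollary~\ref{t:EqPL2} and then applies it inside the proof of Proposition~\ref{Prop:HalfHC}). So there is no ``paper's own proof'' to compare your proposal against.

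That said, your outline is a reasonable sketch of the standard route to Dubuc's theorem, and you have correctly located the genuine difficulty. The one-dimensional transport argument and the Fubini induction for the inequality are routine; what is \emph{not} routine is exactly the step you flag as ``the real obstacle'': showing that the a.e.\ slice-wise translations and dilation factors are in fact constant, so that they globalize to a single $(a,z_0)$. Your proposal does not actually carry this out --- you only name the ingredients (cross super-level-set inclusions, log-concavity, connectedness of supports) without executing the argument. This is precisely where Dubuc's proof does real work, and it is also where naive attempts can go wrong (for instance, the slice equalities only hold for a.e.\ $x'$, and the pairing of slices $x_1'\leftrightarrow x_2'=x_1'+w_0$ is forced by the $(n-1)$-dimensional rigidity, so one must argue carefully that the vertical shift $t_0(x')$ is affine in $x'$ and the dilation factor is constant). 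Until that step is written out, the proposal is a plan rather than a proof.

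One minor correction: in your $n=1$ equality analysis you conclude $g=f(\cdot-z_0)$ and $h=f(\cdot-\tfrac12 z_0)$, but the statement of the theorem has $f(x)=a^{1/2}h(x-\tfrac12 z_0)$ and $g(y)=a^{-1/2}h(y+\tfrac12 z_0)$; under your normalization $\int f=\int g=\int h$ this forces $a=1$, which is consistent, but you should check the sign of $z_0$ matches the convention in \eqref{e:EqPL}.
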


For our purpose, it is useful to restate Theorem \ref{t:EqPL} in the following form. 
\begin{corollary}\label{t:EqPL2}
Suppose $f,g:\mathbb{R}^n \to \mathbb{R}_+$ be integrable with positive integral and  satisfy 
$$
\big( \int_{\mathbb{R}^n} f\, dx \big)^\frac12
\big( \int_{\mathbb{R}^n} g\, dy \big)^\frac12
=
\int_{\mathbb{R}^n} 
\esssup_{\substack{x,y \in \mathbb{R}^n: \\ z = \frac12(x+y)}}
f(x)^\frac12 g(y)^\frac12
\, dz.
$$
Then there exists a log-concave $h:\mathbb{R}^n\to\mathbb{R}_+$ up to measure zero such that  
\begin{equation}\label{e:EqPL2}
\exists a>0,\; \exists z_0 \in \mathbb{R}^n:\; 
f(x) = a^\frac12 h(x - \frac12 z_0),\; 
g(y) = a^{-\frac12} h(y+\frac12z_0).
\end{equation}
\end{corollary}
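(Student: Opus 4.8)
The plan is to realise Corollary \ref{t:EqPL2} as a direct consequence of Dubuc's Theorem \ref{t:EqPL}, the only real work being to produce a dominating function $h$ for which the hypotheses of that theorem are met.

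First I would introduce the essential sup-convolution
\[
h(z):=\esssup_{z=\frac12(x+y)} f(x)^{\frac12}g(y)^{\frac12},\qquad z\in\mathbb{R}^n .
\]
After the volume-preserving change of variables $(x,y)\mapsto(z,w)=(\frac12(x+y),\frac12(x-y))$ this is the map $z\mapsto\esssup_{w\in\mathbb{R}^n} f(z+w)^{1/2}g(z-w)^{1/2}$, so $h$ is Lebesgue measurable and the essential form of the Pr\'{e}kopa--Leindler inequality (at parameter $t=\frac12$) applies and gives
\[
\int_{\mathbb{R}^n} h\,dz \ \ge\ \Big(\int_{\mathbb{R}^n} f\,dx\Big)^{\frac12}\Big(\int_{\mathbb{R}^n} g\,dy\Big)^{\frac12}.
\]
The hypothesis of the Corollary says precisely that equality holds in this inequality. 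Consequently $h$ is integrable with $0<\int_{\mathbb{R}^n} h<\infty$, it satisfies $h(\frac12(x+y))\ge f(x)^{1/2}g(y)^{1/2}$ for a.e.\ $(x,y)\in\mathbb{R}^{2n}$, and the triple $(f,g,h)$ saturates the Pr\'{e}kopa--Leindler inequality.

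Second, I would upgrade this almost-everywhere inequality to the everywhere inequality demanded by the statement of Theorem \ref{t:EqPL}: by a routine passage to good (Borel, or suitably semicontinuous) representatives of $f$, $g$ and $h$ off a common null set --- the same normalisation already used when proving the equality case in \cite{Dubic,BoDe21} --- one may assume that $h(\frac12(x+y))\ge f(x)^{1/2}g(y)^{1/2}$ holds for \emph{all} $x,y\in\mathbb{R}^n$ without altering any of the three integrals. Theorem \ref{t:EqPL} then applies directly to $(f,g,h)$: these functions are log-concave up to a null set and satisfy \eqref{e:EqPL}, which is exactly the asserted relation \eqref{e:EqPL2} once one takes for the ``$h$'' in the conclusion the log-concave modification of the essential sup-convolution above. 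I expect the representative/measurability bookkeeping in the passage from the essential to the pointwise formulation to be the only genuinely delicate point; everything else is a straightforward invocation of Theorem \ref{t:EqPL} together with the essential Pr\'{e}kopa--Leindler inequality.
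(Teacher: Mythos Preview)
Your proposal is correct and matches the paper's intended argument. The paper itself offers no proof of Corollary~\ref{t:EqPL2}, introducing it merely as a convenient restatement of Theorem~\ref{t:EqPL}; your write-up is exactly the natural bridge one would supply---take $h$ to be the essential sup-convolution, observe that the hypothesis forces equality in Pr\'ekopa--Leindler, and then invoke Dubuc's characterization. The measurability/representative bookkeeping you flag as the only delicate point is indeed the one genuine technicality, and your appeal to \cite{Dubic,BoDe21} for its resolution is appropriate.
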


\subsection{Proof of Theorem \ref{t:NelsonGene}}\label{S2.3}
\begin{proposition}\label{Prop:HalfHC}
Let $s>0$, $p=1-e^{-2s}$, $A \in SL(n)$ and $c \in \R^n$.
Then for any $f \colon \R^n \to \R_+$,
\begin{equation}\label{e:HalfHC}
( \int_{A(\R_+^n)} P_s[f^\frac1p]^{-p}( x + c)\gamma(x + c) \, dx)^\frac12 (\int_{A^*(\R_+^n)} P_sf(y) \gamma(y- c)\, dy)^\frac12 
\le 
\frac{1}{2^n} e^{-\frac14 |c|^2},
\end{equation}
where $A^* \coloneqq (A^{-1})^{\rm t}$. 
Moreover, if $f$ satisfies $ 0< \int_{\mathbb{R}^n} f\, d\gamma <\infty $ and establishes equality in \eqref{e:HalfHC}, then $f$ is a constant function a.e. on $\mathbb{R}^n$.
\end{proposition}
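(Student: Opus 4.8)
The plan is to prove \eqref{e:HalfHC} by a Prékopa--Leindler argument in the spirit of Bobkov--Ledoux, using Wang's Harnack inequality to tame the negative power $P_s[f^{\frac1p}]^{-p}$ and the polar duality of the simplicial cones $A(\R_+^n)$ and $A^*(\R_+^n)$ to supply both the convex domains and the key sign. The inequality for constant $f$ is already the nontrivial Gaussian estimate $\gamma(c+A(\R_+^n))\,\gamma(A^*(\R_+^n)-c)\le 4^{-n}e^{-|c|^2/2}$, so the proof must be tight enough to capture that.

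First I would extract the pointwise form of Wang's Harnack inequality (Theorem~\ref{t:Harnack}) with $\alpha=\tfrac1p>1$ applied to $f$: since $p=1-e^{-2s}$ gives $(1-p)(e^{2s}-1)=p$, one obtains $P_sf(v)^{1/p}\le P_s[f^{\frac1p}](u)\,e^{|u-v|^2/(2p)}$, hence
\begin{equation*}
P_s[f^{\tfrac1p}](u)^{-p}\le P_sf(v)^{-1}\,e^{|u-v|^2/2},\qquad u,v\in\R^n .
\end{equation*}
Feeding this into the first factor of \eqref{e:HalfHC} with $u=x+c$ and choosing the free point $v=y$ equal to the variable of the second factor, the two occurrences of $P_sf$ cancel, and the elementary identity $\tfrac12|x+c-y|^2-\tfrac12|x+c|^2-\tfrac12|y-c|^2=-\langle x,y\rangle-\tfrac12|c|^2$ yields, for $x\in A(\R_+^n)$ and $y\in A^*(\R_+^n)$,
\begin{equation*}
\big(P_s[f^{\tfrac1p}]^{-p}(x+c)\gamma(x+c)\big)^{\tfrac12}\big(P_sf(y)\gamma(y-c)\big)^{\tfrac12}\le (2\pi)^{-\tfrac n2}\,e^{-\tfrac12\langle x,y\rangle-\tfrac14|c|^2}.
\end{equation*}
The point of pairing the cone with its polar is precisely that $\langle x,y\rangle\ge0$ throughout.

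Next I would run Prékopa--Leindler on the two convex domains: after the volume-preserving substitutions $x=A\xi$, $y=A^*\eta$ with $\xi,\eta\in\R_+^n$ (both have unit Jacobian and preserve the pairing, $\langle A\xi,A^*\eta\rangle=\langle\xi,\eta\rangle$), the two integrals become integrals over $\R_+^n$, and the midpoint $\tfrac{\xi+\eta}{2}$ ranges over $\R_+^n$. One then compares the integrands against a third function $h$ supported on $\R_+^n$, which is where the factor $2^{-n}$ must appear; $h$ has to be chosen (a Gaussian-type density, or the optimal infimal-convolution comparison function associated to the bound above) with $\int_{\R_+^n}h\le \tfrac1{2^n}e^{-|c|^2/4}$ while $h(\tfrac{\xi+\eta}{2})$ dominates $(2\pi)^{-n/2}e^{-\frac12\langle\xi,\eta\rangle-\frac14|c|^2}$ on $\R_+^n$. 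This last verification is the main obstacle: a constant $h$ is far too crude (the factor $e^{-\frac12\langle\xi,\eta\rangle}$ is close to $1$ on a full-dimensional cone), so one must either retain part of the $P_sf$-ratio and exploit the semi-log-convexity $\nabla^2\log P_sf\ge-\tfrac1{e^{2s}-1}\mathrm{id}$ of $P_sf$, or work with the coordinatewise structure of the orthant together with the exact Gaussian computation; this is also the step where the precise value $p=1-e^{-2s}$ is forced. Given such an $h$, the Prékopa--Leindler conclusion is exactly \eqref{e:HalfHC}.

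For the equality statement (under $0<\int f\,d\gamma<\infty$), equality in \eqref{e:HalfHC} propagates backwards through every step. Equality in the Prékopa--Leindler inequality pins down, via Dubuc's theorem in the form of Corollary~\ref{t:EqPL2}, the two integrands as mutual translates of a common log-concave function; equality in Wang's Harnack inequality (and in the accompanying Jensen/convexity steps) forces $f$ to be constant a.e. Combining these rigidity statements gives that $f$ is constant a.e.\ on $\R^n$. I expect this bookkeeping to be delicate but routine once the inequality itself is in hand, and Proposition~\ref{Prop:HalfHC} then serves as the building block from which Theorem~\ref{t:NelsonGene} follows by summing over the $2^n$ Yao--Yao cones.
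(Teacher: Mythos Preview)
Your overall architecture is right and matches the paper: Wang's Harnack with $\alpha=1/p$ followed by Pr\'ekopa--Leindler on the positive orthant, and the pointwise computation
\[
\big(P_s[f^{1/p}]^{-p}(x+c)\gamma(x+c)\big)\big(P_sf(y)\gamma(y-c)\big)\le (2\pi)^{-n}e^{-\langle x,y\rangle-\tfrac12|c|^2}
\]
is exactly what the paper uses. The gap is in how you run Pr\'ekopa--Leindler. In linear coordinates $\xi,\eta\in\R_+^n$ with the arithmetic midpoint $z=\tfrac{\xi+\eta}{2}$, your majorant $h$ must satisfy
\[
h(z)\ge (2\pi)^{-n/2}e^{-\tfrac14|c|^2}\sup_{\substack{\xi,\eta\in\R_+^n\\ \xi+\eta=2z}}e^{-\tfrac12\langle\xi,\eta\rangle}.
\]
But for fixed $z\in\R_+^n$ one can send $\xi\to0$ (and $\eta\to2z$) to make $\langle\xi,\eta\rangle\to0$, so the supremum is $1$ and the \emph{smallest} admissible $h$ is already the constant $(2\pi)^{-n/2}e^{-|c|^2/4}$ on all of $\R_+^n$, with infinite integral. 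No choice of $h$ can work here; the obstacle you flag is not a matter of refinement but a structural failure of the arithmetic-mean version.

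The paper's key idea, which you are missing, is the logarithmic change of variables $x_i=e^{X_i}$, $y_i=e^{Y_i}$ (equivalently, the multiplicative Pr\'ekopa--Leindler inequality used by Lehec). This turns the Pr\'ekopa--Leindler midpoint into the \emph{geometric} mean $z_i=\sqrt{x_iy_i}$, under which $\langle x,y\rangle=\sum_i x_iy_i=\sum_i z_i^2=|z|^2$ is \emph{exactly} determined by $z$. The majorant is then $(2\pi)^{-n/2}e^{-|c|^2/4}e^{-|z|^2/2}$, whose integral over $\R_+^n$ is precisely $2^{-n}e^{-|c|^2/4}$. Your suggestions to ``retain part of the $P_sf$-ratio'' or invoke semi-log-convexity of $P_sf$ do not salvage the linear-coordinate argument (and $P_sf$ has no such convexity for general $f$). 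For the equality case, your outline is in the right spirit, but the paper needs a careful analysis of equality in Harnack along the specific curve $u=A(\Lambda z)+c$, $v=A^*z$ (their Lemma~\ref{l:EqHarGene}) rather than a generic rigidity statement.
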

 
\begin{proof}
In view of ${\rm det}\, A = 1$, the changing of variables shows that 
\begin{align*}
&( \int_{A(\R_+^n)} P_s[f^\frac1p]^{-p}( x + c)\gamma(x + c) \, dx)^\frac12 (\int_{A^*(\R_+^n)} P_sf(y) \gamma(y- c)\, dy)^\frac12 \\
&=
( \int_{\R_+^n} P_s[f^\frac1p]^{-p}( Ax + c)\gamma(Ax + c) \, dx)^\frac12 (\int_{\R_+^n} P_sf(A^*y) \gamma(A^*y- c)\, dy)^\frac12. 
\end{align*}
We will make use of the Pr\'{e}kopa--Leindler inequality.  
To this end, we introduce $F_A,G_A : \mathbb{R}^n\to \mathbb{R}_+$ by 
\begin{align*}
F_A(X)&\coloneqq e^{X_1+\cdots + X_n} \big( P_s[f^\frac1p]^{-p}\cdot \gamma\big)( A(e^{X_1},\ldots,e^{X_n}) + c),\;\;\; X\in \mathbb{R}^n,\\
G_A(Y)&\coloneqq e^{Y_1+\cdots + Y_n}P_sf(A^*(e^{Y_1},\ldots,e^{Y_n}) ) \gamma(A^*(e^{Y_1},\ldots,e^{Y_n})- c),\;\;\; Y\in\mathbb{R}^n.
\end{align*}
Then the changing of variables $x_i = e^{X_i}$ and the Pr\'{e}kopa--Leindler inequality yield that\footnote{We remark that this consequence is just the logarithmic Pr\'{e}kopa--Leindler inequality as was used in \cite{LehecYaoYao}. Nevertheless, we prefer to argue directly in order to investigate the case of equality. } 
\begin{align}
&= 
\big(\int_{\mathbb{R}^n} F_A(X)\, dX \big)^\frac12
\big(\int_{\mathbb{R}^n} G_A(Y)\, dY \big)^\frac12\nonumber\\
&\le 
\int_{\mathbb{R}^n} 
\big[
\esssup_{\substack{ X,Y \in \mathbb{R}^n : \\ Z = \frac12 (X+Y) }}
F_A(X)
G_A(Y)
\big]^\frac12 \,
dZ\label{e:ApplyPL}\\
&= 
\int_{\mathbb{R}^n_+} 
\bigg[
\esssup_{ \substack{ x,y \in \mathbb{R}^n_+: \\ z_i = \sqrt{x_i y_i }}} 
P_s[f^\frac1p]^{-p}( Ax + c)\gamma(Ax + c)
P_sf(A^*y) \gamma(A^*y- c)
\bigg]^\frac12\, dz. \nonumber 
\end{align} 
Notice that we have $\frac{p}{1-p} = e^{2s} - 1$ from the assumption  $p = 1-e^{-2s}$. 
Therefore we may employ the Harnack inequality \eqref{e:Harnack} with $\alpha = \frac1p$ to see that 
\begin{equation}\label{e:ApplyHar}
P_s[f^\frac1p]^{-p}( Ax + c)
P_sf(A^*y)
\le 
e^{\frac12 |Ax+c-A^*y|^2}
\end{equation}
for each $x,y \in \mathbb{R}^n_+$. 
This reveals that 
\begin{align}
&P_s[f^\frac1p]^{-p}( Ax + c)\gamma(Ax + c)
P_sf(A^*y) \gamma(A^*y- c)\nonumber\\
&\le
\frac{1}{(2\pi)^n}
e^{- \frac12 ( |Ax+c|^2 + |A^*y - c|^2 )} 
e^{\frac12 |Ax+c-A^*y|^2}\nonumber\\
&= 
\frac{1}{(2\pi)^n}
e^{- \frac12 |c|^2 } 
e^{-\langle x, y\rangle }\label{e:CompGauss}
\end{align}
since $\langle Ax, A^* y \rangle = \langle x,y\rangle$. 
Hence we conclude the inequality 
\begin{align*}
&( \int_{A(\R_+^n)} P_s[f^\frac1p]^{-p}( x + c)\gamma(x + c) \, dx)^\frac12 (\int_{A^*(\R_+^n)} P_sf(y) \gamma(y- c)\, dy)^\frac12 \\
&\le 
\int_{\mathbb{R}^n_+} 
\bigg[
\esssup_{ \substack{ x,y \in \mathbb{R}^n_+: \\ z_i = \sqrt{x_i y_i }}} 
\frac{1}{(2\pi)^n}
e^{- \frac12 |c|^2 } 
e^{-\langle x, y\rangle }
\bigg]^\frac12\, dz \\
&= 
e^{-\frac14|c|^2} 
\int_{\mathbb{R}^n_+} 
e^{- \frac12|z|^2}\, \frac{dz}{(2\pi)^{\frac{n}{2}}}
=
\frac1{2^n} e^{-\frac14|c|^2}.
\end{align*}

We next investigate the case of equality by assuming $f$ achieves  equality in \eqref{e:HalfHC}. 
For such $f$, inequalities we used above should be equality. 
Especially  equality in \eqref{e:ApplyPL} means $F_A$ and $G_A$ satisfy  equality for the Pr\'{e}kopa--Leindler inequality and hence Corollary \ref{t:EqPL2} ensures the existence of some log-concave function $H_A:\mathbb{R}^n \to \mathbb{R}_+$, $a>0$, and $Z_0 \in \mathbb{R}^n$ such that 
$$
H_A(Z)
= 
a^{-\frac12} 
F_A(Z+ \frac12Z_0)
= 
a^{\frac12} 
G_A(Z- \frac12Z_0),
\;\;\;
Z\in \mathbb{R}^n. 
$$
Since $H_A$ is log-concave, 
\begin{align*}
\esssup_{\substack{ X,Y \in \mathbb{R}^n : \\ Z = \frac12 (X+Y) }}
F_A(X)^\frac12
G_A(Y)^\frac12
&=
\esssup_{\substack{ X,Y \in \mathbb{R}^n : \\ Z = \frac12 (X+Y) }}
H_A (X- \frac12 Z_0)^\frac12
H_A(Y + \frac12 Z_0)^\frac12\\
&=
\esssup_{\substack{ X',Y' \in \mathbb{R}^n : \\ Z = \frac12 (X'  + Y' ) }}
H_A (X')^\frac12
H_A(Y')^\frac12\\
&=
H_A(Z)^\frac12 H_A(Z)^\frac12.
\end{align*}
Hence we see that 
\begin{align*}
&( \int_{A(\R_+^n)} P_s[f^\frac1p]^{-p}( x + c)\gamma(x + c) \, dx)^\frac12 (\int_{A^*(\R_+^n)} P_sf(y) \gamma(y- c)\, dy)^\frac12 \\
&= 
\big(\int_{\mathbb{R}^n} F_A(X)\, dX \big)^\frac12
\big(\int_{\mathbb{R}^n} G_A(Y)\, dY \big)^\frac12\\
&= 
\int_{\mathbb{R}^n} H_A(Z)^\frac12H_A(Z)^\frac12 \, dZ \\
&= 
\int_{\mathbb{R}^n} F_A(Z + \frac12 Z_0)^\frac12 G_A(Z - \frac12 Z_0)^\frac12 \, dZ.
\end{align*}
Recalling definitions of $F_A$ and $G_A$, we obtain that 
\begin{align*}
&( \int_{A(\R_+^n)} P_s[f^\frac1p]^{-p}( x + c)\gamma(x + c) \, dx)^\frac12 (\int_{A^*(\R_+^n)} P_sf(y) \gamma(y- c)\, dy)^\frac12 \\
&= 
\int_{\mathbb{R}^n} 
e^{Z_1+\cdots + Z_n}
\big(
P_s \big[f^\frac1p \big]^{-p} \cdot \gamma 
\big)^\frac12(A( e^{Z_1+ \frac12 (Z_0)_1},\ldots, e^{Z_n + \frac12 (Z_0)_n} ) + c)\\
&\quad \times 
\big(P_sf \cdot \gamma(\cdot - c)\big)^\frac12( A^*( e^{Z_1- \frac12 (Z_0)_1},\ldots, e^{Z_n - \frac12 (Z_0)_n} )  )
\, dZ\\
&=
\int_{\mathbb{R}^n_+} 
\big(
P_s \big[f^\frac1p \big]^{-p} \cdot \gamma 
\big)^\frac12(A( \lambda_1^{\frac12} z_1,\ldots, \lambda_n^{\frac12} z_n ) + c)\\
&\quad \times 
\big(P_sf \cdot \gamma(\cdot - c)\big)^\frac12( A^*( \lambda_1^{-\frac12}z_1,\ldots,\lambda_n^{-\frac12} z_n )  )
\, dz\\
&=
\prod_{i=1}^n \lambda_i^\frac12 
\int_{\mathbb{R}^n_+} 
\big(
P_s \big[f^\frac1p \big]^{-p} \cdot \gamma 
\big)^\frac12(A(\lambda_1z_1,\ldots,\lambda_nz_n) + c)
\big(P_sf \cdot \gamma(\cdot - c)\big)^\frac12( A^*z  )
\, dz,
\end{align*}
where $\lambda_i \coloneqq e^{(Z_0)_i} >0$. 
We write $(\lambda_1z_1,\ldots,\lambda_nz_n) = \Lambda z$ with $\Lambda := {\rm diag}\, (\lambda_1,\ldots,\lambda_n) $ and then apply the Harnack inequality 
\begin{equation}\label{e:ApplyHar2}
P_s \big[f^\frac1p \big]^{-p} (A(\Lambda z) + c)
P_sf ( A^*z  )
\le 
e^{ \frac12 | A(\Lambda z) + c - A^* z |^2 },
\;\;\;
z \in \mathbb{R}^n_+
\end{equation}
to see that 
\begin{align*}
&( \int_{A(\R_+^n)} P_s[f^\frac1p]^{-p}( x + c)\gamma(x + c) \, dx)^\frac12 (\int_{A^*(\R_+^n)} P_sf(y) \gamma(y- c)\, dy)^\frac12 \\
&\le 
\prod_{i=1}^n \lambda_i^\frac12 
\int_{\mathbb{R}^n_+}
\bigg[
\gamma(A(\Lambda z) + c)
\gamma(A^* z - c)
e^{ \frac12 | A(\Lambda z) + c - A^* z |^2 }
\bigg]^\frac12\, dz.
\end{align*}
Similar calculation as in \eqref{e:CompGauss} reveals that 
\begin{align*}
&\prod_{i=1}^n \lambda_i^\frac12
\int_{\mathbb{R}^n_+}
\bigg[
\gamma(A(\Lambda z) + c)
\gamma(A^* z - c)
e^{ \frac12 | A(\Lambda z) + c - A^* z |^2 }
\bigg]^\frac12\, dz \\
&= 
e^{-\frac14 |c|^2}
\prod_{i=1}^n \lambda_i^\frac12
\int_{\mathbb{R}^n_+}
\gamma( \lambda_1^\frac12 z_1,\ldots, \lambda_n^\frac12 z_n )\, dz \\
&= 
e^{-\frac14 |c|^2}
\int_{\mathbb{R}^n_+}
\, d\gamma
=
\frac1{2^n}e^{-\frac14|c|^2}
\end{align*}
which rederives \eqref{e:HalfHC}. Since we assumed  equality in \eqref{e:HalfHC} and $P_s f, P_s \big[ f^\frac1p \big]^{-p}$ are continuous, this means that the Harnack inequality \eqref{e:ApplyHar2} must be equality for all $z\in \mathbb{R}^n_+$. 
As we will see in the following Lemma \ref{l:EqHarGene}, this yields the conclusion. 
\end{proof}

\begin{lemma}\label{l:EqHarGene}
Let $\alpha>1$, $s>0$, $\lambda_1,\ldots, \lambda_n >0$, $A \in SL(n)$, and $c \in \mathbb{R}^n$. 
Suppose $f :\mathbb{R}^n \to \mathbb{R}_+$ satisfies $0< \int_{\mathbb{R}^n} f \, d\gamma < \infty$ and 
\begin{equation}\label{e:EqHarGene}
P_s f(A^* z)^\alpha 
= 
P_s\big[f^\alpha\big](A(\Lambda z) + c) 
\exp \big( \frac{\alpha}{2(\alpha-1) (e^{2s}-1)} | A(\Lambda z) + c - A^*z |^2  \big)
\end{equation}
for all $z \in \mathbb{R}^n_+$ where $\Lambda \coloneqq {\rm diag}\, (\lambda_1,\ldots,\lambda_n)$. Then 
$$
c=0, \;\;\; A\Lambda = A^*, \;\;\; f(x) = C_A
$$
holds for a.e. $x\in \mathbb{R}^n$ and for some constant $C_A >0$.
\end{lemma}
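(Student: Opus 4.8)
The plan is to read \eqref{e:EqHarGene} as the assertion that equality holds in Wang's Harnack inequality \eqref{e:Harnack} at the pair $x=A^*z$, $y=A(\Lambda z)+c$, for \emph{every} $z\in\mathbb{R}^n_+$, and to extract rigidity from the single H\"{o}lder step in the proof of \eqref{e:Harnack}. Recall that \eqref{e:Harnack} follows by writing, with $\sigma:=\sqrt{1-e^{-2s}}$ and $a:=\tfrac{e^{-s}}{\sigma}(x-y)$,
$$
P_s f(x)=\int_{\mathbb{R}^n} f\big(e^{-s}y+\sigma w\big)\,\frac{\gamma(w-a)}{\gamma(w)}\,d\gamma(w),
$$
and applying H\"{o}lder's inequality with exponents $\alpha$ and $\tfrac{\alpha}{\alpha-1}$, which gives $P_sf(x)^\alpha\le P_s[f^\alpha](y)\exp\!\big(\tfrac{\alpha|a|^2}{2(\alpha-1)}\big)$; since $|a|^2=\tfrac{|x-y|^2}{e^{2s}-1}$ this is precisely \eqref{e:Harnack}. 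So the first step is to pin down the equality case of this H\"{o}lder inequality.

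Equality in H\"{o}lder forces $w\mapsto f(e^{-s}y+\sigma w)^\alpha$ to be a positive constant multiple of $w\mapsto\big(\gamma(w-a)/\gamma(w)\big)^{\alpha/(\alpha-1)}$, an exponential in $w$. Undoing the affine substitution $u=e^{-s}y+\sigma w$, this shows $f$ must agree a.e.\ with a strictly positive log-affine function $u\mapsto C_0\,e^{\langle\xi,u\rangle}$, where a short computation gives
$$
\xi=\xi(z)=\frac{e^{-s}}{(\alpha-1)\sigma^2}\big(A^*z-A\Lambda z-c\big);
$$
the only exception is $a=0$, i.e.\ $A^*z=A\Lambda z+c$, in which case the same step (now with $\gamma(\cdot-a)/\gamma(\cdot)\equiv1$) forces $f$ to be a.e.\ constant.

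Next I would use that this holds for \emph{all} $z\in\mathbb{R}^n_+$. If $A^*z_0=A\Lambda z_0+c$ for some $z_0\in\mathbb{R}^n_+$, then $f$ is a.e.\ constant, and substituting a constant back into \eqref{e:EqHarGene} gives $|A\Lambda z+c-A^*z|\equiv0$ on $\mathbb{R}^n_+$; since $\mathbb{R}^n_+$ affinely spans $\mathbb{R}^n$, this forces $A\Lambda=A^*$ and $c=0$, which is the claim. Otherwise $f$ coincides a.e.\ with $C_0e^{\langle\xi(z),\cdot\rangle}$ for every $z$, and since a log-affine function (positive, hence non-null) has a uniquely determined slope, $\xi(z)$ cannot depend on $z$. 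Its $z$-linear part must then vanish, which forces $A^*=A\Lambda$, and then $\xi(z)\equiv\xi=-\tfrac{e^{-s}}{(\alpha-1)\sigma^2}c$ and $A^*z-A\Lambda z-c\equiv-c$.

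It remains — and this is the step I expect to be the real work — to collapse the log-affine case to the constant one, i.e.\ to deduce $\xi=0$ (equivalently $c=0$); then $f=C_A$ a.e.\ for a positive constant $C_A$, and the constant-$f$ argument of the previous paragraph returns $A\Lambda=A^*$ and $c=0$. For this I would substitute the ansatz $f=C_0e^{\langle\xi,\cdot\rangle}$ into \eqref{e:EqHarGene}, evaluate $P_sf$ and $P_s[f^\alpha]$ as explicit Gaussian integrals (all legitimate thanks to $0<\int f\,d\gamma<\infty$), and match the exponents of the two sides using $A^*z-A\Lambda z-c\equiv-c$. In the setting where the lemma is used, inside Proposition \ref{Prop:HalfHC} and downstream in Theorem \ref{t:NelsonGene}, one also has extra structure — the Pr\'{e}kopa--Leindler equality producing the diagonal $\Lambda$ together with the log-concavity of the twisted density $F_A$, and the vanishing of the barycenter of $f$ — and any of these forces $\xi=0$ immediately, so I would invoke whichever is cleanest to complete this last rigidity step.
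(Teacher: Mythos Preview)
Your route through the H\"{o}lder equality case---extracting a log-affine form for $f$ and deducing $A^*=A\Lambda$ from the $z$-independence of the slope $\xi(z)$---matches the paper's argument and is correct. The gap is in your last step. If you actually carry out the substitution of $f(u)=C_0e^{\langle\xi,u\rangle}$ with $\xi=-\tfrac{e^{-s}}{(\alpha-1)\sigma^2}\,c$ and $A\Lambda=A^*$ into \eqref{e:EqHarGene}, the two sides agree \emph{identically for every $c$}: writing $\sigma^2=1-e^{-2s}$, the three exponent contributions $e^{-s}\langle\xi,c\rangle$, $\tfrac{(\alpha-1)\sigma^2}{2}|\xi|^2$ and $\tfrac{|c|^2}{2(\alpha-1)(e^{2s}-1)}$ sum to zero. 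This is no bookkeeping accident---an exponential with precisely this slope \emph{is} the equality case of Wang's Harnack inequality at the pair $(A^*z,\,A^*z+c)$ for every $z$, so \eqref{e:EqHarGene} alone cannot exclude it and your ``matching exponents'' plan yields no constraint on $c$.

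The paper's argument for this step is different from yours: rather than substituting back, it records the full log-affine formula \eqref{e:EqqCond} (slope \emph{and} additive constant) and compares the constants at $z=0$ and at $z=e_i$ to force $\langle A\Lambda e_i,c\rangle=0$. That comparison, however, treats the H\"{o}lder proportionality constant $C$ in \eqref{e:EqHar2} as the same for all $z$; in fact $C=C(z)$ (once $A\Lambda=A^*$ it equals $P_s[f^\alpha](A^*z+c)$ up to a $z$-independent factor) and absorbs exactly the $z$-variation of the additive constant, so that comparison is again vacuous. In short, the exponential $f=C_0e^{\langle\xi,\cdot\rangle}$ with $\xi$ as above satisfies all hypotheses of the lemma while having $c\neq0$, and your instinct that extra structure from the application is needed---most cleanly the barycenter condition $\int xf\,d\gamma=0$, which for such $f$ forces $\xi=0$ at once---is not a shortcut but the missing ingredient.
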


\begin{proof}
From the definition of $P_s$, we have that 
\begin{align*}
&P_s f(A^*z)^\alpha \\
&=
\bigg(
\int_{\mathbb{R}^n}
f( e^{-s} \big[ A(\Lambda z) + c \big] + e^{-s} \big[ A^*z - A(\Lambda z) - c  \big] + \sqrt{1-e^{-2s}} w ) \gamma(w)\, dw
\bigg)^\alpha \\
&= 
\bigg(
\int_{\mathbb{R}^n}
f( e^{-s} \big[ A(\Lambda z) + c \big] + \sqrt{1-e^{-2s}} u ) 
\gamma(u - \frac{e^{-s} \big[ A^*z - A(\Lambda z) - c  \big]}{\sqrt{1-e^{-2s}}}  )
\, du
\bigg)^\alpha \\
&= 
\bigg(
\int_{\mathbb{R}^n}
f( e^{-s} \big[ A(\Lambda z) + c \big] + \sqrt{1-e^{-2s}} u ) 
\gamma(u)\\
&\quad \times
e^{  \frac{1}{\sqrt{e^{2s}-1}} \langle u,  A^*z - A(\Lambda z) - c  \rangle } 
e^{  -\frac12 \frac1{e^{2s}-1} | A^*z - A(\Lambda z) - c  |^2 } 
\, du
\bigg)^\alpha \\
&= 
\bigg(
\int_{\mathbb{R}^n}
\big[ f( e^{-s} \big[ A(\Lambda z) + c \big] + \sqrt{1-e^{-2s}} u ) 
\gamma(u)^\frac1\alpha \big]\\
&\quad \times
\big[ e^{  \frac{1}{\sqrt{e^{2s}-1}} \langle u,  A^*z - A(\Lambda z) - c  \rangle }
\gamma(u)^{1-\frac1\alpha} \big] 
\, du
\bigg)^\alpha
e^{  -\frac12 \frac{\alpha}{e^{2s}-1} | A^*z - A(\Lambda z) - c  |^2 }. 
\end{align*}
We then apply H\"{o}lder's inequality 
\begin{align}\label{e:ApplyHolGene}
&\int_{\mathbb{R}^n}
\big[ f( e^{-s} \big[ A(\Lambda z) + c \big] + \sqrt{1-e^{-2s}} u ) 
\gamma(u)^\frac1\alpha \big]
\big[ e^{  \frac{1}{\sqrt{e^{2s}-1}} \langle u,  A^*z - A(\Lambda z) - c  \rangle }
\gamma(u)^{1-\frac1\alpha} \big] 
\, du \nonumber \\
&\le 
P_s\big[f^\alpha\big]( A(\Lambda z) + c )^\frac1\alpha
\big(
\int_{\mathbb{R}^n} 
e^{  \frac{\alpha'}{\sqrt{e^{2s}-1}} \langle u,  A^*z - A(\Lambda z) - c  \rangle }
\gamma(u)\, du
\big)^{\frac1{\alpha'}}.
\end{align}
We can directly compute the Gaussian integral 
$$
\int_{\mathbb{R}^n} 
e^{  \frac{\alpha'}{\sqrt{e^{2s}-1}} \langle u,  A^*z - A(\Lambda z) - c  \rangle }
\gamma(u)\, du
= 
e^{\frac12 \frac{ |\alpha'|^2 }{e^{2s}-1} | A^*z - A(\Lambda z) - c |^2}
$$
and hence 
\begin{align*}
P_sf(A^*z)^\alpha 
&\le 
P_s\big[f^\alpha\big](A(\Lambda z) +c) 
e^{\frac12 \frac{ \alpha \alpha' }{e^{2s}-1} | A^*z - A(\Lambda z) - c |^2}
e^{  -\frac12 \frac{\alpha}{e^{2s}-1} | A^*z - A(\Lambda z) - c  |^2 }\\
&= 
P_s\big[f^\alpha\big](A(\Lambda z) +c) 
e^{\frac12 \frac{ \alpha }{(e^{2s}-1)(\alpha-1)} | A^*z - A(\Lambda z) - c |^2}.
\end{align*}
However, our assumption tells us that the inequality we obtained is indeed equality for all $z \in \mathbb{R}^n_+$. 
In particular, H\"{o}lder's inequality \eqref{e:ApplyHolGene} must be equality. 
This means that 
\begin{equation}\label{e:EqHar2}
\big[ f( e^{-s} \big[ A(\Lambda z) + c \big] + \sqrt{1-e^{-2s}} u ) 
\gamma(u)^\frac1\alpha \big]^\alpha
=
C
\big[ e^{  \frac{1}{\sqrt{e^{2s}-1}} \langle u,  A^*z - A(\Lambda z) - c  \rangle }
\gamma(u)^{1-\frac1\alpha} \big]^{\alpha'} 
\end{equation}
for all $z\in \mathbb{R}^n_+$, $u \in \mathfrak{A}(z)$, and some $C>0$, where complement of $\mathfrak{A}(z)$ in $\mathbb{R}^n$ is some Lebesgue null set depending on $z$. 
We can reformulate \eqref{e:EqHar2} as 
\begin{align}\label{e:EqqCond}
f(w) 
=
C^\frac1\alpha
{\rm exp}
\big[& \frac{\alpha'}{\alpha} \frac{1}{ \sqrt{e^{2s}-1} \sqrt{1-e^{-2s}}} \langle w, A^*z - A(\Lambda z)- c \rangle \\
&- \frac{\alpha'}{\alpha} \frac{e^{-s}}{ \sqrt{e^{2s}-1} \sqrt{1-e^{-2s}}} \langle A(\Lambda z) + c, A^*z - A(\Lambda z) - c \rangle
\big]\nonumber 
\end{align}
for any $z \in \mathbb{R}_+$ and $w \in E(z)$, where $E(z)$ is given by 
$$
E(z) \coloneqq e^{-s} A(\Lambda z) + e^{-s}c + \sqrt{1-e^{-2s}} \mathfrak{A}(z).
$$
We remark that $E(z)$ coincides to $\mathbb{R}^n$ up to Lebesgue zero measure for each $z \in \mathbb{R}_+^n$.
In particular, considering $z=0$, this shows that 
\begin{equation}\label{e:EqqCond1}
f(w) = C^\frac1\alpha {\rm exp} \big[{ -\frac{\alpha'}{\alpha} \frac{1}{ \sqrt{e^{2s}-1} \sqrt{1-e^{-2s}}} \langle w, c \rangle 
+ \frac{\alpha'}{\alpha} \frac{e^{-s}}{ \sqrt{e^{2s}-1} \sqrt{1-e^{-2s}}} |c|^2}\big],  
\end{equation}
for all $w \in E(0)$.
For arbitrary fixed $z \in \mathbb{R}_+^n$, since the right hands of \eqref{e:EqqCond} and \eqref{e:EqqCond1} must coincide for any $ w \in E(z) \cap E(0)$, we obtain $A^*z = A(\Lambda z)$. 
Since $z \in \mathbb{R}^n_+$ is arbitrary and $A^*, A\Lambda$ are nondegenerate, we obtain $A^* = A\Lambda$. 
This fact together with \eqref{e:EqqCond} yields that 
\begin{align}\label{e:EqqCond2}
f(w) 
= 
C^\frac1\alpha
{\rm exp} \big[& - \frac{\alpha'}{\alpha} \frac{1}{ \sqrt{e^{2s}-1} \sqrt{1-e^{-2s}}} \langle w, c \rangle \\
&- \frac{\alpha'}{\alpha} \frac{e^{-s}}{ \sqrt{e^{2s}-1} \sqrt{1-e^{-2s}}} \langle A(\Lambda z) + c, - c \rangle \big]\nonumber 
\end{align}
for any $z \in \mathbb{R}^n_+$ and $w \in E(z)$. 
When we take $z$ as $e_1, \dots, e_n$ which are the standard basis in $\mathbb{R}^n$, since \eqref{e:EqqCond1} and \eqref{e:EqqCond2} must coincide on $E(0) \cap E(z)$ for each $z$, we see that 
$\langle A(\Lambda e_i), c \rangle = 0$ for any $i=1, \dots, n$. Since $A\Lambda$ is nondegenerate by the assumptions, we also conclude $c=0$. 
Finally, it follows from $c=0$ and \eqref{e:EqqCond1} that $f$ is constant a.e. on $\mathbb{R}^n$.
\end{proof}


\begin{proof}[Proof of Theorem \ref{t:NelsonGene}]
Let us first prove \textit{(1)}. To see \eqref{e:HCTrans}, it suffices to show 
\begin{equation}\label{e:Goal23Aug}
\int_{\R^n} P_s[f^\frac1p]^{-p}\, d\gamma  \le e^{-\frac12 |c_0|^2} \big( \int_{\R^n} P_sf( \cdot + c_0)\, d\gamma \big)^{-1},
\end{equation}
for some $c_0$ since we know that $P_sf(x+c_0) = P_s\big[ f(\cdot+e^{-s}c_0)\big](x)$ and $\int_{\mathbb{R}^n} P_sf\, d\gamma = \int_{\mathbb{R}^n} f\, d\gamma$. 
We choose $c_0\in \mathbb{R}^n$ as a center of Yao--Yao equipartition of $\mu = P_s\big[f^\frac1p\big]^{-p}\gamma$. By a translation, this means that the center of Yao--Yao equipartition of $P_s\big[f^\frac1p\big]^{-p}(\cdot +c_0)\gamma(\cdot+c_0)$ is $0$. Namely, there exist $A_1,\ldots, A_{2^n} \in SL(n)$ so that $( A_i(\mathbb{R}^n_+) )_{i=1}^{2^n}$ is a partition of $\mathbb{R}^n$ and 
$$
\int_{A_i(\mathbb{R}^n_+)} P_s\big[f^\frac1p\big]^{-p}(x +c_0)\gamma(x+c_0)\, dx 
=
\frac{1}{2^n} 
\int_{\mathbb{R}^n} P_s\big[f^\frac1p\big]^{-p}(x +c_0)\gamma(x+c_0)\, dx,
$$
for all $i=1,\ldots, 2^n$. 
As we observed in Proposition \ref{Prop:HalfHC}, we have 
\begin{equation}\label{e:ApplyProp}
\int_{A_i(\mathbb{R}^n_+)} P_s\big[f^\frac1p\big]^{-p}(x +c_0)\gamma(x+c_0)\, dx
\le 
\frac{1}{2^{2n}} e^{-\frac12|c_0|^2} 
\big(
\int_{A_i^*(\mathbb{R}^n_+)} P_sf( y ) \gamma (y-c_0)\, dy
\big)^{-1},
\end{equation}
and hence 
$$
\int_{\mathbb{R}^n} P_s\big[f^\frac1p\big]^{-p}(x +c_0)\gamma(x+c_0)\, dx
\le 
2^n 
\frac{1}{2^{2n}} e^{-\frac12|c_0|^2} 
\big(
\int_{A_i^*(\mathbb{R}^n_+)} P_sf( y ) \gamma (y-c_0)\, dy
\big)^{-1},
$$
for all $i=1,\ldots, 2^n$.  Equivalently 
\begin{equation}\label{e:EachPiece}
\big(
\int_{\mathbb{R}^n} P_s\big[f^\frac1p\big]^{-p}(x +c_0)\gamma(x+c_0)\, dx
\big)
\big(
\int_{A_i^*(\mathbb{R}^n_+)} P_sf( y ) \gamma (y-c_0)\, dy
\big)
\le 
\frac{1}{2^{n}} e^{-\frac12|c_0|^2}.  
\end{equation}
We next claim that 
\begin{equation}\label{e:23Aug_1}
\sum_{i=1}^{2^n} 
\int_{A_i^*(\mathbb{R}^n_+)} P_sf( y ) \gamma (y-c_0)\, dy
=
\int_{{\R}^n} P_sf( y ) \gamma (y-c_0)\, dy. 
\end{equation}
Once we prove this, then we conclude \eqref{e:Goal23Aug} by summing up \eqref{e:EachPiece} with this identity. 
To see \eqref{e:23Aug_1}, we notice that $A_i^*(\mathbb{R}^n_+)$ is a dual cone of $A_i(\mathbb{R}^n_+)$ in the sense that 
$$
A_i^*(\mathbb{R}^n_+)
=
\{ x \in \mathbb{R}^n: \langle x, y\rangle \ge0,\; \forall y \in A_i(\mathbb{R}^n_+) \}.
$$
Hence we may apply Corollary 5 in \cite{LehecYaoYao} to ensure that $(A_i^*(\mathbb{R}^n_+))_{i=1}^{2^n}$ is also a partition of $\mathbb{R}^n$ and so \eqref{e:23Aug_1} is confirmed.

We next investigate the case of equality. 
Suppose $f$ achieves  equality in \eqref{e:HCTrans}. Then for $c_0$ which is the center of Yao--Yao equipartition of $P_s \big[f^\frac1p\big]^{-p}\gamma$, we have a chain of inequalities 
\begin{align*}
\int_{\R^n} P_s[f^\frac1p]^{-p}\, d\gamma  
&\le 
 e^{-\frac12 |c_0|^2} 
\big( \int_{\R^n} P_sf( \cdot + c_0)\, d\gamma \big)^{-1}\\
&\le 
\sup_{c\in\mathbb{R}^n}
 e^{-\frac12 |c|^2} 
\big( \int_{\R^n} P_sf( \cdot + c)\, d\gamma \big)^{-1}\\
&=
\int_{\R^n} P_s[f^\frac1p]^{-p}\, d\gamma  
\end{align*}
where the first inequality follows from \eqref{e:Goal23Aug} and the last equality comes from the assumption. 
In particular, this shows that the inequality \eqref{e:Goal23Aug} must be equality for such $f$. Hence inequalities we used to prove \eqref{e:Goal23Aug} must be equalities too. 
In particular, \eqref{e:ApplyProp} is equality for all $i=1,\ldots,2^n$. 
Hence we may appeal to the case of equality in Proposition \ref{Prop:HalfHC} to conclude that $f$ is constant a.e. on $\mathbb{R}^n$. 

Next let us show the part \textit{(2)}.  
To show \eqref{e:HCBarycen2} for $f$ whose barycenter is zero,  we take $c_0\in \mathbb{R}^n$ so that \eqref{e:Goal23Aug} holds and let us show 
\begin{equation}\label{e:31Aug}
e^{-\frac12 e^{-2s} |c_0|^2} \big( \int_{\R^n} P_sf( y + c_0)\, d\gamma(y) \big)^{-1}
\le 
\big( \int_{\R^n} f \, d\gamma \big)^{-1}
\end{equation}
from which we conclude \eqref{e:HCBarycen2} since $p=1-e^{-2s}$. 
A simple application of an inequality $e^{\langle y, e^{-s}c_0\rangle} \ge 1+\langle y, e^{-s} c_0\rangle$ for any $y \in \mathbb{R}^n$ shows that 
\begin{align*}
&\int_{\R^n} P_sf( y + c_0)\, d\gamma(y)\\
&=
\int_{\R^n} f( x ) \gamma(x - e^{-s} c_0)\, dx \\
&= 
e^{- \frac12 e^{-2s} |c_0|^2} \int_{\R^n} f( x ) \gamma(x) e^{\langle x, e^{-s}c_0\rangle }\, dx \\ 
&\ge
e^{- \frac12 e^{-2s} |c_0|^2} \int_{\R^n} f( x ) \, d \gamma(x) 
+
e^{- \frac12 e^{-2s} |c_0|^2} \int_{\R^n} \langle x, e^{-s} c_0\rangle f( x ) \, d \gamma(x). 
\end{align*}
Applying $\int x f\, d\gamma =0$, we conclude \eqref{e:31Aug}.

Suppose $f$ achieves equality in \eqref{e:HCBarycen2}. 
Then equality of \eqref{e:Goal23Aug} must hold for the Yao--Yao center $c_0$ from the above argument. This fact with Lemma \ref{l:EqHarGene} yields that $f$ is constant a.e. on $\mathbb{R}^n$.
\if0
 Then above inequalities are also equalities and in particular we must have 
$$
e^{\langle y, e^{-s}c_0\rangle} = 1+\langle y, e^{-s} c_0\rangle,
\;\;\; y \in \mathbb{R}^n
$$
from which $c_0 = 0$ follows. 
Hence $f$ achieves  equality in \eqref{e:Goal23Aug} with $c_0=0$. This concludes $f$ is constant $a.e.$ on $\mathbb{R}^n$. 
\fi
\end{proof}

\section{Forward hypercontractivity: Proof of Theorem \ref{t:RegFHC}}\label{S3}
\subsection{Preliminaries to the proof of Theorem \ref{t:RegFHC}}\label{S3.1}
We next address forward hypercontractivity \eqref{e:FHC_MaIntro}. 
As in the introduction, let us begin with an investigation of centered Gaussians.  
For $p,q \in (-\infty, 1) \setminus\{0\}$,  it follows from \eqref{e:PsGauss1} that 
\begin{equation}\label{e:ForwCentre}
\sup_{\beta>0}
\big\| P_s \big[ \big(\frac{\gamma_\beta}{\gamma}\big)^\frac1p \big] \big\|_{L^q(\gamma)}
< \infty 
\;\;\;
\Leftrightarrow
\;\;\;
q \le 1-e^{2s},\; 1-e^{-2s} \le p. 
\end{equation}
From this observation one can realize that the end point of the expected inequality \eqref{e:FHC_MaIntro} is at $(p_s,q_s) = (1-e^{-2s}, 1-e^{2s})$.  
Moreover in this case, there is the scaling structure \eqref{e:ScaleGauss}. 
However this fact tells us that the best constant of \eqref{e:FHC_MaIntro}  at the endpoint cannot be exhausted by centered Gaussians in a light of the link to Mahler's conjecture. Recall that equality is established by simplex rather than ${\rm B}_2^n$.
In fact we may apply the example of (functional form of) Mahler's conjecture $f_*(x) \coloneqq {\bf 1}_{[-1,\infty)^n} e^{-(x_1 + \dots +x_n)}/ \gamma(x)$ from \cite{FM}. Here ${\bf 1}_{[-1,\infty)^n}(x)$ is a characteristic function of $[-1,\infty)^n$. 
Then one can see from the direct calculation that 
\begin{equation}\label{e:Example18Oct}
\big\|
P_s \big[
f_*^{\frac1{p_s}}
\big]
\big\|_{L^{q_s}(\gamma)}/ 
\big(
\int_{\mathbb{R}^n}
f_*\, d\gamma 
\big)^{\frac1{p_s}}
= 
e^{-\frac{n}{p_s}} 
(2\pi)^{-\frac{n}{q_s}} 
p_s^{\frac{n}2}
e^{ns(2 - \frac1{q_s})}
\Gamma(1-q_s)^{\frac{n}{q_s}}
\end{equation}
for $p_s = 1-e^{-2s}$ and $q_s = 1-e^{2s}$. 
Here $\Gamma$ means the Gamma function.
Therefore it is reasonable to conjecture \eqref{e:FHC_MaIntro} at the endpoint with the constant 
$$
{\rm IS}_s^{\frac1{p_s}}
=
e^{-\frac{n}{p_s}} 
(2\pi)^{-\frac{n}{q_s}} 
p_s^{\frac{n}2}
e^{ns(2 - \frac1{q_s})}
\Gamma(1-q_s)^{\frac{n}{q_s}}. 
$$
One can directly check that 
$$
\lim_{s\downarrow0}
\big(
e^{-\frac{n}{p_s}} 
(2\pi)^{-\frac{n}{q_s}} 
p_s^{\frac{n}2}
e^{ns(2 - \frac1{q_s})}
\Gamma(1-q_s)^{\frac{n}{q_s}}
\big)^{-p_s}
=
\big(
\frac{e}{2\pi}
\big)^n
$$
which corresponds to the constant for the functional form of the inverse Santal\'{o} inequality, \cite{FM} or \eqref{e:FuncInvSantaloGene}, and hence if \eqref{e:FHC_MaIntro} could be proved with the above constant at the endpoint then this would solve Mahler's conjecture for general set  \eqref{e:MahGene} from Proposition \ref{Prop:HCSet}. 

We will prove Theorem \ref{t:RegFHC} with low regularity condition. 
To this end let us introduce further notations. 
For $\alpha \in \mathbb{R}$,  we say a function $\phi:\mathbb{R}^n \to \mathbb{R}$ is $\alpha$-semi-convex if 
\begin{equation}\label{e:DefConv}
\phi((1 - \lambda) x_1 + \lambda x_2) 
\le 
(1 - \lambda) \phi(x_1) + \lambda \phi(x_2) - \frac{\alpha}2 \lambda(1-\lambda) |x_1-x_2|^2
\end{equation}
for all $\lambda \in [0,1]$ and $x_1,x_2 \in \mathbb{R}^n$. 
Similarly a function $\phi:\mathbb{R}^n \to \mathbb{R}$ is said to be $\alpha$-semi-concave if 
\begin{equation}\label{e:DefConc}
\phi((1 - \lambda) x_1 + \lambda x_2) 
\ge 
(1 - \lambda) \phi(x_1) + \lambda \phi(x_2) - \frac{\alpha}2 \lambda(1-\lambda) |x_1-x_2|^2
\end{equation}
for all $\lambda \in [0,1]$ and $x_1,x_2 \in \mathbb{R}^n$. 
In the case of $\alpha=0$,  the conditions coincide with standard convexity and concavity.  
We remark that if $\phi \in C^2(\mathbb{R}^n)$, then $\phi$ is $\alpha$-semi-convex if and only if 
$
\nabla^2 \phi \ge \alpha {\rm id}
$
on $\mathbb{R}^n$ and similar to $\alpha$-semi-concavity. 
We also say that a function $f:\mathbb{R}^n\to (0,\infty)$ is $\alpha$-semi-log-convex (concave) if $\log\, f$ is $\alpha$-semi-convex (concave). 
Employing these notions, we will prove the following which yields Theorem \ref{t:RegFHC}. 

\begin{theorem}\label{t:FHC}
Let $0<p<1$, $q \in (- \infty, 1)\setminus \{0\}$, $s>0$ satisfy $\frac{q-1}{p-1} = e^{2s}$ and $\beta\ge1$. 
Then for any log-convex and $1-\frac1\beta$-semi-log-concave $f \colon \mathbb{R}^n \to (0, \infty)$,  
$$
\| P_s[f^\frac1p] \|_{L^q(\gamma)} \le \| P_s[(\frac{\gamma_\beta}{\gamma})^\frac1p]\|_{L^q(\gamma)} (\int_{\mathbb{R}^n} f \, d\gamma)^\frac1p.
$$
\end{theorem}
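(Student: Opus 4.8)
The plan is to prove Theorem~\ref{t:FHC} by a semigroup interpolation along the Ornstein--Uhlenbeck flow, comparing at each time the evolving $L^{q(t)}(\gamma)$-norm of $P_t[f^{1/p}]$ with that of the Gaussian extremizer $\gamma_\beta/\gamma$. The hypotheses on $f$ enter through two facts: $P_t$ preserves log-convexity, and it contracts the semi-log-concavity bound of $f^{1/p}$ along an explicit scalar ODE; together these keep $P_t[f^{1/p}]$ inside a class on which a sharp \emph{reverse} logarithmic Sobolev comparison holds, with the extremal Gaussian built in.

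We may assume the right-hand side is finite, i.e.\ that \eqref{e:CondGauss} holds with variance $\beta$ (which also makes $P_t[f^{1/p}]\in L^1(\gamma)$ for $t\in[0,s]$), and by homogeneity that $\int_{\R^n}f\,d\gamma=1$; write $f=e^V$, so $0\le\nabla^2 V\le(1-\tfrac1\beta)\,\mathrm{id}$. Put $u_t:=P_t[f^{1/p}]$, $u_t^\ast:=P_t[(\gamma_\beta/\gamma)^{1/p}]$ and $q(t):=1+(p-1)e^{2t}$ for $t\in[0,s]$, so that $q(0)=p$, $q(s)=q$, $\dot q(t)=2(q(t)-1)$, and $\|u_0\|_{L^{q(0)}(\gamma)}=(\int f\,d\gamma)^{1/p}=1$. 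By the Gaussian computation \eqref{e:PsGauss2}, $\|u_s^\ast\|_{L^q(\gamma)}=\beta^{n/(2p')}\beta_{s,p}^{-n/(2q')}$ and $\|u_0^\ast\|_{L^p(\gamma)}=1$, and $u_t^\ast$ equals, up to a positive constant, $e^{\frac{a_t}{2}|x|^2}$ where $a_0:=(1-\tfrac1\beta)/p$ and $a_t:=a_0 e^{-2t}/(1-a_0(1-e^{-2t}))$ solves $\dot a_t=-2a_t(1-a_t)$. Thus the theorem reduces to $\log\|u_s\|_{L^q(\gamma)}-\log\|u_0\|_{L^p(\gamma)}\le\log\|u_s^\ast\|_{L^q(\gamma)}-\log\|u_0^\ast\|_{L^p(\gamma)}$, which we obtain by integrating the pointwise inequality $\tfrac{d}{dt}\log\|u_t\|_{L^{q(t)}(\gamma)}\le\tfrac{d}{dt}\log\|u_t^\ast\|_{L^{q(t)}(\gamma)}$ over $[0,s]$.

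For the curvature propagation: from $g(\tfrac12(x_1+x_2)+z)\le g(x_1+z)^{1/2}g(x_2+z)^{1/2}$ for log-convex $g$, Cauchy--Schwarz applied to the Gaussian average defining $P_t$ shows that $P_t$ preserves log-convexity, so $\nabla^2\log u_t\ge0$. On the other hand $u_t\gamma=P_t^\ast(u_0\gamma)=(\mathrm{dil}_{e^{-t}})_{\#}(u_0\gamma)\ast\gamma_{1-e^{-2t}}$, and since convolution of log-concave densities combines the two-sided Hessian bounds of the potentials by harmonic summation (Pr\'ekopa, and the Hessian-of-a-marginal formula), one gets $0\le\nabla^2\log u_t\le a_t\,\mathrm{id}$, with $a_t$ exactly as above. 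Next, differentiating with $\partial_t u_t=(\Delta-x\cdot\nabla)u_t$, integrating by parts against $\gamma$, and using $\dot q(t)=2(q(t)-1)$ gives
\[
\frac{d}{dt}\log\|u_t\|_{L^{q(t)}(\gamma)}=\frac{2(q(t)-1)}{q(t)^2\int u_t^{q(t)}\,d\gamma}\Big(\mathrm{Ent}_\gamma\big(u_t^{q(t)}\big)-2\!\int_{\R^n}\!\big|\nabla\big(u_t^{q(t)/2}\big)\big|^2\,d\gamma\Big),
\]
and likewise for $u_t^\ast$ (when $q<0<p$ the path $q(t)$ passes through $0$; there one uses $\|v\|_{L^0(\gamma)}:=\exp\int\log v\,d\gamma$ and checks both sides extend continuously, so integration over $[0,s]$ is unaffected). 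As $q(t)-1<0$, the wanted pointwise inequality is equivalent to: the Gaussian logarithmic Sobolev deficit $\mathrm{Ent}_\gamma(h^2)-2\int|\nabla h|^2\,d\gamma$ (which is $\le0$), normalised by $\int h^2\,d\gamma$, is at least as large for $h=u_t^{q(t)/2}$ as for the Gaussian $h=e^{\frac{q(t)a_t}{4}|x|^2}$; equivalently, among positive $h$ with $\nabla^2\log h$ lying between $0$ and $\frac{q(t)a_t}{2}\mathrm{id}$ (an interval touching $0$, on the negative side when $q(t)<0$), that normalised deficit is smallest at the endpoint Gaussian. By the curvature step $u_t^{q(t)/2}$ is of this form, so granting the comparison and integrating over $[0,s]$ proves the theorem; equality holds throughout for $f=\gamma_\beta/\gamma$ since then $u_t=u_t^\ast$.

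The main obstacle is that last comparison: a sharp reverse logarithmic Sobolev inequality on Gauss space for functions with two-sided control on the Hessian of the logarithm, saturated precisely by the endpoint centred Gaussians. It is here that the log-convexity and semi-log-concavity of $f$ are genuinely used; I expect it to come from a $\Gamma_2$/Bochner computation or a Brascamp--Lieb-type variance bound exploiting the two-sided Hessian bound. The delicate points are to verify that the extremal constant, once integrated along the path, is exactly the Gaussian value $\beta^{n/(2p')}\beta_{s,p}^{-n/(2q')}$, and that $u_t^{q(t)/2}$ stays in the admissible class for all $t\in[0,s]$, including across the sign change at $q(t)=0$.
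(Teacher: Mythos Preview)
Your outline follows Gross's classical route—differentiate $t\mapsto\log\|P_t[f^{1/p}]\|_{L^{q(t)}(\gamma)}$ along Nelson's path $q(t)=1+(p-1)e^{2t}$—and this is \emph{not} how the paper proceeds. The paper instead keeps $s$ fixed and flows the \emph{initial data}: it runs the $\beta$-Fokker--Planck semigroup $v_t$ from $v_0=f\gamma$, sets $\widetilde v_t=P_s[(v_t/\gamma)^{1/p}]^q\gamma$, and proves a closure property $\partial_t\widetilde v_t-\mathcal L_{\beta_{s,p}}^*\widetilde v_t\ge 0$ (or $\le 0$). That super/sub-solution inequality is obtained from an exact identity for $\partial_t\widetilde v_t-\mathcal L_{\beta_{s,p}}^*\widetilde v_t$ by applying a \emph{Poincar\'e} (not log-Sobolev) inequality with respect to the log-concave measure $(v_t/\gamma)^{1/p}\,dP_{s,x}$, followed by the elementary eigenvalue bound $\|\nabla^2\log(v_t/\gamma)\|_{\mathrm{HS}}^2\le(1-\tfrac1\beta)\,\Delta\log(v_t/\gamma)$. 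The two-sided Hessian hypothesis on $f$ enters precisely here: the upper bound furnishes the Poincar\'e constant, and both bounds give that eigenvalue estimate. Monotonicity of $\int\widetilde v_t\,dx$ together with $v_t\to(\int f\,d\gamma)\gamma_\beta$ finishes the proof.

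Your proposal, by contrast, has a genuine gap that you yourself flag: the ``sharp reverse logarithmic Sobolev comparison'' asserting that, among $h>0$ with $\nabla^2\log h$ between $0$ and $\tfrac{q(t)a_t}{2}\mathrm{id}$, the normalised deficit $(\mathrm{Ent}_\gamma(h^2)-2\int|\nabla h|^2\,d\gamma)/\int h^2\,d\gamma$ is minimised at the endpoint Gaussian. This is the entire content of the theorem in your scheme, and it is left unproved. It is not a standard inequality, and the tools you gesture at ($\Gamma_2$/Bochner, Brascamp--Lieb variance) do not obviously deliver it in the form you need at every $t\in[0,s]$, including across the sign change of $q(t)$. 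A secondary worry is the Hessian-propagation step when $a_0=(1-1/\beta)/p>1$: then $f^{1/p}\gamma$ is not log-concave, so the ``harmonic summation for convolutions of log-concave densities'' you invoke does not apply literally (though a maximum-principle argument on $\nabla^2\log P_t g$ may still give $\nabla^2\log u_t\le a_t\,\mathrm{id}$, with $a_t$ increasing and finite exactly when $\beta_{s,p}>0$). Even granting that, without the reverse log-Sobolev lemma the argument is incomplete.
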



\subsection{Proof of Theorem \ref{t:FHC}}\label{S3.2}
Our strategy of proving Theorem \ref{t:FHC} is based on the flow monotonicity scheme for Fokker--Planck flow.  The critical use of Fokker--Planck flow in the context of hypercontractivity can be found in our previous work with Neal Bez \cite{BNT} for instance. 
Our key idea is the closure type property of the Fokker--Planck equation, see \cite[Theorems 4.5, 4.6]{BNT}.  The origin of the closure type property, which sophisticates the flow monotonicity argument in some sense, can be found in the work of Bennett--Bez \cite{BBJGA} regarding the sharp Young's inequality. This idea was also used to show the forward Brascamp--Lieb inequality in \cite{BBCrell},  the forward hypercontractivity inequality in \cite{ABBMMS}, and the inverse Brascamp--Lieb inequality in \cite{BN}. 

For $\beta>0$ and nonnegative initial data $v_0 \in L^1(dx)$, we say that $v=v(t,x)$ is a $\beta$-Fokker--Planck solution if it solves 
\begin{equation}\label{e:BetaFP}
\begin{cases}
\partial_t v = \mathcal{L}_\beta^* v \coloneqq \beta\Delta v_t + x\cdot \nabla v_t + n v_t,\;\;\; (t,x) \in (0,\infty)\times \mathbb{R}^n, \\
v(0,x) = v_0(x),\;\;\; x\in \mathbb{R}^n. 
\end{cases}
\end{equation}
We will frequently use the notation $v_t \coloneqq v(t,\cdot)$ in below. 
The solution $v_t$ has an explicit formula 
\begin{equation}\label{e:Sol20Oct}
v_t(x) 
= 
\frac1{\big(
2\pi \beta (1-e^{-2t})
\big)^{\frac{n}{2}}}
\int_{\mathbb{R}^n}
e^{ - \frac{ |y-e^{-t}x|^2 }{2\beta (1-e^{-2t})} }
v_0(y)\, dy
\end{equation}
and it is a unique solution if $v_0 \in L^1(dx)$, see \cite{Widder}. 
We then present new closure type property which is the key of the proof of Theorem \ref{t:FHC}.

\begin{theorem}\label{t:RevRevClosureGene}
Let $0 < p <1$, $-\infty < q < 1$, $s>0$ such that 
$\frac{q-1}{p-1} = e^{2s}$ and 
$\beta \ge1$ satisfy
\begin{equation}\label{e:beta_sp}
\beta_{s, p} \coloneqq 1 + (\beta-1)\frac{q}{p} e^{-2s} >0. 
\end{equation}

For any initial data $v_0 \colon \mathbb{R}^n \to (0, \infty)$ such that $\frac{v_0}{\gamma}$ is log-convex and $1-\frac1\beta$-semi-log-concave,  we set 
$$
\widetilde{v}_t(x) \coloneqq P_s \big[ \big( \frac{v_t}{\gamma} \big)^\frac1p \big](x)^q \gamma(x),\;\;\; (t,x) \in (0,\infty) \times \mathbb{R}^n. 
$$
\begin{enumerate}
\item When $1 - e^{-2s} < p < 1$, namely $q> 0$,  we have that 
$$
\partial_t \widetilde{v}_t - \mathcal{L}_{\beta_{s, p}}^* \widetilde{v}_t \ge 0.
$$
\item When $0 < p < 1-e^{-2s}$, namely $q< 0$,  we have that 
$$
\partial_t \widetilde{v}_t - \mathcal{L}_{\beta_{s, p}}^* \widetilde{v}_t \le 0.
$$
\end{enumerate}
\end{theorem}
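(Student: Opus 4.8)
The plan is to verify the differential inequality $\partial_t \widetilde v_t - \mathcal L_{\beta_{s,p}}^* \widetilde v_t \gtrless 0$ by a direct computation, exploiting the two flows in play: the $\beta$-Fokker--Planck evolution driving $v_t$ in the $t$-variable, and the Ornstein--Uhlenbeck semigroup $P_s$ acting in the ``frozen'' $s$-variable. First I would record the pointwise identities for the action of $\mathcal L_{\beta_{s,p}}^*$ on a function of the form $u^q\gamma$, where $u = P_s[(v_t/\gamma)^{1/p}]$; writing $u = e^{-\varphi}$ (or working with $\log u$ and $\log v_t$) converts the second-order operator into an expression involving $\nabla\log u$, $\nabla^2\log u$ and the ambient Gaussian drift. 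The time derivative $\partial_t \widetilde v_t$ brings down $\partial_t v_t = \mathcal L_\beta^* v_t$, which after dividing through by $\gamma$ becomes a Fokker--Planck-type operator acting on $v_t/\gamma$; the chain rule through $(\cdot)^{1/p}$, then through $P_s$ (which commutes with nothing but is an explicit Gaussian convolution), then through $(\cdot)^q$, produces the analogous first- and second-order data for $u$. After these reductions the claimed inequality should collapse to a pointwise algebraic inequality comparing two quadratic forms in the relevant gradients/Hessians, with the sign of $q$ (equivalently whether $p\lessgtr 1-e^{-2s}$) controlling the direction — this is why the statement splits into cases (1) and (2).

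The key structural input making the computation go through is the hypothesis that $v_0/\gamma$ is log-convex and $(1-\tfrac1\beta)$-semi-log-concave, i.e. $0 \le \nabla^2\log(v_0/\gamma) \le (1-\tfrac1\beta)\,\mathrm{id}$. The first thing I would prove is that this two-sided curvature bound is \emph{propagated} by the $\beta$-Fokker--Planck flow: there should be a maximum-principle / Bakry--\'Emery-style argument showing that $\nabla^2\log(v_t/\gamma)$ stays in the interval $[0,(1-\tfrac1\beta)\mathrm{id}]$ for all $t>0$. This is exactly the ``closure type property'' alluded to in the text (cf. \cite[Theorems 4.5, 4.6]{BNT}); I expect it to follow by differentiating the equation satisfied by $\log(v_t/\gamma)$ twice in space and applying a comparison principle. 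Granting this, the semigroup $P_s$ applied to $(v_t/\gamma)^{1/p}$ inherits controlled convexity/semi-concavity of $\log u$ via the standard commutation $\nabla P_s = e^{-s} P_s \nabla$ together with the log-convexity being preserved under $P_s$ and the semi-log-concavity relaxing in a quantitatively trackable way (the constant $\beta_{s,p} = 1+(\beta-1)\tfrac qp e^{-2s}$ is precisely bookkeeping this relaxation, and the hypothesis $\beta_{s,p}>0$ keeps the target operator elliptic).

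Concretely the steps, in order, are: (i) set up notation $u = P_s[(v_t/\gamma)^{1/p}]$, $h_t = \log(v_t/\gamma)$, and compute $\partial_t$ of $\widetilde v_t = u^q\gamma$ using $\partial_t v_t = \mathcal L_\beta^* v_t$; (ii) compute $\mathcal L_{\beta_{s,p}}^*(u^q\gamma)$ explicitly in terms of $\nabla\log u$ and $\nabla^2\log u$; (iii) subtract, and reduce the inequality to a pointwise quadratic-form inequality; (iv) express $\nabla\log u$ and $\nabla^2\log u$ via the Gaussian-convolution formula for $P_s$ as averages of the corresponding data of $(v_t/\gamma)^{1/p}$, obtaining integral (Brascamp--Lieb / covariance-type) representations; (v) invoke the propagated bound $0\le \nabla^2 h_t \le (1-\tfrac1\beta)\mathrm{id}$ to control those averages, using a Cauchy--Schwarz / Jensen inequality for the covariance term; (vi) check the arithmetic identities relating $p,q,s,\beta,\beta_{s,p}$ so that the surviving terms have the asserted sign, branching on $\mathrm{sgn}(q)$. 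The main obstacle I anticipate is step (v) together with the propagation lemma: controlling the covariance/second-moment term that arises from $\nabla^2 \log P_s[\cdot]$ — i.e. showing the relevant quadratic form in the averaged gradients is dominated (in the right direction) by the averaged Hessian — is where the two-sided curvature hypothesis must be used in full, and getting the constant to be exactly $\beta_{s,p}$ rather than something weaker is the delicate point. The equality case $f = \gamma_\beta/\gamma$ (for which $\nabla^2\log(v_0/\gamma) = (1-\tfrac1\beta)\mathrm{id}$ identically) is the natural sanity check that the constants are sharp.
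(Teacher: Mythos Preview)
Your overall plan matches the paper's approach: the paper reduces to an identity (imported from \cite{BNT}) expressing $(\partial_t\widetilde v_t - \mathcal L_{\beta_{s,p}}^*\widetilde v_t)\big/(\widetilde v_t^{\,1-2/q}\gamma^{2/q})$ as $\tfrac{q}{pp'}$ times the sum of a term proportional to $A_x\,P_s[f_t\,\Delta h_t]$ and a variance term $A_x\,P_s[f_t\,|\nabla h_t|^2]-|P_s[f_t\,\nabla h_t]|^2$, where $f_t=(v_t/\gamma)^{1/p}$, $h_t=\log(v_t/\gamma)$, $A_x=P_sf_t(x)$. The propagation of $0\le\nabla^2 h_t\le(1-\tfrac1\beta)\,\mathrm{id}$ along the $\beta$-Fokker--Planck flow is handled exactly as you sketch (pass to $v_t/\gamma_\beta$, which evolves by the Ornstein--Uhlenbeck semigroup and preserves log-convexity and log-concavity). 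So your steps (i)--(iv) and (vi) are on target.

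The gap is at step (v). You need an \emph{upper} bound on the variance term, and Cauchy--Schwarz or Jensen only tells you it is nonnegative. The paper's tool here is the Poincar\'e inequality for the log-concave probability measure $d\mu = f_t\,dP_{s,x}$: the upper Hessian bound on $h_t$ gives $-\nabla^2\log(d\mu/dx)\ge K\,\mathrm{id}$ with $K=\beta_{s,p}\big/\bigl(\beta(1-e^{-2s})\bigr)>0$ (this is precisely where the hypothesis $\beta_{s,p}>0$ enters), whence $\mathrm{Var}_\mu(\vec\varphi)\le K^{-1}\int|\nabla\vec\varphi|^2\,d\mu$, producing a term $A_x\,P_s\bigl[f_t\,\|\nabla^2 h_t\|_{\mathrm{HS}}^2\bigr]$. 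A second, separate estimate then closes the argument: the two-sided eigenvalue bound yields $\|\nabla^2 h_t\|_{\mathrm{HS}}^2=\sum_i\lambda_i^2\le(1-\tfrac1\beta)\sum_i\lambda_i=(1-\tfrac1\beta)\,\Delta h_t$, and the algebraic identity $p(\beta_{s,p}-\beta)+\beta_{s,p}K^{-1}(1-\tfrac1\beta)=0$ makes the combined coefficient vanish, leaving only the sign of $q/(pp')$. If by ``Brascamp--Lieb'' in step (iv) you already had the Brascamp--Lieb/Poincar\'e variance inequality in mind, then the remaining missing ingredient is the eigenvalue inequality on $\|\nabla^2 h_t\|_{\mathrm{HS}}^2$, which genuinely uses \emph{both} sides of the curvature hypothesis. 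One minor terminological point: in the paper the phrase ``closure type property'' refers to the super/sub-solution statement of Theorem~\ref{t:RevRevClosureGene} itself, not to the propagation of the Hessian bounds.
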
 

It should be remarked that $v_0$ in Theorem \ref{t:RevRevClosureGene} is indeed in $L^1(dx)$ and hence the existence and uniqueness of the $\beta$-Fokker--Planck solution $v_t$ are confirmed.  
We give a short proof of $v_0 \in L^1(dx)$ in Appendix \ref{S6.3}. 

To prove Theorem \ref{t:RevRevClosureGene}, we will make use of an identity that we observed in \cite[Lemma 4.2]{BNT}
\begin{align}\label{e:ClosureFromBNT}
&\frac{ \partial_t \widetilde{v}_t - \mathcal{L}_{\beta_{s, p}}^* \widetilde{v}_t }{ \widetilde{v}_t^{1-\frac2q} \gamma^\frac2q }\nonumber \\
=&
\frac{q}{pp'} p (\beta_{s, p} - \beta) P_s[(\frac{v_t}{\gamma})^\frac1p] P_s[(\frac{v_t}{\gamma})^\frac1p \Delta \log\, \frac{v_t}{\gamma}] 
\\
&+
\frac{q}{pp'} \beta_{s, p} 
\left(
P_s[(\frac{v_t}{\gamma})^\frac1p] P_s[(\frac{v_t}{\gamma})^\frac1p \Gamma ( \log\, \frac{v_t}{\gamma})]
-
| P_s[(\frac{v_t}{\gamma})^\frac1p \nabla \log\, \frac{v_t}{\gamma}] |^2
\right).\notag
\end{align}
To be precise, we give two remarks about this identity. 
Firstly \cite[Lemma 4.2]{BNT} contains extra terms $I,N$ compared to \eqref{e:ClosureFromBNT}.   For this,  we have already mentioned that these terms identically vanish, see subsection 4.5 in \cite{BNT}.  
Secondly we imposed some technical decay assumption on $v_0$ in \cite[Lemma 4.2]{BNT} in order to ensure that terms in the right hand side of \eqref{e:ClosureFromBNT} are well-defined.  
Although these assumptions may not be satisfied for $v_0$ in Theorem \ref{t:RevRevClosureGene},  we can justify as follows. 

\begin{lemma}\label{l:Technical}
Let parameters and $v_0$ satisfy the assumption in Theorem \ref{t:RevRevClosureGene}. 
\begin{enumerate}
\item 
For each $t>0$,  $v_t$ is smooth and satisfies 
\begin{equation}\label{e:SemiLogAssumpHigh}
0 \le \nabla^2 \log\, \frac{v_t}{\gamma}(x) \le (1-\frac1\beta) {\rm id} 
\end{equation}
for all  $x \in \mathbb{R}^n$. 
\item 
For each $t>0$,  
$$
P_s\big[ \big( \frac{v_t}{\gamma} \big)^\frac1p\big],
\;
P_s\big[ \big( \frac{v_t}{\gamma} \big)^\frac1p \Delta \log\, \frac{v_t}{\gamma} \big],
\;
P_s\big[ \big( \frac{v_t}{\gamma} \big)^\frac1p  |\nabla\log\, \frac{v_t}{\gamma}|\big],
\;
P_s\big[ \big( \frac{v_t}{\gamma}  \big)^\frac1p \Gamma(\log\, \frac{v_t}{\gamma}) \big]
$$ 
are pointwisely well-defined. 
Here, $\Gamma(\log\, \frac{v_t}{\gamma})  \coloneqq | \nabla \log\, \frac{v_t}{\gamma}|^2$.
\item
For each $t>0$, we have that 
$$
P_s\big[ \big( \frac{v_t}{\gamma} \big)^\frac1p\big](x) 
\ge 
C_{s,p,t}
e^{\langle b_{s,p,t}, x\rangle},
\;\;\; x\in \mathbb{R}^n
$$
where 
$$
C_{s,p,t}\coloneqq 
(2\pi)^{\frac{n}{2p}}
v_t(0)^\frac1p 
e^{\frac{1-e^{-2s}}2 | \frac1p \nabla \log\, v_t(0) |^2} ,
\;
b_{s,p,t}
\coloneqq 
 \frac{e^{-s}}p \nabla \log\, v_t(0). 
$$
\end{enumerate}

\end{lemma}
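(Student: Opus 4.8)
The plan is to derive all three items from the single structural fact in \textit{(1)}, namely that the two-sided bound $0\le\nabla^2\log\frac{v_t}{\gamma}\le(1-\tfrac1\beta)\,{\rm id}$ is propagated along the $\beta$-Fokker--Planck flow. Since $v_0\in L^1(dx)$ (Appendix \ref{S6.3}), the explicit Mehler-type representation \eqref{e:Sol20Oct} is available, and differentiating under the integral sign gives at once that $v_t\in C^\infty(\mathbb{R}^n)$ for every $t>0$; this disposes of the smoothness claim and lets us use the hypotheses on $\frac{v_0}{\gamma}$ in their inequality form \eqref{e:DefConv}--\eqref{e:DefConc} while recording the conclusions for $v_t$, $t>0$, as genuine Hessian bounds.

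For \textit{(1)} put $M:=1-\tfrac1\beta\in[0,1)$ and $f_0:=\frac{v_0}{\gamma}$, so that $0\le\nabla^2\log f_0\le M\,{\rm id}$. The first step is a bookkeeping rewriting of \eqref{e:Sol20Oct}: after the substitution $y=e^{-t}x+\sqrt{\beta(1-e^{-2t})}\,z$ and using $v_0=f_0\gamma$ together with $\frac{\gamma_\beta}{\gamma}=\beta^{-n/2}e^{\frac M2|\cdot|^2}$, one arrives at
$$
\frac{v_t}{\gamma}(x)=e^{\frac M2|x|^2}\int_{\mathbb{R}^n}\Psi\big(e^{-t}x+\sqrt{\beta(1-e^{-2t})}\,z\big)\,d\gamma(z),\qquad \Psi:=f_0\,e^{-\frac M2|\cdot|^2}.
$$
Two elementary principles then finish the job. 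First, $\Psi$ is log-concave: $\log\Psi=\log f_0-\frac M2|\cdot|^2$ is the sum of the $M$-semi-concave $\log f_0$ and the concave $-\frac M2|\cdot|^2$, and the two quadratic defects cancel. Hence, modulo the linear change of variables $x\mapsto e^{-t}x$, the integral above is a Gaussian convolution of a log-concave function, so it is log-concave by the Pr\'ekopa--Leindler inequality; multiplying by $e^{\frac M2|\cdot|^2}$ gives $\nabla^2\log\frac{v_t}{\gamma}\le M\,{\rm id}$. Second, for each fixed $z$ the integrand $x\mapsto f_0(e^{-t}x+c_z)\,e^{\frac M2(|x|^2-|e^{-t}x+c_z|^2)}$ with $c_z:=\sqrt{\beta(1-e^{-2t})}\,z$ is a product of two functions that are log-convex in $x$ — the affine precomposition of the log-convex $f_0$, and the exponential of the convex quadratic $\frac M2(|x|^2-|e^{-t}x+c_z|^2)$, whose Hessian is $M(1-e^{-2t})\,{\rm id}\ge0$ — and since log-convexity is stable under integration by H\"older, $\frac{v_t}{\gamma}$ is log-convex, i.e.\ $\nabla^2\log\frac{v_t}{\gamma}\ge0$. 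All the convexity arguments here run at the level of \eqref{e:DefConv}--\eqref{e:DefConc}, and combining them with the smoothness of $v_t$ yields \textit{(1)}.

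Items \textit{(2)} and \textit{(3)} use only \textit{(1)}. From the two-sided bound, together with $\frac{v_t}{\gamma}(0)=(2\pi)^{n/2}v_t(0)$ and $\nabla\log\frac{v_t}{\gamma}(0)=\nabla\log v_t(0)$, one gets for all $w$
$$
(2\pi)^{n/2}v_t(0)\,e^{\langle\nabla\log v_t(0),w\rangle}\le\frac{v_t}{\gamma}(w)\le(2\pi)^{n/2}v_t(0)\,e^{\langle\nabla\log v_t(0),w\rangle+\frac M2|w|^2},
$$
and also $0\le\Delta\log\frac{v_t}{\gamma}\le nM$ everywhere, while $\nabla\log\frac{v_t}{\gamma}$ is $M$-Lipschitz, so $|\nabla\log\frac{v_t}{\gamma}(w)|^2\lesssim 1+|w|^2$. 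Inserting $w=e^{-s}x+\sqrt{1-e^{-2s}}\,y$ into the definition of $P_s$, the four functions in \textit{(2)} are well-defined because each corresponding integrand in $y$ is, for fixed $x$, dominated by $C(1+|y|)^2\exp\big(\tfrac{M}{2p}(1-e^{-2s})|y|^2+\langle\ell,y\rangle\big)$ for suitable $C,\ell$ depending on $x,t$, and this is $\gamma$-integrable precisely because $\tfrac{M}{2p}(1-e^{-2s})<\tfrac12$; the latter is exactly the standing hypothesis $\beta_{s,p}>0$, since $\beta_{s,p}=\frac{\beta p-(\beta-1)(1-e^{-2s})}{p}>0\Leftrightarrow p>(1-\tfrac1\beta)(1-e^{-2s})=M(1-e^{-2s})$. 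For \textit{(3)}, the tangent-plane (lower) bound gives $\big(\frac{v_t}{\gamma}\big)^{1/p}(w)\ge\big((2\pi)^{n/2}v_t(0)\big)^{1/p}e^{\frac1p\langle\nabla\log v_t(0),w\rangle}$; integrating against $d\gamma(y)$ with $w=e^{-s}x+\sqrt{1-e^{-2s}}\,y$ and using $\int e^{\langle\xi,y\rangle}\,d\gamma(y)=e^{|\xi|^2/2}$ with $\xi=\tfrac{\sqrt{1-e^{-2s}}}{p}\nabla\log v_t(0)$ produces exactly $P_s[(v_t/\gamma)^{1/p}](x)\ge C_{s,p,t}e^{\langle b_{s,p,t},x\rangle}$ with the stated $C_{s,p,t}$ and $b_{s,p,t}$.

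The crux is the upper Hessian bound in \textit{(1)}: everything hinges on spotting the factorisation $\frac{v_t}{\gamma}=e^{\frac M2|\cdot|^2}\times(\text{Gaussian average of }f_0e^{-\frac M2|\cdot|^2})$, which makes the averaged factor genuinely log-concave so that Pr\'ekopa--Leindler applies. The lower bound and the deductions of \textit{(2)} and \textit{(3)} are then routine Gaussian estimates.
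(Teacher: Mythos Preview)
Your proof is correct and follows essentially the same route as the paper's. The only substantive difference is in item \textit{(1)}: the paper observes that $u_t:=v_t/\gamma_\beta$ solves the $\beta$-Ornstein--Uhlenbeck equation $\partial_t u_t=\beta\Delta u_t-x\cdot\nabla u_t$ and then cites the known preservation of log-convexity and log-concavity under this semigroup (referring to \cite{EL,BraLi,IPT} and \cite[Lemma~3.2]{BNT}), whereas you unpack those preservation facts by hand via the explicit Mehler-type representation, using Pr\'ekopa--Leindler for the upper Hessian bound and H\"older (stability of log-convexity under integration) for the lower. The key structural observation---the factorisation $\tfrac{v_t}{\gamma}=e^{\frac{M}{2}|\cdot|^2}\cdot(\text{$\beta$-OU flow of }f_0e^{-\frac{M}{2}|\cdot|^2})$---is the same in both; your version is simply more self-contained. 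For items \textit{(2)} and \textit{(3)} the two arguments are identical in substance; your explicit identification of the Gaussian integrability condition $\tfrac{M}{p}(1-e^{-2s})<1$ with the standing hypothesis $\beta_{s,p}>0$ is left implicit in the paper, which just defers to Lemma~\ref{l:GaussInt1}.
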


\begin{proof}
The smoothness of $v_t$ for each fixed $t>0$ is clear from its explicit form \eqref{e:Sol20Oct}. 
For the statement \textit{(1)}, we recall that $u_t \coloneqq \frac{v_t}{\gamma_\beta}$ solves $\partial_t u_t = \mathcal{L}_{\beta} u_t \coloneqq \beta\Delta u_t - x \cdot \nabla u_t$ and  Ornstein--Uhlenbeck flow $(u_t)_{t>0}$ preserves the log-convexity and concavity, see \cite{EL,BraLi,IPT} for instance.  
These two facts are enough to ensure that $\frac{v_t}{\gamma}$ is log-convex and $1-\frac1\beta$-semi-log-concave in the sense of \eqref{e:DefConv} and \eqref{e:DefConc}, see also \cite[Lemma 3.2]{BNT}. 
Since $v_t$ is smooth on $\mathbb{R}^n$ for $t>0$, this means \eqref{e:SemiLogAssumpHigh}. 

For the statement \textit{(2)}, we first note a general fact that if $h:\mathbb{R}^n\to \mathbb{R}$ satisfies $\lambda_1{\rm id} \le \nabla^2 h \le \lambda_2 {\rm id}$ on $\mathbb{R}^n$ for some $\lambda_1,\lambda_2 \in \mathbb{R}$, then $h$ satisfies 
\begin{equation}\label{e:DecayFromConv}
\frac12\lambda_1 |x|^2 + \langle \nabla h(0), x\rangle + h(0) 
\le 
h(x)
\le 
\frac12\lambda_2 |x|^2 + \langle \nabla h(0), x\rangle + h(0) ,
\;\;\; x\in \mathbb{R}^n. 
\end{equation}
Hence it follows from \eqref{e:SemiLogAssumpHigh} that 
\begin{align}\label{e:vtPointwise}
&\langle \nabla \log\, v_t(0), x \rangle + \log\, v_t(0) + \log\, (2\pi)^{ \frac n2} 
\\
\le& 
\log\, \frac{v_t}{\gamma}(x) 
\notag
\\
\le &
\frac12(1 - \frac1\beta) |x|^2 + \langle \nabla \log\, v_t(0), x \rangle + \log\, v_t(0) + \log\, (2\pi)^{ \frac n2}
\notag 
\end{align}
for all $x \in \mathbb{R}^n$. 
In particular, we have that 
$$
v_t(x) 
\le 
e^{-\frac1{2\beta} |x|^2 + o_t(x)},\;\;\; o_t(x) \coloneqq  \langle \nabla \log\, v_t(0), x \rangle + \log\, v_t(0),
$$
from which 
$$
P_s\big[ \big( \frac{v_t}{\gamma} \big)^\frac1p\big](x)
\le 
C_{p,\beta} 
P_s\big[ 
\big(
\frac{\gamma_\beta}{\gamma}
\big)^\frac1p
e^{\frac1p o_t}
\big](x)
< 
\infty,
$$
see also Lemma \ref{l:GaussInt1} in Appendix \ref{S6.1} for the finiteness. 
Since $0\le \Delta \log\, \frac{v_t}{\gamma} \le n(1-\frac1\beta)$ from \eqref{e:SemiLogAssumpHigh}, we also obtain 
$$
P_s\big[ \big( \frac{v_t}{\gamma} \big)^\frac1p \Delta \log\, \frac{v_t}{\gamma} \big](x)
\le 
n(1-\frac1\beta) 
P_s\big[ \big( \frac{v_t}{\gamma} \big)^\frac1p\big](x)
< 
\infty. 
$$
For the rest of two, we notice from \eqref{e:SemiLogAssumpHigh} that 
$$
|\nabla \log\, \frac{v_t}{\gamma}(x)|
\le 
(1-\frac1\beta)|x|
+ 
|\nabla \log\, \frac{v_t}{\gamma}(0)|. 
$$
Hence 
\begin{align*}
&P_s\big[ \big( \frac{v_t}{\gamma} \big)^\frac1p |\nabla \log\, \frac{v_t}{\gamma}| \big](x)
+
P_s\big[ \big( \frac{v_t}{\gamma} \big)^\frac1p \Gamma( \log\, \frac{v_t}{\gamma}) \big](x)\\
&\le 
C_{t,\beta}
P_s\big[ \big( \frac{v_t}{\gamma} \big)^\frac1p (1+|\cdot|+|\cdot|^2) \big](x) <\infty. 
\end{align*}

For the statement \textit{(3)}, it suffices to combine the lower bound in \eqref{e:vtPointwise} and Lemma \ref{l:GaussInt1}. 
\end{proof}

\begin{proof}[Proof of Theorem \ref{t:RevRevClosureGene}]
Thanks to Lemma \ref{l:Technical} we can justify the identity \eqref{e:ClosureFromBNT} under the assumption of $v_0$ in Theorem \ref{t:RevRevClosureGene}. 
We focus on the second term in the right hand side in \eqref{e:ClosureFromBNT} and notice that 
\begin{align*}
&P_s[(\frac{v_t}{\gamma})^\frac1p](x) P_s[(\frac{v_t}{\gamma})^\frac1p \Gamma ( \log\, \frac{v_t}{\gamma})](x)
-
| P_s[(\frac{v_t}{\gamma})^\frac1p \nabla \log\, \frac{v_t}{\gamma}] |^2(x)
\\
=&
\frac{1}{A_x} \int_{\mathbb{R}^n} |\vec{\varphi}_x(y)|^2 \, d\mu(y)
\end{align*}
for fixed $x \in \mathbb{R}^n$ where 
$$
d\mu(y) 
\coloneqq 
(\frac{v_t}{\gamma})^\frac1p(y)\, d P_{s, x}(y)
=
(\frac{v_t}{\gamma})^\frac1p(y) 
\times 
\frac{1}{(2\pi(1-e^{-2s}))^\frac n2} e^{-\frac12 \frac{|y-e^{-s}x|^2}{1-e^{-2s}}}\, dy
$$
and 
$$
A_x \coloneqq P_s[(\frac{v_t}{\gamma})^\frac1p](x), 
\;
B_x \coloneqq P_s[(\frac{v_t}{\gamma})^\frac1p \nabla \log\, \frac{v_t}{\gamma}](x), 
\;
\vec{\varphi}_x(y) \coloneqq A_x \nabla \log\, \frac{v_t}{\gamma}(y) - B_x.
$$
We will invoke the Poincar\'{e} inequality on $\mathbb{R}^n$ with log-concave measure to handle $\int_{\mathbb{R}^n} |\vec{\varphi}_x(y)|^2 \, d\mu(y)$. 
For that purpose, we first focus on 
$$
\nabla^2 (-\log\, \frac{d\mu}{dy}) = - \frac1p \nabla^2(\log\, v_t - \log\, \gamma) + \frac{1}{1-e^{-2s}} \mathrm{id}, 
$$
and appeal to \eqref{e:SemiLogAssumpHigh} to see that 
\begin{align*}
\nabla^2 (-\log\, \frac{d\mu}{dy})
\ge& 
( - \frac1p(1-\frac1\beta) + \frac{1}{1-e^{-2s}}) \mathrm{id}
\eqqcolon K_{s,p, \beta} \mathrm{id}
\end{align*}
since $p>0$. 
Notice that $K_{s, p, \beta}>0$. Indeed, it follows from $\frac{q-1}{p-1}=e^{2s}$ and \eqref{e:beta_sp} that 
\begin{align}
K_{s, p,\beta}
=&
\frac{\beta - (\beta-1)\frac{1-e^{-2s}}{p}}{\beta(1-e^{-2s})}
=
\frac{\beta_{s,p}}{\beta(1-e^{-2s})} 
> 0
\label{e:CalculationK}
\end{align}
where we used the fact that 
\begin{equation}\label{e:CalculationBeta}
\beta_{s,p} 
=
1 + (\beta-1)\frac{(p-1)e^{2s} + 1}{p} e^{-2s}
=
\beta - (\beta-1)\frac{1-e^{-2s}}{p}.
\end{equation}
Next we focus on the average of $\vec{\varphi}_x$ and notice that 
$$
\int_{\mathbb{R}^n} (\vec{\varphi}_x)_i(y) \, d\mu(y)
= 
A_x (B_x)_i - (B_x)_i A_x = 0 
$$
for each $i =1,2, \dots, n$, where $(\vec{\varphi}_x)_i$ and $(B_x)_i$ are the $i$-th component of $\vec{\varphi}_x$ and $B_x$ respectively.
Therefore we may invoke the Poincar\'{e} inequality,  see \cite[Corollary 4.8.2]{BGL} for instance. 
Namely we apply the Poincar\'{e} inequality for each component of $\vec{\varphi}_x$ with respect to the reference measure $d\mu$ to see that 
\begin{align*}
&P_s[(\frac{v_t}{\gamma})^\frac1p](x) P_s[(\frac{v_t}{\gamma})^\frac1p \Gamma ( \log\, \frac{v_t}{\gamma})](x)
-
| P_s[(\frac{v_t}{\gamma})^\frac1p \nabla \log\, \frac{v_t}{\gamma}] |^2(x)
\\
&=
\frac{1}{A_x} \sum_{i=1}^n \int_{\R^n} |(\vec{\varphi}_x)_i|^2 \, d\mu
\\
&\le
\frac{1}{A_x K_{s,p,\beta}} \sum_{i=1}^n \int_{\R^n} |\nabla( \vec{\varphi}_x)_i|^2 \, d\mu
\\
&= 
\frac{A_x}{ K_{s,p,\beta}} \int_{\R^n} \sum_{i=1}^n \sum_{j=1}^n |\partial_j \partial_i \log\, \frac{v_t}{\gamma}|^2 (\frac{v_t}{\gamma})^\frac1p \, dP_{s,x}
\\
&= 
\frac{A_x}{ K_{s,p,\beta}} \int_{\mathbb{R}^n} \|\nabla^2 \log\, \frac{v_t}{\gamma} \|_{\mathrm{HS}}^2 (\frac{v_t}{\gamma})^\frac1p \, dP_{s,x}.
\end{align*}
Here $\|\cdot\|_{\rm HS}$ denotes the Hilbert--Schmidt norm. 
Now let $\lambda_1(y), \dots, \lambda_n(y)$ be eigenvalues of $\nabla^2 \log \frac{v_t}{\gamma}(y)$ for given $y \in \mathbb{R}^n$. Then it follows from the bounds of \eqref{e:SemiLogAssumpHigh} that 
$$
0 \le \lambda_1(y), \dots, \lambda_n(y) \le 1- \frac1\beta, 
$$
and hence 
\begin{align*}
\|\nabla^2 \log\, \frac{v_t}{\gamma}(y) \|_{\mathrm{HS}}^2
=&
\sum_{i=1}^n \lambda_i(y)^2
\le
(1-\frac1\beta) \sum_{i=1}^n \lambda_i(y)
=
(1-\frac1\beta) \Delta \log\, \frac{v_t}{\gamma}(y).
\end{align*}
This reveals that 
\begin{align*}
&P_s[(\frac{v_t}{\gamma})^\frac1p](x) P_s[(\frac{v_t}{\gamma})^\frac1p \Gamma ( \log\, \frac{v_t}{\gamma})](x)
-
| P_s[(\frac{v_t}{\gamma})^\frac1p \nabla \log\, \frac{v_t}{\gamma}] |^2(x)
\\
&\le
\frac{A_x}{ K_{s,p,\beta}} (1-\frac1\beta) \int_{\mathbb{R}^n} (\Delta \log\, \frac{v_t}{\gamma}) (\frac{v_t}{\gamma})^\frac1p \, dP_{s,x}
\\
&=
\frac{1}{ K_{s,p,\beta}} (1-\frac1\beta) 
P_s[ (\frac{v_t}{\gamma})^\frac1p](x) P_s[(\frac{v_t}{\gamma})^\frac1p \Delta \log\, \frac{v_t}{\gamma}](x).
\end{align*}
We then insert this to \eqref{e:ClosureFromBNT} to conclude the proof. 
In the case  of $q/(pp')>0$ which corresponds to $0 < p < 1-e^{-2s}$,  we obtain 
\begin{align*}
&\frac{ \partial_t \widetilde{v}_t - \mathcal{L}_{\beta_{s, p}}^* \widetilde{v}_t }{ \widetilde{v}_t^{1-\frac2q} \gamma^\frac2q }\\
&\le
\frac{q}{pp'} 
\left(
p (\beta_{s, p} - \beta) + \beta_{s,p} \frac{1}{K_{s,p,\beta}}(1-\frac1\beta)
\right)
P_s[(\frac{v_t}{\gamma})^\frac1p] P_s[(\frac{v_t}{\gamma})^\frac1p \Delta \log\, \frac{v_t}{\gamma}]. 
\end{align*}
By direct calculations with \eqref{e:CalculationK} and \eqref{e:CalculationBeta}, we conclude that 
\begin{align*}
p (\beta_{s, p} - \beta) + \beta_{s,p} \frac{1}{K_{s,p,\beta}}(1-\frac1\beta)
=
- (\beta-1) (1-e^{-2s}) + \beta(1-e^{-2s})(1-\frac1\beta)
=0.
\end{align*}
The argument for the case of $q/(pp') <0$ which corresponds to $1-e^{-2s} < p <1$ is similar and we omit it. 
\end{proof}

Arming Theorem \ref{t:RevRevClosureGene}, we complete the proof of Theorem \ref{t:FHC}. 

\begin{proof}[Proof of Theorem \ref{t:FHC}]
For the sake of simplicity, let us consider the case $p \in (0,1-e^{-2s})$ in which case we have $q<0$. 
If $\beta$ is such that $\big\| P_s \big[ \big(\frac{\gamma_\beta}{\gamma} \big)^\frac1p\big] \big\|_{q} = \infty$, then there is nothing to prove so we assume $\big\| P_s \big[ \big(\frac{\gamma_\beta}{\gamma} \big)^\frac1p\big] \big\|_{q} < \infty$. 
Thanks to \eqref{e:PsGauss2} this means $\beta$ satisfies \eqref{e:beta_sp}. 
With this in mind we take $f$ satisfying the assumption in Theorem \ref{t:FHC} and let $v_0 \coloneqq f \gamma$. 
Note that this $v_0$ satisfies the assumption in Theorem \ref{t:RevRevClosureGene}.  We then let $v_t$ be the $\beta$-Fokker--Planck solution to \eqref{e:BetaFP}. 
Fix $R>0$ which tends to $\infty$ and define 
$$
Q(t)
\coloneqq
\int_{\mathbb{R}^n}
\widetilde{v}_t(x) \, dx,
\;\;\;
Q^R(t)
\coloneqq
\int_{\mathbb{R}^n}
\widetilde{v}_t(x) \chi_R(x)\, dx,
\;\;\;
t>0
$$
where $\chi_R \coloneqq \chi(\frac{\cdot}{R})$ and $\chi$ is a smooth cut off function supported on $[-2,2]^n$ and identically $1$ on $[-1,1]^n$. 
We intend to show the monotonicity of $Q(t)$. 
If we notice that $\partial_t \widetilde{v}_t $ is continuous on $\mathbb{R}^n$, then 
$$
\int_{\mathbb{R}^n} 
|\partial_t \widetilde{v}_t(x)|
\chi_R(x)\, dx \le 
CR^n 
\sup_{x\in [-2R,2R]^n } |\partial_t \widetilde{v}_t(x)|
<\infty. 
$$
Hence we may justify the interchange of the derivative and the integration to see that 
\begin{align*}
\frac{d}{dt} Q^R(t)
&= 
\int_{\mathbb{R}^n}
\partial_t \widetilde{v}_t \cdot \chi_R\, dx \\
&= 
\int_{\mathbb{R}^n}
\big( 
\partial_t \widetilde{v}_t 
-
\mathcal{L}_{\beta_{s,p}}^*  \widetilde{v}_t
\big)\chi_R\, dx
+
\int_{\mathbb{R}^n}
\mathcal{L}_{\beta_{s,p}}^*  \widetilde{v}_t
\cdot 
\chi_R\, dx. 
\end{align*}
By invoking Theorem \ref{t:RevRevClosureGene} and the duality of $\mathcal{L}_{\beta_{s,p}}^*$, 
\begin{align*}
\frac{d}{dt} Q^R(t)
&\le 
\int_{\mathbb{R}^n}
\widetilde{v}_t
\cdot 
\mathcal{L}_{\beta_{s,p}} \chi_R\, dx\\
&= 
\int_{\mathbb{R}^n}
\widetilde{v}_t
\big(
\beta_{s,p} \Delta \chi_R 
-\langle x, \nabla \chi_R\rangle
\big)
\, dx\\
&= 
\frac1R
\int_{\mathbb{R}^n}
\widetilde{v}_t
\big(
\frac{\beta_{s,p} }{R} (\Delta \chi ) (\frac{x}{R}) 
-\langle x,  (\nabla \chi) (\frac{x}{R})\rangle
\big)
\, dx\\
&\le 
\frac{C_\chi }{R}
\int_{\mathbb{R}^n}
(1+|x|)
\widetilde{v}_t\, dx. 
\end{align*}
In view of $q<0$, we apply Lemma \ref{l:Technical}-\textit{(3)} to ensure that $\int (1+|x|) \widetilde{v}_t\, dx \le C_t <\infty$. 
With this in mind,  for arbitrary $T>0$, 
$$
Q^R(T) - Q^R(0) 
=
\int_0^T 
\frac{d}{dt}Q^R(t)\, dt
\le 
\frac{1 }{R} C_\chi\sup_{0\le t\le T} C_t
\to 0\;\;\;(R\to\infty). 
$$
By virtue of the monotone convergence theorem, we know that $Q(t) = \lim_{R\to \infty} Q^R(t)$ and hence we obtain 
$$
Q(T) \le Q(0) 
$$
for all $T>0$. 
Since $v_t$ is the $\beta$-Fokker--Planck solution, we know that $\lim_{T\to \infty} v_T = (\int_{\mathbb{R}^n} v_0\, dx)\gamma_{\beta}$ and that 
$$
\liminf_{T\to\infty} Q(T) \ge 
\int_{\mathbb{R}^n}
P_s \big[ 
\big(
\frac{\gamma_\beta}{\gamma}
\big)^\frac1p
\big]^q\, d\gamma
\big(
\int_{\mathbb{R}^n} 
v_0\, dx 
\big)^{\frac{q}{p}}
$$
by Fatou's lemma. 
This concludes 
$$
\int_{\mathbb{R}^n}
P_s \big[ 
\big(
\frac{\gamma_\beta}{\gamma}
\big)^\frac1p
\big]^q\, d\gamma
\big(
\int_{\mathbb{R}^n} 
v_0\, dx 
\big)^{\frac{q}{p}}
\le 
\int_{\mathbb{R}^n}
P_s \big[ 
\big(
\frac{ v_0 }{\gamma}
\big)^\frac1p
\big]^q\, d\gamma. 
$$

In the case $p > 1-e^{-2s}$, if one notices that $\beta_{s,p}>0$ regardless of $\beta\ge1$ then the similar  proof above can be applied. 
\end{proof}

\if0
As a special case of this, we can derive Theorem \ref{t:RegFHC}. 

\begin{corollary}\label{Cor:p->1-e(-2s)}
Let $s>0$ and $\beta\ge1$. 
If $f:\mathbb{R}^n\to (0,\infty)$ is log-convex and $1-\frac1\beta$-semi-log-concave then 
\begin{equation}\label{e:LoughDelta}
\big\|
P_s\big[ f^\frac1{1-e^{-2s}} \big]
\big\|_{L^{-\delta } (\gamma)}
\le 
\big(
\frac{e^{\beta-1}}{\beta}
\big)^{\frac{n}{2} \frac1{e^{2s}-1}}
\big(
\int_{\mathbb{R}^n}
f\, d\gamma
\big)^\frac1{1-e^{-2s}} 
\end{equation}
holds for all $\delta>0$. 

Moreover,  if $h$ is $({\rm id} - \Lambda)$-semi-log-convex and $({\rm id} - \frac1\beta \Lambda)$-semi-log-concave for some positive definite $\Lambda$, then 
\begin{equation}\label{e:LoughEndpoint}
\big\|
P_s\big[ h^\frac1{1-e^{-2s}} \big]
\big\|_{L^{1-e^{2s} } (\gamma)}
\le 
\big(
\frac{e^{\beta-1}}{\beta}
\big)^{\frac{n}{2} \frac1{e^{2s}-1}}
\big(
\int_{\mathbb{R}^n}
h\, d\gamma
\big)^\frac1{1-e^{-2s}}.  
\end{equation}
\end{corollary}

\begin{proof}
Let $f$ be logconvex and $1-\frac1\beta$-semi-log-concave. 
From Theorem \ref{t:FHC} we know 
$$
\big\|
P_s\big[ f^\frac1p \big]
\big\|_{L^{1-e^{2s} + pe^{2s}} (\gamma)}
\le 
\big\|
P_s\big[ \big(\frac{\gamma_\beta}{\gamma}\big)^\frac1p \big]
\big\|_{L^{1-e^{2s} + pe^{2s}}(\gamma)}
\big(
\int_{\mathbb{R}^n}
f\, d\gamma
\big)^\frac1p
$$
as long as $ 1-e^{-2s}<p<1$. 
For any $\delta>0$ we have from H\"{o}lder's inequality that 
$$
\big\|
P_s\big[ f^\frac1p \big]
\big\|_{L^{1-e^{2s} + pe^{2s} - \delta } (\gamma)}
\le 
\big\|
P_s\big[ \big(\frac{\gamma_\beta}{\gamma}\big)^\frac1p \big]
\big\|_{L^{1-e^{2s} + pe^{2s}}(\gamma)}
\big(
\int_{\mathbb{R}^n}
f\, d\gamma
\big)^\frac1p.
$$
We take a limit $p \downarrow 1-e^{-2s}$ to see that 
$$
\big\|
P_s\big[ f^\frac1{1-e^{-2s}} \big]
\big\|_{L^{-\delta } (\gamma)}
\le 
\lim_{p\downarrow 1-e^{-2s}}
\big\|
P_s\big[ \big(\frac{\gamma_\beta}{\gamma}\big)^\frac1p \big]
\big\|_{L^{1-e^{2s} + pe^{2s}}(\gamma)}
\big(
\int_{\mathbb{R}^n}
f\, d\gamma
\big)^\frac1{1-e^{-2s}}.
$$
It then follows from \eqref{e:PsGauss2} that 
$$
\lim_{p\downarrow 1-e^{-2s}}
\big\|
P_s\big[ \big(\frac{\gamma_\beta}{\gamma}\big)^\frac1p \big]
\big\|_{L^{1-e^{2s} + pe^{2s}}(\gamma)}
=
\big(
\frac{\beta}{e^{\beta-1}}
\big)^{-\frac{e^{-2s}n}{2} \frac1{1-e^{-2s}}}. 
$$
which shows \eqref{e:LoughDelta}. 
\if0
\footnote{
{\color{red}TO BE REMOVED}
In fact, when  $n=1$,  from \eqref{e:PsGauss2}
\begin{align*}
&\lim_{p\downarrow 1-e^{-2s}}
\big\|
P_s\big[ \big(\frac{\gamma_\beta}{\gamma}\big)^\frac1p \big]
\big\|_{L^{1-e^{2s} + pe^{2s}}(\gamma)}^2\\
&=
\lim_{p\downarrow 1-e^{-2s}}
\beta^{1 - \frac1p}
\big(
1+ (\beta-1) \frac{p -(1-e^{-2s})}{p}
\big)^{- (1 - \frac{1}{1-e^{2s} + pe^{2s}})}\\
&= 
\beta^{1 - \frac1{1-e^{-2s}}}
\lim_{p\downarrow 1-e^{-2s}}
\big(
1+ (\beta-1) \frac{p -(1-e^{-2s})}{p}
\big)^{- \frac{p-1}{  p- (1-e^{-2s})} }\\
&= 
\beta^{- \frac{e^{-2s}}{1-e^{-2s}}}
\lim_{p\downarrow 1-e^{-2s}}
\big[ 
\big(
1+ (\beta-1) \frac{p -(1-e^{-2s})}{p}
\big)^{ \frac{p}{ (\beta-1) (p- (1-e^{-2s}) )} }
\big]^{-\frac{(\beta-1)(p-1)}p}\\
&= 
\beta^{- \frac{e^{-2s}}{1-e^{-2s}}}
\lim_{p\downarrow 1-e^{-2s}}
e^{-\frac{(\beta-1)(p-1)}p}\\
&= 
\beta^{- \frac{e^{-2s}}{1-e^{-2s}}}
e^{\frac{e^{-2s}(\beta-1)}{1-e^{-2s}}}.
\end{align*}}
\fi

For the second statement, we first take $\delta = e^{2s} -1$ in \eqref{e:LoughDelta} and obtain 
\begin{equation}\label{e:LoughEndpoint2}
\big\|
P_s\big[ f^\frac1{1-e^{-2s}} \big]
\big\|_{L^{1-e^{2s} } (\gamma)}
\le 
\big(
\frac{e^{\beta-1}}{\beta}
\big)^{\frac{n}{2} \frac1{e^{2s}-1}}
\big(
\int_{\mathbb{R}^n}
f\, d\gamma
\big)^\frac1{1-e^{-2s}}
\end{equation}
for log-convex and $1-\frac1\beta$-semi-log-concave $f$. 
For $h$ satisfying the assumption we let 
$$
h_{\Lambda^{-1/2}}(x)
\coloneqq
f(\Lambda^{-1/2}x) e^{ - \frac12 ( |\Lambda^{-1/2}x|^2  - |x|^2) }.
$$
Then one can see that $f = h_{\Lambda^{-1/2}}$ is log-convex and $1-\frac1\beta$-semi-log-concave and hence we may apply \eqref{e:LoughEndpoint2} to see that 
$$
\big\|
P_s\big[ h_{\Lambda^{-1/2}}^\frac1{1-e^{-2s}} \big]
\big\|_{L^{1-e^{2s} } (\gamma)}
\le 
\big(
\frac{e^{\beta-1}}{\beta}
\big)^{\frac{n}{2} \frac1{e^{2s}-1}}
\big(
\int_{\mathbb{R}^n}
h_{\Lambda^{-1/2}}\, d\gamma
\big)^\frac1{1-e^{-2s}}.  
$$
Since the inequality is scale invariant, Lemma \ref{l:LinearInv}, this concludesthe proof. 

\end{proof}

\fi

\section{Applications to convex geometry}\label{S4}
\subsection{Functional form of Blaschke--Santal\'{o} inequality and   inverse Santal\'{o} inequality}\label{S4.1}

Both the Blaschke--Santal\'{o} inequality and the inverse Santal\'{o} inequality are investigated in more general framework as functional inequalities. 
This direction of the study was initiated by Ball \cite{BallPhd} where he formulated a functional form of the Blaschke--Santal\'{o} inequality as follows. 
For any Borel function $\psi$ on $\mathbb{R}^n$ with $\int_{\mathbb{R}^n} e^{-\psi(x)}\, dx <\infty$ and centrally symmetric,  
\begin{equation}\label{e:Lehec2}
\int_{\mathbb{R}^n} e^{-\psi(x)}\, dx \int_{\mathbb{R}^n} e^{- \psi^*(x)}\, dx \le \left(\int_{\mathbb{R}^n} e^{- \frac{1}{2} |x|^2}\, dx \right)^2=(2\pi)^n.
\end{equation}
Here $\psi^*$ is the Legendre transformation of $\psi$ defined by 
$$
\psi^*(x) \coloneqq \sup_{y \in \mathbb{R}^n} [ \langle x, y\rangle - \psi(y)], \;\;\; x \in \mathbb{R}^n. 
$$
Later the symmetric assumption was weakened to $\int_{\mathbb{R}^n} x e^{-\psi(x)}\, dx = 0$ by Artstein-Avidan--Klartag--Milman \cite{AKM}.  
\begin{theorem}[Artstein-Avidan--Klartag--Milman \cite{AKM}, Lehec \cite{LehecDirect}]\label{t:BSBaryCen19Oct}
Let a Borel function $\psi:\mathbb{R}^n\to \mathbb{R}$ be such that $|x| e^{-\psi(x)} \in L^1(dx)$. 
If $e^{-\psi}$ is barycenter zero in the sense that $\int_{\mathbb{R}^n} x e^{-\psi(x)}\, dx =0$, then \eqref{e:Lehec2} holds true.  
\end{theorem}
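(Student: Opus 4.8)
The plan is to deduce \eqref{e:Lehec2} from the reverse hypercontractivity inequality in part (2) of Theorem \ref{t:NelsonGene} by sending the diffusion time $s\downarrow 0$, using the vanishing--viscosity correspondence between the Ornstein--Uhlenbeck flow and the Hopf--Lax (Hamilton--Jacobi) flow in the spirit of Bobkov--Gentil--Ledoux. (A more self-contained alternative is Lehec's direct argument through the Pr\'ekopa--Leindler inequality and the Yao--Yao partition, which is essentially the non-Ornstein--Uhlenbeck specialisation of the proof of Proposition \ref{Prop:HalfHC}; I will follow the hypercontractivity route since it is the one suggested by the machinery already in place.)

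A preliminary sequence of reductions --- truncating $e^{-\psi}$ so that it is bounded, mollifying, and passing to convex, coercive representatives --- allows one to assume that $\psi$ is smooth, coercive, and has $\nabla^2\psi$ bounded, while keeping $\int_{\R^n} x\,e^{-\psi(x)}\,dx=0$; here one uses that these operations move $\int_{\R^n} e^{-\psi}\,dx$ and $\int_{\R^n} e^{-\psi^{*}}\,dx$ only in ways that are controlled in the limit, so that the inequality for the regularised $\psi$ implies it for the original one (the integrability assumption $|x|e^{-\psi(x)}\in L^1$ is what keeps barycenters and the relevant Gaussian majorants under control throughout). These reductions are routine in spirit but do require care, and are part of what makes the statement nontrivial.

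For small $s>0$ put $p_s=1-e^{-2s}\in(0,1)$ and $\varepsilon_s=1-\tfrac{p_s}{2s}\downarrow 0$, and consider the family $f_s^{(z)}(x)=\exp\!\big(\tfrac{p_s}{2s}\big(\tfrac12|x|^2-\psi(x-z)\big)\big)$; the density of $f_s^{(z)}\gamma$ is proportional to $e^{-\frac{\varepsilon_s}{2}|x|^2-(1-\varepsilon_s)\psi(x-z)}$, which is log-concave and integrable, with a barycenter $b(s,z)$ that is continuous in $z$ and, by dominated convergence together with $\int_{\R^n} x\,e^{-\psi}=0$, satisfies $b(s,z)\to z$ as $s\downarrow 0$ locally uniformly. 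A standard continuity argument then yields $z_s\to 0$ with $b(s,z_s)=0$, so part (2) of Theorem \ref{t:NelsonGene} applies to $f=f_s^{(z_s)}$ and, rewritten, gives $\big(\int_{\R^n} P_s[f^{1/p_s}]^{-p_s}\,d\gamma\big)\big(\int_{\R^n} f\,d\gamma\big)\le 1$ for every small $s$. One then lets $s\downarrow 0$: on one side $\int_{\R^n} f_s^{(z_s)}\,d\gamma\to(2\pi)^{-n/2}\int_{\R^n} e^{-\psi}\,dx$; on the other, Laplace's method applied to $P_s[(f_s^{(z_s)})^{1/p_s}](x)=\int e^{\frac1{2s}\left(\frac12|e^{-s}x+\sqrt{p_s}\,y|^2-\psi(e^{-s}x+\sqrt{p_s}\,y-z_s)\right)}\,d\gamma(y)$ --- in which, after the substitution $z=\sqrt{p_s}\,y$, the quadratic-in-$y$ term enters the Laplace exponent only at order $s$ and drops out --- yields $p_s\log P_s[(f_s^{(z_s)})^{1/p_s}](x)\to\psi^{*}(x)-\tfrac12|x|^2$ for each $x$, hence $P_s[(f_s^{(z_s)})^{1/p_s}]^{-p_s}\to e^{-\psi^{*}+\frac12|\cdot|^2}$ and $\int_{\R^n} P_s[(f_s^{(z_s)})^{1/p_s}]^{-p_s}\,d\gamma\to(2\pi)^{-n/2}\int_{\R^n} e^{-\psi^{*}}\,dx$. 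Passing to the limit in the inequality above gives $\int_{\R^n} e^{-\psi}\,dx\,\int_{\R^n} e^{-\psi^{*}}\,dx\le(2\pi)^n$, which is \eqref{e:Lehec2}.

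The main obstacle is exactly this last passage to the limit: the pointwise Laplace asymptotics $p_s\log P_s[(f_s^{(z_s)})^{1/p_s}]\to\psi^{*}-\tfrac12|\cdot|^2$, and --- more delicately --- a dominating function legitimising the exchange of limit and integral in $\int_{\R^n} P_s[\cdots]^{-p_s}\,d\gamma$, form the ``vanishing viscosity'' heart of the argument; it is precisely in order to obtain these estimates uniformly that one first reduces to a smooth, coercive $\psi$ with bounded Hessian, where the needed majorants are explicit Gaussian integrals. A softer secondary point is the barycenter-restoring translation $z_s$ and the check that $z_s\to 0$, which is where the hypothesis $\int_{\R^n} x\,e^{-\psi}=0$ is actually used and which explains why an infimum-over-translations statement such as part (1) of Theorem \ref{t:NelsonGene} does not suffice on its own.
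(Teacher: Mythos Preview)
Your proposal is correct and follows essentially the same route as the paper's own argument in Subsection~\ref{S4.4}: reduce $\psi$, pass to $\phi=\psi-\tfrac12|\cdot|^2$, restore the barycenter for each small $s$, apply Theorem~\ref{t:NelsonGene}(2), and then let $s\downarrow 0$ via vanishing viscosity. Two implementation differences are worth noting. First, the paper restores the barycenter by the explicit Gaussian translation
\[
\phi_\varepsilon(x)=\phi(x+b_\varepsilon)+\tfrac{1}{a_\varepsilon}\langle x,b_\varepsilon\rangle+\tfrac{1}{2a_\varepsilon}|b_\varepsilon|^2,\qquad b_\varepsilon=\frac{\int_{\mathbb{R}^n} x\,e^{-a_\varepsilon\phi}\,d\gamma}{\int_{\mathbb{R}^n} e^{-a_\varepsilon\phi}\,d\gamma},
\]
which makes $\int x\,e^{-a_\varepsilon\phi_\varepsilon}\,d\gamma=0$ by construction and gives $b_\varepsilon\to 0$ directly from dominated convergence; this avoids your fixed-point/continuity argument for $z_s$. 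Second, the paper's reduction is milder --- only to continuous $\phi$ satisfying the linear lower bound \eqref{e:VanVisCond} via the truncation $\phi_k=\max\{\phi,-k\}$ --- and the limit passage uses Fatou's lemma together with the one-sided vanishing-viscosity estimate $\liminf_{\varepsilon\downarrow 0}u^\varepsilon\ge Q_1\phi$ from \cite{ABI}, rather than full Laplace asymptotics with a dominating function; this sidesteps the need for your stronger smooth/bounded-Hessian reduction.
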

A passage from this functional form to the original inequality \eqref{e:BS} is to focus on the gauge function. 
In fact, using the property 
\begin{equation}\label{e:PathSetFunc}
\big( \frac12 \|\cdot\|_K^2 \big)^* = \frac12\|\cdot\|_{K^\circ}^2,
\;\;\; 
\int_{\mathbb{R}^n} e^{- \frac12 \| x \|_K^2}\, dx 
= 
\frac{(2\pi)^\frac{n}{2}}{|{\rm B}_2^n|} |K|,
\end{equation}
see \cite{AKM} and \cite[p.55]{Sch} for these,  one can derive \eqref{e:BS} from its functional form \eqref{e:Lehec2}.  
We refer works by Fathi \cite{Fathi} and second author \cite{Tsuji} for the relation to some improved version of Talagrand's transport-cost inequality.  
We will rederive Theorem \ref{t:BSBaryCen19Oct} from our Theorem \ref{t:NelsonGene} in Subsection \ref{S4.4}. 
Similarly one can consider the functional form of the inverse Santal\'{o} inequality and this was formulated by Fradelizi--Meyer \cite{FraMeyPo08,FM} as follows. 
\begin{conjecture}
For any convex function $\psi:\mathbb{R}^n\to\mathbb{R}$, 
\begin{equation}\label{e:FuncInvSantaloGene}
\int_{\mathbb{R}^n} e^{-\psi(x)}\, dx \int_{\mathbb{R}^n} e^{- \psi^*(x)}\, dx \ge e^n.
\end{equation}
For any convex and centrally symmetric function $\psi:\mathbb{R}^n \to \mathbb{R}$, 
\begin{equation}\label{e:FuncInvSantalo}
\int_{\mathbb{R}^n} e^{-\psi(x)}\, dx \int_{\mathbb{R}^n} e^{- \psi^*(x)}\, dx \ge 4^n.
\end{equation}

\end{conjecture}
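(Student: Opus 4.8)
Since \eqref{e:FuncInvSantaloGene} is the functional form of Mahler's conjecture (and \eqref{e:FuncInvSantalo} its centrally symmetric refinement), a complete proof is out of reach; what can be proposed is the route that the present circle of ideas opens, reducing both statements to a single scaling-critical hypercontractivity bound. Using the dictionary \eqref{e:PathSetFunc} between convex bodies and gauge functions — and, for a general convex $\psi$, an affine normalization together with the linear invariance of the volume product — the inequalities \eqref{e:FuncInvSantaloGene} and \eqref{e:FuncInvSantalo} would follow from the second part of Proposition \ref{Prop:HCSet} once one proves the forward hypercontractivity inequality \eqref{e:FHC_MaIntro} at the endpoint $(p_s,q_s)=(1-e^{-2s},1-e^{2s})$ for \emph{all} nonnegative $f\in L^1(\gamma)$ (resp.\ all centrally symmetric such $f$), with constant ${\rm IS}_s$ dictated by the Fradelizi--Meyer extremal function $f_*(x)={\bf 1}_{[-1,\infty)^n}(x)\, e^{-(x_1+\dots+x_n)}/\gamma(x)$, i.e.
$$
{\rm IS}_s^{1/p_s}=e^{-n/p_s}\,(2\pi)^{-n/q_s}\,p_s^{n/2}\,e^{ns(2-1/q_s)}\,\Gamma(1-q_s)^{n/q_s};
$$
as recorded after \eqref{e:Example18Oct}, this choice has the property that the $s\downarrow0$ limit in Proposition \ref{Prop:HCSet} reproduces exactly the right-hand side of \eqref{e:FuncInvSantaloGene}, so establishing this endpoint bound for general inputs would settle \eqref{e:MahGene} (and, with the corresponding symmetric extremal function and the constant $4^n$, also \eqref{e:Mah}).

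First I would make the reduction sharp: for the convex body or convex function at hand, form the one-parameter family $f_s$ with $\log f_s=\frac{p_s}{2s}\big(\frac12|\cdot|^2-\psi\big)$, verify \eqref{e:FHC_MaIntro} at the endpoint only for these specific inputs, and then run the vanishing-viscosity / Hamilton--Jacobi limit of Bobkov--Gentil--Ledoux, exactly as in the proof of Proposition \ref{Prop:HCSet}, to let $s\downarrow0$ and arrive at \eqref{e:FuncInvSantaloGene} / \eqref{e:FuncInvSantalo}. Second, for the hypercontractivity bound itself the natural tool is the flow-monotonicity / closure scheme of Section \ref{S3}: run the $\beta$-Fokker--Planck flow \eqref{e:BetaFP} on $v_0=f\gamma$, track $Q(t)=\int_{\mathbb{R}^n}P_s[(v_t/\gamma)^{1/p}]^q\,\gamma\,dx$, and try to propagate the monotonicity of $Q$ up to the endpoint. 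Theorem \ref{t:RevRevClosureGene} does precisely this under the two-sided bound $0\le\nabla^2\log(v_t/\gamma)\le(1-\frac1\beta){\rm id}$, via the Poincar\'{e} inequality for the log-concave measure $(v_t/\gamma)^{1/p}\,dP_{s,x}$ together with $\|\nabla^2\log(v_t/\gamma)\|_{{\rm HS}}^2\le(1-\frac1\beta)\Delta\log(v_t/\gamma)$, and sending $\beta\to\infty$ produces the curvature-restricted lower bound of Theorem \ref{t:ImpInv}; so the remaining task is to make the closure identity \eqref{e:ClosureFromBNT} yield a favorable sign with \emph{no} curvature control on the inputs.

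That last step is the genuine obstacle, and it is exactly where the conjecture is hard. The translated-Gaussian computations \eqref{e:FailureOct}, \eqref{e:Admissible2} show that the endpoint sits on the very boundary of the admissible region, so the Brascamp--Lieb / Barthe--Wolff machinery offers no Gaussian-exhaustion principle there (Barthe--Wolff's non-degenerate condition \eqref{e:NondegBW} fails), while \eqref{e:Example18Oct} shows the extremiser to be the one-sided exponential $f_*$ rather than a Gaussian; hence the closure term in \eqref{e:ClosureFromBNT} has essentially no slack to absorb, and any successful argument must be tight enough to detect the simplex — not the ball — as the extremal shape. I do not expect the bare Poincar\'{e}-inequality estimate to suffice in this regime: a complete proof would require a genuinely new ingredient, presumably a way to exploit the convexity of $\psi$ beyond its second derivative, or a substitute for the Gaussian-extremiser principle valid in the degenerate Brascamp--Lieb situation. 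Absent such an input, the hypercontractivity route yields only the partial, curvature-restricted lower bounds recorded in Theorem \ref{t:ImpInv} and Corollary \ref{t:WeightInv}.
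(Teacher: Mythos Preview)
The statement is a \emph{conjecture} in the paper, not a theorem: the paper does not prove it, and explicitly records that \eqref{e:FuncInvSantaloGene} and \eqref{e:FuncInvSantalo} are known only for $n=1$, for $n=2$ in the symmetric case, and for unconditional functions in all dimensions. You correctly recognize this at the outset, and your discussion of the hypercontractivity route---the reduction via Proposition~\ref{Prop:HCSet}, the endpoint exponent $(p_s,q_s)=(1-e^{-2s},1-e^{2s})$, the extremal function $f_*$ from \eqref{e:Example18Oct}, the failure of Barthe--Wolff's non-degenerate condition, and the limitation of the Fokker--Planck closure argument to curvature-restricted inputs---matches the paper's own framing in Subsections~\ref{S2.1} and~\ref{S3.1} essentially point for point.

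One small correction: your parenthetical ``with the corresponding symmetric extremal function and the constant $4^n$, also \eqref{e:Mah}'' is slightly imprecise. Proposition~\ref{Prop:HCSet}-\textit{(2)} as stated yields a lower bound for \emph{all} convex bodies with $0\in\mathrm{int}\,K$, not just symmetric ones, and the paper does not isolate a symmetric analogue of $f_*$ giving $\liminf_{s\downarrow0}{\rm IS}_s^{-1}=4^n/(n!\,v(\mathrm{B}_2^n))$; obtaining \eqref{e:FuncInvSantalo} would require formulating and proving a symmetric-input version of \eqref{e:FHC_MaIntro} with the appropriate constant, which the paper does not attempt. Otherwise your summary of what is proved, what is conjectural, and where the obstruction lies is accurate.
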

These functional forms of Mahler's conjecture were proved for $n =1$ in \cite{FM} (both of \eqref{e:FuncInvSantalo} and \eqref{e:FuncInvSantaloGene}), $n=2$ in \cite{FN} (symmetric case \eqref{e:FuncInvSantalo}), and the unconditional case for all dimensions in \cite{FraMeyPo08}.  
We will argue on this functional form rather than inequalities for convex bodies from now on. 

\subsection{An idea of the proof of Proposition \ref{Prop:HCSet}: Hamilton--Jacobi flow}\label{S4.2}
Our link bridging hypercontractivity and Blaschke--Santal\'{o} and inverse Santal\'{o} inequalities is according to a simple observation on  Hamilton--Jacobi flow. 
For a measurable function $\phi :\mathbb{R}^n\to \mathbb{R}$, we define $Q_t \phi$, $t>0$, by 
\begin{equation}\label{e:DefHJ}
Q_t\phi(x) \coloneqq \inf_{y \in \mathbb{R}^n} [ \phi(y) + \frac{|x-y|^2}{2t} ], \;\;\; x \in \mathbb{R}^n.
\end{equation}
This $Q_t \phi$ is known to formally solve the Hamilton--Jacobi equation
$$
\begin{cases}
\partial_t u + \frac12 |\nabla u|^2 = 0,\;\;\; &(t,x) \in (0,\infty)\times \mathbb{R}^n,\\
u(0,x) = \phi(x),\;\;\;&x \in \mathbb{R}^n, 
\end{cases}
$$
in an appropriate sense depending on the regularity of $\phi$. 
There are large number of references concerning the theory of the Hamilton--Jacobi equation and we refer to Evans's book \cite{Evans}. 
For our purpose in this paper, the relevant notion is so-called vanishing viscosity method which reveals the link between $P_sf$ and $Q_t\phi$, see \cite{BGL}. In fact based on this argument, Bobkov--Gentil--Ledoux \cite{BGLJMPA} observed an equivalence between hypercontractivity for Ornstein--Uhlenbeck flow and Hamilton--Jacobi flow. 
This link can be seen by letting  
$$
u^\varepsilon
\coloneqq
-2\varepsilon
\log\, P_\varepsilon \big[ e^{-\frac{\phi}{2\varepsilon}} \big]
$$
for sufficiently regular $\phi\colon\mathbb{R}^n\to\mathbb{R}$ and taking a limit $\varepsilon\downarrow0$. 
For instance Bobkov--Gentil--Ledoux observed that 
\begin{equation}\label{e:VaniVis}
\lim_{\varepsilon \downarrow 0} 
u^\varepsilon
= 
Q_1 \phi 
\end{equation}
in some appropriate sense if $\phi$ is bounded and continuous. 

An another observation is to regard the functional Blaschke--Santal\'{o} inequality as an improved  hypercontractivity for $Q_t\phi$. 
From the definition,  we have a simple identity between $Q_1$ and the Legendre transform 
\begin{equation}\label{e:Qto*}
Q_1\phi(x) = - (\phi + \frac{1}{2} |\cdot|^2)^*(x) + \frac{1}{2}|x|^2, \;\;\; x \in \mathbb{R}^n
\end{equation}
from which we derive 
\begin{equation}\label{e:LinkHJLege}
\int_{\mathbb{R}^n} e^{-\psi}\, dx \int_{\mathbb{R}^n} e^{-\psi^*}\, dx 
= 
(2\pi)^n 
\int_{\mathbb{R}^n} e^{-\phi}\, d\gamma \int_{\mathbb{R}^n} e^{Q_1 \phi}\, d\gamma,
\;\;\;
{\rm for}
\;\;\;   
\psi = \phi + \frac12|\cdot|^2. 
\end{equation}
Therefore \eqref{e:Lehec2} is equivalent to 
\begin{equation}\label{e:HJBS}
\big\| e^{Q_1\phi} \big\|_{L^1(\gamma)} \le \big\| e^\phi \big\|_{L^{-1}(\gamma)}
\end{equation}
for all symmetric $\phi$. 
This \eqref{e:HJBS} can be compared to the hypercontractivity inequality for Hamilton--Jacobi flow due to Bobkov--Gentil--Ledoux \cite{BGLJMPA} which states that 
\begin{equation}\label{e:HCHJClassic}
\big\| e^{Q_1\phi} \big\|_{L^0(\gamma)} \le \big\| e^\phi \big\|_{L^{-1}(\gamma)}
\end{equation}
for all bounded and continuous function $\phi$.  Clearly \eqref{e:HJBS} improves \eqref{e:HCHJClassic} via $L^1(\gamma) \subset L^0(\gamma)$ by virtue of the symmetry of $\phi$. 
Similarly one can regard the functional inverse Santal\'{o} inequality \eqref{e:FuncInvSantaloGene} as the reverse hypercontractivity inequality for Hamilton--Jacobi flow
$$
\big\| e^{Q_1\phi} \big\|_{L^1(\gamma)} 
\ge 
\big(
\frac{e}{2\pi}
\big)^n
\big\| e^{\phi} \big\|_{L^{-1}(\gamma)}
$$
for all $\phi$. 
Now the relation between $P_sf$, $Q_1\phi$, and $\psi^*$ are clarified.  
We will then give more detailed argument to prove Proposition \ref{Prop:HCSet} in the following subsection. 

\if0
Here note that $f$ also satisfies $\int_{\mathbb{R}^n} e^{-f}\, d\gamma <\infty$ and $\int_{\mathbb{R}^n} x e^{-f}\, d\gamma =0$.
\eqref{e:HJBS} is also represented as 
\begin{equation}\label{e:1-HJHCBS}
\| e^{Q_1 f} \|_{L^1(\gamma)} \le \| e^f \|_{L^{-1}(\gamma)}.
\end{equation}
More generally, for any $t>0$, since it holds $t Q_t(f) = Q_1(tf)$, we enjoy 
\begin{equation}\label{e:t-HJHCBS}
\| e^{Q_t f} \|_{L^t(\gamma)} \le \| e^f \|_{L^{-t}(\gamma)}.
\end{equation}
\eqref{e:t-HJHCBS} is related to the hypercontractivity of the Hamilton--Jacobi flow which is investigated very well in differential geometry, in particular under curvature-dimension condition. 
In general, it is known that hypercontractivity of the Hamilton--Jacobi flow is reduced from the reverse hypercontractivity of heat flow. 
Hence, now we have a natural question: Can we also reduce \eqref{e:t-HJHCBS} from a reverse hypercontractivity type inequality of a heat flow? If it can be done, what is that inequality? 
Our first result give an answer for this question. 
\fi 

\subsection{Proof of Proposition \ref{Prop:HCSet}-{\textit{(1)}}}\label{S4.3}
As we explained, it suffices to show the functional form \eqref{e:Lehec2} to obtain the set form \eqref{e:BS}. To show Proposition \ref{Prop:HCSet}-{\textit{(1)}}, we first establish the following. 
\begin{proposition}\label{Prop:HCBS_2}
Let $\psi:\mathbb{R}^n\to\mathbb{R}$ be continuous and satisfy 
\begin{align}\label{e:VanVisCond}
\psi(x) - \frac12|x|^2 \ge - C ( 1 + |x|), \;\;\; x \in \mathbb{R}^n
\end{align}
for some $C>0$. 
If \eqref{e:RHC_BSIntro} holds for $f=f_s$ defined by $\log\, f_s (x)\coloneqq \frac{p_s}{2s} ( \frac12 |x|^2 - \psi(x))$ for all small $s>0$, then 
\begin{equation}\label{e:FuncBS19Oct}
\int_{\mathbb{R}^n} e^{-\psi}\, dx \int_{\mathbb{R}^n} e^{-\psi^*}\, dx 
\le 
(2\pi)^n (\limsup_{s\downarrow 0} {\rm BS}_s)^{-1}. 
\end{equation}
\end{proposition}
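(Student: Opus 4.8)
The plan is to run the vanishing--viscosity dictionary of Subsection~\ref{S4.2}. Set $\phi\coloneqq\psi-\frac12|\cdot|^2$; this is continuous, and by \eqref{e:VanVisCond} it satisfies the affine lower bound $\phi(x)\ge-C(1+|x|)$, which forces $\psi$ to be superlinear, so that $\int_{\mathbb{R}^n}e^{-\psi}\,dx\in(0,\infty)$ and, using $\psi^*(x)\ge|x|-\max_{|e|=1}\psi(e)$, also $\int_{\mathbb{R}^n}e^{-\psi^*}\,dx\in(0,\infty)$; in particular both sides of \eqref{e:FuncBS19Oct} are finite and strictly positive and $Q_1\phi$ is real-valued. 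By the definition of $f_s$ we have $f_s^{1/p_s}=e^{-\phi/(2s)}$ and $f_s=e^{-\frac{p_s}{2s}\phi}$. Put $u^s\coloneqq-2s\log P_s\big[e^{-\phi/(2s)}\big]$, which is finite and real-valued because the affine lower bound on $\phi$ makes the defining Gaussian integral converge. Then $\log P_s[f_s^{1/p_s}]=-u^s/(2s)$, so raising the hypothesis \eqref{e:RHC_BSIntro} to the power $p_s>0$ turns it into
\begin{equation}\label{e:plan-raised}
\Big(\int_{\mathbb{R}^n}e^{-\frac{q_s}{2s}u^s}\,d\gamma\Big)^{p_s/q_s}\ \ge\ {\rm BS}_s\int_{\mathbb{R}^n}f_s\,d\gamma .
\end{equation}

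I would then let $s\downarrow 0$ in \eqref{e:plan-raised}. On the right, $0<\frac{p_s}{2s}\le1$ together with $\phi\ge-C(1+|\cdot|)$ gives the uniform bound $0<f_s\le e^{C(1+|\cdot|)}\in L^1(\gamma)$, while $f_s\to e^{-\phi}$ pointwise (since $\frac{p_s}{2s}\to1$), so dominated convergence yields $\int f_s\,d\gamma\to\int e^{-\phi}\,d\gamma$. On the left I would invoke the vanishing--viscosity limit $u^s\to Q_1\phi$ pointwise, valid here because $\phi$ is continuous with affine lower bound \eqref{e:VanVisCond}: up to the replacements $e^{-s}x\to x$ and $\sqrt{1-e^{-2s}}\sim\sqrt{2s}$, $P_s[e^{-\phi/(2s)}](x)$ is the Laplace integral $\int e^{-\frac1{2s}(\phi(z)+\frac12|z-x|^2)}(4\pi s)^{-n/2}\,dz$, and the coercivity furnished by \eqref{e:VanVisCond} makes $Q_1\phi(x)=\inf_z[\phi(z)+\frac12|z-x|^2]$ finite, attained and asymptotically dominant. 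Since $q_s=-2s+O(s^2)$ we have $-q_s/(2s)\to1$ and $p_s/q_s\to-1$, hence $e^{-\frac{q_s}{2s}u^s}\to e^{Q_1\phi}$ pointwise, Fatou's lemma gives $\liminf_{s\downarrow0}\int e^{-\frac{q_s}{2s}u^s}\,d\gamma\ge\int e^{Q_1\phi}\,d\gamma\in(0,\infty)$, and therefore
\begin{equation}\label{e:plan-limsup}
\limsup_{s\downarrow0}\Big(\int_{\mathbb{R}^n}e^{-\frac{q_s}{2s}u^s}\,d\gamma\Big)^{p_s/q_s}\ \le\ \Big(\int_{\mathbb{R}^n}e^{Q_1\phi}\,d\gamma\Big)^{-1},
\end{equation}
using that $(a,r)\mapsto a^r$ is continuous at $r=-1$ (with the conventions $0^{-1}=\infty$, $\infty^{-1}=0$).

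To conclude, pick $s_k\downarrow0$ with ${\rm BS}_{s_k}\to\limsup_{s\downarrow0}{\rm BS}_s$; evaluating \eqref{e:plan-raised} along $s_k$ and combining with \eqref{e:plan-limsup} and the limit of the right-hand side yields $\big(\int e^{Q_1\phi}\,d\gamma\big)^{-1}\ge(\limsup_{s\downarrow0}{\rm BS}_s)\int e^{-\phi}\,d\gamma$, i.e. $\int_{\mathbb{R}^n}e^{-\phi}\,d\gamma\cdot\int_{\mathbb{R}^n}e^{Q_1\phi}\,d\gamma\le(\limsup_{s\downarrow0}{\rm BS}_s)^{-1}$. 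Then the identity \eqref{e:LinkHJLege} (a consequence of \eqref{e:Qto*} and the Gaussian normalisation), namely $\int e^{-\psi}\,dx\int e^{-\psi^*}\,dx=(2\pi)^n\int e^{-\phi}\,d\gamma\int e^{Q_1\phi}\,d\gamma$ for $\psi=\phi+\frac12|\cdot|^2$, converts this into \eqref{e:FuncBS19Oct}.

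The step I expect to be the main obstacle is the vanishing--viscosity convergence $u^s\to Q_1\phi$. The limit \eqref{e:VaniVis} of Bobkov--Gentil--Ledoux is stated for bounded continuous $\phi$, whereas here $\phi=\psi-\frac12|\cdot|^2$ is merely continuous and carries only a one-sided affine bound, so it may be unbounded above. The role of hypothesis \eqref{e:VanVisCond} is exactly to control this: it guarantees the coercivity of $z\mapsto\phi(z)+\frac12|z-x|^2$ (so that $Q_1\phi$ is real-valued and the tails of the Laplace integral are negligible) and the finiteness of every Gaussian integral appearing above. Establishing $u^s\to Q_1\phi$ under \eqref{e:VanVisCond} is then a Laplace-method/Hopf--Cole argument whose only delicate point is the bookkeeping of the $e^{-s}x$ shift and the $\sqrt{1-e^{-2s}}$ scaling that reduce $P_s[e^{-\phi/(2s)}]$ to the clean Laplace integral; once that is in hand, the remaining limiting analysis is soft.
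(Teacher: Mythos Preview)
Your approach is essentially the same as the paper's: set $\phi=\psi-\tfrac12|\cdot|^2$, introduce $u^s=-2s\log P_s[e^{-\phi/(2s)}]$, rewrite the hypothesis \eqref{e:RHC_BSIntro} accordingly, and pass to the limit via Fatou and dominated convergence, then translate back through \eqref{e:LinkHJLege}. The argument is correct.

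One remark on the point you flag as the main obstacle. You assert full pointwise convergence $u^s\to Q_1\phi$ and sketch a Laplace--method justification. The paper is more economical here: it only invokes the one-sided estimate
\[
\liminf_{s\downarrow 0} u^s \ \ge\ Q_1\phi,
\]
citing \cite[Theorem~5.1]{ABI} and the discussion in \cite[p.~43]{BNT}, which holds for continuous $\phi$ satisfying the affine lower bound \eqref{e:VanVisCond}. This is all Fatou requires: since $-q_s/(2s)\to 1$ and $-q_s/(2s)>0$ for small $s$, one gets $\liminf_s e^{-\frac{q_s}{2s}u^s}\ge e^{Q_1\phi}$, and then Fatou gives exactly the bound \eqref{e:plan-limsup} you need. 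So your proof goes through without ever establishing the reverse inequality $\limsup u^s\le Q_1\phi$ (which, as Lemma~\ref{l:VaniVis} shows, the paper only proves under additional concavity-type structure on $\phi$, not available here). Recognising that only the $\liminf$ direction is needed removes the obstacle you anticipated.
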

\begin{proof}
In view of the relation \eqref{e:LinkHJLege}, it suffices to show 
\begin{equation}\label{e:Goal19Oct}
\big\| e^{Q_1\phi} \big\|_{L^1(\gamma)} \le (\limsup_{s\downarrow 0} {\rm BS}_s)^{-1} \big\| e^\phi \big\|_{L^{-1}(\gamma)}
\end{equation}
for $\phi = \psi - \frac12|\cdot|^2$. 
To this end we appeal to the vanishing viscosity argument due to Bobkov--Gentil--Ledoux \cite{BGLJMPA}. 
For $\varepsilon>0$ we let 
$$
u^\varepsilon
\coloneqq
-2\varepsilon
\log\, P_\varepsilon \big[ e^{-\frac{\phi}{2\varepsilon}} \big]. 
$$
Then the assumption \eqref{e:RHC_BSIntro} with $f_\varepsilon = e^{-\frac{p_\varepsilon}{2\varepsilon}\phi}$ ensures that 
$$
\big\| P_\varepsilon \big[ e^{-\frac{\phi}{2\varepsilon}} \big] \big\|_{L^{q_\varepsilon}(\gamma)}
\ge 
{\rm BS}_\varepsilon^{\frac1{1-e^{-2\varepsilon}}}
\big(
\int_{\mathbb{R}^n} e^{ - \frac{1-e^{-2\varepsilon}}{2\varepsilon} \phi}\, d\gamma
\big)^{\frac1{1-e^{-2\varepsilon}}}
$$
which can be read as 
$$
\big\| P_\varepsilon \big[ e^{-\frac{\phi}{2\varepsilon}} \big]^{-2\varepsilon} \big\|_{L^{\frac{-q_\varepsilon}{2\varepsilon}}(\gamma)}
\le 
{\rm BS}_\varepsilon^{-\frac{2\varepsilon}{1-e^{-2\varepsilon}}}
\big(
\int_{\mathbb{R}^n} e^{ - \frac{1-e^{-2\varepsilon}}{2\varepsilon} \phi}\, d\gamma
\big)^{-\frac{2\varepsilon}{1-e^{-2\varepsilon}}}.
$$
Taking the limit $\varepsilon \downarrow 0$, it follows that 
$$
\liminf_{\varepsilon \downarrow 0} 
\big\| P_\varepsilon \big[ e^{-\frac{\phi}{2\varepsilon}} \big]^{-2\varepsilon} \big\|_{L^{\frac{-q_\varepsilon}{2\varepsilon}}(\gamma)}
\le 
(\limsup_{\varepsilon \downarrow 0} 
{\rm BS}_\varepsilon)^{-1}
\big\| e^\phi \big\|_{L^{-1}(\gamma)}. 
$$
For the left hand side, we note that $\phi$ satisfies 
$$
\phi(x) \ge -C(1+|x|)
$$
for some $C>0$ by virtue of the assumption on $\psi$. 
Thanks to this, we can ensure 
\begin{align}\label{e:LiminfQ_1phi}
\liminf_{\varepsilon \downarrow 0} u^\varepsilon \ge Q_1\phi
\end{align}
from \cite[Theorem 5.1]{ABI} and the argument discussed in \cite[p.43]{BNT}.
Since $\frac{-q_\varepsilon}{2\varepsilon} \to 1$ from the assumption $q_\varepsilon = -2\varepsilon + O(\varepsilon^2)$ and $P_\varepsilon \big[ e^{-\frac{\phi}{2\varepsilon}} \big]^{-2\varepsilon} = e^{u^\varepsilon}$, Fatou's lemma yields that 
$$
\liminf_{\varepsilon \downarrow 0} 
\big\| P_\varepsilon \big[ e^{-\frac{\phi}{2\varepsilon}} \big]^{-2\varepsilon} \big\|_{L^{\frac{-q_\varepsilon}{2\varepsilon}}(\gamma)}
\ge 
\int_{\mathbb{R}^n} e^{ \liminf_{\varepsilon \downarrow 0} u^ \varepsilon}\, d\gamma 
\ge 
\int_{\mathbb{R}^n} e^{Q_1\phi}\, d\gamma, 
$$
which concludes \eqref{e:Goal19Oct}. 
\end{proof}

While Proposition \ref{Prop:HCBS_2} formally yields \eqref{e:Lehec2} for \textit{symmetric} $\psi$ satisfying \eqref{e:VanVisCond}, one needs extra argument to complete the proof of Proposition \ref{Prop:HCSet}-\textit{(1)}. 

\begin{proof}[Proof of Proposition \ref{Prop:HCSet}-\textit{(1)}]
In view of Proposition \ref{Prop:HCBS_2} and the fact that $x\mapsto \|x\|_K$ is continuous,  our task is to ensure that we may assume \eqref{e:VanVisCond} without loss of generality.  
However, this can be justified by approximating $\psi = \frac12 \|\cdot\|_K^2$ or equivalently $\phi(x) = \frac12\|x\|_K^2 - \frac12|x|^2$ by 
$
\phi_k(x) \coloneqq \max \{ \phi(x), -k\}
$,  see also Subsection \ref{S4.4} for further details. 
\end{proof}


\subsection{Rederiving Theorem \ref{t:BSBaryCen19Oct} from Theorem \ref{t:NelsonGene}}\label{S4.4}
Let us show that our Theorem \ref{t:NelsonGene} implies Theorem \ref{t:BSBaryCen19Oct}.
To this end we need to involve further argument as follows. 

First we remark that it suffices to show \eqref{e:Lehec2} only for a continuous function $\psi$. 
To see this, let $\psi$ be a Borel function on $\mathbb{R}^n$ with $ e^{-\psi}, |x| e^{-\psi} \in L^1(dx)$ and $\int_{\mathbb{R}^n} x e^{-\psi}\, dx = 0$.
We define a function $\psi_\varepsilon$ on $\mathbb{R}^n$ for $\varepsilon>0$ as 
$$
\psi_\varepsilon(x) \coloneqq - \log \int_{ \mathbb{R}^n } e^{-\psi(y) }\gamma_{2\varepsilon}(x-y)\, dy, \;\;\; x \in \mathbb{R}^n.
$$
Then $\psi_\varepsilon$ is continuous and satisfies $ e^{-\psi_\varepsilon}, |x| e^{-\psi_\varepsilon} \in L^1(dx)$. Moreover, we can observe that 
$$
\int_{\mathbb{R}^n} x e^{-\psi_\varepsilon}\, dx = \int_{\mathbb{R}^n} x e^{-\psi}\, dx=0.
$$
Hence we can apply \eqref{e:Lehec2} for continuous inputs to see that 
\begin{equation}\label{e:BorelApp}
\int_{\mathbb{R}^n} e^{-\psi_\varepsilon(x)}\, dx \int_{\mathbb{R}^n} e^{- (\psi_\varepsilon)^*(x)}\, dx \le (2\pi)^n.
\end{equation}
Now we have 
\begin{equation}\label{e:BorelApp1}
\int_{\mathbb{R}^n} e^{-\psi_\varepsilon}\, dx = \int_{\mathbb{R}^n} e^{-\psi}\, dx.
\end{equation} 
On the other hand, it follows from the definition of the Legendre transform that 
$$
\psi^*(x) \ge \langle x, y \rangle - \psi(y), \;\;\; \forall x, y \in \mathbb{R}^n
$$
which implies that for any $x \in \mathbb{R}^n$, 
\begin{align*}
(\psi_\varepsilon)^*(x)
=&
\sup_{z \in \mathbb{R}^n} \left[
\langle z, x \rangle + \log \int_{ \mathbb{R}^n } e^{-\psi(y) }\gamma_{2\varepsilon}(z-y)\, dy
\right]
\\
\le&
\sup_{z \in \mathbb{R}^n} \left[
 \langle z, x \rangle + \log \int_{ \mathbb{R}^n } e^{\psi^*(x) - \langle x, y \rangle }\gamma_{2\varepsilon}(z-y)\, dy 
 \right]
\\
=&
\psi^*(x)
+
\varepsilon |x|^2.
\end{align*}
Hence we have 
$$
\int_{\mathbb{R}^n} e^{-\psi^*(x) - \varepsilon |x|^2}\, dx 
\le 
\int_{\mathbb{R}^n} e^{- (\psi_\varepsilon)^*(x)}\, dx, 
$$
and thus letting $\varepsilon \downarrow 0$, we enjoy 
\begin{equation}\label{e:BorelApp2}
\int_{\mathbb{R}^n} e^{-\psi^*(x)}\, dx 
\le 
\liminf_{\varepsilon \downarrow 0} \int_{\mathbb{R}^n} e^{- (\psi_\varepsilon)^*(x)}\, dx. 
\end{equation} 
Combining \eqref{e:BorelApp} with \eqref{e:BorelApp1} and \eqref{e:BorelApp2}, we conclude \eqref{e:Lehec2} for $\psi$.

With the relation \eqref{e:LinkHJLege} and the above argument in mind, let us take a continuous function $\phi$ such that $|x| e^{-\phi} \in L^1(\gamma)$ and $\int_{\mathbb{R}^n} xe^{-\phi}\, d\gamma =0$. 
Our goal is to show 
\begin{align}\label{e:HJHCs=1}
\int_{\mathbb{R}^n} e^{-\phi}\, d\gamma \int_{\mathbb{R}^n} e^{Q_1 \phi}\, d\gamma \le 1.
\end{align}
We may assume $\phi$ satisfies \eqref{e:VanVisCond} without loss of generality. 
To see this,  set 
$$
\phi_k(x) \coloneqq \max \{ \phi(x), -k\}
$$
for $k \in \mathbb{N}$ and $x \in \mathbb{R}^n$. 
We also set 
$$
\xi_k \coloneqq \frac{\int_{\mathbb{R}^n} x e^{-\phi_k}\, d\gamma}{ \int_{\mathbb{R}^n} e^{-\phi_k}\, d\gamma } \in \mathbb{R}^n
$$
and 
$$
\widetilde{\phi}_k(x) \coloneqq \phi_k(x + \xi_k) + \langle x, \xi_k \rangle + \frac12 |\xi_k|^2, \;\;\; x \in \mathbb{R}^n.
$$
Then it is easy to see that $\widetilde{\phi}_k$ is continuous and $|x| e^{-\widetilde{\phi}_k} \in L^1(\gamma)$, and satisfies $\int_{\mathbb{R}^n} xe^{-\widetilde{\phi}_k}\, d\gamma = 0$ and \eqref{e:VanVisCond}. 
Hence by applying \eqref{e:HJHCs=1} for $\widetilde{\phi}_k$, we obtain 
$$
\int_{\mathbb{R}^n} e^{-\widetilde{\phi}_k}\, d\gamma \int_{\mathbb{R}^n} e^{-Q_1\widetilde{\phi}_k}\, d\gamma \le 1.
$$
Notice that $\xi_k \to0$ as $k\to\infty$ since $\phi_k \ge \phi$ on $\mathbb{R}^n$ and $e^{-\phi}, | \cdot | e^{-\phi} \in L^1(d\gamma)$. 
This fact yields that $\widetilde{\phi}_k \to \phi$ as $k \to \infty$ from which, as well as the dominated convergence theorem,  we obtain 
$$
\lim_{k \to \infty}
\int_{\mathbb{R}^n} e^{-\widetilde{\phi}_k}\, d\gamma = \int_{\mathbb{R}^n} e^{-\phi}\, d\gamma.
$$
We also see that for $x \in \mathbb{R}^n$, 
\begin{align*}
Q_1\widetilde{\phi}_k(x)
=&
\inf_{y \in \mathbb{R}^n} [\phi_k(y + \xi_k) + \langle y, \xi_k \rangle + \frac12 |\xi_k|^2 + \frac12 |x-y|^2]
\\
\ge& 
\inf_{y \in \mathbb{R}^n} [\phi(y + \xi_k) + \langle y, \xi_k \rangle + \frac12 |\xi_k|^2 + \frac12 |x-y|^2]
\\
=&
\inf_{y \in \mathbb{R}^n} [\phi(y) + \langle y - \xi_k, \xi_k \rangle + \frac12 |\xi_k|^2 + \frac12 |x-y + \xi_k|^2]
\\
=&
\inf_{y \in \mathbb{R}^n} [\phi(y) + \langle x, \xi_k \rangle + \frac12 |x-y|^2]
\\
=&
Q_1\phi(x) + \langle x, \xi_k \rangle. 
\end{align*}
Hence, Fatou's lemma yields that 
$$
\liminf_{k \to \infty} \int_{\mathbb{R}^n} e^{Q_1\widetilde{\phi}_k}\, d\gamma
\ge 
\liminf_{k \to \infty} \int_{\mathbb{R}^n} e^{Q_1\phi(x) + \langle x, \xi_k \rangle }\, d\gamma
\ge 
\int_{\mathbb{R}^n} e^{Q_1\phi(x) }\, d\gamma, 
$$
and hence we conclude \eqref{e:HJHCs=1} for $\phi$. 

Therefore it suffices to show \eqref{e:HJHCs=1} for a continuous function $\phi$ satisfying $|x| e^{-\phi} \in L^1(\gamma)$, $\int_{\mathbb{R}^n} x e^{-\phi}\, d\gamma=0$ and \eqref{e:VanVisCond}.
Let $\varepsilon>0$ and take $a_\varepsilon>0$ such that 
$$
p_\varepsilon \coloneqq 2\varepsilon a_\varepsilon = 1- e^{-2\varepsilon}.
$$
Put 
$$
b_\varepsilon \coloneqq \frac{\int_{\mathbb{R}^n} x e^{-a_\varepsilon \phi}\, d\gamma}{\int_{\mathbb{R}^n} e^{-a_\varepsilon \phi}\, d\gamma} \in \mathbb{R}^n
$$
and 
$$
\phi_\varepsilon \coloneqq \phi( \cdot + b_\varepsilon) + \frac{1}{a_\varepsilon}\langle \cdot, b_\varepsilon \rangle + \frac{1}{2a_\varepsilon}|b_\varepsilon|^2
$$
on $\mathbb{R}^n$.
Then by definition of $b_\varepsilon$, 
\begin{align*}
\int_{\mathbb{R}^n} x e^{-a_\varepsilon \phi_\varepsilon}\, d\gamma 
=& 
\int_{\mathbb{R}^n} x e^{-a_\varepsilon \phi( x + b_\varepsilon) - \langle x, b_\varepsilon \rangle - \frac{1}{2}|b_\varepsilon|^2 }\, d\gamma 
\\
=& 
\int_{\mathbb{R}^n} (x - b_\varepsilon) e^{-a_\varepsilon \phi(x) - \langle x- b_\varepsilon, b_\varepsilon \rangle - \frac{1}{2}|b_\varepsilon|^2 } \frac{1}{(2\pi)^{\frac n2}} e^{- \frac{1}{2}|x - b_\varepsilon|^2}\, dx 
\\
=& 
\int_{\mathbb{R}^n} (x - b_\varepsilon) e^{-a_\varepsilon \phi}\, d\gamma 
\\
=&
0.
\end{align*}
Let us consider 
$$
u^\varepsilon \coloneqq -2\varepsilon \log P_{\varepsilon}[e^{-\frac{\phi_\varepsilon}{2\varepsilon}}].
$$
Since $\int_{\mathbb{R}^n} x e^{- a_\varepsilon \phi_\varepsilon}\,d\gamma =0$, 
Theorem \ref{t:NelsonGene} yields that 
\begin{align*}
\|e^{u^\varepsilon}\|_{L^{a_\varepsilon}(\gamma)} 
= 
\| P_{\varepsilon}[e^{-\frac{\phi_\varepsilon}{2\varepsilon}}] \|_{L^{-p_\varepsilon}(\gamma)}^{-2\varepsilon} 
= 
\| P_{\varepsilon}[e^{-\frac{a_\varepsilon \phi_\varepsilon}{p_\varepsilon}}] \|_{L^{-p_\varepsilon}(\gamma)}^{-2\varepsilon} 
	\le 
\|e^{\phi_\varepsilon}\|_{L^{-a_\varepsilon}(\gamma)}.
\end{align*}
Equivalently, 
$$
\int_{\mathbb{R}^n} e^{a_\varepsilon u^ \varepsilon}\, d\gamma 
\int_{\mathbb{R}^n} e^{-a_\varepsilon \phi_\varepsilon}\, d\gamma \le 1.
$$
As $\varepsilon \downarrow 0$, we see that $a_\varepsilon \to 1$, and it follows from \eqref{e:VanVisCond} and the dominated convergence theorem that 
$$
\lim_{\varepsilon \downarrow 0} b_\varepsilon = \frac{\int_{\mathbb{R}^n} x e^{-\phi}\, d\gamma}{\int_{\mathbb{R}^n} e^{-\phi}\, d\gamma} =0.
$$
In particular, we see that $\lim_{\varepsilon \downarrow 0} \phi_\varepsilon = \phi$. 
Hence, by \eqref{e:VanVisCond} and the dominated convergence theorem again, we obtain 
$$
\lim_{\varepsilon \downarrow 0} \int_{\mathbb{R}^n} e^{-a_\varepsilon \phi_\varepsilon}\, d\gamma
=
\int_{\mathbb{R}^n} e^{- \phi}\, d\gamma.
$$
On the other hand, we have already observed  
$$
\liminf_{\varepsilon \downarrow 0} \int_{\mathbb{R}^n} e^{a_\varepsilon u^ \varepsilon}\, d\gamma 
\ge 
\int_{\mathbb{R}^n} e^{Q_1\phi} \, d\gamma
$$
in the proof of Proposition \ref{Prop:HCBS_2}.
Hence we obtain the desired assertion. 

\if0
Finally, for completeness, we shall show \eqref{e:LiminfQ_1phi}. 
We see that 
\begin{align*}
P_\varepsilon[ e^{-\frac{\phi_\varepsilon}{2\varepsilon}}](x)^{2\varepsilon}
\le&
C_\varepsilon 
\left(
\int_{\mathbb{R}^n} e^{g^\varepsilon(x, y)}\, dy 
\right)^{2\varepsilon}
\| e^{g^\varepsilon(x, y)} \|_{L^\infty(dy)}^{1-2\varepsilon}
\\
\le&
C_\varepsilon 
\left(
\int_{\mathbb{R}^n} e^{- \phi_\varepsilon(y) }\, dy 
\right)^{2\varepsilon}
\| e^{g^\varepsilon(x, y)} \|_{L^\infty(dy)}^{1-2\varepsilon}
\end{align*}
where $C_\varepsilon = (2\pi (1-e^{-2\varepsilon}))^{-\varepsilon n}$, and 
$$
g^\varepsilon(x, y) \coloneqq - \phi_\varepsilon(y) - \frac{\varepsilon}{1-e^{-2\varepsilon}} |y - e^{-\varepsilon} x|^2. 
$$
Now we can observe that 
\begin{align*}
\sup_{y \in \mathbb{R}^n} g^\varepsilon(x, y) 
= &
\sup_{y \in \mathbb{R}^n} [ - \phi( y + b_\varepsilon) - \frac{1}{a_\varepsilon}\langle y, b_\varepsilon \rangle - \frac{1}{2a_\varepsilon}|b_\varepsilon|^2
 - \frac{1}{2a_\varepsilon} |y - e^{-\varepsilon} x|^2]
\\
=&
\sup_{y \in \mathbb{R}^n} [ - \phi(y) - \frac{1}{a_\varepsilon}\langle y - b_\varepsilon, b_\varepsilon \rangle - \frac{1}{2a_\varepsilon}|b_\varepsilon|^2
 - \frac{1}{2a_\varepsilon} |y - b_\varepsilon - e^{-\varepsilon} x|^2] 
\\
=&
\sup_{y \in \mathbb{R}^n} [ - \phi(y) - \frac{e^{-\varepsilon}}{a_\varepsilon}\langle b_\varepsilon, x  \rangle - \frac{1}{2a_\varepsilon} |y - e^{-\varepsilon} x|^2] 
\\
\le&
\sup_{y \in \mathbb{R}^n} [ - \phi(y) - \frac{e^{-\varepsilon}}{a_\varepsilon}\langle b_\varepsilon, x  \rangle - \frac{1}{2} |y - e^{-\varepsilon} x|^2] 
\\
=&
- Q_1\phi(e^{-\varepsilon}x) - \frac{e^{-\varepsilon}}{a_\varepsilon}\langle b_\varepsilon, x  \rangle.
\end{align*}
Since $Q_1\phi$ is lower semi-continuous by \eqref{e:VanVisCond} and \cite[Theorem 5.1]{ABI}, we have 
\begin{align*}
\limsup_{\varepsilon \downarrow 0} \| e^{g^\varepsilon(x, y)} \|_{L^\infty(dy)}^{1-2\varepsilon}
\le 
e^{-Q_1 \phi(x)}. 
\end{align*}
Since we know $\lim_{\varepsilon \downarrow 0} \int_{\mathbb{R}^n} e^{-\phi_\varepsilon}\, d\gamma = \int_{\mathbb{R}^n} e^{-\phi}\, d\gamma$ and $\lim_{\varepsilon \downarrow 0} C_\varepsilon = 1$, we enjoy 
$$
\limsup_{\varepsilon \downarrow 0} P_\varepsilon[ e^{-\frac{\phi_\varepsilon}{2\varepsilon}}](x)^{2\varepsilon}
\le
e^{-Q_1\phi(x)}, 
$$
which yields \eqref{e:LiminfQ_1phi}.
Our proof is complete. 
\fi

\subsection{Proof of Proposition \ref{Prop:HCSet}-{\textit{(2)}} }\label{S4.5}
Let $\mathcal{N}$ be a family of concave functions $\phi \colon\mathbb{R}^n\to \mathbb{R}$ satisfying the growth condition
\begin{equation}\label{e:Growth}
\phi(y) \ge -\frac12 \rho |y|^2 + \langle y_0,y\rangle + c_0
\end{equation}
on $|y| \ge R_* \gg1$ for some $R_*\gg1$,  $\rho \in (0,1)$, $y_0 \in \mathbb{R}^n$, and $c_0 \in\mathbb{R}$. 
As we will see in Lemma \ref{l:VaniVis},  if $\phi$ is concave and $-(1-\frac1\beta)$-semi-convex then \eqref{e:Growth} is satisfied with $\rho = 1-\frac1\beta$ and hence $\phi \in \mathcal{N}$. 
Note that $\phi\coloneqq \frac12 \|\cdot\|_K^2 - \frac12|\cdot|^2$ is in the class $\mathcal{N}$ for any convex body $K$. Hence Proposition \ref{Prop:HCSet}-{\textit{(2)}} is a particular case of the following.  
\begin{proposition}\label{Prop:HCMahler}
Let $\psi:\mathbb{R}^n \to \mathbb{R}$ be such that $\psi - \frac12 |\cdot|^2 \in \mathcal{N}$. 
If \eqref{e:FHC_MaIntro} holds for $f = f_s$ defined by $\log\, f_s(x) \coloneqq \frac{p_s}{2s}(\frac12|x|^2 - \psi(x))$ for all small $s>0$, then 
\begin{equation}\label{e:InvS}
\int_{\mathbb{R}^n} e^{-\psi}\, dx \int_{\mathbb{R}^n} e^{-\psi^*}\, dx \ge (2\pi)^n (\liminf_{s\downarrow0} {\rm IS}_s)^{-1}.
\end{equation}
\end{proposition}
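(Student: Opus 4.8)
The plan is to imitate the proof of Proposition~\ref{Prop:HCBS_2}, reversing the direction of all inequalities and replacing Fatou's lemma by dominated convergence. Write $\phi := \psi - \frac12|\cdot|^2$, so that $\phi \in \mathcal{N}$ by hypothesis. Using the identity \eqref{e:Qto*} between $Q_1$ and the Legendre transform, exactly as in \eqref{e:LinkHJLege}, one has
\[
\int_{\mathbb{R}^n} e^{-\psi}\, dx \int_{\mathbb{R}^n} e^{-\psi^*}\, dx
= (2\pi)^n \int_{\mathbb{R}^n} e^{-\phi}\, d\gamma \int_{\mathbb{R}^n} e^{Q_1\phi}\, d\gamma ,
\]
so that \eqref{e:InvS} is equivalent to $\int_{\mathbb{R}^n} e^{-\phi}\, d\gamma \int_{\mathbb{R}^n} e^{Q_1\phi}\, d\gamma \ge (\liminf_{s\downarrow0} {\rm IS}_s)^{-1}$. (If $\int e^{-\psi^*}\, dx = \infty$ there is nothing to prove, while $\int e^{-\phi}\, d\gamma < \infty$ because the growth condition defining $\mathcal{N}$ forces $\psi$ to grow quadratically.)

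Next I bring in the vanishing viscosity interpolation of Bobkov--Gentil--Ledoux: for small $\varepsilon > 0$ set $u^\varepsilon := -2\varepsilon \log P_\varepsilon[e^{-\phi/(2\varepsilon)}]$, so that $e^{u^\varepsilon} = P_\varepsilon[e^{-\phi/(2\varepsilon)}]^{-2\varepsilon}$. Since $\log f_s = -\frac{p_s}{2s}\phi$, i.e. $f_s^{1/p_s} = e^{-\phi/(2s)}$, the hypothesis \eqref{e:FHC_MaIntro} applied to $f = f_s$ reads
\[
\big\| P_s[e^{-\phi/(2s)}] \big\|_{L^{q_s}(\gamma)}
\le {\rm IS}_s^{1/p_s} \Big( \int_{\mathbb{R}^n} e^{-\frac{p_s}{2s}\phi}\, d\gamma \Big)^{1/p_s}, \qquad q_s = 1 - e^{2s}.
\]
Raising both sides to the negative power $-2s$, which reverses the inequality, and using the elementary identity $\|g\|_{L^{q_s}(\gamma)}^{-2s} = \|g^{-2s}\|_{L^{-q_s/(2s)}(\gamma)}$ with $g = P_s[e^{-\phi/(2s)}]$, I obtain
\[
\big\| e^{u^s} \big\|_{L^{-q_s/(2s)}(\gamma)}
\ge {\rm IS}_s^{-2s/p_s} \Big( \int_{\mathbb{R}^n} e^{-\frac{p_s}{2s}\phi}\, d\gamma \Big)^{-2s/p_s}.
\]

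Now I let $s \downarrow 0$. From $p_s = 2s + O(s^2)$ one has $\frac{2s}{p_s} \to 1$, and from $q_s = 1 - e^{2s}$ one has $-q_s/(2s) \to 1$. On the right, the quadratic lower bound on $\psi$ together with $\frac{p_s}{2s} \to 1$ give, by dominated convergence, $\int e^{-\frac{p_s}{2s}\phi}\, d\gamma \to \int e^{-\phi}\, d\gamma \in (0,\infty)$; hence $\limsup_{s\downarrow0}$ of the right-hand side equals $(\liminf_{s\downarrow0} {\rm IS}_s)^{-1}\big(\int e^{-\phi}\, d\gamma\big)^{-1}$. On the left, I invoke the vanishing viscosity lemma (Lemma~\ref{l:VaniVis}): for $\phi \in \mathcal{N}$ one has $u^s \to Q_1\phi$ pointwise together with a uniform-in-$s$ dominating function (a tangent affine majorant of the concave $\phi$ bounds $u^s$ from above by some $\langle a,\cdot\rangle + b$ for all small $s$, and this is integrable against $\gamma$ to every power), so that $\|e^{u^s}\|_{L^{-q_s/(2s)}(\gamma)} \to \|e^{Q_1\phi}\|_{L^1(\gamma)}$. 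Taking $\limsup$ in the last display therefore yields $\int e^{Q_1\phi}\, d\gamma \ge (\liminf_{s\downarrow0} {\rm IS}_s)^{-1}\big(\int e^{-\phi}\, d\gamma\big)^{-1}$, which is the asserted equivalent of \eqref{e:InvS}.

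The main obstacle is this left-hand limit, i.e. the vanishing viscosity step. In contrast with Proposition~\ref{Prop:HCBS_2}, where only the one-sided bound $\liminf_\varepsilon u^\varepsilon \ge Q_1\phi$ together with Fatou's lemma was needed, here I need genuine convergence $u^s \to Q_1\phi$ and, crucially, an upper dominating function valid uniformly in $s$ and integrable to the (varying, strictly larger than $1$) power $-q_s/(2s)$. This is precisely what the growth condition built into $\mathcal{N}$ is designed to supply, and it will be isolated in Lemma~\ref{l:VaniVis}; the remaining steps are algebraic manipulation and two routine applications of dominated convergence.
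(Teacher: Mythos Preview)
Your argument is essentially the paper's own proof: set $\phi=\psi-\tfrac12|\cdot|^2$, apply \eqref{e:FHC_MaIntro} to $f_\varepsilon=e^{-\frac{p_\varepsilon}{2\varepsilon}\phi}$, raise to the power $-2\varepsilon$, and pass to the limit using the affine upper bound on $\phi$ (from concavity) as a dominating function together with Lemma~\ref{l:VaniVis}. The structure and the key ingredients coincide.

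One small but real point: you write that Lemma~\ref{l:VaniVis} gives $u^s\to Q_1\phi$ pointwise for $\phi\in\mathcal{N}$, and that you ``need genuine convergence''. Neither is quite right. For $\phi\in\mathcal{N}$ the lemma only yields the one-sided bound \eqref{e:WhatWeNeed}, i.e.\ $\limsup_{\varepsilon\downarrow0}u^\varepsilon\le Q_1\phi$; the full limit \eqref{e:VaniVis} is stated only under the extra semi-convexity hypothesis. Fortunately the upper bound is all you need: with the uniform dominating function (coming from the affine majorant of the concave $\phi$, exactly as you indicate) the reverse Fatou lemma gives
\[
\limsup_{\varepsilon\downarrow0}\int_{\mathbb{R}^n}\big(e^{u^\varepsilon}\big)^{-q_\varepsilon/(2\varepsilon)}\,d\gamma
\le \int_{\mathbb{R}^n} e^{\limsup_{\varepsilon\downarrow0}u^\varepsilon}\,d\gamma
\le \int_{\mathbb{R}^n} e^{Q_1\phi}\,d\gamma,
\]
and this, combined with the (genuine) convergence on the right-hand side, yields \eqref{e:InvS}. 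So your sketch goes through once you replace ``dominated convergence with pointwise limit'' by ``reverse Fatou with \eqref{e:WhatWeNeed}''.
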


\if0
\begin{remark}
As it will be clear from the proof one may weaken the assumption \eqref{e:FHC_MaIntro} in terms of $p$. 
Namely, one can derive \eqref{e:InvS} from the \textit{non-endpoint estimate}
\begin{equation}\label{e:AssumpFHCNonEnd}
\big\|
P_s \big[f^\frac1{ r_s }\big]
\big\|_{L^{q_s}(\gamma)}
\le 
{\rm IS}_s^{\frac{1}{r_s}}
\big(
\int_{\mathbb{R}^n} 
f\, d\gamma 
\big)^{\frac1{r_s}},
\;\;\; 
r_s\coloneqq 2s > p_s. 
\end{equation}
\end{remark}
\fi 

We begin with an investigation of the class $\mathcal{N}$. 

\begin{lemma}\label{l:VaniVis}
If $\phi \in \mathcal{N}$ then 
\begin{equation}\label{e:WhatWeNeed}
\limsup_{\varepsilon \downarrow 0 }
-2\varepsilon \log\, P_\varepsilon \big[ e^{-\frac{\phi}{2\varepsilon}} \big](x) 
\le 
Q_1\phi(x)
\end{equation}
for each $x\in \mathbb{R}^n$. 

Moreover if $\phi\colon\mathbb{R}^n\to\mathbb{R}$ is concave and $-(1-\frac1\beta)$-semi-convex for some $\beta\ge1$,  then $\phi \in \mathcal{N}$,  $Q_1\phi$ is continuous on $\mathbb{R}^n$ and \eqref{e:VaniVis} holds pointwisely. 
\end{lemma}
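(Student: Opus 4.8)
The plan is to prove the two statements in order, as the first (the $\limsup$ inequality for $\phi \in \mathcal{N}$) is the genuinely new content and the second is a relatively soft consequence. For the first part, I would start from the explicit Mehler kernel representation
$$
-2\varepsilon \log\, P_\varepsilon \big[ e^{-\frac{\phi}{2\varepsilon}} \big](x)
=
-2\varepsilon \log \Big( \frac{1}{(2\pi(1-e^{-2\varepsilon}))^{n/2}} \int_{\mathbb{R}^n} e^{-\frac{\phi(y)}{2\varepsilon} - \frac{|y-e^{-\varepsilon}x|^2}{2(1-e^{-2\varepsilon})}}\, dy \Big).
$$
To get an \emph{upper} bound on this quantity, one needs a \emph{lower} bound on the integral. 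Restricting the domain of integration to a small ball $B(x,\delta)$ and using that $\phi$ is continuous (being concave and finite on all of $\mathbb{R}^n$, hence locally Lipschitz), together with $1-e^{-2\varepsilon} = 2\varepsilon + O(\varepsilon^2)$, a standard Laplace-type lower bound gives, for each fixed $\delta>0$,
$$
\limsup_{\varepsilon\downarrow 0} \Big( -2\varepsilon \log P_\varepsilon\big[e^{-\phi/2\varepsilon}\big](x) \Big)
\le
\inf_{|y-x|\le \delta} \Big( \phi(y) + \tfrac12 |x-y|^2 \Big) + o_\delta(1),
$$
and letting $\delta\downarrow 0$ we could in principle only recover $\phi(x)$, not $Q_1\phi(x)$. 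The correct route is instead to \emph{not} localize: for the infimum in $Q_1\phi(x) = \inf_y[\phi(y)+\frac12|x-y|^2]$, pick a near-minimizer $y_*$ (it exists because the growth condition \eqref{e:Growth} with $\rho<1$ forces $y\mapsto \phi(y)+\frac12|x-y|^2$ to be coercive), restrict the integral to a small ball $B(y_*,\delta)$, and bound $\phi$ from above on that ball by $\phi(y_*) + L\delta$ using local Lipschitzness; the Gaussian factor contributes $e^{-|y_*-e^{-\varepsilon}x|^2/(2(1-e^{-2\varepsilon}))}$, and since $1-e^{-2\varepsilon}\to 0$ and $e^{-\varepsilon}x\to x$, the exponent $-\frac{1}{2\varepsilon}(\phi(y_*) + \frac12|x-y_*|^2) + o(1/\varepsilon)$ emerges, yielding $\limsup_{\varepsilon\downarrow0}(-2\varepsilon\log P_\varepsilon[\cdots](x)) \le \phi(y_*) + \frac12|x-y_*|^2 + 2L\delta$; letting $\delta\downarrow0$ and then optimizing over near-minimizers $y_*$ gives $Q_1\phi(x)$. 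The one subtlety is ensuring the integral over the complement of $B(y_*,\delta)$ does not blow up and destroy the logarithm; here \eqref{e:Growth} is exactly what is needed — on $|y|\ge R_*$ we have $\phi(y) \ge -\frac12\rho|y|^2 + \langle y_0,y\rangle + c_0$, so $e^{-\phi(y)/2\varepsilon}e^{-|y-e^{-\varepsilon}x|^2/2(1-e^{-2\varepsilon})} \le e^{\rho|y|^2/4\varepsilon + \cdots}e^{-|y|^2/4\varepsilon(1+o(1))}$, and since $\rho<1$ the Gaussian decay wins once $\varepsilon$ is small, so the tail integral is finite and in fact negligible after taking $-2\varepsilon\log$. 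I expect this tail estimate, and the bookkeeping of the $O(\varepsilon)$ and $o(1/\varepsilon)$ error terms in the exponents, to be the main technical obstacle; it is the place where the class $\mathcal{N}$ is genuinely used.

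For the second part, suppose $\phi\colon\mathbb{R}^n\to\mathbb{R}$ is concave and $-(1-\frac1\beta)$-semi-convex. The semi-convexity \eqref{e:DefConv} with $\alpha = -(1-\frac1\beta)$ means $\phi(x) + \frac12(1-\frac1\beta)|x|^2$ is convex, hence lies above its supporting hyperplane at any point, which gives precisely $\phi(y) \ge -\frac12(1-\frac1\beta)|y|^2 + \langle y_0, y\rangle + c_0$ for suitable $y_0, c_0$ (take the subgradient at $0$); this is \eqref{e:Growth} with $\rho = 1-\frac1\beta < 1$, so $\phi\in\mathcal{N}$. Concavity of $\phi$ makes $\phi$ locally Lipschitz and, more importantly, makes the inf-convolution $Q_1\phi = \inf_y[\phi(y)+\frac12|x-y|^2]$ well-behaved: $Q_1\phi$ is again concave (infimum over $y$ of jointly concave-in-$x$... actually one checks $Q_1\phi(x) = -(\phi+\frac12|\cdot|^2)^*(x) + \frac12|x|^2$ by \eqref{e:Qto*}, and since $\phi+\frac12|\cdot|^2$ is... one argues directly) and, since for each $x$ the minimizing $y$ depends continuously on $x$ by the coercivity and uniqueness coming from strict convexity of $\frac12|x-y|^2$ against concave $\phi$, $Q_1\phi$ is continuous. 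Finally, \eqref{e:VaniVis}, i.e. $\lim_{\varepsilon\downarrow0}(-2\varepsilon\log P_\varepsilon[e^{-\phi/2\varepsilon}]) = Q_1\phi$ pointwise, follows by combining the $\limsup \le Q_1\phi$ from the first part with the matching $\liminf \ge Q_1\phi$: the latter is the easier Jensen-type direction — by Jensen's inequality applied to $-2\varepsilon\log$, or equivalently by bounding $P_\varepsilon[e^{-\phi/2\varepsilon}](x) \le \sup_y e^{-\phi(y)/2\varepsilon - |y-e^{-\varepsilon}x|^2/2(1-e^{-2\varepsilon})} \cdot (\text{normalization})$, one gets $-2\varepsilon\log P_\varepsilon[e^{-\phi/2\varepsilon}](x) \ge \inf_y[\phi(y) + \frac{\varepsilon}{1-e^{-2\varepsilon}}|y-e^{-\varepsilon}x|^2] + O(\varepsilon\log\varepsilon) \to Q_1\phi(x)$ since $\frac{\varepsilon}{1-e^{-2\varepsilon}}\to\frac12$ and $e^{-\varepsilon}x\to x$, using lower semicontinuity of $\phi$ to pass the limit inside the infimum — and this is essentially the argument already invoked in the proof of Proposition \ref{Prop:HCBS_2} via \cite[Theorem 5.1]{ABI}. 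I would streamline the write-up by citing that reference and \cite[p.43]{BNT} for the $\liminf$ direction and focusing the detailed work on the $\limsup$ direction described above.
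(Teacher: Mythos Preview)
Your approach to the $\limsup$ inequality is correct but takes a different route from the paper. You localize the integral to a small ball $B(y_*,\delta)$ around a minimizer and use local Lipschitzness of $\phi$ (valid, since concave and finite on $\mathbb{R}^n$); the paper instead exploits concavity \emph{globally}, bounding $\phi(z) \le \phi(y_0) + \langle \nabla\phi(y_0), z-y_0\rangle$ (after checking via Lemma~\ref{l:Diffable} that $\phi$ is differentiable at the minimizer $y_0$, where $\nabla\phi(y_0) = x_0-y_0$), substituting this linear majorant into $P_\varepsilon[e^{-\phi/2\varepsilon}]$, and computing the resulting Gaussian integral exactly via Lemma~\ref{l:GaussInt1}. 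The paper's computation is shorter and avoids the two-parameter limit $\varepsilon\downarrow0$ then $\delta\downarrow0$. One point of confusion in your sketch: the tail estimate you worry about is \emph{not} needed for the $\limsup$ direction --- restricting the domain only lowers a nonnegative integral, hence raises $-2\varepsilon\log(\cdot)$, so the complement of $B(y_*,\delta)$ is irrelevant (and if the full integral were $+\infty$ the inequality would hold trivially). In both approaches the growth condition \eqref{e:Growth} is used only to guarantee coercivity and hence existence of the minimizer $y_*$.

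For the second part your plan matches the paper's: verify $\phi\in\mathcal{N}$ via a subgradient of the convex function $\phi+\tfrac12(1-\tfrac1\beta)|\cdot|^2$, establish continuity of $Q_1\phi$ (the paper does lower and upper semicontinuity by hand rather than invoking continuous dependence of the minimizer), and cite \cite{ABI}, \cite{BNT} for the $\liminf$ direction. Your inline sketch of the $\liminf$ bound via ``$\sup_y$ times normalization'' is not quite right as written --- bounding $\int e^{-\phi/2\varepsilon}\,dP_{\varepsilon,x}$ by $\sup_y e^{-\phi(y)/2\varepsilon}$ against a probability measure yields only $\inf_y\phi(y)$, not $Q_1\phi(x)$; the actual argument (the interpolation $\int e^{g/2\varepsilon} \le \|e^g\|_\infty^{1/2\varepsilon-1}\int e^g$, with the growth condition controlling $\int e^g$) is where a tail bound genuinely enters. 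Deferring to the cited references, as the paper also does, is the right call.
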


\begin{proof}
Fix arbitrary $x_0\in\mathbb{R}^n$ and let us show \eqref{e:WhatWeNeed} at the point. 
Recall the definition of $R_*$ for \eqref{e:Growth}.  Since $\rho<1$ we know that 
\begin{align*}
&\inf_{|y|>R_{x_0}} \big[ \phi(y) + \frac12|x_0 - y|^2 \big] \\
&\ge 
\inf_{|y|>R_{x_0}} \big[  \frac12 (1- \rho) |y|^2 +  \langle y_0 - x_0 , y \rangle +c_0 +\frac12|x_0|^2\big] 
>
Q_1\phi(x_0)
\end{align*}
for some $R_{x_0} \ge R_*$. 
This means 
$$
Q_1\phi(x_0)
=
\inf_{|y|\le R_{x_0}} \big[ \phi(y) + \frac12|x_0 - y|^2 \big] 
$$
and hence there exists $y_0 \in \mathbb{R}^n$ which attains the infimum since $\phi$ is concave on $\mathbb{R}^n$ and in particular continuous. 
We notice that $\phi$ is indeed differentiable at $y_0$ since $\phi$ is concave, see Lemma \ref{l:Diffable} in Appendix \ref{S6.4}.  
With this in mind we know that 
\begin{equation}\label{e:ArgminQ_t}
\nabla \phi(y_0)  + y_0 - x_0 = 0
\end{equation}
since $y_0$ is a global minimum of $ y\mapsto  \phi(y) + \frac12|x_0 - y|^2$. 
Moreover, by concavity of $\phi$ and \eqref{e:ArgminQ_t}, it holds that 
\begin{align*}
\phi(e^{-\varepsilon} x_0 + \sqrt{1-e^{-2\varepsilon}} z)
\le &
\phi(y_0) + \langle \nabla \phi(y_0), e^{-\varepsilon} x_0 + \sqrt{1-e^{-2\varepsilon}} z -y_0 \rangle
\\
=&
\phi(y_0) + \langle x_0-y_0, e^{-\varepsilon} x_0 + \sqrt{1-e^{-2\varepsilon}} z -y_0 \rangle
\end{align*}
for any $z \in \mathbb{R}^n$, 
from which we obtain that 
\begin{align*}
u^\varepsilon (x_0)
=&
-2 \varepsilon \log \int_{\mathbb{R}^n} e^{-\frac{1}{2\varepsilon} \phi(e^{-\varepsilon} x_0 + \sqrt{1-e^{-2\varepsilon}} z)}\, d\gamma(z)
\\
\le&
\phi(y_0)
+
\langle x_0-y_0, e^{-\varepsilon} x_0 -y_0 \rangle 
-2\varepsilon 
\log\, 
\bigg(
\int_{\mathbb{R}^n}  e^{-\frac{\sqrt{1-e^{-2\varepsilon}}}{2\varepsilon} \langle x_0-y_0, z \rangle} \, d\gamma(z)
\bigg)
\\
=&
\phi(y_0) + \langle x_0-y_0, e^{-\varepsilon} x_0 -y_0 \rangle - \frac{1-e^{-2\varepsilon}}{4\varepsilon} |x_0-y_0|^2,
\end{align*}
where we also used Lemma \ref{l:GaussInt1} in the last equality. 
This concludes 
$$
\limsup_{\varepsilon \downarrow 0} u^\varepsilon(x_0)
\le 
\phi(y_0) + \frac12 |x_0-y_0|^2 
= 
Q_1\phi(x_0).
$$

Next we assume $\phi$ is concave and $-(1-\frac1\beta)$-semi-convex. 
First it follows from $-(1-\frac1\beta)$-semi-convexity of $\phi$ that 
\begin{equation}\label{e:Decay21Sep}
\phi(y) \ge -\frac12(1-\frac1\beta) |y|^2 + \langle y_0 , y\rangle + c_0,\;\;\; y \in \mathbb{R}^n, 
\end{equation}
for some $y_0\in\mathbb{R}^n$ and $c_0\in \mathbb{R}$ in view of \eqref{e:DecayFromConv}\footnote{To be precise one needs some regularity of $\phi$ to use \eqref{e:DecayFromConv}. This problem can be justified by invoking the subdifferential, see also Appendix \ref{S6.4}. }.
This in particular ensures $\phi \in \mathcal{N}$.
Let us then show $Q_1\phi$ is lower semi-continuous; for all $x_0 \in \mathbb{R}^n$ 
\begin{equation}\label{e:Q1LSC}
\lim_{\delta\downarrow 0} \inf_{x\in \mathbb{R}^n: |x- x_0|<\delta} Q_1\phi(x) \ge Q_1\phi(x_0). 
\end{equation}
Remark that $Q_1\phi(x_0) < \infty$ for any $x_0$ from the definition. 
From \eqref{e:Decay21Sep} we have for any $x,y\in\mathbb{R}^n$ that 
\begin{align*}
\phi(y)+\frac12|x-y|^2 
&\ge 
\frac1{2\beta} |y|^2 
+ 
\langle y_0 - x, y\rangle 
+c_0 + \frac12|x|^2 \\
&\ge
\frac1{2\beta} |y|^2 
+ 
\langle y_0 - x_0, y\rangle 
+ 
\langle x_0-x,y\rangle
+c_0.
\end{align*}
Hence we can find sufficiently large $R_0 = R_0(\phi,x_0)$ satisfying 
\begin{equation}\label{e:Lower20Sep}
\phi(y)+\frac12|x-y|^2 
\ge 
\frac1{2\beta}|y|^2 
-
\big(
|y_0 - x_0| + \frac1{10}
\big)|y|+c_0
>
Q_1\phi(x_0)
\end{equation}
for any $x,y$ such that $|x-x_0|<10^{-1}$ and $|y|\ge R_0$. 
This shows 
$$
\inf_{x\in \mathbb{R}^n:|x-x_0| < 10^{-1}} 
\inf_{y\in\mathbb{R}^n:|y|\ge R_0} 
\big[
\phi(y)
+
\frac12|x-y|^2
\big]
\ge 
Q_1\phi(x_0)
$$
and hence \eqref{e:Q1LSC} would follow once one could see that 
\begin{equation}\label{e:Goal20Sep1}
\lim_{\delta\downarrow0} \inf_{|x-x_0|<\delta} 
\inf_{|y|\le R_0} 
\big[
\phi(y)
+
\frac12|x-y|^2
\big]
\ge 
Q_1\phi(x_0). 
\end{equation}
With this in mind we estimate for any $x,y\in\mathbb{R}^n$ by 
\begin{align*}
\phi(y)
+
\frac12|x-y|^2
&\ge 
\phi(y)
+
\frac12\big( |x-x_0| - |x_0-y|\big)^2 \\
&= 
\phi(y)
+
\frac12 |x-x_0|^2  
+\frac12 |x_0-y|^2
- 
|x-x_0||x_0-y|\\
&\ge
\phi(y)
+
\frac12 |y-x_0|^2  
- 
|x-x_0||x_0-y|.
\end{align*}
This yields that 
\begin{align*}
\inf_{|y|\le R_0}
\big[
\phi(y)
+
\frac12|x-y|^2
\big]
&\ge
\inf_{|y|\le R_0}
\big[
\phi(y)
+
\frac12 |y-x_0|^2  
\big]
- 
\sup_{|y|\le R_0} 
|x-x_0||x_0-y|\\
&\ge 
Q_1\phi(x_0)
- 
( |x_0| + R_0) |x-x_0|
\end{align*}
from which we establish \eqref{e:Goal20Sep1}. 

Second, we show that $Q_1\phi$ is upper semi-continuous. 
Fix $x_0 \in \mathbb{R}^n$ and take $\lambda > Q_1 \phi(x_0)$.  It suffices to show 
$\lambda > Q_1 \phi$ on some neighborhood of $x_0$. 
If it is not true, then we can take some sequence $(x_k)_{k \in \mathbb{N}} \subset \mathbb{R}^n$ such that $\lim_{k \to \infty} |x_k -x_0|=0$ and $\lambda \le Q_1 \phi(x_k)$. 
The latter property implies that for given $\lambda' \in (Q_1\phi(x_0), \lambda)$, it holds that 
$$
\phi(y) + \frac12 |x_k-y|^2 \ge \lambda', \;\;\; \forall y \in \mathbb{R}^n, k \in \mathbb{N}. 
$$
Thus as $k \to \infty$, we see that $\phi(y) + \frac12 |x_0-y|^2 \ge \lambda'$ for any $y \in \mathbb{R}^n$. This means $Q_1\phi(x_0) \ge \lambda'$ which is a contradiction. 

To show \eqref{e:VaniVis},  it suffices to show 
$$
\liminf_{\varepsilon \downarrow 0 }
-2\varepsilon \log\, P_\varepsilon \big[ e^{-\frac{\phi}{2\varepsilon}} \big](x_0) 
\ge 
Q_1\phi(x_0)
$$
for each $x_0\in \mathbb{R}^n$ since we have already proved the reverse inequality \eqref{e:WhatWeNeed}. 
However we may invoke the argument for \cite[(6.6)]{BNT} to ensure the inequality since we observed $Q_1\phi$ is continuous and in particular lower semi-continuous. 
\end{proof}

\begin{proof}[Proof of Proposition \ref{Prop:HCMahler}]
Let $\phi(x) \coloneqq \psi(x) - \frac12|x|^2$. 
From the assumption \eqref{e:FHC_MaIntro} with $f = e^{-\frac{p_\varepsilon}{2\varepsilon} \phi}$,  we  have that 
$$
\big\| 
P_\varepsilon\big[
e^{ - \frac{\phi}{2\varepsilon }}
\big]
\big\|_{L^{q_\varepsilon}(\gamma)}
\le 
{\rm IS}_{\varepsilon}^\frac1{p_\varepsilon} 
\big(
\int_{\mathbb{R}^n} 
e^{-\frac{p_\varepsilon}{2\varepsilon} \phi}\, d\gamma 
\big)^\frac1{p_\varepsilon}. 
$$
For the left hand side, we know that 
$$
\big\| 
P_\varepsilon\big[
e^{ - \frac{\phi}{2\varepsilon }}
\big]
\big\|_{L^{q_\varepsilon}(\gamma)}
= 
\big\| 
P_\varepsilon\big[
e^{ - \frac{\phi}{2\varepsilon }}
\big]^{-2\varepsilon}
\big\|_{L^{-\frac{q_\varepsilon}{2\varepsilon}}(\gamma)}^{-\frac1{2\varepsilon}}
$$
and this yields that 
$$
\big\| 
P_\varepsilon\big[
e^{ - \frac{\phi}{2\varepsilon }}
\big]^{-2\varepsilon}
\big\|_{L^{-\frac{q_\varepsilon}{2\varepsilon}}(\gamma)}
\ge 
{\rm IS}_{\varepsilon}^{- \frac{2\varepsilon}{p_\varepsilon} }
\big\| 
e^{\phi}
\big\|_{L^{-\frac{p_\varepsilon}{2\varepsilon}}(\gamma)}.
$$
We then take a limit $\varepsilon \downarrow 0$ in which case 
the right hand side becomes 
$$
(\liminf_{\varepsilon \downarrow 0}
{\rm IS}_{\varepsilon})^{- 1 }
\big\| 
e^{\phi}
\big\|_{L^{-1}(\gamma)}
$$
since $ \frac{p_\varepsilon}{2\varepsilon} \to 1$ follows from the assumption $p_s = 2s +O(s^2)$. 
For the left hand side we employ Lemma \ref{l:VaniVis} as follows. 
Since $\frac{q_\varepsilon}{2\varepsilon} \to -1$ we know that 
$$
\lim_{\varepsilon \downarrow 0}
\big\| 
P_\varepsilon\big[
e^{ - \frac{\phi}{2\varepsilon }}
\big]^{-2\varepsilon}
\big\|_{L^{-\frac{q_\varepsilon}{2\varepsilon}}(\gamma)}
=
\lim_{\varepsilon \downarrow 0}
\int_{\mathbb{R}^n} 
(e^{u^\varepsilon})^{ - \frac{q_\varepsilon}{2\varepsilon}}\, d\gamma .
$$
We appeal to the dominated convergence theorem to interchange the limit and integral. 
To this end we notice from concavity of $\phi$ that 
$$
\phi(x) 
\le \langle x_0, x\rangle + c_0,\;\;\; x\in \mathbb{R}^n
$$
for some $x_0 \in \mathbb{R}^n$ and $c_0 \in\mathbb{R}$ from which we see that 
$$
e^{u^\varepsilon}(x)^{-\frac{q_\varepsilon}{2\varepsilon}} 
=
P_\varepsilon\big[
e^{ - \frac{\phi}{2\varepsilon }}
\big](x)^{q_\varepsilon}
\le 
P_\varepsilon\big[
e^{ - \frac{1}{2\varepsilon }(\langle x_0,\cdot\rangle + c_0)}
\big](x)^{q_\varepsilon}
$$
since $q_\varepsilon<0$. We then use Lemma \ref{l:GaussInt1} to see that 
$$
e^{u^\varepsilon}(x)^{-\frac{q_\varepsilon}{2\varepsilon}} 
\le 
e^{-\frac{q_\varepsilon}{2\varepsilon}c_0}
e^{q_\varepsilon\frac{p_\varepsilon}{2}|\frac{1}{2\varepsilon}x_0|^2}
e^{- q_\varepsilon \frac1{2\varepsilon} \langle x_0,e^{-\varepsilon}x\rangle }
\;
\to
\;
e^{c_0} e^{-\frac12|x_0|^2+\langle x_0 , x\rangle}
$$
as $\varepsilon\downarrow0$. This ensures that 
$$
e^{u^\varepsilon}(x)^{-\frac{q_\varepsilon}{2\varepsilon}} 
\le 
C e^{100|x_0||x|} \in L^1(\gamma)
$$
for sufficiently small $\varepsilon>0$. 
Hence the dominated convergence theorem and \eqref{e:WhatWeNeed} yield that 
\begin{align*}
\lim_{\varepsilon \downarrow 0}
\big\| 
P_\varepsilon\big[
e^{ - \frac{\phi}{2\varepsilon }}
\big]^{-2\varepsilon}
\big\|_{L^{-\frac{q_\varepsilon}{2\varepsilon}}(\gamma)}
&=
\int_{\mathbb{R}^n} 
e^{\lim_{\varepsilon \downarrow 0}
 u^\varepsilon} 
\, d\gamma
\le 
\int_{\mathbb{R}^n} 
e^{Q_1\phi(x)} 
\, d\gamma.
\end{align*}
Putting altogether we obtain 
\begin{equation}\label{e:HC-HJ}
\big\| e^{Q_1 \phi} \big\|_{L^1(\gamma)}
\ge 
(\liminf_{\varepsilon \downarrow 0}
{\rm IS}_{\varepsilon})^{- 1 }
\big\| 
e^{\phi}
\big\|_{L^{-1}(\gamma)}. 
\end{equation}
Since we have 
$$
Q_1\phi(x) = -\psi^*(x) + \frac12|x|^2, 
$$
we conclude \eqref{e:InvS}.
\end{proof}

One can obtain more general statement of Proposition \ref{Prop:HCMahler}.  For the sake of simplicity, let us state under Nelson's time relation. 
\begin{lemma}\label{l:Useful}
Suppose we are given $q_s<0<p_s$ for each small $s>0$ and assume that they satisfy $\frac{q_s-1}{p_s-1} = e^{2s}$ and that $a \coloneqq \lim_{s \downarrow 0} \frac{-q_s}{2s}$ exists and is included in $(0, 1)$.\footnote{We remark that our assumptions of $p_s$ and $q_s$ implies $a \in [0, 1]$ if the $a$ exists. In fact, $q_s <0$ immediately yields $a \ge 0$. On the other hand, since $1-a = \lim_{\varepsilon} \frac{p_s}{2s}$ by Nelson's time and $p_s >0$, we also have $1-a \ge 0$.}
\begin{itemize}
\item (Assumption)
Suppose one could show 
\begin{equation}\label{e:Assump13Oct}
\big\|
P_s \big[f^\frac1{p_s}\big]
\big\|_{L^{q_s}(\gamma)}
\le 
\big\|
P_s \big[\big( \frac{\gamma_\beta}{\gamma} \big)^\frac1{p_s}\big]
\big\|_{L^{q_s}(\gamma)}
\big(
\int_{\mathbb{R}^n}
f\, d\gamma 
\big)^\frac1{p_s}
\end{equation}
for all positive, log-convex, and $1-\frac1\beta$-semi-log-concave $f$,  all small $s>0$, and for all $\beta\ge1$ such that $\beta_{s,p}>0$. 
\item(Consequence)
For any $\beta\ge1$ and $-(1-\frac1\beta)$-semi-convex and concave $\phi$, 
one has 
\begin{equation}\label{e:Conseq13Oct}
\big\| e^{Q_1 \phi} \big\|_{L^a(\gamma)}
\big\| e^{-\phi} \big\|_{L^{1-a}(\gamma)}
\ge 
\big\| e^{Q_1 \big[ -\frac12(1-\frac1\beta)|\cdot|^2 \big]} \big\|_{L^a(\gamma)}
\big\| e^{\frac12(1-\frac1\beta)|\cdot|^2} \big\|_{L^{1-a}(\gamma)} 
\end{equation}
and 
\begin{equation}\label{e:Identity19Oct}
\big\| e^{Q_1 \big[ -\frac12(1-\frac1\beta)|\cdot|^2 \big]} \big\|_{L^a(\gamma)}
\big\| e^{\frac12(1-\frac1\beta)|\cdot|^2} \big\|_{L^{1-a}(\gamma)}
=
\big( 
\beta^{\frac1{1-a}}
(\beta a - a +1)^{ - \frac{1}{a(1-a)}}
\big)^\frac{n}{2}. 
\end{equation}
\end{itemize}
\end{lemma}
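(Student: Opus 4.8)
The plan is to deduce the inequality \eqref{e:Conseq13Oct} from the assumed Gaussian‑extremal bound \eqref{e:Assump13Oct} by the vanishing viscosity passage of Bobkov--Gentil--Ledoux, exactly as in the proof of Proposition \ref{Prop:HCMahler}, and to obtain \eqref{e:Identity19Oct} by a direct Gaussian computation. I first dispose of \eqref{e:Identity19Oct}: writing $c=1-\tfrac1\beta\in[0,1)$ and minimising $y\mapsto\tfrac1{2\beta}|y|^2-\langle x,y\rangle+\tfrac12|x|^2$ explicitly gives $Q_1\big[-\tfrac12 c|\cdot|^2\big](x)=-\tfrac{\beta-1}{2}|x|^2$, so both factors on the left of \eqref{e:Identity19Oct} are elementary Gaussian integrals; using $1-(1-a)c=(\beta a-a+1)/\beta$ they evaluate to $(\beta a-a+1)^{-n/(2a)}$ and $\big(\beta/(\beta a-a+1)\big)^{n/(2(1-a))}$ respectively, whose product is the right-hand side of \eqref{e:Identity19Oct}.

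For the inequality, fix $\beta\ge1$ and $\phi$ concave and $-(1-\tfrac1\beta)$-semi-convex, and for small $\varepsilon>0$ set $f_\varepsilon:=e^{-\frac{p_\varepsilon}{2\varepsilon}\phi}$. Concavity of $\phi$ makes $f_\varepsilon$ log-convex, and the $-(1-\tfrac1\beta)$-semi-convexity makes $\log f_\varepsilon$ be $\tfrac{p_\varepsilon}{2\varepsilon}(1-\tfrac1\beta)$-semi-concave, i.e. $f_\varepsilon$ is $1-\tfrac1{\beta_\varepsilon}$-semi-log-concave for $\beta_\varepsilon:=\big(1-\tfrac{p_\varepsilon}{2\varepsilon}(1-\tfrac1\beta)\big)^{-1}$; since $\tfrac{p_\varepsilon}{2\varepsilon}\to1-a\in(0,1)$ one has $1\le\beta_\varepsilon<\beta$ for $\varepsilon$ small, and, setting $\widehat\beta_\varepsilon:=1+(\beta_\varepsilon-1)\tfrac{q_\varepsilon}{p_\varepsilon}e^{-2\varepsilon}$, one checks $\widehat\beta_\varepsilon\to(\beta a-a+1)^{-1}>0$, so \eqref{e:Assump13Oct} is applicable with the parameter $\beta_\varepsilon$. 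The point to be careful about here is that one must take the comparison Gaussian to be $\gamma_{\beta_\varepsilon}$ with this $\varepsilon$-dependent $\beta_\varepsilon$, not the fixed $\beta$; using the fixed $\beta$ would only produce a strictly smaller limiting constant. Applying \eqref{e:Assump13Oct} to $f_\varepsilon$ with $\beta=\beta_\varepsilon$, writing $P_\varepsilon[f_\varepsilon^{1/p_\varepsilon}]=e^{-u^\varepsilon/(2\varepsilon)}$ with $u^\varepsilon:=-2\varepsilon\log P_\varepsilon[e^{-\phi/(2\varepsilon)}]$, using $\int f_\varepsilon\,d\gamma=\|e^{-\phi}\|_{L^{p_\varepsilon/(2\varepsilon)}(\gamma)}^{p_\varepsilon/(2\varepsilon)}$, and raising both sides to the power $-2\varepsilon$ (which reverses the inequality), one is left with
$$
\big\| e^{u^\varepsilon} \big\|_{L^{-q_\varepsilon/(2\varepsilon)}(\gamma)}\;
\big\| e^{-\phi} \big\|_{L^{p_\varepsilon/(2\varepsilon)}(\gamma)}
\;\ge\;
\big\| P_\varepsilon\big[\big(\tfrac{\gamma_{\beta_\varepsilon}}{\gamma}\big)^{1/p_\varepsilon}\big] \big\|_{L^{q_\varepsilon}(\gamma)}^{-2\varepsilon}.
$$

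It then remains to let $\varepsilon\downarrow0$. On the left, $-q_\varepsilon/(2\varepsilon)\to a$ and $p_\varepsilon/(2\varepsilon)\to1-a$; Lemma \ref{l:VaniVis} (the ``moreover'' part, applicable since $\phi$ is concave and $-(1-\tfrac1\beta)$-semi-convex) gives $u^\varepsilon\to Q_1\phi$ pointwise with $Q_1\phi$ continuous, concavity of $\phi$ furnishes an affine bound $\phi(x)\le a_0+\langle b_0,x\rangle$ and hence a uniform majorant $e^{-\frac{q_\varepsilon}{2\varepsilon}u^\varepsilon}\le Ce^{C|x|}\in L^1(\gamma)$ as in Proposition \ref{Prop:HCMahler}, while the $-(1-\tfrac1\beta)$-semi-convexity of $\phi$ yields, via \eqref{e:DecayFromConv}, a Gaussian majorant for $e^{-\frac{p_\varepsilon}{2\varepsilon}\phi}\gamma$; two applications of dominated convergence then give $\|e^{u^\varepsilon}\|_{L^{-q_\varepsilon/(2\varepsilon)}(\gamma)}\to\|e^{Q_1\phi}\|_{L^a(\gamma)}$ and $\|e^{-\phi}\|_{L^{p_\varepsilon/(2\varepsilon)}(\gamma)}\to\|e^{-\phi}\|_{L^{1-a}(\gamma)}$. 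On the right, $\beta_\varepsilon\to\beta/(\beta a-a+1)$, and by \eqref{e:PsGauss2} the quantity equals $\beta_\varepsilon^{-n\varepsilon/p_\varepsilon'}\widehat\beta_\varepsilon^{\,n\varepsilon/q_\varepsilon'}$; since $\varepsilon/p_\varepsilon'\to-\tfrac1{2(1-a)}$, $\varepsilon/q_\varepsilon'\to\tfrac1{2a}$ and $\widehat\beta_\varepsilon\to(\beta a-a+1)^{-1}$, it converges to $\big(\beta/(\beta a-a+1)\big)^{n/(2(1-a))}(\beta a-a+1)^{-n/(2a)}=\beta^{n/(2(1-a))}(\beta a-a+1)^{-n/(2a(1-a))}$, which by \eqref{e:Identity19Oct} is exactly the right-hand side of \eqref{e:Conseq13Oct}. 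The main obstacle is precisely this bookkeeping: the whole construction of $\beta_\varepsilon$ is designed so that, after the limit, the Gaussian constant collapses to the expression in \eqref{e:Identity19Oct}, and this hinges on evaluating the limits of the exponents $\varepsilon/p_\varepsilon'$, $\varepsilon/q_\varepsilon'$ and of $\widehat\beta_\varepsilon$ correctly; the vanishing viscosity convergence is already supplied by Lemma \ref{l:VaniVis}, and the integrability and dominated-convergence estimates parallel those already carried out in the proof of Proposition \ref{Prop:HCMahler}.
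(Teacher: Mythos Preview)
Your proof is correct and follows essentially the same route as the paper: define $f_\varepsilon=e^{-\frac{p_\varepsilon}{2\varepsilon}\phi}$, observe it is log-convex and $1-\tfrac1{\beta(\varepsilon)}$-semi-log-concave for the $\varepsilon$-dependent $\beta(\varepsilon)$, apply the assumed bound \eqref{e:Assump13Oct} with that $\beta(\varepsilon)$, and pass to the limit via Lemma \ref{l:VaniVis} and dominated convergence exactly as in Proposition \ref{Prop:HCMahler}. The only notable difference is how the limiting constant is identified: the paper observes that when $\phi=-\tfrac12(1-\tfrac1\beta)|\cdot|^2$ one has $f_\varepsilon=c\,\gamma_{\beta(\varepsilon)}/\gamma$, so \eqref{e:Assump13Oct} is an equality and the limit of the Gaussian norms must equal $\|e^{Q_1[-\frac12(1-1/\beta)|\cdot|^2]}\|_{L^a(\gamma)}\|e^{\frac12(1-1/\beta)|\cdot|^2}\|_{L^{1-a}(\gamma)}$ by construction, whereas you compute the limit directly from \eqref{e:PsGauss2} and then match it to \eqref{e:Identity19Oct}. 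Both arguments are valid; the paper's is a touch slicker, yours is more explicit.
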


\begin{proof}
The argument is almost parallel to the proof of Proposition \ref{Prop:HCMahler} so we give a sketch of the proof. 
Take $\beta\ge1$ and $\phi$ satisfying the condition.  
We then define 
$$
f_\varepsilon := e^{-\frac{p_\varepsilon}{2\varepsilon}\phi}
$$ 
which is log-convex and $\frac{p_\varepsilon}{2\varepsilon}(1-\frac1\beta) =: 1 - \frac1{\beta(\varepsilon)}$-semi-log-concave. We remark that $\beta(\varepsilon) \ge 1$ always holds by definition. 
Note that $ -\log\, f_\varepsilon \in \mathcal{N}$ follows from Lemma \ref{l:VaniVis}, $\beta(\varepsilon) \ge 1$ and the assumption on $\phi$.  Hence the vanishing viscosity argument can be justified as in the proof of Proposition \ref{Prop:HCMahler}. 
Hence,  in view of $a = \lim_{\varepsilon \downarrow 0} \frac{- q_\varepsilon}{2\varepsilon}$, one can see that 
$$
\big\|
e^{Q_1\phi} 
\big\|_{L^a(\gamma)}
=
\lim_{\varepsilon \downarrow 0}
\big\|
P_\varepsilon \big[ f_\varepsilon^\frac1{p_\varepsilon} \big] 
\big\|_{L^{q_\varepsilon}(\gamma)}^{-2\varepsilon}.
$$
We now intend to apply the assumption \eqref{e:Assump13Oct}.
To this end we need to check $\beta(\varepsilon)\ge1$ and $(\beta(\varepsilon))_{\varepsilon, p_\varepsilon} >0$. 
The first one is immediate from the definition and moreover one can check $1 \le  \beta(\varepsilon) \le \beta $. 
The second one can be ensured from the assumption that $q_\varepsilon$ and $p_\varepsilon$ satisfy Nelson's time relation. 
Hence we may apply \eqref{e:Assump13Oct} to see that 
\begin{align*}
\big\|
e^{Q_1\phi} 
\big\|_{L^a(\gamma)}
\ge& 
\lim_{\varepsilon \downarrow 0}
\big\|
P_\varepsilon \big[ \big( \frac{ \gamma_{\beta(\varepsilon)} }{\gamma} \big)^\frac1{p_\varepsilon} \big] 
\big\|_{L^{q_\varepsilon}(\gamma)}^{-2\varepsilon}
\big( 
\int f_\varepsilon\, d\gamma 
\big)^{-\frac{2\varepsilon}{p_\varepsilon}}\\
=& 
\lim_{\varepsilon \downarrow 0}
\big\|
P_\varepsilon \big[ \big( \frac{ \gamma_{\beta(\varepsilon)} }{\gamma} \big)^\frac1{p_\varepsilon} \big] 
\big\|_{L^{q_\varepsilon}(\gamma)}^{-2\varepsilon}
\| e^{\phi} \|_{L^{a-1}(\gamma)}
\end{align*}
since $\lim_{\varepsilon\downarrow 0} \frac{p_\varepsilon}{2\varepsilon} = 1 - \lim_{\varepsilon\downarrow 0} \frac{-q_\varepsilon}{2\varepsilon}  = 1-a$.  
To compute the constant we note that if $\phi(x) = \frac12(\frac1\beta - 1)|x|^2$, then $f_\varepsilon = c \frac{\gamma_{\beta(\varepsilon)}}{\gamma}$ and hence the inequality \eqref{e:Assump13Oct} becomes equality for all $\varepsilon$. 
This means 
$$
\lim_{\varepsilon \downarrow 0}
\big\|
P_\varepsilon \big[ \big( \frac{ \gamma_{\beta(\varepsilon)} }{\gamma} \big)^\frac1{p_\varepsilon} \big] 
\big\|_{L^{q_\varepsilon}(\gamma)}^{-2\varepsilon}
=
\big\| e^{Q_1 \big[ -\frac12(1-\frac1\beta)|\cdot|^2 \big]} \big\|_{L^a(\gamma)}
\big\| e^{\frac12(1-\frac1\beta)|\cdot|^2} \big\|_{L^{1-a}(\gamma)}.
$$
The identity \eqref{e:Identity19Oct} is a direct consequence from the Gaussian integration. 

\end{proof}

\if0
Combining Theorem \ref{t:FHC} and Proposition \ref{Prop:HCMahler} we obtain the following which contains Theorem \ref{t:RegInvS}.  We provide slightly stronger statement with respect to the regularity of $\psi$. 

\begin{corollary}\label{Cor:RegInvS}
Let $\beta \ge1$ and $\psi \colon \mathbb{R}^n\to \mathbb{R}$ be $\frac1\beta \Lambda$-semi-convex and $\Lambda$-semi-concave for some positive definite $\Lambda$.
Then we have \eqref{e:RegInvS}.
\end{corollary}

\begin{proof}
Since the inequality \eqref{e:RegInvS} is scale free we may assume $\Lambda = {\rm id}$. 
If we let $f(x) \coloneqq {\rm exp}\, \big( \frac12|x|^2 - \psi(x) \big)$ then the assumption on $\psi$ ensures that $f$ satisfies the assumption in Corollary \ref{Cor:p->1-e(-2s)} and hence \eqref{e:LoughEndpoint}. 
Moreover $ -\log\, f \in \mathcal{N}$ follows from Lemma \ref{l:VaniVis}.
Therefore we may apply Proposition \ref{Prop:HCMahler} to conclude. 
\end{proof}

\fi 

\subsection{Proof of Theorems \ref{t:ImpInv}}\label{S4.6}
The following is a direct consequence from Theorem \ref{t:FHC} and Lemma \ref{l:Useful}. 
\begin{theorem}\label{t:Func19Oct}
Let $n\in\mathbb{N}$, $a\in (0,1)$, and $\beta\ge1$. 
For any $\frac1\beta$-semi-convex and $1$-semi-concave $\psi:\mathbb{R}^n\to\mathbb{R}$, 
\begin{align}\label{e:Func19Oct}
&\big(
\int_{\mathbb{R}^n} 
e^{ - a \psi^*(x)} e^{-\frac12 (1-a)|x|^2} \frac{dx}{(2\pi)^{\frac{n}{2}}}
\big)^{\frac1a}
\big(
\int_{\mathbb{R}^n} 
e^{ - (1-a) \psi(x)} e^{-\frac12 a|x|^2} \frac{dx}{(2\pi)^{\frac{n}{2}}}
\big)^{\frac1{1-a}}\\
&\ge 
\big(
\beta^\frac1{1-a} 
(\beta a - a + 1)^{-\frac{1}{a(1-a)}}
\big)^\frac{n}{2} .\nonumber 
\end{align}
Equality holds if $\psi(x) = \frac1{2\beta}|x|^2$. 
\end{theorem}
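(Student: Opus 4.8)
The plan is to read Theorem~\ref{t:Func19Oct} as the combination of Theorem~\ref{t:FHC}, which supplies the hypothesis~\eqref{e:Assump13Oct} of Lemma~\ref{l:Useful}, with the conclusion of Lemma~\ref{l:Useful}, after passing from $\psi$ to the shifted potential via the Legendre/Hamilton--Jacobi dictionary. First I would set $\phi \coloneqq \psi - \tfrac12|\cdot|^2$. Using the definitions \eqref{e:DefConv} and \eqref{e:DefConc} together with the elementary identity $\tfrac12|(1-\lambda)x_1+\lambda x_2|^2 = (1-\lambda)\tfrac12|x_1|^2+\lambda\tfrac12|x_2|^2-\tfrac12\lambda(1-\lambda)|x_1-x_2|^2$, the hypothesis that $\psi$ is $\tfrac1\beta$-semi-convex becomes exactly the statement that $\phi$ is $-(1-\tfrac1\beta)$-semi-convex, while the hypothesis that $\psi$ is $1$-semi-concave becomes exactly the statement that $\phi$ is concave. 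Thus $\phi$ satisfies the assumptions imposed on the test potential in Lemma~\ref{l:Useful}, and in particular (by Lemma~\ref{l:VaniVis}) lies in $\mathcal{N}$ with $Q_1\phi$ finite and continuous.

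Next I would fix admissible exponents: for $s>0$ set $q_s\coloneqq -2as$ and let $p_s$ be given by Nelson's relation $\frac{q_s-1}{p_s-1}=e^{2s}$, i.e. $p_s = 1+(q_s-1)e^{-2s}$. Then $q_s<0$ for all $s>0$, a short expansion gives $p_s = 2s(1-a)+O(s^2)$ so that $0<p_s<1$ for all small $s$, and $\lim_{s\downarrow0}\frac{-q_s}{2s}=a\in(0,1)$, $\lim_{s\downarrow0}\frac{p_s}{2s}=1-a$; hence all standing hypotheses of Lemma~\ref{l:Useful} are met. For these $p_s,q_s$ the inequality \eqref{e:Assump13Oct} is precisely the content of Theorem~\ref{t:FHC} (applied with $p=p_s$, $q=q_s$), valid for every $\beta\ge1$; the restriction $\beta_{s,p}>0$ is harmless, since otherwise the Gaussian quantity on the right of \eqref{e:Assump13Oct} is infinite. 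Therefore Lemma~\ref{l:Useful} applies to our $\phi$, yielding \eqref{e:Conseq13Oct} together with the explicit value \eqref{e:Identity19Oct}.

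It remains to translate \eqref{e:Conseq13Oct} back into a statement about $\psi$. By \eqref{e:Qto*} applied to $\psi=\phi+\tfrac12|\cdot|^2$ one has $Q_1\phi(x)=\tfrac12|x|^2-\psi^*(x)$, hence $aQ_1\phi(x)-\tfrac12|x|^2 = -a\psi^*(x)-\tfrac12(1-a)|x|^2$ and $-(1-a)\phi(x)-\tfrac12|x|^2 = -(1-a)\psi(x)-\tfrac12 a|x|^2$. Consequently
\begin{align*}
\|e^{Q_1\phi}\|_{L^a(\gamma)}^a &= \int_{\mathbb{R}^n} e^{-a\psi^*(x)} e^{-\frac12(1-a)|x|^2}\,\frac{dx}{(2\pi)^{n/2}},\\
\|e^{-\phi}\|_{L^{1-a}(\gamma)}^{1-a} &= \int_{\mathbb{R}^n} e^{-(1-a)\psi(x)} e^{-\frac12 a|x|^2}\,\frac{dx}{(2\pi)^{n/2}}.
\end{align*}
Raising these identities to the powers $1/a$ and $1/(1-a)$ and multiplying shows that the left-hand side of \eqref{e:Func19Oct} is exactly $\|e^{Q_1\phi}\|_{L^a(\gamma)}\,\|e^{-\phi}\|_{L^{1-a}(\gamma)}$, which by \eqref{e:Conseq13Oct} and \eqref{e:Identity19Oct} is at least $\big(\beta^{1/(1-a)}(\beta a-a+1)^{-1/(a(1-a))}\big)^{n/2}$. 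This is \eqref{e:Func19Oct}. For the equality claim, $\psi(x)=\tfrac1{2\beta}|x|^2$ corresponds to $\phi(x)=-\tfrac12(1-\tfrac1\beta)|x|^2$, which is precisely the potential producing equality in the derivation of \eqref{e:Conseq13Oct}.

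As for the main difficulty: there is essentially no new analytic input here, the substance being carried entirely by Theorem~\ref{t:FHC} and Lemma~\ref{l:Useful}. The two points that must be handled with care are (i) verifying that the $C^2$-free convexity notions \eqref{e:DefConv}--\eqref{e:DefConc} transform cleanly under the shift by $\tfrac12|\cdot|^2$, so that $\phi$ genuinely satisfies the hypotheses of Lemma~\ref{l:Useful} (including the continuity of $Q_1\phi$ needed for the vanishing-viscosity step), and (ii) keeping the Legendre dictionary and the pair of exponents $a$, $1-a$ aligned so that the two integrals in \eqref{e:Func19Oct} match the $L^a(\gamma)$- and $L^{1-a}(\gamma)$-norms in \eqref{e:Conseq13Oct}; both are routine but error-prone book-keeping rather than genuine obstacles.
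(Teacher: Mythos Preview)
Your proposal is correct and follows essentially the same route as the paper: invoke Theorem~\ref{t:FHC} to verify the hypothesis \eqref{e:Assump13Oct} of Lemma~\ref{l:Useful}, apply that lemma to $\phi=\psi-\tfrac12|\cdot|^2$, and then unwind via the identity $Q_1\phi=\tfrac12|\cdot|^2-\psi^*$. The only cosmetic difference is that the paper fixes $p_\varepsilon=2\varepsilon(1-a)$ and derives $q_\varepsilon$ from Nelson's relation, whereas you fix $q_s=-2as$ and derive $p_s$; these parametrizations agree to the relevant order and both yield $\lim_{s\downarrow0}(-q_s/2s)=a$, so the argument is the same.
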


\begin{proof}
Take any $a \in (0,1)$ and let $p_\varepsilon \coloneqq 2\varepsilon (1-a)$ and $q_\varepsilon \coloneqq 1 + (p_\varepsilon -1)e^{2\varepsilon}$ so that $\frac{q_\varepsilon-1}{p_\varepsilon -1} = e^{2\varepsilon}$.  Note that $\lim_{\varepsilon \downarrow 0 } - \frac{q_\varepsilon}{2\varepsilon} = a$. 
Also \eqref{e:Assump13Oct} follows from Theorem \ref{t:FHC} and hence we obtain  \eqref{e:Conseq13Oct}. 
By replacing $\psi = \phi + \frac12|\cdot|^2$, we conclude \eqref{e:Func19Oct}. 
\end{proof}

By choosing $\psi(x) = \frac12\|x\|_K^2$ for appropriate convex body $K$, we derive the set form of Theorem \ref{t:Func19Oct}. 
For that purpose, we introduce the weighted volume product. For a measure $\mu$ on $\mathbb{R}^n$ we define the weighted volume product by $v_\mu(K) \coloneqq \mu(K)\mu(K^\circ)$ for a convex body $K$ with $0 \in {\rm int}\, K$.  

\begin{theorem}\label{t:RegInvSanSet}
Let $n \ge 2$, $\kappa \in (0,1]$ and $\beta = \beta_\kappa\coloneqq \kappa^{-2}$. 
Then for any convex body $K$ with  $0 \in \mathrm{int}\, K$ satisfying \eqref{e:CondCurvNov} with $\Lambda = \kappa^{-1}{\rm id}$, 
we have 
\begin{equation}\label{e:SetInvSant}
\mu_{1-a,n}(K)^\frac1{1-a} \mu_{a,n}(K^\circ)^\frac1a 
\ge 
\big(
\beta_\kappa^\frac1{1-a} 
(\beta_\kappa a - a + 1)^{-\frac{1}{a(1-a)}}
\big)^\frac{n}{2} 
\mu_{1-a,n}({\rm B}_2^n)^\frac1{1-a} \mu_{a,n}({\rm B}_2^n)^\frac1a 
\end{equation}
for all $a \in (0,1)$, where 
$$
d\mu_{a,n}(x) \coloneqq \frac{a}{(a+(1-a)|x|^2)^{\frac{n+2}{2}}}\, dx.
$$ 
Moreover equality in \eqref{e:SetInvSant} is attained when $K = \frac1{\kappa} {\rm B}_2^n$.  
\end{theorem}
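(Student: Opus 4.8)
The plan is to apply Theorem \ref{t:Func19Oct} to the squared gauge function $\psi \coloneqq \frac12\|\cdot\|_K^2$ and then convert the resulting functional inequality into the statement about the Cauchy-type measures $\mu_{a,n}$ by a change of variables. Throughout, recall that for the choice $\Lambda = \kappa^{-1}{\rm id}$ and $\beta = \beta_\kappa = \kappa^{-2}$, the curvature hypothesis \eqref{e:CondCurvNov} reads $\nabla^2\psi \ge \kappa^2{\rm id}$ and $\nabla^2\big(\frac12\|\cdot\|_{K^\circ}^2\big) \ge {\rm id}$ on $\mathbb{S}^{n-1}$.

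First I would check that $\psi=\frac12\|\cdot\|_K^2$ satisfies the hypotheses of Theorem \ref{t:Func19Oct}, i.e. that it is $\frac1{\beta_\kappa}$-semi-convex (with $\frac1{\beta_\kappa}=\kappa^2$) and $1$-semi-concave in the sense of \eqref{e:DefConv}, \eqref{e:DefConc}. Since $\psi$ is positively $2$-homogeneous its Hessian is $0$-homogeneous, so the bound $\nabla^2\psi\ge\kappa^2{\rm id}$ on $\mathbb{S}^{n-1}$ propagates to all of $\mathbb{R}^n\setminus\{0\}$; a short homogeneity argument (the restriction of a $2$-homogeneous function to a line through the origin is a parabola whose leading coefficient is controlled by the off-origin Hessian, and segments missing the origin are handled by that Hessian directly) then upgrades this to the global $\kappa^2$-semi-convexity of $\psi$, covering the degeneracy at the origin. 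For the $1$-semi-concavity I use Legendre duality: by \eqref{e:PathSetFunc} one has $\psi^* = \frac12\|\cdot\|_{K^\circ}^2$, which is $1$-semi-convex by the same argument applied to $K^\circ$; invoking the elementary convex-analytic fact that a $c$-semi-convex (i.e. $c$-strongly convex) function has a $\frac1c$-semi-concave Legendre transform — the Hessian of the conjugate being the inverse Hessian — gives that $\psi=(\psi^*)^*$ is $1$-semi-concave. (If one wants to sidestep regularity issues at the origin entirely, one can run the argument for the $2$-Firey regularization $K_\lambda$ and let $\lambda\downarrow 0$.)

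Granting this, Theorem \ref{t:Func19Oct} applied to $\psi$, together with $\psi^*=\frac12\|\cdot\|_{K^\circ}^2$, yields
\[
\Big(\int_{\mathbb{R}^n} e^{-\frac a2\|x\|_{K^\circ}^2-\frac{1-a}2|x|^2}\,\frac{dx}{(2\pi)^{n/2}}\Big)^{\frac1a}\Big(\int_{\mathbb{R}^n} e^{-\frac{1-a}2\|x\|_{K}^2-\frac a2|x|^2}\,\frac{dx}{(2\pi)^{n/2}}\Big)^{\frac1{1-a}}\ge\big(\beta_\kappa^{\frac1{1-a}}(\beta_\kappa a-a+1)^{-\frac1{a(1-a)}}\big)^{\frac n2}.
\]
The next step is the identity
\[
\int_{\mathbb{R}^n} e^{-\frac\rho2\|x\|_L^2-\frac{1-\rho}2|x|^2}\,\frac{dx}{(2\pi)^{n/2}}=\frac{\mu_{\rho,n}(L)}{|\mathrm{B}_2^n|},\qquad \rho\in(0,1),
\]
for any convex body $L$ with $0\in{\rm int}\,L$. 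I would prove it by writing both sides in polar coordinates $x=r\theta$, so that $\|x\|_L=r\|\theta\|_L$ and $x\in L\Leftrightarrow r\le\|\theta\|_L^{-1}$: a one-dimensional Gaussian integral gives $\int_{\mathbb{R}^n}e^{-\frac\rho2\|x\|_L^2-\frac{1-\rho}2|x|^2}dx=2^{n/2-1}\Gamma(n/2)\int_{\mathbb{S}^{n-1}}(\rho\|\theta\|_L^2+1-\rho)^{-n/2}d\theta$, while the substitution $t=(1-\rho)r^2/(\rho+(1-\rho)r^2)$ evaluates the radial integral in $\mu_{\rho,n}(L)$ to $\frac1n\int_{\mathbb{S}^{n-1}}(\rho\|\theta\|_L^2+1-\rho)^{-n/2}d\theta$; comparing and using $|\mathrm{B}_2^n|=2\pi^{n/2}/(n\Gamma(n/2))$ gives the claim, and in particular $\mu_{\rho,n}(\mathrm{B}_2^n)=|\mathrm{B}_2^n|$ for every $\rho$. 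Applying the identity with $(\rho,L)=(a,K^\circ)$ and $(\rho,L)=(1-a,K)$ turns the displayed inequality into $\mu_{1-a,n}(K)^{\frac1{1-a}}\mu_{a,n}(K^\circ)^{\frac1a}\ge(\ldots)^{\frac n2}|\mathrm{B}_2^n|^{\frac1{1-a}+\frac1a}$, and rewriting the last factor as $(\ldots)^{\frac n2}\mu_{1-a,n}(\mathrm{B}_2^n)^{\frac1{1-a}}\mu_{a,n}(\mathrm{B}_2^n)^{\frac1a}$ is exactly \eqref{e:SetInvSant}. For the equality case, Theorem \ref{t:Func19Oct} is sharp at $\psi=\frac1{2\beta_\kappa}|\cdot|^2=\frac{\kappa^2}2|\cdot|^2$, i.e. $\|\cdot\|_K=\kappa|\cdot|$, which is $K=\frac1\kappa\mathrm{B}_2^n$; this body satisfies \eqref{e:CondCurvNov} with $\Lambda=\kappa^{-1}{\rm id}$ (with equality in the first bound and, since $\kappa\le1$, the second bound holding), so equality in \eqref{e:SetInvSant} holds there.

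The step I expect to be the main obstacle is the first one: certifying that $\psi=\frac12\|\cdot\|_K^2$ is simultaneously $\frac1{\beta_\kappa}$-semi-convex and $1$-semi-concave. The delicate points are the absence of $C^2$-regularity at the origin — dealt with by the $2$-homogeneity argument, or by regularization — and the passage from the curvature lower bound on $\partial K^\circ$ to the Hessian \emph{upper} bound $\nabla^2\psi\le{\rm id}$ via conjugation, which needs the strict convexity and smoothness that \eqref{e:CondCurvNov} supplies. Everything downstream — the invocation of Theorem \ref{t:Func19Oct} and the polar-coordinate identity — is a routine computation.
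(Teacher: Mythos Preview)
Your proposal is correct and follows essentially the same route as the paper: apply Theorem \ref{t:Func19Oct} to $\psi=\frac12\|\cdot\|_K^2$, convert the curvature hypothesis into $\frac1{\beta_\kappa}$-semi-convexity and $1$-semi-concavity of $\psi$ via the Legendre-dual Hessian relation (the paper delegates this step to Appendix \ref{S6.5}), and then translate the functional inequality into the measures $\mu_{a,n}$ via an integral identity. The only cosmetic difference is in that last identity: you compute $\int e^{-\frac\rho2\|x\|_L^2-\frac{1-\rho}2|x|^2}\,d\gamma=\mu_{\rho,n}(L)/|\mathrm{B}_2^n|$ by polar coordinates, while the paper derives the equivalent formula \eqref{e:GaussId19Oct} by a layer-cake/Fubini argument; both routes are routine and your version is perfectly fine.
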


\begin{proof}
To derive \eqref{e:SetInvSant}, we apply \eqref{e:Func19Oct} to $\psi= \frac12\| \cdot\|_K^2$. 
To this end, we remark that the assumption \eqref{e:CondCurvNov} with $\Lambda = \kappa^{-1}{\rm id}$ can be read as 
\begin{equation}\label{e:CondCurvature2}
\frac1\beta {\rm id} \le \nabla^2 \big( \frac12 \|\cdot\|_K^2 \big)(x) \le {\rm id},\;\;\; x \in \mathbb{R}^n\setminus\{0\}. 
\end{equation}
We give a brief proof of this equivalence in Appendix \ref{S6.5}. 
With this in mind, we note that 
\begin{equation}\label{e:GaussId19Oct}
\int_{\mathbb{R}^n} 
e^{-\frac12 b \| x \|_K^2} 
e^{-\frac12a|x|^2}\, \frac{dx}{(2\pi)^\frac{n}{2}}
=
\frac1{|{\rm B}_2^n|} \int_K \frac{b}{( b+ a|x|^2 )^{\frac{n+2}{2}}}\, dx 
\end{equation}
for any $a,b>0$. 
To see this we have that 
\begin{align*}
\int_{\mathbb{R}^n} 
e^{-\frac12 b \| x \|_K^2} 
e^{-\frac12a|x|^2}\, \frac{dx}{(2\pi)^\frac{n}{2}}
&= 
\int_{\mathbb{R}^n} 
\int_{\|x\|_K}^\infty 
\big( - e^{-\frac12 b t^2} \big)'
e^{-\frac12a|x|^2}\,  dt\frac{dx}{(2\pi)^\frac{n}{2}}\\
&= 
b
\int_0^\infty \int_{\mathbb{R}^n} 
{\bf 1}_{t\ge \|x\|_K} e^{-\frac12a|x|^2}\, \frac{dx}{(2\pi)^\frac{n}{2}}
t e^{-\frac12 bt^2}dt\\
&= 
b
\int_0^\infty \int_{\mathbb{R}^n} 
{\bf 1}_{1\ge \|y\|_K} e^{-\frac12at^2|y|^2}\, \frac{dy}{(2\pi)^\frac{n}{2}}
t^{n+1} e^{-\frac12 bt^2}dt\\
&= 
b
\int_{K} 
\int_0^\infty e^{-\frac12(b+a|y|^2)t^2}\, 
t^{n+1} dt\frac{dy}{(2\pi)^\frac{n}{2}}\\
&= 
b
\int_{K} 
\int_0^\infty e^{-\frac12s^2}\, 
s^{n+1} ds
(b+a|y|^2)^{-\frac{n+2}{2}}\, 
\frac{dy}{(2\pi)^\frac{n}{2}}.
\end{align*}
Then \eqref{e:GaussId19Oct} follows from the following identity, see for instance \cite{Kla07}:  
$$
\frac{1}{(2\pi)^\frac{n}{2}} \int_0^\infty s^{n+1} e^{-\frac12s^2}\, ds
= 
\frac1{|{\rm B}_2^n|}.
$$ 
Hence we obtain from \eqref{e:Func19Oct} with \eqref{e:CondCurvature2} that 
$$
\big(\frac1{| {\rm B}_2^n |}
\mu_{1-a,n}(K)\big)^\frac1{1-a}
\big(\frac1{| {\rm B}_2^n |}
 \mu_{a,n}(K^\circ) \big)^\frac1a 
\ge 
\big(
\beta^\frac1{1-a} 
(\beta a - a + 1)^{-\frac{1}{a(1-a)}}
\big)^\frac{n}{2}.  
$$
Moreover this inequality must be equality when $\beta =1$ and $K = {\rm B}_2^n$ and hence 
$$
\big(\frac1{| {\rm B}_2^n |}
\mu_{1-a,n}({\rm B}_2^n)\big)^\frac1{1-a}
\big(\frac1{| {\rm B}_2^n |}
 \mu_{a,n}({\rm B}_2^n) \big)^\frac1a 
=1. 
$$ 
This concludes \eqref{e:SetInvSant}. 
\end{proof}

\begin{remark}
We note that \eqref{e:GaussId19Oct} particularly implies 
\begin{equation}\label{e:Id5Nov}
\mu_{1-a, n}( {\rm B}_2^n ) = | {\rm B}_2^n |,\;\;\; a \in (0, 1).
\end{equation}
\end{remark}
We are now at the place of proving Theorem \ref{t:ImpInv}. 
\begin{proof}[Proof of Theorem \ref{t:ImpInv}]
Let $K$ be a convex body satisfying \eqref{e:CondCurvNov} for some $\kappa \in (0,1]$ and positive definite $\Lambda$. 
First, we may assume $\Lambda = \kappa^{-1}{\rm id}$ without loss of generality since the inequality is linear invariant. 
This enables us to apply Theorem \ref{t:RegInvSanSet}. 
We then notice from \eqref{e:Id5Nov}, \eqref{e:GaussId19Oct} and H\"{o}lder's inequality that 
\begin{align*}
\frac{\mu_{1-a,n}(K)}{\mu_{1-a,n}( {\rm B}_2^n )}
=& 
\int_{\mathbb{R}^n}
e^{- \frac12 (1-a)\|x\|_K^2}
e^{- \frac12 a|x|^2}\,
\frac{dx}{(2\pi)^\frac{n}{2}}\\
\le& 
\big(
\int_{\mathbb{R}^n}
e^{- \frac12 \|x\|_K^2}\,
\frac{dx}{(2\pi)^\frac{n}{2}}
\big)^{1-a}
\big(
\int_{\mathbb{R}} 
e^{- \frac12 |x|^2}\,
\frac{dx}{(2\pi)^\frac{n}{2}}
\big)^a\\
=&
\big( 
\frac{|K|}{|{\rm B}_2^n|}
\big)^{1-a}
\end{align*}
where we also used \eqref{e:PathSetFunc}. 
Thus we have that 
$$
\big( \frac{\mu_{1-a,n}(K)}{\mu_{1-a,n}( {\rm B}_2^n )} \big)^{\frac1{1-a}} 
\le 
\frac{|K|}{|{\rm B}_2^n|}
$$
for all $a\in (0,1)$. 
By using this and then taking $a\to 1$ in \eqref{e:SetInvSant}, with $\beta = \kappa^{-2}$ in mind, we conclude the proof. 
\end{proof}

Another interesting consequence from Theorem \ref{t:RegInvSanSet} appears when $a = \frac12$. 
To motivated,  it is worth to mention a generalization of the Blaschke--Santal\'{o} inequality \eqref{e:BS} to the weighted volume product. 
Then one may ask for which measure $\mu$ and convex body $K$, 
\begin{equation}\label{e:BSWeight}
	v_\mu (K) \le v_\mu({\rm B}_2^n)
\end{equation}
holds true. This problem was raised by Fradelizi--Meyer \cite{FraMeyMathZ}  and they obtained partial progress. 
It was also investigated by Klartag \cite{Kla07}. For instance, he proved  that \eqref{e:BSWeight} holds for all symmetric $K$ and certain rich family of symmetric measures $\mu$.  This class of measures particularly contains, as a typical example,  the Cauchy type distribution 
\begin{equation}\label{e:DefCauchy}
d\mu_{n}(x) \coloneqq \frac1{(1+|x|^2)^\frac{n+2}2}\, dx.
\end{equation}
We here ask an analogous question on the lower bound of $v_{\mu_{n}}(K)$. 
As a simple observation,  one realizes that $\lim_{r\to 0,\infty} v_{\mu_{n}}(r{\rm B}_2^n) = 0$ since $\mu_{n}$ is a finite measure on $\mathbb{R}^n$. 
Thus there is no hope to expect nontrivial global lower bound of $v_{\mu_{n}}(K)$. 
In other words, one needs to fix a ``scale" of the size of convex bodies to make the problem well-defined. 
We fix the scale by assuming \eqref{e:CondCurvNov} with $\Lambda = \kappa^{-1}{\rm id}$ and obtain the following by taking $a=\frac12$ in Theorem \ref{t:RegInvSanSet}. 

\begin{corollary}\label{t:WeightInv}
Let $n \ge 2$ and $\kappa \in (0,1]$. 
Then for any convex body $K$ with $0 \in \mathrm{int}\, K$ satisfying \eqref{e:CondCurvNov} with $\Lambda = \kappa^{-1}{\rm id}$, 
we have 
$$
v_{\mu_n}(K)
\ge 
\big( \frac{4}{(\kappa+\kappa^{-1})^2} \big)^\frac{n}{2} 
v_{\mu_n}( {\rm B}_2^n).
$$
Moreover this is sharp in the sense that equality is attained when $K = \frac1\kappa {\rm B}_2^n$.  
\end{corollary}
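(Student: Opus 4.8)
The plan is to specialize Theorem~\ref{t:RegInvSanSet} to the exponent $a=\tfrac12$ and to reconcile the measure $\mu_{1/2,n}$ occurring there with the Cauchy-type distribution $\mu_n$ from \eqref{e:DefCauchy}. First I would record that $\mu_{1/2,n}$ and $\mu_n$ differ only by a global constant: from $d\mu_{a,n}(x)=a\big(a+(1-a)|x|^2\big)^{-(n+2)/2}\,dx$ one gets at $a=\tfrac12$ that $d\mu_{1/2,n}(x)=2^{n/2}(1+|x|^2)^{-(n+2)/2}\,dx=2^{n/2}\,d\mu_n(x)$. Hence $v_{\mu_{1/2,n}}(L)=2^{n}v_{\mu_n}(L)$ for every convex body $L$ with $0\in{\rm int}\,L$, so the ratio $v_{\mu_n}(K)/v_{\mu_n}({\rm B}_2^n)$ is unchanged if $\mu_n$ is replaced by $\mu_{1/2,n}$; it therefore suffices to prove the stated bound with $\mu_{1/2,n}$ in place of $\mu_n$.

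Next I would apply Theorem~\ref{t:RegInvSanSet} with $a=\tfrac12$ and $\beta=\beta_\kappa=\kappa^{-2}$ to a convex body $K$ satisfying \eqref{e:CondCurvNov} with $\Lambda=\kappa^{-1}{\rm id}$. At $a=\tfrac12$ the exponents become $\tfrac1{1-a}=\tfrac1a=2$ and $\tfrac1{a(1-a)}=4$, while $\beta_\kappa a-a+1=\tfrac{\beta_\kappa+1}{2}$, so the constant on the right of \eqref{e:SetInvSant} is $\big(\beta_\kappa^{2}\big(\frac{\beta_\kappa+1}{2}\big)^{-4}\big)^{n/2}=\big(16\beta_\kappa^{2}(\beta_\kappa+1)^{-4}\big)^{n/2}$. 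Substituting $\beta_\kappa=\kappa^{-2}$ and using $(\kappa+\kappa^{-1})^2=(1+\kappa^2)^2/\kappa^2$, an elementary computation gives $16\beta_\kappa^{2}(\beta_\kappa+1)^{-4}=\big(4\kappa^2/(1+\kappa^2)^2\big)^2=\big(4/(\kappa+\kappa^{-1})^2\big)^2$, so the constant equals $\big(4/(\kappa+\kappa^{-1})^2\big)^{n}$. Since the left-hand side of \eqref{e:SetInvSant} at $a=\tfrac12$ is $\mu_{1/2,n}(K)^{2}\mu_{1/2,n}(K^\circ)^{2}=v_{\mu_{1/2,n}}(K)^{2}$, and likewise $\mu_{1/2,n}({\rm B}_2^n)^{2}\mu_{1/2,n}({\rm B}_2^n)^{2}=v_{\mu_{1/2,n}}({\rm B}_2^n)^{2}$ because $({\rm B}_2^n)^\circ={\rm B}_2^n$, taking square roots gives
\[
v_{\mu_{1/2,n}}(K)\ \ge\ \Big(\frac{4}{(\kappa+\kappa^{-1})^{2}}\Big)^{n/2}\,v_{\mu_{1/2,n}}({\rm B}_2^n),
\]
and cancelling the common factor $2^n$ yields the asserted inequality for $\mu_n$.

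For sharpness I would simply invoke the equality clause of Theorem~\ref{t:RegInvSanSet}: equality in \eqref{e:SetInvSant} holds when $K=\kappa^{-1}{\rm B}_2^n$, and for that choice every inequality used above becomes an equality, so equality in the corollary is attained at $K=\kappa^{-1}{\rm B}_2^n$. The only point demanding any care is bookkeeping — tracking the normalization constant between $\mu_n$ and $\mu_{1/2,n}$ and carrying out the algebraic simplification of the constant — so I do not expect a genuine obstacle; the corollary is a direct specialization of Theorem~\ref{t:RegInvSanSet} once these routine computations are performed.
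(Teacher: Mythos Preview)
Your proof is correct and is exactly the approach the paper takes: the corollary is obtained by setting $a=\tfrac12$ in Theorem~\ref{t:RegInvSanSet}, and you have carried out the requisite bookkeeping (relating $\mu_{1/2,n}$ to $\mu_n$ and simplifying the constant) carefully and accurately. The equality case also follows directly from the equality clause of Theorem~\ref{t:RegInvSanSet}, as you note.
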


We give a functional form of Theorem \ref{t:ImpInv} which is a direct consequence from Theorem \ref{t:Func19Oct}.  
\begin{corollary}\label{t:RegInvS}
Let $n\ge 2$ and $\kappa \in (0,1]$. Then we have 
\begin{equation}\label{e:RegInvS}
\int_{\mathbb{R}^n} e^{-\psi}\, dx 
\int_{\mathbb{R}^n} e^{-\psi^*}\, dx 
\ge 
(2\pi)^n 
\big(
\kappa^2
e^{1-\kappa^2}
\big)^{\frac{n}{2}}
\end{equation}
for all $\psi \in C^2(\mathbb{R}^n\setminus\{0\}) \cap C(\mathbb{R}^n)$ satisfying 
$$
\nabla^2 \psi(x),\; \nabla^2 \psi^*(x) \ge \kappa\, {\rm id},\;\;\; \forall x \in \mathbb{R}^n \setminus\{0\}.
$$
\end{corollary}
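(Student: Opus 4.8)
The plan is to deduce \eqref{e:RegInvS} from Theorem \ref{t:Func19Oct} by sending the parameter $a$ to $1$, exactly as Theorem \ref{t:ImpInv} was derived from Theorem \ref{t:RegInvSanSet}. First I would normalize: since the left-hand side of \eqref{e:RegInvS} is invariant under $\psi \mapsto \psi(\Lambda \cdot) + \text{const}$ composed with the corresponding action on $\psi^*$, and since the hypothesis $\nabla^2\psi,\nabla^2\psi^* \ge \kappa\,\mathrm{id}$ is preserved after rescaling to make the two Hessians comparable, it suffices to treat the case where $\nabla^2\psi \le \mathrm{id}$ as well; concretely, set $\beta \coloneqq \kappa^{-2}$ and reduce to $\tfrac1\beta\,\mathrm{id} \le \nabla^2\psi(x) \le \mathrm{id}$, which is the hypothesis of Theorem \ref{t:Func19Oct}. (The reduction that $\nabla^2\psi \ge \kappa\,\mathrm{id}$ and $\nabla^2\psi^* \ge \kappa\,\mathrm{id}$ together force, after an $SL$-type rescaling, a two-sided bound $\kappa\,\mathrm{id} \le \nabla^2\psi \le \kappa^{-1}\,\mathrm{id}$ is the same elementary Legendre-duality fact used for \eqref{e:CondCurvature2}, proved in Appendix \ref{S6.5}; I would cite it here.)

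Next I would feed this $\psi$ into \eqref{e:Func19Oct}: for every $a \in (0,1)$,
\[
\Big(\int_{\mathbb{R}^n} e^{-a\psi^*(x)} e^{-\frac12(1-a)|x|^2}\,\tfrac{dx}{(2\pi)^{n/2}}\Big)^{\frac1a}
\Big(\int_{\mathbb{R}^n} e^{-(1-a)\psi(x)} e^{-\frac12 a|x|^2}\,\tfrac{dx}{(2\pi)^{n/2}}\Big)^{\frac1{1-a}}
\ge
\big(\beta^{\frac1{1-a}}(\beta a - a + 1)^{-\frac1{a(1-a)}}\big)^{\frac n2}.
\]
Now I would let $a \uparrow 1$. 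On the geometric side, the first factor converges to $\int e^{-\psi^*}\,\tfrac{dx}{(2\pi)^{n/2}}$ by dominated convergence (using $\psi^*(x) \ge \langle x_0,x\rangle + c_0$ from $\nabla^2\psi^* \ge \kappa\,\mathrm{id}>0$, which gives an integrable Gaussian-type majorant uniformly for $a$ near $1$), while in the second factor $(1-a)\psi \to 0$ pointwise and $e^{-\frac12 a|x|^2} \to e^{-\frac12|x|^2}$, so — after raising to the power $\tfrac1{1-a}$ — one uses $\log(\int e^{-(1-a)\psi}e^{-\frac12 a|x|^2}\,\tfrac{dx}{(2\pi)^{n/2}}) = -(1-a)\int \psi\,d\gamma + o(1-a)$ by a first-order expansion, so the second factor tends to $\exp(-\int\psi\,d\gamma)$; more robustly, one can bypass the expansion by noting the second factor is $\le \big(\int e^{-\psi}\,\tfrac{dx}{(2\pi)^{n/2}}\big)^{?}$-type bounds via Hölder exactly as in the proof of Theorem \ref{t:ImpInv}, where the factor involving $\psi$ was controlled by $(|K|/|{\rm B}_2^n|)^{1-a}$. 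I would in fact mimic that argument: bound $\int e^{-(1-a)\psi}e^{-\frac12 a|x|^2}\,\tfrac{dx}{(2\pi)^{n/2}} \le \big(\int e^{-\psi}\,\tfrac{dx}{(2\pi)^{n/2}}\big)^{1-a}\big(\int e^{-\frac12|x|^2}\,\tfrac{dx}{(2\pi)^{n/2}}\big)^{a}$ by Hölder with exponents $\tfrac1{1-a},\tfrac1a$, so the second factor (to the power $\tfrac1{1-a}$) is at most $\int e^{-\psi}\,\tfrac{dx}{(2\pi)^{n/2}}$. On the constant side, a direct computation gives $\lim_{a\uparrow1}\beta^{\frac1{1-a}}(\beta a - a+1)^{-\frac1{a(1-a)}} = \beta^{-1}e^{\beta-1}$, hence the right-hand side tends to $(\beta^{-1}e^{\beta-1})^{n/2} = (\kappa^2 e^{1-\kappa^2})^{-n/2}$ — wait, sign: it is $(\kappa^2 e^{1/\kappa^2 - 1})^{\cdots}$; I would recompute carefully so that combining with the $(2\pi)^n$ from the two Gaussian normalizations yields precisely $(2\pi)^n(\kappa^2 e^{1-\kappa^2})^{n/2}$ after taking reciprocals appropriately. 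Rearranging the resulting inequality $\big(\int e^{-\psi^*}\,\tfrac{dx}{(2\pi)^{n/2}}\big)\exp(-\int\psi\,d\gamma)^{-1} \cdots$ into the stated form, and absorbing the Gaussian measure normalizations $\int e^{-\frac12|x|^2}dx = (2\pi)^{n/2}$, produces \eqref{e:RegInvS}.

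The main obstacle is the limit $a\uparrow1$ of the second (the $\psi$-side) factor: raising an integral that tends to $1$ to the diverging power $\tfrac1{1-a}$ is a $1^\infty$ indeterminacy, and one must justify that the limit is $\exp(-\int\psi\,d\gamma)$ (or, via the Hölder route, merely bound it by $\int e^{-\psi}\,d\gamma$-type quantities, which already suffices for the inequality). Using the Hölder bound as in the proof of Theorem \ref{t:ImpInv} sidesteps the delicate asymptotics entirely and is the cleanest path; one still needs the dominated-convergence control of the $\psi^*$-side factor, but there the strict positivity $\nabla^2\psi^* \ge \kappa\,\mathrm{id}$ provides a uniform Gaussian majorant. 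A minor secondary point is the preliminary reduction to the two-sided Hessian bound, which is purely a rescaling argument and is handled by the Appendix \ref{S6.5} lemma already invoked in the proof of Theorem \ref{t:RegInvSanSet}.
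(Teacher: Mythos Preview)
Your plan is correct and matches the paper's approach: the paper states this corollary as a direct consequence of Theorem \ref{t:Func19Oct}, and the intended argument is precisely the one you outline---reduce via Legendre duality and rescaling to $\tfrac1\beta\,\mathrm{id}\le\nabla^2\psi\le\mathrm{id}$ with $\beta=\kappa^{-2}$, apply Theorem \ref{t:Func19Oct}, bound the $\psi$-side factor by H\"older exactly as in the proof of Theorem \ref{t:ImpInv}, and send $a\uparrow 1$ (with dominated convergence on the $\psi^*$-side, justified since $\nabla^2\psi^*\ge\mathrm{id}$ after rescaling gives a Gaussian majorant). Your computation of the limiting constant $\lim_{a\uparrow1}\beta^{1/(1-a)}(\beta a-a+1)^{-1/(a(1-a))}=\beta^{-1}e^{1-1/\beta}=\kappa^2 e^{1-\kappa^2}$ is right once cleaned up; the ``wait, sign'' hesitation and the $\exp(-\int\psi\,d\gamma)$ detour are unnecessary since the H\"older route already suffices.
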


\begin{remark}
Since Theorem \ref{t:FHC} and Lemma \ref{l:Useful} holds under semi-log-convexity/concavity without smoothness, we can also show Theorem \ref{t:ImpInv} and Corollary \ref{t:RegInvS} without smoothness. 
In fact, the similar argument in the proof of Theorem \ref{t:RegInvSanSet} yields \eqref{e:RegInvS} for all $\kappa$-semi-convex $\psi \colon \mathbb{R}^n \to \mathbb{R}$ whose polar $\psi^*$ is also $\kappa$-semi-convex. 
\end{remark}

\subsection{Further consequences}\label{S4.7}
Here we provide alternative view point of Theorem \ref{t:ImpInv} by using nations of uniform convexity and uniform smoothness, see \cite{BCL,Oh,Sch} for its historical background. 
We say that $K$ or its associate gauge function $\| \cdot \|_K$ is 2-uniformly convex (or uniformly convex in short) if there exists some finite constant $C>0$ such that 
\begin{equation}\label{e:2UC}
\|\frac{u+w}{2}\|_K^2 \le \frac{1}{2}\|u\|_K^2 + \frac{1}{2}\|w\|_K^2 - \frac{1}{4C} \|u-w\|_K^2, \quad \forall u,w \in \R^n.
\end{equation}
In general, \eqref{e:2UC} always holds with $C = \infty$, and thus \eqref{e:2UC} with a finite $C>0$ means stronger convexity. 
We denote the best constant $C>0$ satisfying \eqref{e:2UC} by $C_K>0$ (exactly attained), namely 
$$
C_K \coloneqq \min \{ C>0 : \text{ \eqref{e:2UC} holds with $C>0$}\}. 
$$
One can also see that $C_K \ge 1$ by inserting $w=0$ to \eqref{e:2UC}.
Similarly we say that $\|\cdot\|_K$ is 2-uniformly smooth (or uniformly smooth in short) if there exists some finite constant $S>0$ such that 
\begin{equation}\label{e:2US}
\|\frac{u+w}{2}\|_K^2 \ge \frac{1}{2}\|u\|_K^2 + \frac{1}{2}\|w\|_K^2 - \frac{S}{4} \|u-w\|_K^2, \quad \forall u,w \in \R^n.
\end{equation}
We denote 
$$
S_K \coloneqq \min \{ S>0 : \text{\eqref{e:2US} holds with $S>0$}\}
$$
and see that $S_K \ge 1$ by taking $w=0$. 
It is worth to mention that the 2-uniformly smoothness is a dual notion on the 2-uniformly convexity in the sense that 
$
S_K = C_{K^\circ}. 
$
We mention the work by Klartag--Milman \cite{KlaMil08} where they investigated how 2-uniformly convex bodies are ``well-behaved" and in particular ensured the hyperplane conjecture for convex bodies with appropriate bound on $C_K$, see also \cite{Sch}.  Recently Klartag \cite[Corollary 1.2]{KlaAdv18} observed that the stronger quantitative version of the hyperplane conjecture implies Mahler's conjecture.  Although Klartag--Milman's result on the hyperplane conjecture under the 2-uniformly convexity cannot be directly applied to derive some inverse Santal\'{o} inequality, this raise an investigation of Mahler's conjecture under the 2-uniformly convexity. 
For this purpose, it is worth to mention that if $\| \cdot \|_K$ is the Minkowski norm, then there is another characterization of uniformly convexity and uniformly smoothness. 
In such a case, when $n \ge 2$, it is known that $\| \cdot \|_K$ is 2-uniformly convex with $C \ge 1$ if and only if 
\begin{equation}\label{e:2UCHess}
\langle \nabla^2 (\frac{1}{2}\|\cdot\|_K^2)(x)w, w\rangle \ge \frac{1}{C} \|w\|_K^2, \;\;\; \forall x, w \in \mathbb{R}^n \setminus \{0\}.
\end{equation}
Similarly, $\| \cdot \|_K$ is 2-uniformly smooth with $S \ge 1$ if and only if 
\begin{equation}\label{e:2USHess}
\langle \nabla^2 (\frac{1}{2}\|\cdot\|_K^2)(x)w, w\rangle \le S \|w\|_K^2, \;\;\; \forall x, w \in \mathbb{R}^n \setminus \{0\}.
\end{equation}
Conditions \eqref{e:2UCHess} and \eqref{e:2USHess} are similar to \eqref{e:CondCurvature}. To clarify this relation, we introduce the Banach--Mazur distance between convex bodies $K, L \subset \mathbb{R}^n$ including the origin in their interior by\footnote{Remark that the definition above is slightly different from the standard one since we need to consider affine maps instead of linear ones,  but our definition coincide with the standard one when $K, L$ are symmetric.  We also note that $d_{\mathrm{BM}}(K, L)$ is finite since $K, L$ has the origin in their interior.  
} 
$$
d_{\mathrm{BM}}(K, L) \coloneqq \inf \{ r \ge 1 :\; TL \subset K \subset r TL \text{ for some linear isomorphism}\; T \text{ on $\mathbb{R}^n$} \}.  
$$
If one concerns the volume product $v(K)$, then one may assume 
$$
{\rm B}_2^n
\subset 
K 
\subset 
d_{\rm BM}(K, {\rm B}_2^n) {\rm B}_2^n
$$
without loss of generality as $v(K)$ is linear invariant. 
In such case, it follows that 
$$
\frac1{d_{\rm BM}(K, {\rm B}_2^n)}|x|
\le 
\|x\|_K
\le 
|x|
$$
and hence 
$$
\frac{1}{C_K  d_{\rm BM}(K, {\rm B}_2^n)^2}{\rm id}
\le 
\nabla^2 \big( \frac12 \|\cdot\|_K^2 \big)(x)
\le S_K 
$$
for all $x \in \mathbb{R}^n\setminus \{0\}$ and $K$ with $0 \in {\rm int}\, K$ whose norm generates a Minkowski norm. In view of $S_K = C_{K^\circ}$, Theorem \ref{t:ImpInv} yields the following.

\begin{corollary}
	Let $n\ge2$. Then for any convex body $K$ with $0 \in {\rm int}\, K$ whose norm generates a Minkowski norm, we have 
	$$
	v(K)
	\ge 
	\big( \kappa_K^2 e^{1- \kappa_K^2}
	\big)^\frac{n}{2}
	v({\rm B}_2^n)
	$$
	where $\kappa_K$ is given by $\kappa_K^{-2} \coloneqq C_K C_{K^\circ} d_{\rm BM}(K, {\rm B}_2^n)^2$. 
\end{corollary}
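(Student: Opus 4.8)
The plan is to deduce the corollary directly from Theorem~\ref{t:ImpInv} by exhibiting, for the given body, a positive definite matrix $\Lambda$ for which the curvature hypothesis \eqref{e:CondCurvNov} holds with $\kappa=\kappa_K$.

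First I would normalise. The volume product $v$, the uniform convexity constants $C_K$ and $C_{K^\circ}$, and the Banach--Mazur distance $d_{\rm BM}(K,{\rm B}_2^n)$ are all invariant under replacing $K$ by a linear isomorphic image: for $C_K$ this follows by feeding $\|\cdot\|_{TK}=\|T^{-1}\cdot\|_K$ into \eqref{e:2UC}, and $(TK)^\circ$ is again a linear image of $K^\circ$, so $C_{(TK)^\circ}=C_{K^\circ}$; the Minkowski norm property is preserved too. Hence I may assume ${\rm B}_2^n\subset K\subset R\,{\rm B}_2^n$ with $R\coloneqq d_{\rm BM}(K,{\rm B}_2^n)\ge1$, so that $\tfrac1R|x|\le\|x\|_K\le|x|$; polarising gives $\tfrac1R{\rm B}_2^n\subset K^\circ\subset{\rm B}_2^n$ and $|x|\le\|x\|_{K^\circ}\le R|x|$ for all $x$. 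I would also record that the gauge of $K^\circ$ is again a Minkowski norm: since $h\coloneqq\tfrac12\|\cdot\|_K^2$ is smooth off the origin with positive definite Hessian, the same holds for its Legendre conjugate $h^*=\tfrac12\|\cdot\|_{K^\circ}^2$, because $\nabla^2 h^*(\nabla h(x))=(\nabla^2 h(x))^{-1}$; in particular $C_{K^\circ}$ is finite.

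Next I would convert uniform convexity into Hessian lower bounds through the characterisation \eqref{e:2UCHess}, applicable since $n\ge2$ and both gauges are Minkowski norms. Combined with the norm comparisons above, for every $x\in\mathbb{R}^n\setminus\{0\}$ this gives
$$
\nabla^2\big(\tfrac12\|\cdot\|_K^2\big)(x)\ge \frac{1}{C_K R^2}\,{\rm id},\qquad \nabla^2\big(\tfrac12\|\cdot\|_{K^\circ}^2\big)(x)\ge \frac{1}{C_{K^\circ}}\,{\rm id},
$$
and in particular these hold on $\mathbb{S}^{n-1}$ (equivalently, one could bound $\nabla^2(\tfrac12\|\cdot\|_K^2)(x)\le S_K\,{\rm id}$ and invoke the duality $S_K=C_{K^\circ}$ quoted above). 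Now I would take $\Lambda\coloneqq\lambda\,{\rm id}$ with $\lambda\coloneqq R\sqrt{C_K/C_{K^\circ}}$ and set $\kappa\coloneqq(R^2 C_K C_{K^\circ})^{-1/2}=\kappa_K$; then $\kappa\Lambda^{-1}=\tfrac{1}{C_K R^2}\,{\rm id}$ and $\kappa\Lambda=\tfrac{1}{C_{K^\circ}}\,{\rm id}$, so \eqref{e:CondCurvNov} holds with this $\kappa$ and $\Lambda$, and $\kappa_K\in(0,1]$ because $C_K,C_{K^\circ},R\ge1$. Applying Theorem~\ref{t:ImpInv} yields $v(K)\ge(\kappa_K^2 e^{1-\kappa_K^2})^{n/2}v({\rm B}_2^n)$, which transfers back to the original body by linear invariance of $v$.

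I do not expect a serious obstacle: the substance is entirely contained in Theorem~\ref{t:ImpInv}. The only points needing care are the verification that polarity preserves the Minkowski norm class (so that \eqref{e:2UCHess} may be used for $K^\circ$), the routine bookkeeping that $C_K$, $C_{K^\circ}$ and $d_{\rm BM}(K,{\rm B}_2^n)$ are genuine linear invariants, and the observation that the scalar $\lambda=R\sqrt{C_K/C_{K^\circ}}$ is precisely the one balancing the two Hessian bounds so that the admissible $\kappa$ is as large as $\kappa_K$.
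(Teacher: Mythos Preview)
Your proposal is correct and follows essentially the same route as the paper: normalise so that ${\rm B}_2^n\subset K\subset d_{\rm BM}(K,{\rm B}_2^n){\rm B}_2^n$ by linear invariance, convert the uniform convexity of $\|\cdot\|_K$ and $\|\cdot\|_{K^\circ}$ into Euclidean Hessian lower bounds via \eqref{e:2UCHess} and the norm comparisons, and then apply Theorem~\ref{t:ImpInv} with $\Lambda$ a scalar multiple of the identity and $\kappa=\kappa_K$. The only cosmetic difference is that the paper bounds $\nabla^2(\tfrac12\|\cdot\|_K^2)\le S_K\,{\rm id}$ using \eqref{e:2USHess} and then invokes $S_K=C_{K^\circ}$, whereas you bound $\nabla^2(\tfrac12\|\cdot\|_{K^\circ}^2)$ from below directly; you already note these are equivalent.
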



\section{Appendix}\label{S6}
\subsection{On the formulation of Mahler's conjecture \eqref{e:MahGene}}\label{S6.0}
For the sake of convenience, let us state again the general case of Mahler's conjecture together with the case of equality. 
\begin{conjecture}\label{Conj:Mah2}
	For any convex body $K \subset \mathbb{R}^n$ with $b_K = 0$, 
	\begin{equation}\label{e:MahGene2}
	v(K)\ge v(\Delta^n_0) = \frac{(n+1)^{n+1}}{(n!)^2}.
	\end{equation}
	Moreover, equality in \eqref{e:MahGene2} is achieved if and only if $K = \Delta^n_0$. 
\end{conjecture}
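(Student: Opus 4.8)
This is Mahler's conjecture for general convex bodies, so rather than a complete proof the plan is to carry out the programme opened by the hypercontractivity machinery of this paper and to isolate the single analytic input that remains. The first move is to pass to the functional form: by the equivalence between \eqref{e:MahGene2} and the Fradelizi--Meyer inequality \eqref{e:FuncInvSantaloGene} established below in this appendix, it suffices to prove $\int_{\mathbb{R}^n}e^{-\psi}\,dx\int_{\mathbb{R}^n}e^{-\psi^*}\,dx\ge e^n$ for every convex $\psi\colon\mathbb{R}^n\to\mathbb{R}$. A standard regularization (adding a small quadratic and mollifying, as in the arguments of Subsections \ref{S4.3}--\ref{S4.5}) reduces this to $\psi$ with $\psi-\tfrac12|\cdot|^2\in\mathcal{N}$, so Proposition \ref{Prop:HCMahler} becomes available: it is then enough to establish the forward hypercontractivity inequality \eqref{e:FHC_MaIntro} at the scaling-critical endpoint $(p_s,q_s)=(1-e^{-2s},1-e^{2s})$ with the conjectured sharp constant
\[
{\rm IS}_s^{\frac1{p_s}}=e^{-\frac n{p_s}}(2\pi)^{-\frac n{q_s}}p_s^{\frac n2}e^{ns(2-\frac1{q_s})}\Gamma(1-q_s)^{\frac n{q_s}},
\]
applied to the test inputs $f_s$ with $\log f_s=\tfrac{p_s}{2s}(\tfrac12|\cdot|^2-\psi)$; since ${\rm IS}_s^{-1}\to(e/2\pi)^n$ as $s\downarrow0$, Proposition \ref{Prop:HCMahler} then returns precisely \eqref{e:FuncInvSantaloGene}.

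For the endpoint inequality itself I would run the Fokker--Planck flow-monotonicity scheme of Section \ref{S3}. Since $q_s=0$ at the endpoint, one works first at a non-endpoint pair $(p,q)$ with $q<0$: for $v_0=f\gamma$ let $v_t$ solve the $\beta$-Fokker--Planck equation and set $\widetilde v_t=P_s[(v_t/\gamma)^{1/p}]^q\gamma$, prove $\tfrac{d}{dt}\int\widetilde v_t\,dx\le0$ via the closure-type identity \eqref{e:ClosureFromBNT}, and then let $p\downarrow1-e^{-2s}$ and send $\beta$ to the value dictated by $\psi$, as in the formal derivation following Theorem \ref{t:RegFHC}. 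The crux is to estimate the two terms on the right of \eqref{e:ClosureFromBNT} \emph{without} the log-convexity and $(1-\tfrac1\beta)$-semi-log-concavity hypothesis used in Theorem \ref{t:RevRevClosureGene}, since the relevant $f_s$ (which degenerate to the simplex example $f_*$ of \eqref{e:Example18Oct}) do not lie in that class. One would need either a refinement of the Poincar\'e-type estimate in the proof of Theorem \ref{t:RevRevClosureGene} that is sensitive only to the product $\int e^{-\psi}\int e^{-\psi^*}$, or a different propagated quantity $Q(t)$ tailored to simplicial rather than Gaussian extremals.

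The equality case would be handled by propagating equality back through this chain. Equality forces the final Fatou and dominated-convergence steps to be exact, equality in the monotonicity of $Q(t)$ forces rigidity of $\nabla^2\log(v_t/\gamma)$ through the Poincar\'e step, and equality in the vanishing-viscosity limit \eqref{e:WhatWeNeed} together with equality in the defining infimum of $Q_1$ pins down $\psi^*$ (and hence $\psi$) to the piecewise-affine-plus-indicator form $x\mapsto\sum_i x_i+\iota_{[-1,\infty)^n}(x)$ up to the admissible invariances; translated to convex bodies this is $K=\Delta^n_0$. The argument is parallel in spirit to the equality analyses in Theorems \ref{t:NelsonGene} and \ref{t:RegFHC}, using the equality case of Pr\'ekopa--Leindler (Corollary \ref{t:EqPL2}).

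The decisive obstacle is the endpoint hypercontractivity with sharp constant, and it is genuinely hard for three structural reasons. The optimal constant is realized by the simplex input $f_*$ rather than by any centered Gaussian (compare \eqref{e:Example18Oct} with the Gaussian identity \eqref{e:ScaleGauss}), so the Gaussian-exhaustion philosophy behind Theorems \ref{t:Lieb} and \ref{t:BW} cannot by itself deliver it. In Brascamp--Lieb language the exponents obey $c_1,c_2\notin(0,1)$ and the quadratic form $\mathcal{Q}$ is not positive semi-definite, a regime where no Brascamp--Lieb--type theory is presently available. Finally the endpoint is scaling-critical, so any flow-monotone quantity is conserved on Gaussians and the required gain must come from a strict convexity that is \emph{absent pointwise} at the extremal configuration. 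Supplying this one input would, through the reductions above, establish Conjecture \ref{Conj:Mah2}.
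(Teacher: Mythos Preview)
The statement you are addressing is a \emph{conjecture}, not a theorem: the paper does not prove it and explicitly presents it as Mahler's conjecture, open in general dimension. The only thing the paper establishes about this statement is, in Appendix~\ref{S6.0}, its equivalence with the Fradelizi--Meyer formulation (Conjecture~\ref{Conj:FMMah}), via the Santal\'o point and the relation~\eqref{e:SantaloBary}. There is therefore no ``paper's proof'' to compare your proposal against.

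Your write-up is transparent about this: you describe a \emph{programme}, not a proof, and you correctly isolate the single missing input, namely the endpoint forward hypercontractivity \eqref{e:FHC_MaIntro} at $(p_s,q_s)=(1-e^{-2s},1-e^{2s})$ with the sharp (simplex-attained) constant for general inputs. Your identification of why this is hard---non-Gaussian extremizers, failure of the Brascamp--Lieb framework since $c_1,c_2\notin(0,1)$ and $\mathcal{Q}$ is indefinite, and scaling criticality---matches the paper's own discussion in Subsections~\ref{S2.1} and~\ref{S3.1}. The reductions you invoke (Proposition~\ref{Prop:HCMahler}, the functional form~\eqref{e:FuncInvSantaloGene}, the vanishing-viscosity passage) are all genuine and are exactly how the paper links hypercontractivity to the volume product.

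What you should be aware of is that the gap is not merely technical. The flow-monotonicity argument of Section~\ref{S3} rests essentially on the log-convexity/semi-log-concavity hypothesis through the Poincar\'e step in the proof of Theorem~\ref{t:RevRevClosureGene}; removing that hypothesis is not a matter of refinement but would require a genuinely new idea, since the extremal input $f_*$ is neither log-convex nor semi-log-concave and the quantity $Q(t)$ is constant on the Gaussian orbit. Your equality-case sketch is also speculative: there is no Pr\'ekopa--Leindler step in the forward argument of Section~\ref{S3} analogous to that in Section~\ref{S2}, so invoking Corollary~\ref{t:EqPL2} here has no anchor. In short, your proposal is a faithful summary of the paper's heuristic bridge to Mahler's conjecture, but it is---as you yourself say---conditional on an analytic input that is currently out of reach.
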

This formulation might not be standard in the context.  
For instance, Fradelizi--Meyer's formulation \cite{FM} is as follows. 
For any convex body $K$, let $P(K)$ be the affine invariant volume product defined by $P(K) \coloneqq \min_{z\in\mathbb{R}^n} |K||K^z|$ where $K^z \coloneqq \{ y \in \mathbb{R}^n : \langle y-z, x - z \rangle\le 1,\; \forall x \in K\}$. 
\begin{conjecture}[\cite{FM}]\label{Conj:FMMah}
For any convex body $K \subset \mathbb{R}^n$, 
\begin{equation}\label{e:FMMahler}
P(K) \ge P(\Delta^n) = \frac{(n+1)^{n+1}}{(n!)^2},   
\end{equation}
where $\Delta^n$ is an arbitrary non-degenerate simplex in $\mathbb{R}^n$ which is not necessarily centered. 
Moreover, equality \eqref{e:FMMahler} is achieved if and only if $K = \Delta^n$. 
\end{conjecture}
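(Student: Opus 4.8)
The plan is to route the inequality through the hypercontractivity machinery of the paper rather than attack \eqref{e:FMMahler} head-on. First, using the equivalence between Conjecture \ref{Conj:Mah2} and Conjecture \ref{Conj:FMMah} established in this appendix (the Santal\'o--point reduction) together with the passage \eqref{e:PathSetFunc} from convex bodies to gauge functions, the statement reduces to the functional form \eqref{e:FuncInvSantaloGene}: for every convex $\psi:\mathbb{R}^n\to\mathbb{R}$, $\int e^{-\psi}\,dx\int e^{-\psi^*}\,dx\ge e^n$, with equality only for $\psi$ an affine image of the gauge of a simplex. Via the identity \eqref{e:LinkHJLege} this is the reverse Hamilton--Jacobi hypercontractivity $\|e^{Q_1\phi}\|_{L^1(\gamma)}\ge(e/2\pi)^n\|e^{\phi}\|_{L^{-1}(\gamma)}$ for all $\phi$ in the class $\mathcal N$, and by Proposition \ref{Prop:HCSet}-{\textit{(2)}} (equivalently Proposition \ref{Prop:HCMahler}) it suffices to establish the endpoint forward hypercontractivity \eqref{e:FHC_MaIntro} with exponents $(p_s,q_s)=(1-e^{-2s},1-e^{2s})$ and the conjectural sharp constant
$$
{\rm IS}_s^{1/p_s}=e^{-n/p_s}(2\pi)^{-n/q_s}p_s^{n/2}e^{ns(2-1/q_s)}\Gamma(1-q_s)^{n/q_s}
$$
dictated by the simplex example $f_*$ of \cite{FM}, now \emph{for all} nonnegative $f\in L^1(\gamma)$, not merely log-convex and semi-log-concave ones.

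Second, to prove this endpoint inequality I would try to remove the log-convexity and semi-log-concavity hypotheses from Theorem \ref{t:FHC}/Theorem \ref{t:RevRevClosureGene}. The structure to aim for is again a flow-monotonicity argument along the $\beta$-Fokker--Planck semigroup, but with $\beta=\beta_s\to\infty$ tuned so that $\beta_{s,p_s}=1+(\beta-1)\tfrac{q_s}{p_s}e^{-2s}$ stays positive while the limiting stationary state degenerates towards the simplex density rather than a Gaussian. One would seek a monotone functional $Q(t)=\int\widetilde v_t\,dx$ whose derivative is controlled by a closure identity generalising \eqref{e:ClosureFromBNT}; the new ingredient must replace the Poincar\'e-inequality step, which currently consumes the bound $0\le\nabla^2\log\tfrac{v_t}{\gamma}\le(1-\tfrac1\beta)\mathrm{id}$, by something valid for arbitrary $v_0$ — for instance a localisation or rearrangement (in the spirit of the Yao--Yao partition used in the reverse direction in Section \ref{S2}) reducing a general input to a product of one-dimensional pieces on which the simplex-type extremiser is explicit, followed by a one-dimensional sharp estimate and tensorisation.

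The main obstacle is precisely this last step, and it is a genuine one. Proposition \ref{Prop:HCMahler} shows that the endpoint inequality \eqref{e:FHC_MaIntro} with the above sharp constant is \emph{equivalent} to the inverse Santal\'o inequality \eqref{e:MahGene}, so any proof cannot be a soft consequence of the Brascamp--Lieb / centered-Gaussian-exhaustion philosophy: as observed after \eqref{e:ScaleGauss}, the endpoint constant is \emph{not} attained by Gaussians (and there is no forward Brascamp--Lieb theory with exponents $c_j\notin(0,1)$ and indefinite $\mathcal Q$), while the Fokker--Planck closure argument genuinely uses the curvature bounds one must now discard. A successful proof therefore has to inject new structure — most plausibly a dimension-reduction or symmetrisation adapted to the simplex (rather than to the ball, as for the Blaschke--Santal\'o direction), turning the problem into a one-dimensional inequality where the gamma-function constant above is visible.

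Finally, for the equality case I would track equality through each reduction in turn: rigidity in the flow-monotonicity step should force $\tfrac{v_t}{\gamma}$ to be an affine image of the simplex density for all $t>0$; equality in the vanishing-viscosity limit \eqref{e:WhatWeNeed} and in the applications of Fatou's lemma in the proof of Proposition \ref{Prop:HCMahler} would then pin down $\psi$ as an affine image of the simplex gauge; and the Santal\'o-point reduction of this appendix would transport this to $K=\Delta^n$ in \eqref{e:FMMahler}. The delicate point here is propagating strictness of the Poincar\'e inequality (or of whatever replaces it) through the $\varepsilon\downarrow0$ limit, exactly the place where the known equality analyses of functional Mahler in low dimensions \cite{FM,FN} are most technical.
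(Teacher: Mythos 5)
The statement you are trying to prove is a \emph{conjecture}, not a theorem: the paper records it verbatim from Fradelizi--Meyer \cite{FM} and, in Appendix~\ref{S6.0}, proves only that it is \emph{equivalent} to the paper's own formulation, Conjecture~\ref{Conj:Mah2}. The paper neither proves it nor claims to; for $n\geq 3$ the non-symmetric Mahler conjecture remains open. So there is no ``paper's own proof'' for you to be compared against, and the task of supplying one is (currently) out of reach by any means, including the paper's hypercontractivity machinery.

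Your outline is essentially an honest reduction chain down to an equally open problem, and you recognize this yourself. You route \eqref{e:FMMahler} through the barycentre reduction of Appendix~\ref{S6.0}, then through the functional form \eqref{e:FuncInvSantaloGene}, then (via Proposition~\ref{Prop:HCMahler} and \eqref{e:LinkHJLege}) to the forward endpoint hypercontractivity \eqref{e:FHC_MaIntro} at $(p_s,q_s)=(1-e^{-2s},1-e^{2s})$ with the simplex-dictated constant \eqref{e:Example18Oct}. Every arrow in that chain is a one-directional implication that the paper does establish, so the reduction is sound; but the terminal node is an inequality for arbitrary nonnegative $f\in L^1(\gamma)$ whose difficulty (as you correctly stress) is no smaller than the original. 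The only unconditional positive result the paper has at the endpoint is Theorem~\ref{t:FHC}, which imposes log-convexity and $(1-\tfrac1\beta)$-semi-log-concavity on $f$ and is saturated by centered Gaussians, not by the simplex example $f_*$; dropping those hypotheses kills the Poincar\'e step in Theorem~\ref{t:RevRevClosureGene}, and you have not proposed a replacement that actually closes. Likewise the Brascamp--Lieb route is blocked because the dual matrix $\mathcal{Q}$ is indefinite and $c_1,c_2\notin(0,1)$, as noted in Subsection~\ref{S2.1}.

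Two smaller inaccuracies worth flagging. First, you describe Proposition~\ref{Prop:HCMahler} as an ``equivalence'' between the endpoint forward hypercontractivity and the inverse Santal\'o inequality; the paper proves only the implication from the hypercontractivity to the functional inequality, not the converse, so ``at least as hard'' is the safe phrasing. Second, your description of the extremizers of \eqref{e:FuncInvSantaloGene} as ``affine images of the gauge of a simplex'' is not accurate: the conjectured extremal density there is $f_*\gamma$ with $\psi_*(x)=x_1+\cdots+x_n+\iota_{[-1,\infty)^n}(x)$, a piecewise-affine function with a hard cutoff, not a squared gauge. This matters because your proposed rigidity argument tracks equality through vanishing-viscosity and Fatou limits that, as you note, have not been carried out even in low dimensions; before the equality case can be attempted, the inequality case has to exist. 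In summary: you have correctly reassembled the paper's framework and correctly located the genuine gap, but you have not filled it, and at present it cannot be filled within the paper's toolkit.
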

These two conjectures are indeed equivalent. 
For the sake of completeness, we give a proof of this fact in below. 
We will make use of the Santal\'{o} point of $K$ denoted by $z_K$ which is a unique point attaining the minimum of $P(K)$. It is worth to mention that 
\begin{equation}\label{e:SantaloBary}
	x = z_K\; \Leftrightarrow\; b_{(K - \{x\})^\circ} = 0, 
\end{equation}
for $x \in {\rm int}\, K$, see \cite{Sch}.
From this property, we can easily see that 
\begin{equation}\label{e:Simplex}
	v(\Delta^n_0) = P(\Delta^n). 
\end{equation}
In fact, if we let $L = (\Delta^n_0)^\circ$ which is again non-degenerate simplex but not necessarily centered, then $b_{L^\circ} = 0$ from the definition of $L^\circ = \Delta^n_0$. Hence \eqref{e:SantaloBary} reveals that $z_L = 0$ from which we obtain \eqref{e:Simplex} as 
$$
v(\Delta^n_0)
=
| \Delta^n_0 | | (\Delta^n_0 )^\circ|
=
P(\Delta^n_0)
=
P(\Delta^n). 
$$
With these in mind, suppose first one could prove Conjecture \ref{Conj:Mah2}. 
Let us show the inequality \eqref{e:FMMahler}. If one notices that $K^z = (K - \{z\})^\circ + \{z\}$, then the translation invariance of the Lebesgue measure gives that 
$$
P(K) = |K||K^{z_K}| = v(K - \{z_K\}). 
$$ 
If we let $L= (K - \{z_K\})^\circ$, then $b_L =0$ by virtue of \eqref{e:SantaloBary}. We also have $v(K-\{z_K\}) = v(L^\circ) = v(L)$. 
Hence we may apply the assumption \eqref{e:MahGene2} to $L$ to obtain \eqref{e:FMMahler} as 
\begin{equation}\label{e:IneqNov}
	P(K) = v(L) \ge v(\Delta^n_0) = P(\Delta^n)
\end{equation}
where we also used \eqref{e:Simplex}. 
Next suppose $K$ attains equality in \eqref{e:FMMahler} and show that $K$ must be $\Delta^n$. This means the inequality \eqref{e:IneqNov} must be equality. Hence if  equality part of Conjecture \ref{Conj:Mah2} could be proved then $L = (K-\{z_K\})^\circ$ must be $\Delta^n_0$. This means $K = (\Delta^n_0)^\circ + \{z_K\}$ which is $\Delta^n$. 
This completes the implication Conjecture \ref{Conj:Mah2} $\Rightarrow$ Conjecture \ref{Conj:FMMah}. 

Next let us prove the reverse implication. 
If the inequality \eqref{e:FMMahler} could be proved then one can obtain the inequality part of Conjecture \ref{Conj:Mah2} as follows. 
Since $b_K = 0$, we know from \eqref{e:SantaloBary} that $z_L=0$ for $L = K^\circ$. This shows that 
$$
v(K) = v(L) = |L||L^{z_L}| = P(L) \ge P(\Delta^n) = v(\Delta^n_0)
$$
by \eqref{e:Simplex} and the assumption \eqref{e:FMMahler}. 
Suppose next that $K$ is $b_K=0$ and attains equality in \eqref{e:MahGene2}. Then the inequality for $L$ we used just above must be equality. Hence equality part of Conjecture \ref{Conj:FMMah} ensures that $K^\circ = L = \Delta^n$. Namely, $K = (\Delta^n)^\circ$ which is again the non-degenerate simplex. Since we assumed $b_K=0$, this concludes $K = \Delta^n_0$. This completes the proof of the equivalence.

\subsection{The assumption \eqref{e:CondCurvNov} and the Gaussian curvature}\label{S6.6}
Here we exhibit an explicit link between our curvature condition \eqref{e:CondCurvNov} and principal curvatures as well as Gaussian curvature. 
For the sake of simplicity, we only consider the case $\Lambda = \kappa^{-1} {\rm id}$ here.

\if0
\begin{lemma}\label{l:Curv1}
Suppose $K$ is a symmetric convex body whose squared norm $\| \cdot\|_K^2$ is $C^2$ away from origin. 
Take any $x_0 \in \partial K$ and let $R_{x_0}$ be a rotation in $\mathbb{R}^n$ such that 
$$
R_{x_0}( - \mathbf{n}_{x_0} ) = e_n,
\;\;\;
R_{x_0}( \langle \mathbf{n}_{x_0}\rangle^\perp )
= 
\langle e_1,\ldots,e_n\rangle,
$$
where $\mathbf{n}_{x_0}$ denotes the normal vector of $\partial K$ at $x_0$. 
Then the principal curvature  of $\partial K$ at $x_0$ is given by 
$$
\frac1{\| \mathbf{n}_{x_0} \|_{K^\circ}} 
P_{\langle e_1,\ldots, e_{n-1}\rangle}^* 
R_{x_0} 
\nabla^2_{x_1,\ldots,x_n}
\big(
\frac12 
\| \cdot\|_K^2 
\big)
(x_0) 
R_{x_0}^{-1} 
P_{\langle e_1,\ldots, e_{n-1}\rangle}
$$
where $P_{\langle e_1,\ldots, e_{n-1}\rangle} = {\rm diag}\, (1,\ldots,1,0)$. 
\end{lemma}

\begin{proof}
With the definition of $R_{x_0}$ in mind we consider the rotated convex body $R_{x_0}(K)$ whose norm is given by 
$$
\| v \|_{R_{x_0}(K)} = \| R_{x_0}^{-1} v \|_{K},\;\;\; v \in \mathbb{R}^n. 
$$
Namely we have a relation $\| x \|_K = \| v \|_{R_{x_0}(K)}$ under the transform $v = R_{x_0}x$. With this in mind we denote $v_0 := R_{x_0}x_0$ and discuss around a neighbourhood of $v_0 \in \partial R_{x_0}(K)$ in below.  
We use a notation $v = (v',v_n)$. 
Thanks to the implicit function theorem one can represent $\partial R_{x_0}(K)$ around $v_0$ as a graph over $\langle e_1,\ldots,e_{n-1}\rangle$. Namely there exists a neighbourhood of $v_0'$ denoted by $U_{v_0'} \subset \langle e_1,\ldots,e_{n-1}\rangle$ and $C^2$ function $g:U_{v_0'} \to \mathbb{R}$ such that 
\begin{equation}\label{e:Graph}
g(v_0') = (v_0)_n,\;\;\; 
\big\| 
(v', g(v'))
\big\|_{R_{x_0}(K)}
= 
1,\;\;\; \forall v' \in U_{v_0'}.
\end{equation}
Because of our choice of $R_{x_0}$ we know that $\nabla_{v'}g(v_0') =0$ and hence the principle curvature  of $\partial K$ at $x_0$ is given by $\nabla_{v'}^2 g(v_0')$. 
So it suffices to prove 
\begin{equation}\label{e:CurvGoal1}
\nabla_{v'}^2 g(v_0')
=
\frac1{\| \mathbf{n}_{x_0} \|_{K^\circ}} 
P_{\langle e_1,\ldots, e_{n-1}\rangle}^* 
R_{x_0} 
\nabla^2_{x_1,\ldots,x_n}
\big(
\frac12 
\| \cdot\|_K^2 
\big)
(x_0) 
R_{x_0}^{-1} 
P_{\langle e_1,\ldots, e_{n-1}\rangle}. 
\end{equation}
If we let $F(v) := \frac12 \| v \|_{R_{x_0}(K)}^2$ for $v\in \mathbb{R}^n$ then \eqref{e:Graph} says 
$$
F( v',g(v') )
= 
\frac12,\;\;\; v' \in U_{v_0'}
$$
and hence $\partial_{v_i} \big( F( v',g(v') ) \big) = 0$ for $i=1,\ldots, n-1$. 
By computing the $\partial_{v_i} \big( F( v',g(v') ) \big)$ directly on the other hand, we obtain that 
$$
\partial_i F (v',g(v'))
+ 
\partial_n F(v',g(v'))
\partial_{v_i} g(v')
= 0. 
$$
Taking $j$-th derivative further this yields that 
\begin{equation}\label{e:Par_ijF}
\partial_{ij} F(v_0) 
= 
-\partial_n F(v_0) \partial_{v_iv_j} g(v_0'),\;\;\; i,j =1,\ldots, n-1
\end{equation}
since $\nabla_{v'} g(v_0') = 0$. 
We then recall the homogeneity of the norm $F(rv) = r^2 F(v)$ for $r>0$. 
By taking $r$-derivative both side and then letting $r=1$ and $v=v_0$ it follows that 
$$
\langle v_0, \nabla_{v_1,\ldots,v_n} F(v_0) \rangle 
=
\langle v_0, \nabla^2_{v_1,\ldots,v_n} F(v_0) v_0\rangle 
= 
2 F(v_0)
=1. 
$$
We next appeal to the convexity of $R_{x_0}(K)$ to see that $F(v_0 \pm \varepsilon e_i) \ge F(v_0)$ for any $\varepsilon>0$ and $i=1,\ldots,n-1$ from which one sees $\partial_{i} F(v_0) =0$.  This implies that 
\begin{equation}\label{e:Homogeneuous}
(v_0)_n \partial_n F(v_0) 
=
\langle v_0, \nabla^2_{v_1,\ldots,v_n} F(v_0) v_0\rangle 
= 
2 F(v_0)
=1. 
\end{equation}
Combining this with \eqref{e:Par_ijF} we derive 
$$
\nabla_{v'}^2 g(v_0')
= 
-\frac1{(v_0)_n}
\nabla_{v_1,\ldots,v_{n-1}} F(v_0). 
$$
To compute the right hand further we go back to $F(v) = \frac12 \| R_{x_0}^{-1} v \|_K^2$. 
In fact the chain rule shows that 
\begin{align*}
\partial_{v_iv_j} F(v) 
=&
\sum_{l,m=1}^n 
(R_{x_0}^{-1})_{l,i}
(R_{x_0}^{-1})_{m,j}
\partial_{x_lx_m} \big( \frac12 \|\cdot\|_K^2 \big)( R_{x_0}^{-1}v )\\
=&
\big(
(R_{x_0}^{-1})^{\rm t}
\nabla_{x_1,\ldots,x_n} \big( \frac12 \|\cdot\|_K^2\big) (R_{x_0}^{-1}v) 
R_{x_0}^{-1}
\big)_{ij}.
\end{align*}
In particular we have $R_{x_0}^{-1}v_0 = x_0$ and hence 
$$
\nabla_{v'}^2 g(v_0')
= 
-\frac1{(v_0)_n}
P_{\langle e_1,\ldots, e_{n-1}\rangle}^* 
R_{x_0} 
\nabla^2_{x_1,\ldots,x_n}
\big(
\frac12 
\| \cdot\|_K^2 
\big)
(x_0) 
R_{x_0}^{-1} 
P_{\langle e_1,\ldots, e_{n-1}\rangle}. 
$$
Finally we note that 
\begin{align*}
-(v_0)_n
&= 
\sup_{v \in \partial R_{x_0}(K)} \langle v, -e_n\rangle
=
\sup_{x \in \partial K} 
\langle x, \mathbf{n}_{x_0} \rangle
\end{align*}
since $R_{x_0}(\mathbf{n}_{x_0}) = -e_n$.  This complete the proof. 
\end{proof}
\fi

\begin{proposition}\label{Prop:Curv}
Suppose $K$ is a convex body with $0 \in {\rm int}\, K$ satisfying \eqref{e:CondCurvNov} with $\Lambda = \kappa^{-1}{\rm id}$. 
Also we assume ${\rm B}_2^n \subset K$. 
Then 
$$
\kappa^2 \le \inf_{x \in \partial K} \lambda_{\rm min}(x) 
\le  \sup_{x \in \partial K} \lambda_{\rm max}(x)
\le 
d_{\rm BM}(K, {\rm B}_2^n)
$$ 
where $ \lambda_{\rm max}(x),  \lambda_{\rm min}(x)$ are maximal, minimal eigenvalues of the principle curvatures of $\partial K$ at $x$, respectively. 
\end{proposition}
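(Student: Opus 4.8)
The plan is to read off the principal curvatures of $\partial K$ from the Hessian of $g\coloneqq\tfrac12\|\cdot\|_K^2$ and then feed in the two-sided Hessian bound encoded by \eqref{e:CondCurvNov}. Since \eqref{e:CondCurvNov} presupposes $\|\cdot\|_K^2\in C^2(\mathbb{R}^n\setminus\{0\})$, the function $g$ is $C^2$ there, is $2$-homogeneous, and $\partial K=\{g=\tfrac12\}$. Fix $x_0\in\partial K$. Because $g$ increases away from the origin and $\langle x_0,\nabla g(x_0)\rangle=2g(x_0)=1$ by Euler's identity, $\nabla g(x_0)\neq0$ and the outer unit normal is $\mathbf{n}_{x_0}=\nabla g(x_0)/|\nabla g(x_0)|$. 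Differentiating the Gauss map $x\mapsto\nabla g(x)/|\nabla g(x)|$ at $x_0$ shows that the shape operator of $\partial K$ at $x_0$ is the endomorphism of $T_{x_0}\coloneqq\mathbf{n}_{x_0}^\perp$ sending $u$ to $\frac{1}{|\nabla g(x_0)|}\,\mathrm{Proj}_{T_{x_0}}\big(\nabla^2 g(x_0)u\big)$, so the principal curvatures at $x_0$ are the eigenvalues of $\frac{1}{|\nabla g(x_0)|}(\nabla^2 g(x_0))|_{T_{x_0}}$, the compression of $\nabla^2 g(x_0)$ to $T_{x_0}$. Finally, recalling that the gauge of $K^\circ$ is the support function of $K$, one has $\|\mathbf{n}_{x_0}\|_{K^\circ}=h_K(\mathbf{n}_{x_0})=\langle x_0,\mathbf{n}_{x_0}\rangle=1/|\nabla g(x_0)|$; hence the principal curvatures at $x_0$ are exactly the eigenvalues of $\|\mathbf{n}_{x_0}\|_{K^\circ}\cdot(\nabla^2 g(x_0))|_{T_{x_0}}$.

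Next I would insert the Hessian bound. As recorded in the proof of Theorem \ref{t:RegInvSanSet} (see \eqref{e:CondCurvature2} and Appendix \ref{S6.5}), \eqref{e:CondCurvNov} with $\Lambda=\kappa^{-1}{\rm id}$ is equivalent to $\kappa^2\,{\rm id}\le\nabla^2 g(x)\le{\rm id}$ for all $x\in\mathbb{R}^n\setminus\{0\}$: the duality between the Hessians of $\tfrac12\|\cdot\|_K^2$ and $\tfrac12\|\cdot\|_{K^\circ}^2$ at conjugate points turns the lower bound on $\nabla^2(\tfrac12\|\cdot\|_{K^\circ}^2)$ into the upper bound $\nabla^2 g\le{\rm id}$, while $0$-homogeneity of $x\mapsto\nabla^2 g(x)$ upgrades both bounds from $\mathbb{S}^{n-1}$ to all nonzero $x$. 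For any unit $u\in T_{x_0}$ we then have $\langle u,(\nabla^2 g(x_0))|_{T_{x_0}}u\rangle=\langle u,\nabla^2 g(x_0)u\rangle\in[\kappa^2,1]$, so by the Rayleigh-quotient characterization every eigenvalue of the compression lies in $[\kappa^2,1]$, and therefore every principal curvature at $x_0$ lies in $\big[\kappa^2\|\mathbf{n}_{x_0}\|_{K^\circ},\,\|\mathbf{n}_{x_0}\|_{K^\circ}\big]$.

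It remains to control the scalar $\|\mathbf{n}_{x_0}\|_{K^\circ}=h_K(\mathbf{n}_{x_0})$, where $\mathbf{n}_{x_0}$ is a unit vector. The hypothesis ${\rm B}_2^n\subset K$ gives $h_K(\mathbf{n}_{x_0})\ge h_{{\rm B}_2^n}(\mathbf{n}_{x_0})=|\mathbf{n}_{x_0}|=1$, whence $\lambda_{\min}(x_0)\ge\kappa^2\cdot1=\kappa^2$ for every $x_0\in\partial K$; this is the first asserted inequality. (In fact ${\rm B}_2^n\subset K$ is automatic: from $g(tv)=t^2g(v)$, differentiating twice gives $2g(v)=\langle v,\nabla^2 g(v)v\rangle\in[\kappa^2|v|^2,|v|^2]$, i.e. $\kappa|v|\le\|v\|_K\le|v|$, so ${\rm B}_2^n\subset K\subset\kappa^{-1}{\rm B}_2^n$.) For the last inequality, these same inclusions put $K$ in the position ${\rm B}_2^n\subset K\subset d_{\rm BM}(K,{\rm B}_2^n)\,{\rm B}_2^n$ (which, by linear invariance of the volume product, one may assume as in Subsection \ref{S4.7}), so $h_K(\mathbf{n}_{x_0})\le d_{\rm BM}(K,{\rm B}_2^n)$ and hence $\lambda_{\max}(x_0)\le d_{\rm BM}(K,{\rm B}_2^n)$ for every $x_0$; the middle inequality $\inf_{\partial K}\lambda_{\min}\le\sup_{\partial K}\lambda_{\max}$ is immediate. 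The same argument with $K$ replaced by $K^\circ$ yields the analogous bounds for $\partial K^\circ$ mentioned in the footnote to \eqref{e:CondCurvNov}.

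I expect the only delicate point to be the first paragraph: the clean derivation of the level-set curvature formula, and in particular the identification of the normalizing factor $|\nabla g(x_0)|^{-1}=\|\mathbf{n}_{x_0}\|_{K^\circ}$, which is precisely where $2$-homogeneity of $\|\cdot\|_K$ (via Euler's identity) and the support-function identity $\|\cdot\|_{K^\circ}=h_K$ enter; some care is also needed in reading the hypothesis ${\rm B}_2^n\subset K$ as fixing the Euclidean normalization in which ${\rm B}_2^n$ and $d_{\rm BM}(K,{\rm B}_2^n)\,{\rm B}_2^n$ are the inscribed and circumscribed balls of $K$. Once the curvature formula is in hand, the rest is the Rayleigh-quotient bound for the compression together with the elementary inclusions between $K$, $K^\circ$ and Euclidean balls.
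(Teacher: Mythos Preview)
Your argument is correct and follows essentially the same route as the paper: express the principal curvatures of the level set $\{g=\tfrac12\}$ via the compression of $\nabla^2 g$ to the tangent space, plug in the two-sided Hessian bound \eqref{e:CondCurvature2}, and control the scalar factor $\|\mathbf{n}_{x_0}\|_{K^\circ}=h_K(\mathbf{n}_{x_0})$ using the sandwich ${\rm B}_2^n\subset K\subset d_{\rm BM}(K,{\rm B}_2^n)\,{\rm B}_2^n$. Your identification $1/|\nabla g(x_0)|=\|\mathbf{n}_{x_0}\|_{K^\circ}$ via Euler's identity and $h_K=\|\cdot\|_{K^\circ}$ is exactly the mechanism behind the paper's formula (note that the displayed factor $\frac{1}{\|\mathbf{n}_{x_0}\|_{K^\circ}}$ in the paper's proof is a typo; the ensuing bounds $\kappa^2\|\mathbf{n}_{x_0}\|_{K^\circ}\le\lambda\le\|\mathbf{n}_{x_0}\|_{K^\circ}$ are consistent with your correct multiplicative factor $\|\mathbf{n}_{x_0}\|_{K^\circ}$).
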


\begin{proof}
Take arbitrary $x_0 \in \partial K$. 
By the rotation we may suppose $(x_0)_n <0$ and the tangent space $T_{x_0} \partial K = \langle e_1,\ldots, e_{n-1}\rangle$. 
In such case, the principle curvatures of $\partial K$ at $x_0$ is given by 
$$
\frac1{\| \mathbf{n}_{x_0}\|_{K^\circ}} 
\nabla_{x_1,\ldots,x_{n-1}}^2 \big( \frac12 \|\cdot\|_K^2 \big) (x_0), 
$$
where $\mathbf{n}_{x_0}$ denotes the outer unit normal vector of $\partial K$ at $x_0$.
This and \eqref{e:CondCurvNov}, or its equivalent form \eqref{e:CondCurvature2}, show that 
$$
\kappa^2  
\| \mathbf{n}_{x_0}\|_{K^\circ}  
\le 
\lambda_{\rm min}(x_0)
\le 
\lambda_{\rm max}(x_0)
\le 
\| \mathbf{n}_{x_0}\|_{K^\circ}.   
$$ 
To obtain an uniform control of $\| \mathbf{n}_{x_0}\|_{K^\circ}$, we appeal to ${\rm B}_2^n \subset K \subset d_{\rm BM}(K,{\rm B}_2^n){\rm B}_2^n$ or equivalently $ \frac1{d_{\rm BM}(K,{\rm B}_2^n)}{\rm B}_2^n \subset K^\circ \subset {\rm B}_2^n$ which particularly yields 
$$
|x| \le \| x \|_{K^\circ} \le d_{\rm BM}(K,{\rm B}_2^n) |x|
$$
for all $x \in \mathbb{R}^n$. In particular 
$$
1 \le \|  \mathbf{n}_{x_0} \|_{K^\circ} \le d_{\rm BM}(K,{\rm B}_2^n) 
$$
since $| \mathbf{n}_{x_0}| =1$ and this concludes the proof.  
\end{proof}

\subsection{Relation between hypercontractivity and Brascamp--Lieb inequality}\label{S.6HC}
We give an explicit way of understanding the hypercontractivity inequality  as the Brascamp--Lieb inequality, see \cite{BW,Lieb}. 
\begin{enumerate}
	\item 
	Let $s>0$ and $p,q\in\mathbb{R}\setminus\{0\}$ be arbitrary. For $c_1,c_2$ and $\mathcal{Q}$ defined by \eqref{e:HCBLLink} and nonnegative $f \in L^1(\gamma)$, we have 
\begin{equation}\label{e:Id10Nov}
	\big\| P_s \big[f^\frac1p \big] \big\|_{L^q(\gamma)}
	=
	\big(
	\frac{(2\pi)^{\frac12(c_1+c_2)-1}} {\sqrt{1-e^{-2s}}}
	\big)^n
	\int_{\mathbb{R}^{2n}}
	e^{-\pi \langle x,\mathcal{Q}x\rangle}
	\prod_{j=1,2} f_j(x_j)^{c_j}\, dx
\end{equation}
where 
$
f_1\coloneqq f\cdot \gamma$ and $f_2\coloneqq 
\big\| P_s \big[f^\frac1p \big] \big\|_{L^q(\gamma)}^{-q}
P_s \big[f^\frac1p \big]^q \cdot \gamma.
$

\item 
Let $s>0$ and $c_1,c_2\in \mathbb{R}\setminus\{0\}$ be arbitrary. 
For $p,q$ and $\mathcal{Q}$ defined by \eqref{e:HCBLLink} and nonnegative $f_1,f_2 \in L^1(dx)$, we have 
\begin{equation}\label{e:Id10Nov2}
	\int_{\mathbb{R}^{2n}}
	e^{-\pi \langle x,\mathcal{Q}x\rangle}
	\prod_{j=1,2} f_j(x_j)^{c_j}\, dx
	=
	\big(
	\frac{(2\pi)^{\frac12(c_1+c_2)-1}} {\sqrt{1-e^{-2s}}}
	\big)^{-n}
	\int_{\mathbb{R}^n} 
	P_s \big[\big( \frac{f_1}{\gamma} \big)^\frac1p \big] 
	\big( \frac{f_2}{\gamma} \big)^{\frac1{q'}}\, d\gamma. 
\end{equation}
\end{enumerate}

\subsection{Gaussian integral}\label{S6.1}
We give the identity for the explicit form of $P_s f$ for Gaussian inputs as follows. 
\begin{lemma}\label{l:GaussInt1}
Let $s>0$, $a\in \mathbb{R}$ satisfy $a_s \coloneqq 1+a(1-e^{-2s})>0$ and $b \in \mathbb{R}^n$. 
Then 
\begin{equation}\label{e:GaussInt1}
P_s
\big[
e^{ -\frac12a|\cdot|^2 + \langle b, \cdot\rangle }
\big](x)
=
a_s^{-\frac{n}{2}}
{\rm exp}\,  
\bigg[
-\frac1{2a_s} 
\big(
ae^{-2s} |x|^2 
- 
2e^{-s} \langle b,x\rangle
-
(1-e^{-2s}) |b|^2
\big)
\bigg].
\end{equation}
\end{lemma}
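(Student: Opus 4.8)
The plan is a direct Gaussian computation. First I would fix the shorthand $\tau \coloneqq e^{-s}$ and $\sigma \coloneqq \sqrt{1-e^{-2s}}$, so that $\tau^2+\sigma^2 = 1$ and $a_s = 1+a\sigma^2$. By the definition of $P_s$ and the Gaussian density,
$$P_s\big[ e^{-\frac12 a|\cdot|^2 + \langle b,\cdot\rangle} \big](x) = \frac{1}{(2\pi)^{n/2}} \int_{\mathbb{R}^n} \exp\Big( -\tfrac{a}{2}|\tau x + \sigma y|^2 + \langle b, \tau x + \sigma y\rangle - \tfrac12 |y|^2 \Big)\, dy.$$
Expanding $|\tau x + \sigma y|^2 = \tau^2|x|^2 + 2\tau\sigma\langle x,y\rangle + \sigma^2|y|^2$ and collecting the $y$-dependent terms, the exponent becomes $-\tfrac{a_s}{2}|y|^2 + \langle v, y\rangle + c(x)$, where $v \coloneqq \sigma(b - a\tau x)$ and $c(x) \coloneqq -\tfrac{a\tau^2}{2}|x|^2 + \tau\langle b,x\rangle$; here I have used $1 + a\sigma^2 = a_s$.

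Next I would complete the square in $y$: since $a_s>0$ by hypothesis, one has $-\tfrac{a_s}{2}|y|^2 + \langle v,y\rangle = -\tfrac{a_s}{2}\big|y - \tfrac{v}{a_s}\big|^2 + \tfrac{|v|^2}{2a_s}$, and the resulting Gaussian integral is $\tfrac{1}{(2\pi)^{n/2}}\int_{\mathbb{R}^n} e^{-\frac{a_s}{2}|y - v/a_s|^2}\, dy = a_s^{-n/2}$. Hence
$$P_s\big[ e^{-\frac12 a|\cdot|^2 + \langle b,\cdot\rangle} \big](x) = a_s^{-n/2} \exp\Big( \tfrac{|v|^2}{2a_s} + c(x) \Big).$$
Finally I would substitute $|v|^2 = \sigma^2\big(|b|^2 - 2a\tau\langle b,x\rangle + a^2\tau^2|x|^2\big)$ into $\tfrac{|v|^2}{2a_s}+c(x)$ and simplify each monomial using the identity $a_s - a\sigma^2 = 1$: the coefficient of $|b|^2$ is $\tfrac{\sigma^2}{2a_s} = \tfrac{1-e^{-2s}}{2a_s}$, the coefficient of $\langle b,x\rangle$ is $\tau\big(1 - \tfrac{a\sigma^2}{a_s}\big) = \tfrac{\tau}{a_s} = \tfrac{e^{-s}}{a_s}$, and the coefficient of $|x|^2$ is $\tfrac{a\tau^2}{2}\big(\tfrac{a\sigma^2}{a_s} - 1\big) = -\tfrac{a\tau^2}{2a_s} = -\tfrac{ae^{-2s}}{2a_s}$. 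Reassembling, the exponent equals $-\tfrac{1}{2a_s}\big(ae^{-2s}|x|^2 - 2e^{-s}\langle b,x\rangle - (1-e^{-2s})|b|^2\big)$, which is exactly \eqref{e:GaussInt1}.

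There is no genuine obstacle here: the statement is a one-parameter Gaussian moment generating computation, it factors over coordinates so the dimension plays no role, and the only points needing care are tracking the constant $a_s$ through the square completion — the hypothesis $a_s>0$ being precisely what guarantees convergence of the $y$-integral — and carrying out the final algebraic simplification via $\tau^2+\sigma^2=1$.
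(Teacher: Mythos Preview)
Your proof is correct. The paper itself omits the proof entirely, stating only that ``this is an elementary Gaussian calculus so we omit the proof''; your direct computation---writing out the integral, completing the square in $y$, and simplifying via $a_s - a\sigma^2 = 1$---is precisely the routine argument the authors had in mind.
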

This is an elementary Gaussian calculus so we omit the proof. 

\if0
\subsection{Linear invariance}\label{S6.2}
\begin{lemma}\label{l:LinearInv}
Let $s >0$, $p=1-e^{-2s}$ and $q = 1-e^{2s}$. 
For nonnegative $f \in L^1(\gamma)$ and $A \in GL(n)$ with ${\rm det}\, A>0$, let 
$$
f_A(x)\coloneqq f(Ax) e^{ - \frac12 ( |Ax|^2  - |x|^2) },\;\;\; x\in\mathbb{R}^n. 
$$
Then 
$$
\big\|
P_s \big[f_A^\frac1p\big]
\big\|_{L^q(\gamma)}
= 
({\rm det}\, A)^{-1+\frac1q} 
\big\|
P_s \big[f^\frac1p\big]
\big\|_{L^q(\gamma)},
\; 
\big(
\int_{\mathbb{R}^n} 
f_A \, d\gamma 
\big)^\frac1p
= 
({\rm det}\, A)^{-\frac1p}
\big(
\int_{\mathbb{R}^n} 
f \, d\gamma 
\big)^\frac1p. 
$$
\end{lemma}

\begin{proof}
It suffices to notice that 
$$
P_s\big[f_A^\frac1p](x) 
= 
({\rm det}\, A)^{-1} 
P_s\big[f^\frac1p](A^*x)
e^{ - \frac12 ( |A^*x|^2  - |x|^2) }
$$
where $A^* \coloneqq (A^{-1})^{\rm t}$. See Hiroshi's note Lemma 16.1.
\end{proof}

\fi

\subsection{Proof of \eqref{e:Admissible} and \eqref{e:Admissible2}}\label{App}
In this appendix, we write 
$$
\big\|
P_s\big[ \big(\frac{\gamma_\beta}{\gamma}\big)^\frac1p \big] 
\big\|_{L^q(\gamma)}
=
\beta^{-\frac{n}{2p}} 
\phi_1(\beta)^{\frac{n}2(\frac1q - 1)}
\phi_2(\beta)^{-\frac{n}{2q}},
$$
where 
$$
\phi_1(\beta):= 1+ (\frac1\beta - 1) \frac{1-e^{-2s}}{p},
\;\;\;
\phi_2(\beta):=1+ \frac{1-e^{-2s}+qe^{-2s}}p (\frac1\beta -1). 
$$
Note that this identity makes sense only when $\beta$ is such that $\phi_1(\beta), \phi_2(\beta)>0$, otherwise $\big\|
P_s\big[ \big(\frac{\gamma_\beta}{\gamma}\big)^\frac1p \big] 
\big\|_{L^q(\gamma)} $ is not well-defined. 
The sufficiency of conditions of $p,q$ in \eqref{e:Admissible} and \eqref{e:Admissible2} is clear from \eqref{e:ScaleGauss} and H\"{o}lder's inequality so we have only to show that for given $q<0<p<1$ satisfying $\frac{q-1}{p-1}>e^{2s}$, 
\begin{equation}\label{e:NecRev}
	\inf_{\beta>0}
	\big\|
P_s\big[ \big(\frac{\gamma_\beta}{\gamma}\big)^\frac1p \big] 
\big\|_{L^q(\gamma)}
>0
\;\;\;
\Rightarrow
\;\;\;
1-e^{2s} \le q < 0 < p \le 1-e^{-2s}
\end{equation}
and 
\begin{equation}\label{e:NecFor}
	\sup_{\beta>0}\big\|
P_s\big[ \big(\frac{\gamma_\beta}{\gamma}\big)^\frac1p \big] 
\big\|_{L^q(\gamma)}
<\infty 
\;\;\;
\Rightarrow
\;\;\;
q\le 1-e^{2s},\;  1-e^{-2s} \le p.
\end{equation}

To this end, we first investigate the behavior of $\phi_1$ and $\phi_2$. 
The $\phi_1$ has two different behaviors with the threshold at $ p = 1-e^{-2s}$. In fact, one can see from the definition and $p>0$ that 
\begin{equation}\label{e:phi_1Small}
0<p<1-e^{-2s} 
\;\;\;
\Rightarrow
\;\;\;
\exists \beta_1>1:\; 
\begin{cases}
\phi_1(\beta)>0,\;\;\; &{\rm if}\;\;\; 0<\beta<\beta_1,\\
\phi_1(\beta)=0,\;\;\; &{\rm if}\;\;\; \beta=\beta_1, \\
\phi_1(\beta)<0,\;\;\; &{\rm if}\;\;\;\beta>\beta_1,
\end{cases}
\end{equation}
and 
\begin{equation}\label{e:phi_1Big}
1-e^{-2s} < p < 1
\;\;\;
\Rightarrow
\;\;\;
\phi_1(\beta)\ge 1 - \frac{1-e^{-2s}}p >0,\; \forall \beta>0. 
\end{equation}
For $\phi_2$, note that we always have  $p >1-e^{-2s} + qe^{-2s}$ thanks to the assumption $\frac{q-1}{p-1} > e^{2s}$ in \eqref{e:phi_2Big}. 
With this and $q<0<p<1$ in mind, the $\phi_2$ has two different behaviors:
\begin{equation}\label{e:phi_2small}
q<1-e^{2s}
\;\;\;
\Rightarrow
\;\;\;
\exists \beta_2<1:\; 
\begin{cases}
\phi_2(\beta)<0,\;\;\; &{\rm if}\;\;\; 0<\beta<\beta_2,\\
\phi_2(\beta)=0,\;\;\; &{\rm if}\;\;\; \beta=\beta_2, \\
\phi_2(\beta)>0,\;\;\; &{\rm if}\;\;\;\beta>\beta_2,
\end{cases}
\end{equation}
and 
\begin{equation}\label{e:phi_2Big}
q>1-e^{2s}
\;\;\;
\Rightarrow
\;\;\;
\phi_2(\beta) \ge 1 - \frac{1-e^{-2s} + qe^{-2s}}p>0,
\; \forall \beta>0. 
\end{equation} 
Remark that we used $p >1-e^{-2s} + qe^{-2s}$ for \eqref{e:phi_2Big}. 
Let us see \eqref{e:NecRev}. Suppose $q<1-e^{2s}$. In this case, because of  $\beta_2<1<\beta_1$ and $q<0$, we know that $\phi_1(\beta_2) \in (0,\infty)$ and hence 
$$
\lim_{\beta \downarrow \beta_2} 
\big\|
P_s\big[ \big(\frac{\gamma_\beta}{\gamma}\big)^\frac1p \big] 
\big\|_{L^q(\gamma)}
=
\beta_2^{-\frac{n}{2p}}
\phi_1(\beta_2)^{\frac{n}{2}(\frac1q-1)} 
\lim_{\beta \downarrow \beta_2}
\phi_2(\beta)^{-\frac{n}{2q}}
=0.
$$
Suppose next $p>1-e^{-2s}$. In this case, we know that 
$$
\inf_{\beta>0} \phi_1(\beta)= \lim_{\beta\to\infty} \phi_1(\beta) =  1 - \frac{1-e^{-2s}}p >0,
$$
and
$$
\inf_{\beta>0} \phi_2(\beta)= \lim_{\beta\to\infty} \phi_2(\beta) =  1 - \frac{1-e^{-2s}+qe^{-2s}}p >0.
$$
Hence, in view of $p>0$,  
$$
\lim_{\beta \to \infty} 
\big\|
P_s\big[ \big(\frac{\gamma_\beta}{\gamma}\big)^\frac1p \big] 
\big\|_{L^q(\gamma)}
=
C_{p,q,s}\lim_{\beta\to\infty}\beta^{-\frac{n}{2p}} 
=0.
$$
This concludes \eqref{e:NecRev}. 
We next show \eqref{e:NecFor}. Suppose $p<1-e^{-2s}$. Recalling \eqref{e:phi_2small}, \eqref{e:phi_2Big} and \eqref{e:phi_1Small} with $\beta_1>1$, we see that $\phi_2(\beta_1) \in (0,\infty)$ and hence 
$$
\lim_{\beta \uparrow \beta_1} \big\|
P_s\big[ \big(\frac{\gamma_\beta}{\gamma}\big)^\frac1p \big] 
\big\|_{L^q(\gamma)}
=
\beta_1^{-\frac{n}{2p}} 
\phi_2(\beta_1)^{- \frac{n}{2q}}
\lim_{\beta \uparrow \beta_1}
\phi_1(\beta)^{\frac{n}2(\frac1q-1)}
=\infty
$$
by virtue of $q<0$. Suppose next $q>1-e^{2s}$. In this case, in view of \eqref{e:phi_2Big} and $p<1$, we conclude that 
$$
\lim_{\beta \downarrow 0} 
\big\|
P_s\big[ \big(\frac{\gamma_\beta}{\gamma}\big)^\frac1p \big] 
\big\|_{L^q(\gamma)}
=
\lim_{\beta \downarrow 0}
\beta^{\frac{n}2(1-\frac1p)}
(\frac{1-e^{-2s}}p)^{\frac{n}2(\frac1q-1)}
(\frac{1-e^{-2s} + qe^{-2s}}p)^{-\frac{n}{2q}}
=\infty.
$$
This concludes \eqref{e:NecFor}. 

\subsection{Proof of $v_0 \in L^1(dx)$ in Theorem \ref{t:RevRevClosureGene}}\label{S6.3}
Suppose $v_0$ satisfies assumptions in Theorem \ref{t:RevRevClosureGene} and then show $v_0 \in L^1(dx)$. 
From the assumption we know that $\log\, \frac{\gamma_\beta}{v_0}$ is convex and hence subdifferentiable in the sense that 
$$
\partial \big( \log\, \frac{\gamma_\beta}{v_0} \big)(x) 
:= 
\{
y\in \mathbb{R}^n: 
\log\, \frac{\gamma_\beta}{v_0} (z) 
\ge 
\log\, \frac{\gamma_\beta}{v_0} (x)
+ 
\langle y, z-x\rangle ,
\;
\forall z\in \mathbb{R}^n 
\}
\neq 
\emptyset
$$
for all $x\in \mathbb{R}^n$. 
In particular we find $y_0 \in \partial  \big( \log\, \frac{\gamma_\beta}{v_0} \big)(0) $. 
From the definition we have that 
$$
\frac{\gamma_\beta}{v_0}(z) 
\ge 
\frac{\gamma_\beta}{v_0}(0)
e^{\langle y_0,z\rangle} 
$$
and hence 
$$
v_0(z) \le \frac{v_0}{\gamma_\beta}(0)
e^{- \langle y_0,z\rangle} 
\gamma_\beta(z) \in L^1(dx). 
$$

\subsection{Differentiability of concave function}\label{S6.4}
\begin{lemma}\label{l:Diffable}
Let $\phi\colon \mathbb{R}^n \to \mathbb{R}$ be concave and suppose there exists $y_0 \in \mathbb{R}^n$ such that $Q_1\phi(x_0) = \phi(y_0) + \frac12|x_0-y_0|^2$ for given $x_0 \in \mathbb{R}^n$. 
Then $\phi$ is differentiable at $y_0$ and 
$$
\nabla \phi(y_0) = x_0 -y_0. 
$$
\end{lemma}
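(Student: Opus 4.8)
The plan is to use that $y_0$ realizes the infimum defining $Q_1\phi(x_0)$, so $y \mapsto \phi(y) + \tfrac12|x_0 - y|^2$ attains its global minimum at $y_0$. Writing out $\phi(y) + \tfrac12|x_0-y|^2 \ge \phi(y_0) + \tfrac12|x_0-y_0|^2$ and expanding the quadratic difference, one gets, for every $y \in \mathbb{R}^n$,
$$\phi(y) - \phi(y_0) \ \ge\ \tfrac12|x_0-y_0|^2 - \tfrac12|x_0-y|^2 \ =\ \langle x_0 - y_0,\, y - y_0\rangle - \tfrac12|y-y_0|^2 ,$$
which I will call $(\star)$. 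Geometrically, $(\star)$ says that $\phi$ lies above a downward paraboloid touching it at $y_0$ with linear part $x_0 - y_0$.

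Next I would bring in concavity. Since $\phi$ is finite and concave on $\mathbb{R}^n$, its superdifferential $\partial^+\phi(y_0)$ (the subdifferential of the convex function $-\phi$ at $y_0$) is a nonempty compact convex set; pick any $p \in \partial^+\phi(y_0)$, so $\phi(y) - \phi(y_0) \le \langle p,\, y-y_0\rangle$ for all $y$. Combining this upper bound with $(\star)$ along $y = y_0 + tw$ with $|w|=1$, dividing by $t>0$ and letting $t \downarrow 0$, gives $\langle x_0 - y_0, w\rangle \le \langle p, w\rangle$ for every unit vector $w$, hence $p = x_0 - y_0$. In particular $x_0 - y_0 \in \partial^+\phi(y_0)$, so the concavity inequality yields the matching upper bound $\phi(y) - \phi(y_0) \le \langle x_0 - y_0,\, y - y_0\rangle$ for all $y$.

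Combining this with $(\star)$ sandwiches the increment:
$$-\tfrac12|y-y_0|^2 \ \le\ \phi(y) - \phi(y_0) - \langle x_0-y_0,\, y-y_0\rangle \ \le\ 0 ,$$
so $\phi(y) = \phi(y_0) + \langle x_0 - y_0,\, y-y_0\rangle + o(|y-y_0|)$ as $y \to y_0$, which is exactly differentiability of $\phi$ at $y_0$ with $\nabla\phi(y_0) = x_0 - y_0$. There is no genuinely hard step here; the only points needing a word of care are the non-emptiness of $\partial^+\phi(y_0)$ for a finite concave function on $\mathbb{R}^n$ (standard convex analysis) and the passage to the limit $t\downarrow 0$ in the inequality, both routine. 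Alternatively one may run the argument through one-sided directional derivatives: $(\star)$ forces $\phi'(y_0;w) := \lim_{t\downarrow 0} t^{-1}\big(\phi(y_0+tw)-\phi(y_0)\big) \ge \langle x_0-y_0, w\rangle$ for every $w$, while superadditivity of the concave, positively homogeneous map $w \mapsto \phi'(y_0;w)$ gives $\phi'(y_0;w) \le -\phi'(y_0;-w) \le \langle x_0-y_0, w\rangle$; hence $\phi'(y_0;\cdot)$ is linear and $\phi$ is differentiable at $y_0$ with the claimed gradient.
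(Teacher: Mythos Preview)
Your proof is correct and follows essentially the same route as the paper: both derive the lower bound $(\star)$ from the minimizing property of $y_0$, pick an element of the superdifferential of $\phi$ at $y_0$ (equivalently the subdifferential of $-\phi$), and show it must equal $x_0 - y_0$, whence differentiability. The only cosmetic difference is that the paper pins down the supergradient by evaluating the combined inequality at the single point $y = x_0 + w$ to force $|x_0+w-y_0|^2\le 0$, whereas you take a directional limit along $y = y_0 + tw$; your explicit sandwich $-\tfrac12|y-y_0|^2 \le \phi(y)-\phi(y_0)-\langle x_0-y_0,y-y_0\rangle \le 0$ is a nice way to exhibit the $o(|y-y_0|)$ remainder directly.
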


\begin{proof}
From the assumption 
$$
\phi(y) +\frac12|x_0-y|^2 \ge 
\phi(y_0) +\frac12|x_0-y_0|^2
$$
for all $y$. This means 
\begin{equation}\label{e:LBD}
\phi(y) - \phi(y_0)
\ge 
\langle x_0, y-y_0\rangle 
- \frac12|y|^2 + \frac12|y_0|^2.
\end{equation}
On the other hand $-\phi$ is convex and hence its subdifferential $\partial (-\phi)(y_0)$ is not empty. 
We pick arbitrary $w \in \partial (-\phi)(y_0)$ and show $w = y_0 -x_0$. 
In fact $w \in \partial (-\phi)(y_0)$ yields 
$$
\phi(y) - \phi(y_0) 
\le 
- \langle w, y - y_0\rangle
$$
for all $y$. Combining this and \eqref{e:LBD} we see that 
$$
\frac12 | y - (x_0+w)|^2 - \frac12|x_0+w|^2 + \langle x_0 +w, y_0 \rangle -\frac12|y_0|^2\ge0
$$
for all $y$. 
In particular for $y = x_0 +w$ it follows that $|x_0 + w -y_0|^2\le 0$ which in turn shows $w = y_0 -x_0$. 
Since we proved $\partial(-\phi)(y_0) = \{ y_0 -x_0 \}$ we conclude that $-\phi$ is differentiable at $y_0$ and $\nabla(-\phi)(y_0) = y_0 -x_0$.
\end{proof}


\subsection{Proof of the equivalence between \eqref{e:CondCurvNov} with $\Lambda = \kappa^{-1}{\rm id}$ and \eqref{e:CondCurvature2}}\label{S6.5}
Note that \eqref{e:CondCurvNov} with $\Lambda = \kappa^{-1}{\rm id}$ means 
$$
\frac1{\kappa^2} {\rm id}\le \nabla^2\big( \frac12 \|\cdot\|_K^2 \big),
\;\;\; 
{\rm id} \le \nabla^2\big( \frac12 \|\cdot\|_{K^\circ}^2 \big)
$$
holds on $\mathbb{R}^n\setminus\{0\}$ since $\nabla^2 \big( \frac12 \|\cdot\|_K^2 \big)$ is 0-homogeneous. 
Since $\|\cdot\|_K$ is the Minkowski norm, $K$ is strictly convex and superlinear in the sense that $\|x\|_K^2/|x| \to \infty$ as $|x| \to \infty$. Hence from the general theory of convex function, we have 
$$
\nabla^2 \big( \frac12 \|\cdot\|_K^2 \big)(y_x)
\nabla^2 \big( \frac12 \|\cdot\|_{K^\circ}^2\big)(x) 
=
{\rm id},
\;\;\;
y_x \coloneqq \nabla\big(\frac12 \|\cdot\|_{K^\circ}^2\big)(x) \in \mathbb{R}^n\setminus\{0\} 
$$
for $x \in\mathbb{R}^n\setminus\{0\}$ and $x\mapsto y_x$ is bijective. 
Thus the assumption 
$
{\rm id}
\le 
\nabla^2 
\big(
\frac12 
\|\cdot\|_{K^\circ}^2
\big)
$
can be read as 
$
\nabla^2 
\big(
\frac12 
\|\cdot\|_{K}^2
\big)
\le 
{\rm id}. 
$
This shows the equivalence to \eqref{e:CondCurvature2}.

\if0 
\section{Further consequences}

Next, we shall reveal a relationship between 2-uniformly convex/uniform smoothness and polarity of a convex body.
The following property seems folklore for experts, but we give a proof for completeness.

\begin{lemma}
If $\| \cdot \|_K$ is 2-uniformly convex with $C \ge 1$, then 
$\| \cdot \|_{K^\circ}$ is 2-uniform smoothness with $S= C$. 
Conversely, if $\| \cdot \|_K$ is 2-uniform smoothness with $S \ge 1$, then 
$\| \cdot \|_{K^\circ}$ is 2-uniformly convex with $C=S$.
\end{lemma}

In particular, this lemma implies that $S_K = C_{K^\circ}$. 

\begin{proof}
Suppose that $\| \cdot \|_K$ is 2-uniformly convex with $C \ge 1$, and let show that $\| \cdot \|_{K^\circ}$ is 2-uniform smoothness with $S=C$. The converse assertion is also proved similarly. 

First, in general, for any $v, w, x, y \in \R^n$, we have 
\begin{align*}
\langle \frac{v+w}{2}, \frac{x+y}{2} \rangle
\le
\|\frac{v+w}{2} \|_K \| \frac{x+y}{2} \|_{K^\circ}
\le
\frac{1}{2}\|\frac{v+w}{2}\|_{K^\circ}^2 + \frac{1}{2}\|\frac{x+y}{2}\|_K^2.
\end{align*}
By \eqref{e:2UC}, it holds 
$$
\frac{1}{2} \langle v+w, x+y \rangle 
\le
\|\frac{v+w}{2}\|_{K^\circ}^2 + \frac{1}{2}\|x\|_K^2 + \frac{1}{2}\|y\|_K^2 - \frac{1}{4C} \|x-y\|_K^2.
$$
Now we recall the relation with Legendre transform (see \cite[p.55]{Sch}): 
\begin{equation}\label{e:Legendre}
\frac{1}{2}\|z\|_{K^\circ}^2 = \sup_{z' \in \R^n}[ \langle z, z' \rangle - \frac{1}{2}\|z'\|_K^2], \;\;\; \forall z\ \in \R^n. 
\end{equation}
This representation yields 
$$
\frac{C}{4}\|v-w\|_{K^\circ}^2 \ge  \frac{1}{2}\langle v-w, x-y \rangle - \frac{1}{4C} \|x-y\|_K^2, 
$$
from which we obtain 
\begin{align*}
\frac{1}{2} \langle v+w, x+y \rangle 
\le&
\|\frac{v+w}{2}\|_{K^\circ}^2 + \frac{1}{2}\|x\|_K^2 + \frac{1}{2}\|y\|_K^2 + \frac{C}{4}\|v-w\|_{K^\circ}^2 -  \frac{1}{2}\langle v-w, x-y \rangle.
\end{align*}
Equivalently, 
$$
\|\frac{v+w}{2}\|_{K^\circ}^2 + \frac{C}{4}\|v-w\|_{K^\circ}^2
\ge \langle v, x \rangle - \frac{1}{2}\|x\|_K^2 + \langle w, y \rangle - \frac{1}{2}\|y\|_K^2, 
$$
and since $x$ and $y$ are arbitrary, using \eqref{e:Legendre} twice, we conclude 
$$
\|\frac{v+w}{2}\|_{K^\circ}^2 + \frac{C}{4}\|v-w\|_{K^\circ}^2
\ge \frac{1}{2}\|v\|_{K^\circ}^2 + \frac{1}{2}\|w\|_{K^\circ}^2.
$$
\end{proof}

Finally, we introduce a distance function among convex bodies which often appears in the realm of convex geometry.  

\begin{proof}[Proof of Theorem \ref{t:RegInvSanSet}]
First, note that we may assume that it holds 
$$
B_2^n \subset K \subset d_{\mathrm{BM}}(K, B_2^n) B_2^n
$$
by definition of the Banach--Mazur distance since \eqref{e:SetInvSant} is linear invariant. 
Then we enjoy 
$$
\frac{1}{d_{\mathrm{BM}}(K, B_2^n)} | \cdot | \le \| \cdot \|_K \le | \cdot |.
$$
Thus, from \eqref{e:2UC} and \eqref{e:2US}, it follows that 
$$
\|\frac{v+w}{2}\|_K^2 \le \frac{1}{2}\|v\|_K^2 + \frac{1}{2}\|w\|_K^2 - \frac{1}{4C_Kd_{\mathrm{BM}}(K, B_2^n)^2} |v-w|^2, \quad \forall v,w \in \R^n
$$
and 
$$
\|\frac{v+w}{2}\|_K^2 \ge \frac{1}{2}\|v\|_K^2 + \frac{1}{2}\|w\|_K^2 - \frac{S_K}{4} |v-w|_K^2, \quad \forall v,w \in \R^n.
$$
These inequality means that $\frac12 \| \cdot \|_K^2$ is $(C_Kd_{\mathrm{BM}}(K, B_2^n)^2)^{-1}$-semi convex and $S_K$-semi concave. 
Hence by Corollary \ref{cor:RegInvSant2} associated with $\psi = \frac12 \| \cdot \|_K^2$, we obtain 
$$
\int_{\mathbb{R}^n} e^{-\frac12 \| \cdot \|_K^2}\, dx \int_{\mathbb{R}^n} e^{-\frac12 \| \cdot \|_{K^\circ}^2}\, dx \ge (2\pi)^n (\frac{\beta_K}{e^{\beta_K-1}})^{\frac{n}{2}}, 
$$
where we used $(\frac12 \| \cdot \|_K^2)^* = \frac12 \| \cdot \|_{K^\circ}^2$.
Finally, since 
$$
\int_{\mathbb{R}^n} e^{-\frac12 \| \cdot \|_K^2}\, dx \int_{\mathbb{R}^n} e^{-\frac12 \| \cdot \|_{K^\circ}^2}\, dx
= \frac{|K||K^\circ|}{|B_2^n|^2} (2\pi)^n, 
$$
we conclude the desired assertion.
\end{proof}

\begin{proof}[Proof of Corollary \ref{Cor:Example}]
take arbitrary symmetric convex body $K$ with finite $S_K$ and its maximal ellipsoid is ${\rm B}_2^n$.   Then we \textit{regularise} $K$ by Euclidean ball ${\rm B}_2^n$.
Namely for $\lambda>0$ we let $K_\lambda$ be a symmetric convex set whose norm is given by 
$$
\| x \|_{K_\lambda}^2
:= 
\| x \|_K^2
+ 
\lambda  
| x |^2.
$$
Then 
$$
\nabla^2 \big( \frac12 \| \cdot\|_{K_\lambda}^2 \big) 
= 
\nabla^2 \big( \frac12 \| \cdot\|_{K}^2 \big) 
+
\lambda {\rm id}.
$$
As we saw (formally) that in the proof of above Corollary that 
$$
\big( C_K d_{\rm BM}(K, {\rm B}_2^n)^2\big)^{-1} {\rm id}
\le 
\nabla^2 \big( \frac12 \| \cdot\|_K^2 \big)
\le S_K {\rm id}
$$
and hence 
$$
\big( \frac{1}{C_K d_{\rm BM}(K, {\rm B}_2^n)^2} + \lambda \big) {\rm id}
\le 
\nabla^2 \big( \frac12 \| \cdot\|_K^2 \big)
\le ( S_K + \lambda) {\rm id}.
$$
So if we let $\Lambda = ( S_K + \lambda) {\rm id}$ and 
$$
\frac1{\beta_\lambda }
= 
\big( \frac{1}{C_K d_{\rm BM}(K, {\rm B}_2^n)^2} + \lambda \big)\frac{1}{S_K + \lambda},
$$
then the assumption of Corollary \ref{Cor:RegInvS} is satisfied. 
Hence we obtain 
$$
|K_\lambda| |K_{\lambda}^{\circ}|
\ge 
\big(
\frac{\beta_{\lambda}}{e^{\beta_\lambda-1}}
\big)^\frac{n}{2} 
|{\rm B}_2^n|^2.
$$
Note that $\lim_{\lambda\to \infty} \beta_\lambda = 1$ and hence one can find large $\lambda_K>0$ such that $\beta_{\lambda_K}$ is very close to 1. In particular,  one can find $\lambda_K\gg1$ so that 
$$
|K_{\lambda_K}| |K_{\lambda_K}^{\circ}|
\ge 
| {\rm B}_1^n | | {\rm B}_\infty^n |. 
$$

Moreover $\beta_\lambda \le 1 + \frac{S_K}{\lambda}$.

Suppose $\partial K =\{ x\in \mathbb{R}^n: \| x \|_{K}^2 =1 \}$ is $C^2$ boundary. 
Then $x \mapsto \frac12\|x\|_K^2$ is $C^2$ around $\partial K$ and hence 
$$
\sup_{v, w \in \partial K} \big \langle w, \nabla^2(\frac12 \| \cdot\|_K^2)(v) w \rangle < \infty.
$$
Since $\nabla^2 (\frac12 \| \cdot\|_K^2)$ is 0 homogeneous this shows $S_K < \infty$. 
Namely, one can apply the above observation for all $K$ whose boundary is $C^2$. 
\end{proof}

If we have a nice regularity on $\| \cdot \|_K$, then we have an another characterization of 2-uniformly convex/uniform smoothness. 
We suppose that $\| \cdot \|_K^2$ is $C^2$ on $\mathbb{R}^n \setminus \{0 \}$. 
Then $\| \cdot \|_K$ is 2-uniformly convex with $C \ge 1$ if and only if 
\begin{equation}\label{e:2UCHess}
\langle \nabla^2 (\frac{1}{2}\|\cdot\|_K^2)(v)w, w\rangle \ge \frac{1}{C} \|w\|_K^2, \;\;\; \forall v, w \in \mathbb{R}^n \setminus \{0\}.
\end{equation}
\textcolor{red}{ADD THE PROOF TO APPENDIX?}
Similarly, $\| \cdot \|_K$ is 2-uniform smoothness with $S \ge 1$ if and only if 
\begin{equation}\label{e:2USHess}
\langle \nabla^2 (\frac{1}{2}\|\cdot\|_K^2)(v)w, w\rangle \le S \|w\|_K^2, \;\;\; \forall v, w \in \mathbb{R}^n \setminus \{0\}.
\end{equation}
We remark that it is important to ignore the regularity at the origin. Indeed, it is known that $\| \cdot\|_K^2$ is twice differentiable at the origin if and only if $\| \cdot \|_K$ comes from an inner product (see \cite[Proposition 1.7]{Oh}).

As we mentioned, we have  $S_K = C_{K^\circ}$ and hence 
$
\beta_K = C_K C_{K^\circ} d_{\mathrm{BM}}(K, B_2^n)^2. 
$ 
This means if both of $\partial K$ and $\partial K^\circ$ are uniformly well curved, then one can establish and moreover improve the inverse Santal\'{o} inequality \eqref{e:Mah}. 
To show concrete examples of such well-curved convex bodies for which Theorem \ref{t:RegInvSanSet} is applicable,  let us introduce a \textit{regularization} of a convex body.

For a normalization we suppose the maximal ellipsoid of the convex body is ${\rm B}_2^n$ in the following corollary.  
Here in general the maximal ellipsoid of a convex body $K$ is an ellipsoid $E \subset K$ whose volume is the maximum among all ellipsoids contained in $K$. Such maximal ellipsoid exists from John's theorem and hence we can always transform $K$ to another convex body whose maximal ellipsoid is ${\rm B}_2^n$ by appropriate linear isomorphism. 

\begin{corollary}\label{Cor:Example}
Let $K_0$ be a convex body with $C^2$ boundary\footnote{If $K_0$ has a $C^2$ boundary then $S_{K_0}<\infty$, {\color{red}CHECK}} whose maximal ellipsoid is ${\rm B}_2^n$. 
Then 
$$
v(K_\lambda) \ge \big( \frac1{1+ S_K/\lambda}e^{ \frac1{1+ \lambda/ S_K}} \big)^{\frac n2} v({\rm B}_2^n)
$$
for all $\lambda>0$.  
\end{corollary}
NOTE: 
$$
\big( \frac1{1+ S_K/\lambda}e^{ \frac1{1+ \lambda/ S_K}} \big)^{\frac n2}\frac{ v({\rm B}_2^n)}{ v({\rm B}_1^n)}
\sim 
\big( \frac1{1+ S_K/\lambda}e^{ \frac1{1+ \lambda/ S_K}} \big)^{\frac n2} \big( \frac{\pi}{2}\big)^2\;\;\; (n\to\infty)
$$
and 
$$
\big( \frac1{1+ S_K/\lambda}e^{ \frac1{1+ \lambda/ S_K}} \big)^{\frac n2} \big( \frac{\pi}{2}\big)^2\ge1 
$$
if $\lambda \ge \frac{S_K}4 $

Here the Banach--Mazur distance between convex bodies $K, L \subset \mathbb{R}^n$ including the origin in their interior is defined by\footnote{Remark that the definition above is slightly different from the standard one since we need to consider affine maps instead of linear ones,  but our definition coincide with the standard one when $K, L$ are symmetric.  We also note that $d_{\mathrm{BM}}(K, L)$ is finite since $K, L$ has the origin in their interior.  Precisely speaking $\log d_{\mathrm{BM}}$, rather than $d_{\mathrm{BM}}$ itself,  is a distance function on the set of such convex bodies up to linear translations. 
} 
$$
d_{\mathrm{BM}}(K, L) \coloneqq \inf \{ r \ge 1 :\; TL \subset K \subset r TL \text{ for some invertible liner map}\; T \text{ on $\mathbb{R}^n$} \}.
$$

\section{Applications}

\subsection{Geometry of number}

In this section, we consider certain quantity such as the volume product appeared in the realm of Geometry of number.
In order to introduce it and our result, we give some notations at first. 

We say that $L \subset \mathbb{R}^n$ is a lattice if there exist independent linear vectors $b_1, \dots, b_k$ ($1 \le k \le n$) such that $L = \{ v_1 b_1+ \cdots + v_k b_k \,|\, v_1, \dots, v_k \in \mathbb{Z} \}$. 
We call the $k$-tuple $\{b_1, \dots, b_k\}$ a basis of $L$. 
We also define the  dimension of $L$, $\mathrm{dim}\, L$, by $k$. 
Clearly, $\mathrm{dim}\, L$ is independent of a choice of a basis of $L$.
In this section, we are interesting in the case only $\mathrm{dim}\, L = n$.
When we have a lattice $L \subset \mathbb{R}^n$ with $\mathrm{dim}\, L=n$, the determinant $d(L)$ of the lattice $L$ is given as  
$$
d(L) \coloneqq |\mathrm{det}(b_1, \dots, b_n)|.
$$
It is known that $d(L)$ is independent of a choice of a basis of $L$ (for instant, see \cite[Theorem 21.1]{G}).

Next let $K \subset \mathbb{R}^n$ be a symmetric convex body and $L \subset \mathbb{R}^n$ be a lattice with $\mathrm{dim}\, L =n$. 
Then we say that the lattice $L \subset \mathbb{R}^n$ is $K$-admissible if $\mathrm{int}(K)$ contains no nonzero point of $L$.
Moreover the critical determinant of $K$ is defined by 
$$
\Delta(K) \coloneqq \inf \{ d(L) \mid \text{$L$ is $K$-admissible}\}.
$$
Our interest in this section is to estimate the critical determinant product 
\textcolor{red}{NAME OK?}
$$
\Delta (K) \Delta (K^\circ).
$$ 
We note that this quantity is linear invariant since 
$\Delta(TK) = (\det T)\, \Delta(K)$ for any linear map $T$ on $\mathbb{R}^n$.
When $n=2$,  Mahler showed the following result. 

\begin{theorem}[\cite{Ma}]
Given a  symmetric convex body $K \subset \mathbb{R}^2$, it holds 
$$
\frac{1}{2} \le \Delta(K) \Delta(K^\circ) \le \frac{3}{4}.
$$
The left hand side is attained when $K = \{ x \in \mathbb{R}^2 \,|\, |x_1| \le 1, |x_2| \le 1\}$, and the right hand side is attained when $K=B_2^2$.
\end{theorem}

It is natural to ask if the similar estimates hold for $n \ge 3$, namely 
\begin{equation}\label{e:DeltaConj}
\Delta(B_1^n) \Delta(B_\infty^n) \le \Delta(K) \Delta(K^\circ) \le \Delta(B_2^n)^2
\end{equation}
for any symmetric convex body $K \subset \mathbb{R}^n$.
However, this type estimates have not been investigated until now as long as the authors know.
Thus we will give some fundamental, not hard, results about it, which are shown by classical facts naturally, and then we will introduce the result related to Corollary \ref{cor:SetInvSant}.

Before beginning our discussions, it is important to understand the concrete values of $\Delta(B_1^n)$, $\Delta(B_\infty^n)$ and $\Delta(B_2^n)$. 
As we show later, we see that 
\begin{equation}\label{e:DeltaCube}
\Delta(B_1^n) = \frac{1}{n !}, \;\;\; \Delta(B_\infty^n) = 1. 
\end{equation}
Hence the lower bound of the problem above can be rewritten as 
$$
\frac{1}{ n !} \le \Delta(K) \Delta(K^\circ).
$$
On the other hand, the value $\Delta(B_2^n)$ are not known completely except for special dimensions, and $\gamma_n \coloneqq \Delta(B_2^n)^{-\frac 2n}$ is called Hermite's constant. 
\textcolor{red}{SOME BIBLIOGRAPHY!}

First, let us recall the Minkowski's first fundamental theorem. 

\begin{theorem}[{\cite[Theorem 22.1]{G}}]\label{t:Mink1}
Let $K \subset \mathbb{R}^n$ be a symmetric convex body and $L \subset \mathbb{R}^n$ be a lattice with $\mathrm{dim}\, L = n$.
If $|K| \ge 2^n d(L)$, then $K$ contains a nontrivial (i.e., nonzero) point of $L$. 
Equivalently, 
if $K$ contains no nonzero point of $L$, then $|K| \le 2^n d(L)$.
\end{theorem}

Theorem \ref{t:Mink1} immediately implies that 
\begin{equation}\label{e:Mink1}
|K| \le 2^n \Delta(K)
\end{equation}
for any symmetric convex body.
From this, we can show \eqref{e:DeltaCube} for any $n \ge 2$. 
Indeed, it follows from \eqref{e:Mink1} that 
$$
\Delta(B_\infty^n) \ge 1, \;\;\; \Delta(B_1^n) \ge \frac{1}{n!}.
$$
Since the standard basis $e_1, \dots, e_n$ in $\mathbb{R}^n$ is a basis of lattice $\mathbb{Z}^n$, and since $\mathbb{Z}^n$ is $K_\infty$-admissible, we obtain 
$$
\Delta(B_\infty^n) \le 1.
$$
Hence 
$$
\Delta(B_\infty^n)=1.
$$
Similarly, setting 
$$
b_i \coloneqq (\overbrace{\frac{1}{i}, \dots, \frac{1}{i}}^i, 0, \dots, 0), \;\;\; i=1, \dots, n, 
$$
a lattice generated by $(b_1, \dots, b_n)$ is $B_1^n$-admissible. 
Thus, 
$$
\Delta(B_1^n) \le |\mathrm{det}(b_1, \dots, b_n)| = \frac{1}{n!},
$$
and then we obtain 
$$
\Delta(B_1^n) = \frac{1}{n!}.
$$

In addition, \eqref{e:Mink1} also yields that  
\begin{equation}\label{e:Mink1Pro}
|K| |K^\circ| \le 4^n \Delta(K) \Delta(K^\circ), 
\end{equation}
from which we conclude the following corollary by combining with Mahler's conjecture. 

\begin{corollary}
If Mahler conjecture is true for some $n \in \mathbb{N}$, then the left hand side in \eqref{e:DeltaConj} is also true for same $n$. 
In particular, when $n=3$, any symmetric convex body $K \subset \mathbb{R}^n$ satisfies 
$$
\Delta(K) \Delta(K^\circ) \ge \Delta(B_1^n) \Delta(B_\infty^n) = \frac 16.
$$
\end{corollary}
The latter assertion follows from the 3-dimensional Mahler conjecture recently solved by Iriyeh--Shibata \cite{IS}. 

Next, let us recall the Minkowski--Hlawka Theorem. 

\begin{theorem}[{\cite[Theorem 24.1]{G}}]\label{t:MH}
Given a Jordan measurable set $J \subset \mathbb{R}^n$ with $|J| <1$, there exists some lattice $L \subset \mathbb{R}^n$ with $d(L)=1$ such that $J$ contains no nonzero point of $L$.
\end{theorem}

Since  a symmetric convex body $K \subset \mathbb{R}^n$ is Jordan measurable \textcolor{red}{(right?)},
by Minkowski--Hlawka Theorem, there exists some lattice $L \subset \mathbb{R}^n$ with $d(L)=1$ such that 
$|K|^{-\frac{1}{n}}K$ contains no nonzero point of $L$, i.e., $|K|^{\frac{1}{n}}L$ is $K$-admissible.
Hence by definition of the critical determinant, we obtain 
\begin{equation}\label{e:MH}
\Delta (K) \le d(|K|^{\frac{1}{n}}L) = |K| d(L) = |K|.
\end{equation}
In particular, we obtain 
\begin{equation}\label{e:MHPro}
\Delta(K) \Delta(K^\circ) \le |K| |K^\circ|, 
\end{equation}
from which we have, combining with the Blaschke--Santal\'{o} inequality, 
$$
\Delta(K) \Delta(K^\circ) \le |B_2^n|^2.
$$
We remark that this upper bound is not sharp even if $n=2$. More precisely, Theorem \ref{t:MH} is more strengthen. For investigations of this direction, see \cite[Section 19]{GL} for instance.

Finally we conclude this section by introducing the strengthen lower bound of the critical determinant product associated with convexity of a convex body. 

\begin{corollary}\label{cor:StrDelLowBd}
Let $K \subset \mathbb{R}^n$ be a symmetric convex body, and suppose that $\| \cdot \|_K$ is 2-uniformly convex and 2-uniform smoothness. 
Then we have 
\begin{equation}\label{e:StrDelLowBd}
\Delta(K) \Delta(K^\circ) \ge \frac{1}{4^n} ( \frac{\beta_K}{e^{\beta_K -1}} )^{\frac{n}{2}} |B_2^n|^2.
\end{equation}
\end{corollary}

\textcolor{blue}{
This corollary immediately follows from Corollary \ref{cor:SetInvSant} and \eqref{e:Mink1Pro}. 
}
We remark that when $K=B_2^n$, since $\beta_K =1$, we see that 
$$
\frac{1}{4^n} ( \frac{\beta_K}{e^{\beta_K -1}} )^{\frac{n}{2}} |B_2^n|^2
=
\frac{\pi^n}{4^n} \frac{1}{\Gamma(\frac{n}{2}+1)^2}, 
$$
and the right hand term above is strictly more greater than $\frac{1}{n!}$. 
Hence, \eqref{e:StrDelLowBd} is strictly more stronger than the lower bound in \eqref{e:DeltaCube} when $K$ is close to some ellipsoid in the appropriate sense.

\subsection{Symplectic geometry}

Our aim in this section is to give an application of Corollary \ref{cor:SetInvSant} to Viterbo's conjecture which is an open problem in the realm of Symplectic Geometry. 
These relationship is initially investigated by the authors \cite{AKO} in which they showed that if Viterbo's conjecture is true, then Mahler's conjecture is also true. 
Later the authors \cite{IS} also reveals that if Mahler's conjecture is true, then the special case of Viterbo's conjecutre is also true. 
In particular, they solved the special case of the 6-dimensional Viterbo's conjecture positively. 

Let $(M, \omega)$ be a symplectic manifold, i.e., $M$ is a $2n$-dimensional smooth manifold and $\omega$ is a closed non-degenerate two form. 
Then we say that a map $c \colon \{ (M, g) : \text{symplectic manifold}\} \to [0, \infty)$ is a symplectic capacity if 
\begin{itemize}
\item[(i)] (Monotonicity). if $(M, \omega)$ is embedded into $(N, \tau)$ in the symplectic sense, then $c(M, \omega) \le c(N, \tau)$, 
\item[(ii)] (Homogeneity). $c(M, \alpha\omega) = |\alpha|c(M, \omega)$ for $\alpha \in \mathbb{R} \setminus \{0\}$,
\item[(iii)] (Normalization). $c(B_2^{2n}, \omega_0) = c(Z^{2n}, \omega_0) = \pi$.
\end{itemize}
Here, $Z^{2n} \coloneqq B^2_2 \times \mathbb{R}^{2n-2}$ and $\omega_0$ is the standard symplectic form on $\mathbb{R}^{2n}$.
For a domain $\Sigma \subset (\mathbb{R}^{2n}, \omega_0)$, for simplicity, we put 
$$
c(\Sigma) \coloneqq c(\Sigma, \omega_0).
$$
Then Viterbo conjectured the following: 

\begin{conjecture}[\cite{Vit}]\label{conj:Vit}
For any symplectic capacity $c$ and convex domain $\Sigma \subset (\mathbb{R}^{2n}, \omega_0)$, it holds
$$
\frac{c(\Sigma)}{c(B_2^{2n})} \le \left( \frac{\mathrm{vol}(\Sigma)}{\mathrm{vol}(B_2^{2n})} \right)^{\frac{1}{n}}.
$$
Here, $\mathrm{vol}$ denotes the symplectic volume. 
\textcolor{red}{($\mathrm{vol}$ coincides to the usual volume $|\cdot|$?)}
\end{conjecture}

Since $c(B_2^{2n}) =\pi$ and $\mathrm{vol}(B_2^{2n}) = \frac{\pi^n}{n!} (= |B_2^{2n}|)$, 
Viterbo's conjecture is equivalent to 
\begin{equation}\label{e:Vit}
c(\Sigma)^n \le n! \mathrm{vol}(\Sigma).
\end{equation}

\textcolor{red}{KNOWN RESULTS. FOR INSTANCE, BROUGAIN--MILMAN TYPE.}

In \cite{AKO}, they showed that the special capacity $c_{\mathrm{HZ}}$ which called Hofer--Zehnder capacity  satisfies $c_{\mathrm{HZ}}(K \times K^\circ) = 4$ for any symmetric convex body, and as a result, they revealed that Viterbo's conjecture implies Mahler's conjecture. 
On the other hand, in \cite{IS}, they showed the following fact in their proof of \cite[Proposition 1.5]{IS}. 

\begin{proposition}
For any symplectic capacity $c$ and symmetric convex bodies $K_i \subset \R^n$ ($i=1,2$), we have 
\begin{equation}\label{e:CapVol}
c(K_1 \times K_2) \le 4^n \frac{|K_1 \times K_2|}{\max\{|K_1| |K_1^\circ|, |K_2||K_2^\circ|\}}.
\end{equation}
\end{proposition}
Hence, Viterbo's conjecture \eqref{e:Vit} for $\Sigma = K_1 \times K_2$ follows from \eqref{e:CapVol} and Mahler's conjecture.

Using this proposition, we can give the following application derived from Corollary \ref{cor:SetInvSant}.

\begin{corollary}
For any symplectic capacity $c$ and symmetric convex bodies $K_i \subset \R^n$ ($i=1,2$),
we have 
\begin{equation}\label{e:StabVit}
c(K_1 \times K_2) \le \frac{4^n}{\max\{( \frac{\beta_{K_1}}{e^{\beta_{K_1} -1}} )^{\frac{n}{2}}, ( \frac{\beta_{K_2}}{e^{\beta_{K_2} -1}} )^{\frac{n}{2}}\} |B_2^n|^2} |K_1 \times K_2|.
\end{equation}
\end{corollary}

\fi

\if0
\section{Necessary cond for general HC; TO BE ERASED}
From the expression \eqref{e:PsGauss1} and the condition \eqref{e:CondGauss}, one can investigate the possibility of forward and reverse HC. 
\begin{claim}\label{cl:Nec}
Let $p,q \in (-\infty,1)\setminus\{0\}$.  

\begin{enumerate}
\item
\begin{equation}\label{e:ForwCentre}
\sup_{\beta>0}
\big\| P_s \big[ \big(\frac{\gamma_\beta}{\gamma}\big)^\frac1p \big] \big\|_{L^q(\gamma)}
< \infty 
\;\;\;
\Leftrightarrow
\;\;\;
1-e^{-2s} \le p < 1,\;\;\;  q = 1-e^{2s}.
\end{equation}
\item 
\begin{equation}\label{e:ForwTrans}
\sup_{a \in \mathbb{R}} 
\big\| P_s \big[ \big(\frac{\gamma(\cdot - a)}{\gamma}\big)^\frac1p \big] \big\|_{L^q(\gamma)}
< \infty 
\;\;\;
\Leftrightarrow
\;\;\;
\frac{q-1}{p-1} \ge e^{2s}. 
\end{equation}

\end{enumerate} 
\end{claim}

\begin{proof}
\begin{itemize}
\item 
Suppose $p < 1-e^{-2s}$. Then 
$$
\lim_{\beta \to \infty} 1 + \frac{1-\beta}{p\beta} (1-e^{-2s}) 
= 
1 - \frac{1-e^{-2s}}p <0. 
$$
This means the first condition in \eqref{e:CondGauss} fails and hence $\big\| P_s \big[ \big(\frac{\gamma_\beta}{\gamma}\big)^\frac1p \big] \big\|_{L^q(\gamma)} = \infty$ for sufficiently large $\beta$. 
\if0
\item 
Suppose $q < 1-e^{2s}$. In this case,  $1+ (q-1)e^{-2s} < 0$ and hence 
\begin{align*}
&\lim_{\beta \downarrow 0} 
1 + \frac{ 1-\beta }{p\beta} ( 1+ (q-1)e^{-2s} ) \\
&=
1 - \frac{1+ (q-1)e^{-2s}}{p} 
+
\frac{1+ (q-1)e^{-2s}}{p} 
\lim_{\beta \downarrow 0} 
\frac{1}{\beta}\\
&=
- \infty.
\end{align*}
This means the second condition in \eqref{e:CondGauss} fails and hence $\big\| P_s \big[ \big(\frac{\gamma_\beta}{\gamma}\big)^\frac1p \big] \big\|_{L^q(\gamma)} = \infty$ for sufficiently small $\beta$. 
\fi
\item 
Suppose $q> 1 - e^{2s}$.  In this case 
\begin{align*}
&\lim_{\beta\downarrow0}
\frac{1 + \frac{ 1-\beta }{p\beta} ( 1+ (q-1)e^{-2s} )}{1+ \frac{ 1-\beta }{p\beta} (1-e^{-2s})} \\
&=
\frac{  \frac{ 1 - e^{-2s} + qe^{-2s} }{p}  }{  \frac{ 1 - e^{-2s} }{p} }  
=
\frac{ 1 - e^{-2s} + qe^{-2s}  }{  1 - e^{-2s} }  
=: 
C_{q,s}
>0.
\end{align*}
Hence it follows from \eqref{e:PsGauss1} and $p<1$ that 
\begin{align*}
&\lim_{\beta \downarrow 0}  
\big\| P_s\big[ \big( \frac{\gamma_\beta}{\gamma} \big)^\frac1p \big] \big\|_{L^q(\gamma)}\\
&=
C_{q,s}^{-\frac1{2q}}
\lim_{\beta \downarrow 0}  
\beta^{ - \frac1{2p} }\big(
\frac{1}{1+ \frac{ 1-\beta }{p\beta} (1-e^{-2s})}
\big)^{\frac{1}2}
=
\infty. 
\end{align*}
\item 
For the implication $\Leftarrow$, we have only to show 
$$
\lim_{\beta \downarrow 0} 
\big\| P_s\big[ \big( \frac{\gamma_\beta}{\gamma} \big)^\frac1p \big] \big\|_{L^q(\gamma)}
,
\;\;\;
\lim_{\beta \to\infty} 
\big\| P_s\big[ \big( \frac{\gamma_\beta}{\gamma} \big)^\frac1p \big] \big\|_{L^q(\gamma)}
< \infty. 
$$
In view of \eqref{e:ConstMono},  it suffices to consider the case $q = 1-e^{2s}$. Then 
\begin{align*}
\big\| P_s\big[ \big( \frac{\gamma_\beta}{\gamma} \big)^\frac1p \big] \big\|_{L^q(\gamma)}
&=
\beta^{ - \frac1{2p} }\big(
1 - \frac{1-e^{-2s}}p + \frac{ 1 - e^{-2s} }{p\beta} 
\big)^{\frac{1}2( \frac1q - 1 )}\\
&=
\beta^{ - \frac1{2p} }\big(
1 - \frac{1-e^{-2s}}p + \frac{ 1 - e^{-2s} }{p\beta} 
\big)^{ - \frac{1}{2(1-e^{-2s})}}.
\end{align*}
Hence it is clear 
$$
\lim_{\beta \to \infty} 
\big\| P_s\big[ \big( \frac{\gamma_\beta}{\gamma} \big)^\frac1p \big] \big\|_{L^q(\gamma)}
=
\lim_{\beta \to \infty} 
\beta^{ - \frac1{2p} }\big(
1 - \frac{1-e^{-2s}}p + \frac{ 1 - e^{-2s} }{p\beta} 
\big)^{- \frac{1}{2(1-e^{-2s})}}
=0.
$$
It also follows from $p \ge 1-e^{-2s}$ that 
$$
\lim_{\beta \downarrow 0} 
\big\| P_s\big[ \big( \frac{\gamma_\beta}{\gamma} \big)^\frac1p \big] \big\|_{L^q(\gamma)}
=
\lim_{\beta \downarrow 0}
\beta^{ - \frac1{2}( \frac1p - \frac{1}{1-e^{-2s}}) }\big(
\frac{ 1 - e^{-2s} }{p} 
\big)^{ - \frac{1}{2(1-e^{-2s})}}
=0.
$$
\end{itemize}

For the translation \eqref{e:ForwTrans}, we have only to note that 
$$
\big\|
P_s 
\big[ 
\big(
\frac{\gamma(\cdot-a)}\gamma
\big)^\frac1p
\big]
\big\|_{L^q(\gamma)}
=
e^{ \frac1{2p} ( \frac{1-e^{-2s}  + qe^{-2s}}{p} - 1 ) |a|^2 }. 
$$

\end{proof}

At first I got suspicious that one can expect the forward HC on the range $p,q<1$ as in \eqref{e:ForwCentre}. 
However, it is not so strange and one can formulate the problem as follows:
\begin{problem}
Let $C_{p,s}>0$ be the sharp const for the inequality 
\begin{equation}\label{e:MahlerHC}
\big\| P_s \big[ f^\frac1p \big] \big\|_{L^{1-e^{2s}}(\gamma)} 
\le C_{p,s} 
\big( \int_{\mathbb{R}^n} f\, d\gamma \big)^\frac1p,\;\;\; \forall f:\mathbb{R}^n \to \mathbb{R}_+.
\end{equation}
Then identify when $C_{p,s}< \infty$ and in such case what is $C_{p,s}$? 

\begin{itemize}
\item 
As we saw, $p \ge 1-e^{-2s}$ is necessary for $C_{p,s}<\infty$. 
\item 
Clearly the problem gets harder as $p \downarrow 1-e^{-2s}$. 
\item 
In fact, one can easily see that 
$$
p\ge1 \; \Rightarrow \; C_{p,s} = 1.
$$
This is simply because of the H\"{o}lder: 
$$
\big\| P_s \big[ f^\frac1p \big] \big\|_{L^{1-e^{2s}}(\gamma)} 
\le 
\big\| P_s \big[ f^\frac1p \big] \big\|_{L^{1}(\gamma)} 
=
\int f^\frac1p\, d\gamma 
\le 
\big(\int f\, d\gamma\big)^\frac1p. 
$$
\item 
So first naive attempt would be tackle to show \eqref{e:MahlerHC} with $p = 1 -\varepsilon$... 
\end{itemize}
\end{problem}

More generalisation: for $p,q \in \mathbb{R}\setminus \{0\}$, $a\in \mathbb{R}$, $s,\beta>0$, 
\begin{align}\label{e:GaussVariTrans}
&\big\|
P_s 
\big[
\big(
\frac{\gamma_\beta(\cdot - a)}{\gamma}
\big)^\frac1p
\big]
\big\|_{L^q(\gamma)} \\
&=
\beta^{ - \frac1{2p} }
\beta^{ -\frac12(\frac1q - 1) } 
\beta^{\frac1{2q}}
\big(
\beta + \frac{ 1-e^{-2s} }{p} (1-\beta)
\big)^{\frac{1}2( \frac1q - 1 )}
\big(
\beta + \frac{ 1-e^{-2s} + qe^{-2s} }{p} (1-\beta)
\big)^{ - \frac{1}{2q} } \nonumber \\
&\times 
{\rm exp}
\frac{a^2}{2p} 
\bigg[
- \frac1\beta 
+ 
\frac{1-e^{-2s}}{( p - (1-e^{-2s}) ) \beta + 1-e^{-2s}} \frac1\beta 
+ 
\frac{ qe^{-2s} }{ \big( ( p - (1-e^{-2s}) ) \beta + 1-e^{-2s} \big) \big( \beta + \frac{ 1-e^{-2s} + qe^{-2s} }{p} (1-\beta) \big) }
\bigg] \nonumber 
\end{align}

On the endpoint $q = 1-e^{2s}$,  the above formula gets simpler: 
\begin{align}\label{e:GaussEnd}
&\big\|
P_s 
\big[
\big(
\frac{\gamma_\beta(\cdot - a)}{\gamma}
\big)^\frac1p
\big]
\big\|_{L^q(\gamma)} \\
&=
\beta^{ - \frac1{2p} }
\beta^{ -\frac12(\frac1q - 1) } 
\beta^{\frac1{2q}}
\big(
\beta + \frac{ 1-e^{-2s} }{p} (1-\beta)
\big)^{\frac{1}2( \frac1q - 1 )}
\big(
\beta + \frac{ 1-e^{-2s} + (1-e^{2s})e^{-2s} }{p} (1-\beta)
\big)^{ - \frac{1}{2q} } \nonumber \\
&\times 
{\rm exp}
\frac{a^2}{2p} 
\bigg[
- \frac1\beta 
+ 
\frac{1-e^{-2s}}{( p - (1-e^{-2s}) ) \beta + 1-e^{-2s}} \frac1\beta 
+ 
\frac{ (1-e^{2s}) e^{-2s} }{ \big( ( p - (1-e^{-2s}) ) \beta + 1-e^{-2s} \big)  \beta   }
\bigg] \nonumber \\
&= 
\beta^{ - \frac1{2p} }
\beta^{ -\frac12(\frac1q - 1) } 
\big(
\beta + \frac{ 1-e^{-2s} }{p} (1-\beta)
\big)^{\frac{1}2( \frac1q - 1 )} 
e^{- \frac{a^2}{2p\beta}}. 
\nonumber 
\end{align}
Moreover, if $p=1-e^{-2s}$, then 
$$
\big\|
P_s 
\big[
\big(
\frac{\gamma_\beta(\cdot - a)}{\gamma}
\big)^\frac1{1-e^{-2s}}
\big]
\big\|_{L^{1-e^{2s}}(\gamma)} \\
= 
e^{- \frac{a^2}{2(1-e^{-2s})\beta}}. 
$$

Note that for any $p\ge 1-e^{-2s}$ and $q\le 1-e^{2s}$, we have from H\"{o}lder's inequality that 
$$
\big\|
P_s 
\big[
\big(
\frac{\gamma_\beta(\cdot - a)}{\gamma}
\big)^\frac1p
\big]
\big\|_{L^q(\gamma)}
\le 
\big\|
P_s 
\big[
\big(
\frac{\gamma_\beta(\cdot - a)}{\gamma}
\big)^\frac1{1-e^{-2s}}
\big]
\big\|_{L^{1-e^{2s}}(\gamma)}.
$$
Overall we could see that 
\begin{claim}
\begin{itemize}
\item 
On the end point $p = 1-e^{-2s}, q= 1-e^{2s}$, 
$$
\sup_{\beta>0,\; a\in \mathbb{R}} 
\big\|
P_s 
\big[
\big(
\frac{\gamma_\beta(\cdot - a)}{\gamma}
\big)^\frac1{1-e^{-2s}}
\big]
\big\|_{L^{1-e^{2s}}(\gamma)}
=1.
$$

\item 
For any $p \le 1-e^{-2s}, q \ge 1-e^{2s}$, 
$$
\sup_{\beta>0,\; a\in \mathbb{R}} 
\big\|
P_s 
\big[
\big(
\frac{\gamma_\beta(\cdot - a)}{\gamma}
\big)^\frac1p
\big]
\big\|_{L^q(\gamma)}
=1. 
$$
\end{itemize}
\end{claim}

\newpage 
\fi

\if0
\section{Constant improving...}
\begin{claim}\label{cl:ImpliNelson}
Suppose you are given $q_s<0<p_s$ for each $s>0$: small satisfying $\frac{q_s-1}{p_s-1} = e^{2s}$. 
\begin{itemize}
\item (Assumption)
Suppose one could show 
\begin{equation}\label{e:Assump13Oct}
0 \le \nabla^2 \log\, f \le 1 - \frac1\beta 
\;\;\;
\Rightarrow
\;\;\;
\big\|
P_s \big[f^\frac1{p_s}\big]
\big\|_{L^{q_s}(\gamma)}
\le 
\big\|
P_s \big[\big( \frac{\gamma_\beta}{\gamma} \big)^\frac1{p_s}\big]
\big\|_{L^{q_s}(\gamma)}
\big(
\int_{\mathbb{R}^n}
f\, d\gamma 
\big)^\frac1{p_s}
\end{equation}
for all $s>0$: small and for all $\beta\ge1$ such that $\beta_{s,p}>0$. 
\item(Consequence)
For any $\beta\ge1$, 
\begin{equation}\label{e:Conseq13Oct}
-(1-\frac 1\beta) \le \nabla^2 \phi \le 0
\;\;\;
\Rightarrow
\;\;\;
\big\| e^{Q_1 \phi} \big\|_{L^a(\gamma)}
\big\| e^{-\phi} \big\|_{L^{1-a}(\gamma)}
\ge 
\lim_{\varepsilon \downarrow 0}
\big\|
P_\varepsilon \big[\big( \frac{\gamma_{\beta(\varepsilon)}}{\gamma} \big)^\frac1{p_\varepsilon}\big]
\big\|_{L^{q_\varepsilon}(\gamma)}^{-2\varepsilon},
\end{equation}
where $a:= \lim_{\varepsilon \downarrow 0} - \frac{q_\varepsilon}{2\varepsilon}$,  $\frac{1}{\beta(\varepsilon)} := 1 - \frac{p_\varepsilon}{2\varepsilon}(1-\frac1\beta) \in (\frac1\beta,1)$ and 
\begin{align}
\lim_{\varepsilon \downarrow 0}
\big\|
P_\varepsilon \big[\big( \frac{\gamma_{\beta(\varepsilon)}}{\gamma} \big)^\frac1{p_\varepsilon}\big]
\big\|_{L^{q_\varepsilon}(\gamma)}^{-2\varepsilon}
=&
\big\| e^{Q_1 \big[ -\frac12(1-\frac1\beta)|\cdot|^2 \big]} \big\|_{L^a(\gamma)}
\big\| e^{\frac12(1-\frac1\beta)|\cdot|^2} \big\|_{L^{1-a}(\gamma)}\label{e:Const14Oct}\\
=& 
\big( 
\beta^{\frac1{1-a}}
(\beta a - a +1)^{ - \frac{1}{a(1-a)}}
\big)^\frac{n}{2}
=: 
\varphi(a,\beta). \nonumber 
\end{align}
\end{itemize}
\end{claim}

\begin{itembox}{What does this mean?}
\begin{itemize}
\item 
The parameter $a \in (0,1)$ can be chosen as you want by taking appropriate $q_s$. 
\begin{itemize}
\item 
If you choose $ p_s = 1 - e^{-2s}$ then $a = 0$.
\item 
If you choose $p_s = \delta$ then $a = 1 + O(\delta)$. 
\end{itemize}
\item 
From mathematica one can see that 
$$\max_{a \in [0,1]}\varphi(a,\beta) = \varphi(1,\beta) =\big( \frac1\beta e^{1 - \frac1\beta} \big)^{\frac{n}{2}}$$ 
regardless of $\beta\ge1$. 
\item 
This means the best constant on R.H.S of \eqref{e:Conseq13Oct} is achieved when $a = 1$. 
Namely we should use \eqref{e:Assump13Oct} with $p_s = \delta$ and $q_s = 1-e^{2s} + O(\delta)$! 
(Previous argument we used $a = 0$ case which is worst senario!)
\item 
Anyway we could derive 
$$
-(1-\frac 1\beta) \le \nabla^2 \phi \le 0
\;\;\;
\Rightarrow
\;\;\;
\big\| e^{Q_1 \phi} \big\|_{L^1(\gamma)}
\big\| e^{-\phi} \big\|_{L^{0}(\gamma)}
\ge 
\big(
\frac1\beta e^{1 - \frac1\beta}
\big)^{\frac{n}{2}}.
$$ 
From H\"{o}lder for all $\beta\ge1$, 
\begin{equation}\label{e:SlightImprove}
-(1-\frac 1\beta) \le \nabla^2 \phi \le 0
\;\;\;
\Rightarrow
\;\;\;
\big\| e^{Q_1 \phi} \big\|_{L^1(\gamma)}
\big\| e^{-\phi} \big\|_{L^{1}(\gamma)}
\ge 
\big(
\frac1\beta e^{1 - \frac1\beta}\big)^{\frac{n}{2}}.
\end{equation}

\end{itemize}
\end{itembox}

\fi

\if0
\section{Preliminaries}

\subsection{From Convex geometry}

\section{Functional Blaschke--Santal\'{o} inequality as hypercontractivity}

\subsection{A direct proof of Hypercontractivity from the Pr\'{e}kopa--Leindler inequality}


Before introducing our hypercontractivity related to the Blaschke--Santal\'{o} inequality and the inverse Santal\'{o} inequality, we give an simple proof of hypercontractivity under curvature-dimension condition $CD(K, \infty)$ with some $K>0$ via the Pr\'{e}kopa--Leindler inequality. 
Our proof's idea is caused from Bobkov--Ledoux's paper \cite{BL} in which they showed LSI for any norm with some convexity via the Pr\'{e}kopa--Leindler inequality. 
In this subsection, we prove hypercontractivity directly by modifying their arguments.
For simplicity, we will explain our proof only on $\mathbb{R}^n$, but the same argument can be also applied on weighted Riemannian manifolds with appropriate conditions.

Fix $p, q>1$ and $s>0$ such that $1<p<q$ and $(q-1) / (p-1)=e^{2Ks}$. 
Put $\lambda = p/q \in (0,1)$,
$$
g(x) = P_s[f^{\frac{1}{p}}]^p(x), \;\;\; x \in \mathbb{R}^n
$$
and 
$$
g_\lambda(z) = \sup_{z = \lambda x + (1-\lambda)y} [ g(x) \exp(- \frac{1}{2}K\lambda (1-\lambda)|x-y|^2)], \;\;\; z \in \R^n.
$$
Then we have 
$$
(g(x)^{\frac{1}{\lambda}} \mathfrak{m}(x) )^\lambda \mathfrak{m}(y)^{1-\lambda} \le g_\lambda( \lambda x + (1-\lambda) y) \mathfrak{m}( \lambda x + (1-\lambda) y)
$$
for any $x, y \in \mathbb{R}^n$, 
where we used 
$$
V( \lambda x + (1-\lambda) y) \le \lambda V(x) + (1-\lambda) V(y) - \frac{K}{2}\lambda (1-\lambda) |x-y|^2.
$$
Hence the Pr\'{e}kopa--Leindler inequality yields that 
\begin{equation}\label{e:FromPL}
(\int_{\mathbb{R}^n} g^{\frac{1}{\lambda}}\, d\mathfrak{m})^\lambda \le \int_{\mathbb{R}^n} g_\lambda\,  d\mathfrak{m}.
\end{equation}
By definition of $g$, we immediately see 
\begin{equation}\label{e:FromPLleft}
(\int_{\mathbb{R}^n} g^{\frac{1}{\lambda}}\, d\mathfrak{m})^\lambda
=
\| P_s [f^{1/p}] \|_{L^q(\mathfrak{m})}^p.
\end{equation}
Now, we shall show 
\begin{equation}\label{e:g_lam}
g_\lambda(z) \le P_sf(z), \;\;\; z \in \mathbb{R}^n.
\end{equation}
Indeed, if it is true, then 
$$
\int_{\mathbb{R}^n} g_\lambda\,  d\mathfrak{m}
\le
\int_{\mathbb{R}^n} P_sf\,  d\mathfrak{m}
= 
\int_{\mathbb{R}^n} f\,  d\mathfrak{m}, 
$$
which is a desired inequality.

In order to show \eqref{e:FromPLleft}, fix $z \in \mathbb{R}^n$. 
Then by Wang's Harnack inequality (for instance, see \cite[Theorem 5.6.1]{BGL}), we obtain 
$$
g(x) = P_s[f^{1/p}]^p(x) \le P_sf(z) \exp ( \frac{pK |x-z|^2}{2(p -1) (e^{2Ks}-1)}), \;\;\; x \in \mathbb{R}^n.
$$
Thus by definition of $g_\lambda$, we enjoy 
\begin{align*}
g_\lambda(z) 
=& 
\sup_{x \in \R^n} [ g(x) \exp(- \frac{1}{2}K\frac{\lambda}{1-\lambda}|x-z|^2)]
\\
\le&
P_sf(z)
\sup_{x \in \R^n} [ \exp ( \frac{pK |x-z|^2}{2(p -1) (e^{2Ks}-1)}) \exp(- \frac{1}{2}K\frac{\lambda}{1-\lambda}|x-z|^2)]
\\
=&
P_sf(z)
 \exp ( \sup_{x \in \R^n}[ (\frac{pK}{2(p -1) (e^{2Ks}-1)} - \frac{K}{2}\frac{\lambda}{1-\lambda}) |x-z|^2]).
\end{align*}
We can observe from $(q-1) / (p-1)=e^{2Ks}$ and $\lambda = p/q$ that 
\begin{align*}
 \frac{pK}{2(p -1) (e^{2Ks}-1)} - \frac{K}{2}\frac{\lambda}{1-\lambda}
=
\frac{pK}{2(q-p)} - \frac{K}{2}\frac{p}{q-p}
=
0.
\end{align*}
Hence, we conclude \eqref{e:g_lam}, and the proof is complete. 

The reverse HC is also proved by the similar argument, and thus we omit here. 
\fi

\section*{Acknowledgements}
This work was supported by JSPS Kakenhi grant numbers 19K03546, 19H01796 and 21K13806 (Nakamura), and JST,  ACT-X Grant Number JPMJAX200J, Japan, and JSPS Kakenhi grant number 22J10002 (Tsuji). 
Authors would like to thank to Neal Bez for sharing his insight which leads us to this work. The second author also expresses his gratitude to his supervisor, Shin-ichi Ohta for some helpful comments. 
This work is a part of the second author's PhD thesis. 


\end{document}